\theoremstyle{definition}
\newtheorem{theorem}{Theorem}[section]
\newtheorem{lemma}[theorem]{Lemma}
\newtheorem{proposition}[theorem]{Proposition}
\newtheorem{corollary}[theorem]{Corollary}
\newtheorem{definition}[theorem]{Definition}
\theoremstyle{remark}
\newtheorem{remark}[theorem]{Remark}
\newtheorem*{notation}{Notation}
\numberwithin{equation}{section}
\newcommand*{\cf}{\textit{c.f.}\@\xspace}
\newcommand*{\ie}{\textit{i.e.}\@\xspace}
\newcommand{\Set}{\widehat{\mathsf{Set}}}
\newcommand{\dSet}{\ddot{\mathsf{Set}}}
\newcommand{\DAG}{\mathsf{DAG}}
\newcommand{\SCC}{\mathsf{SCD}}
\newcommand{\CSCC}{\mathsf{CSCC}}
\newcommand{\SAT}{\ddot{\mathsf{SAT}}}
\newcommand{\CNF}{\ddot{\mathsf{CNF}}}
\newcommand{\E}{\mathsf{SSD}}
\newcommand{\G}{\mathsf{G}}
\newcommand{\CG}{\mathsf{CG}}
\newcommand{\T}{\mathsf{T}}
\newcommand{\I}{\mathsf{IT}}
\newcommand{\D}{\mathsf{D}}
\newcommand{\cscc}{\mathfrak{cscc}}
\newcommand{\dgg}{\mathfrak{dag}}
\newcommand{\ssd}{\mathfrak{ssd}}
\newcommand{\scc}{\mathfrak{scd}}
\newcommand{\sat}{\mathfrak{sat}}
\newcommand{\dsat}{\mathfrak{s}\ddot{\mathfrak a}\mathfrak{t}}
\newcommand{\cnf}{\mathfrak{c}\ddot{\mathfrak n}\mathfrak{f}}
\newcommand{\cg}{\mathfrak{cg}}
\newcommand{\ip}{\mathfrak{ip}}
\newcommand{\ir}{\mathfrak{it}}
\newcommand{\g}{\mathfrak{g}}
\newcommand{\dd}{\mathfrak{d}}
\newcommand{\tu}{\mathfrak{t}}
\newcommand{\M}{M}
\newcommand{\BigO}{\mathcal O}
\newcommand{\Ring}[2]{\mathfrak{G}_{#1}^{#2}}
\newcommand{\Coeff}[2]{\mathfrak{C}_{#1}^{#2}}
\newcommand{\Convert}[2]{\mathcal{Q}_{#1}^{#2}}
\newcommand{\Robin}[1]{\Delta_{#1}}
\newcommand{\Change}[3]{\Phi_{#1}^{#2,#3}}
\newcommand{\n}{\overline}
\let\leq\leqslant
\let\geq\geqslant
\definecolor{lgreen}{rgb}{0.0, 0.48, 0.0}
\definecolor{lpurple}{rgb}{0.48, 0.0, 0.48}
\definecolor{bblue}{rgb}{0.2, 0.4, 0.8}
\definecolor{bgreen}{rgb}{0.2, 0.6, 0.4}
\definecolor{bred}{rgb}{0.8, 0.4, 0.2}
\definecolor{bviolet}{rgb}{0.7, 0.2, 0.7}
\definecolor{blackred}{rgb}{0.6, 0.3, 0.3}
\definecolor{blackblue}{rgb}{0.3, 0.3, 0.6}
\definecolor{borange}{rgb}{0.8, 0.4, 0.2}
\tikzset{
  treenode/.style = {align=center, inner sep=0pt, text centered,
    font=\sffamily},
  arnBleuPetit/.style = {treenode, circle, bblue, draw=bblue,
    fill=bblue!10,
    minimum width=0.8em, minimum height=0.5em
  },
  arnRougePetit/.style = {treenode, circle, bred, draw=bred,
    fill=bred!40,
    minimum width=0.8em, minimum height=0.5em
  },
  arnBleuGrande/.style = {treenode, circle, bblue, draw=bblue,
    text width=1.5em, very thick,
    fill=bblue!10
  },
  arnOrangePetit/.style = {treenode, circle, black, draw=borange,
    fill=borange!20,
    minimum width=0.8em, minimum height=0.5em
  },
  arnVioletPetit/.style = {treenode, circle, black, draw=bviolet,
    fill=bviolet!08,
    minimum width=0.8em, minimum height=0.5em
  },
  arnVioletGrande/.style = {treenode, circle, bviolet, draw=bviolet,
    text width=1.5em, very thick,
    fill=bblue!10
  },
  arnOrangeGrande/.style = {treenode, circle, borange, draw=borange,
    text width=1.5em, very thick,
    fill=borange!10
  },
  arnVertPetit/.style = {treenode, circle, black, draw=bgreen,
    fill=bgreen!20,
    minimum width=0.8em, minimum height=0.5em
  },
  arnVertGrande/.style = {treenode, circle, bgreen, draw=bgreen,
    text width=1.5em, very thick,
    fill=bgreen!10},
  RougeTriangle/.style = {treenode, bred,
      draw=bred, fill=bred!20, regular polygon, regular polygon
      sides=3, very thick, text width=1.5em}
}
\title{Asymptotics for graphically divergent series:\\
dense digraphs and 2-SAT formulae}
\author[$,a,c$]{Sergey \textsc{Dovgal},\thanks{sergey.a.dovgal@gmail.com}}
\author[$,b,d,e$]{Khaydar \textsc{Nurligareev}\thanks{khaydar.nurligareev@lip6.fr}}
\affil[$a$]{LaBRI, Universit\'e de Bordeaux, France}
\affil[$b$]{LIPN, Universit\'e Sorbonne Paris Nord, France}
\affil[$c$]{IMB, Universit\'e de Bourgogne, France}
\affil[$d$]{LIB, Universit\'e de Bourgogne, France}
\affil[$e$]{LIP6, Sorbonne Universit\'e, France}
\date{}
\begin{document}

\maketitle

\begin{abstract}
  We propose a new method for obtaining complete asymptotic expansions in a systematic manner, which is suitable for counting sequences of various graph families in dense regime.
  The core idea is to encode the two-dimensional array of expansion coefficients into a special bivariate generating function, which we call a \emph{coefficient generating function}.
  We show that coefficient generating functions possess certain general properties that make it possible to express asymptotics in a short closed form.
  Also, in most scenarios, we indicate a combinatorial meaning of the involved coefficients.
  Applications of our method include asymptotics of connected graphs, irreducible tournaments, strongly connected digraphs, 2-SAT formulae and contradictory strongly connected implication digraphs.
  Moreover, due to its flexibility, the method allows to treat a wide range of structural variations, including
  fixing the numbers of connected, irreducible, strongly connected and contradictory components,
  as well as source-like, sink-like and isolated ones,
  or adding weights and marking variables.
\end{abstract}

\paragraph*{Keywords.}
  Generating functions, asymptotic expansions, directed graphs, 2-CNF.

\tableofcontents

\section{Introduction}
\label{section:introduction}

\subsection{Motivation and historical context}

  Asymptotic methods are a powerful tool widely used in enumerative combinatorics.
  Typically, they help to determine how fast different counting sequences grow and how to compare their growth.
  This quantitative information allows scientists to predict the properties of large combinatorial objects and understand their structure.
  There is extensive literature on this account, for instance, the surveys of Bender~\cite{bender1974asymptotic}, Odlyzko~\cite{odlyzko1995asymptotic} and books of De~Bruijn~\cite{bruijn1981asymptotic}, Flajolet and Sedgewick~\cite{flajolet2009analytic}.

  While, in combinatorics, the main concern is often to get the dominant term of the asymptotics,
  there are certain reasons to go further.
  First of them is rather obvious: 
  the more terms in the asymptotic expansion, the more accurate the estimate of the behavior of a counting sequence.
  Ideally, we would like to have a complete asymptotic expansion that allows us to obtain estimates of any predetermined accuracy.

  Another reason to look for complete asymptotic expansions is that they may possess certain structure themselves.
  In other words, coefficients in these expansions often have combinatorial meanings on their own.   
  For instance, it follows from results of Dixon~\cite{dixon2005asymptotics} and Cori~\cite{cori2009indecomposable} that, for any \( r \geq 1 \), the probability \( t_n \) that a uniform random square-tiled surface is connected, satisfies
  \[
    t_n = 1 -
    \sum\limits_{k=1}^{r-1}
      \dfrac{\ip_k}{n^{\underline{k}}} +
    O\left(\dfrac{1}{n^r}\right) 
    \, ,
  \]
  where the sequence \( (\ip_k)_{k=1}^{\infty} \) counts indecomposable permutations and
  \( n^{\underline{k}} = n(n-1)\ldots(n-k+1) \) are the falling factorials (see also \cite{monteilSET}).
  The same way, the probability \( p_n \) that a uniform random graph is connected is equal to
  \[
    p_n = 1 - 
    \sum\limits_{k=1}^{r-1} \ir_k \cdot \binom{n}{k} \cdot
      \dfrac{2^{k(k+1)/2}}{2^{kn}} +
    O\left(\dfrac{n^r}{2^{nr}}\right)
    \, ,
  \]
  where $\ir_k$ is the number of irreducible tournaments of size $k$, see \cite{monteil2021asymptotics}.
  In some cases, coefficients are rather linear combinations of certain counting sequences.
  Thus, the asymptotic expansion of the number of permutations of size \( n \) is given by
  \[
    n! = \dfrac{n^n}{e^n} \sqrt{2\pi n}
    \left(
      1 + \sum\limits_{q=1}^{r-1}\dfrac{c_q}{n^q} +
      O\left(\dfrac{1}{n^r}\right)
    \right)
    \, ,
  \]
  where
  \[
    c_q = \sum\limits_{k=1}^{2q}
    \dfrac{(-1)^k h_{2q+2k,k}}{2^{q+k}(q+k)!}
  \]
  and \( h_{m,k} \) is the number of permutations of size \( m \) having \( k \) cycles, all of length at least three, see \cite[Proposition~B.1]{flajolet2009analytic}.
  The reader can find more examples of that types in \cite{monteilSET} and \cite{monteilSEQ}.

  Comprehension of this ``second level'' structure leads to better understanding of the structure of initial combinatorial objects
  and can be potentially used, for instance, to guess new recurrences and bijections.
  Let us illustrate this idea by the following example.
  Consider a uniform random graph  with \( n \) vertices and \( m = \frac{1}{2}n (1 + \mu n^{-1/3}) \) edges,
  as \( \mu \to -\infty \), while \( |\mu| = o(n^{1/3}) \).
  The first two terms of the asymptotic expansion of the probability that this graph is a union of trees and unicycles are
  \(
    1
     - 
    \frac{5}{24} |\mu|^{-3}
     + 
    \BigO \left( |\mu|^{-6} \right)
  \).
  On the one hand, this can be proved with the help of analytical tools
  (see Equation (10.3) in~\cite[Lemma~3]{janson1993birth} with \( y = 1/2 \)).
  In this case, the rational constant \( \frac{5}{24} \) appears formally as a term in the asymptotic expansion of a complex contour integral.
  On the other hand, the coefficient
  \(
    \frac{5}{24} = 
    \frac{1}{2!}(\frac{1}{4} + \frac{1}{6})
  \)
  can be interpreted as the total weight of bicyclic cubic cores,
  \ie connected multigraphs with two vertices of degree 3.   
  The corresponding term is negative, since these cores are nor trees neither unicycles;
  they form the skeletons of the majority of objects
excluded from a typical random graph in the given range.
  Further coefficients of this asymptotics can also be expressed via combinations of cubic cores weights.
  The presence of these weights in the asymptotic expansion means that the corresponding cores are excluded from the graph:
  in the absence of bicyclic cores, the graph is almost surely a set of trees and unicycles.
  
  Apparently, this phenomenon must be universal within various graph structures.
  Thus, the probability that the strongly connected components of a uniform random digraph
  with \( n \) vertices and \( m = n(1 + \mu n^{-1/3}) \) edges
  are only isolated vertices or cycles
  behaves as
  \(
    1
     - 
    \frac{1}{2} |\mu|^{-3}
     + 
    \BigO \left( |\mu|^{-6} \right)
  \),
  as \( \mu \to -\infty \), see~\cite{de2020birth}.
  Again, the factor \( \frac{1}{2} \) has an interpretation in terms of the weight of excluded structures.
  From similar point of view, the combinatorial interpretation of the coefficients of complete asymptotic expansions is discussed in \cite{monteilSET}, \cite{monteilSEQ} and \cite{monteilANTISEQ} (see also the PhD dissertation \cite{nurligareev2022irreducibility}, which covers most of the above-mentioned papers).
   
  The tool that serves to establish the asymptotic expansions of \cite{dixon2005asymptotics}, \cite{monteil2021asymptotics} and \cite{nurligareev2022irreducibility} in a form suitable for combinatorial interpretation is Bender's theorem \cite{bender1975asymptotic}.
  Given an analytic function \( F(x,y) \) and a formal power series \( A(z) = \sum_{n=0}^\infty a_n z^n \),
  typically with coefficients that grow factorially or superfactorially,
  Bender's theorem  provides the complete asymptotic expansion of the composition \( B(z) = F\big(A(z), z\big) \) in the form
  \[
    b_n
     = 
    \sum\limits_{k=0}^{r-1} c_k a_{n-k}
     +
    \mathcal O(a_{n-r})
    \, ,
  \]
  (for a simplified version of Bender's theorem that we use in this paper, see~\cref{theorem: Bender's}).
  It is necessary to emphasize here that Bender's theorem itself does not provide a combinatorial meaning for the coefficients.
  There is always additional work that depends on the initial data.
  The advantage of this theorem is that the coefficients \( c_k \) are calculated simultaneously:
  the corresponding formal power series \( C(z) = \sum_{n=0}^\infty c_n z^n \) is expressed in a closed form via the input, that is, \( F(x,y) \) and~\( A(z) \).
  This helps, in certain cases, to give the coefficients \( c_k \) a combinatorial interpretation.

  Inspired by the work of Bender~\cite{bender1975asymptotic}, Borinsky studied the asymptotic behavior of \emph{factorially divergent} series~\cite{borinsky2018generating},
  \ie series whose coefficients admit an expansion of the form
  \[
    a_n \sim \alpha^{n+\beta} \Gamma(n+\beta)
    \left(
      d_0
       + 
      \dfrac{d_1}{\alpha(n+\beta-1)}
       +
      \dfrac{d_1}{\alpha^2(n+\beta-1)(n+\beta-2)}
       + 
      \ldots
    \right)
    \, ,
  \]
  where \( \alpha \in \mathbb R_{>0} \) and \( \beta \in \mathbb R \) are fixed parameters.
  He proved that, for the set \( \mathbb{R}[[x]]^{\alpha}_{\beta} \) of series of that type, the following theorem holds.
\begin{theorem}
\label{Borinsky's theorem}
  For any \( \alpha \in \mathbb R_{>0} \) and \( \beta \in \mathbb R \),
  the subspace \( \mathbb{R}[[x]]^{\alpha}_{\beta} \) is a subring of \( \mathbb{R}[[x]] \). 
  Moreover, if \( f,g \in \mathbb{R}[[x]]^{\alpha}_{\beta} \), then \( f\cdot g \in \mathbb{R}[[x]]^{\alpha}_{\beta} \) and,
  with the additional conditions \( g_0 = 0 \) and \( g_1 = 1 \), we also have \( f\circ g, g^{-1} \in \mathbb{R}[[x]]^{\alpha}_{\beta} \).
  The \emph{transfer map}
  \[
   \left(
     \sum\limits_{n=0}^\infty a_n z^n
   \right)
   \xmapsto{\;\;\mathcal{A}^\alpha_\beta\;\;}
   \left(
     \sum\limits_{k=0}^\infty d_k z^k
   \right)
  \]
  is a \emph{derivation}, \ie this map obeys a \emph{Leibniz rule}
  \[
    \big(
      \mathcal{A}^\alpha_\beta (f\cdot g)
    \big)(z)
     =
    f(z)\big(
      \mathcal{A}^\alpha_\beta g
    \big)(z)
     +
    g(z)\big(
      \mathcal{A}^\alpha_\beta f
    \big)(z)
  \]
  and a \emph{chain rule}
  \begin{equation}
  \label{Borinsky's chain rule}
    \big(
      \mathcal{A}^\alpha_\beta (f\circ g)
    \big)(z)
     =
    f'\big(g(z)\big)
    \big(
      \mathcal{A}^\alpha_\beta g
    \big)(z)
     +
    \left( \dfrac{z}{g(z)} \right)^\beta
    \exp\left( \dfrac{g(z)-z}{\alpha zg(z)} \right)
    \big(
      \mathcal{A}^\alpha_\beta f
    \big)\big(g(z)\big)
    \, .  
  \end{equation}
\end{theorem}  

  The formalism of Borinsky makes it possible to derive the asymptotic expansions of certain implicitly defined power series.
  However, the peculiar form of the correction term in chain rule~\eqref{Borinsky's chain rule} makes it difficult to interpret the result combinatorially.
  

\subsection{Notations}

Before moving on to a description of our work, let us introduce some notations that will be useful for this purpose and will be used throughout the paper.

\subsubsection{Sets and expressions}
  The sets of integers and real numbers are designated by their usual notations, \( \mathbb Z \) and \( \mathbb R \), respectively.
  To represent their subsets, we employ subscripts.
  For instance, we write \( \mathbb R_{>1} \) and \( \mathbb Z_{>0} \) for the sets \( \{ x\in\mathbb R \mid x>1 \} \) and \( \{ n\in\mathbb Z \mid n>0 \} \), respectively.
  We also use indicator functions in one or several variables.
  Thus,
  \[
    \mathbf 1_{n=0}
    =
    \left\{\begin{array}{ll}
      1 & n = 0 \\
      0 & \mbox{otherwise}
    \end{array}\right.
	\qquad\mbox{and}\qquad
    \mathbf 1_{n=k}
    =
    \left\{\begin{array}{ll}
      1 & n = k \\
      0 & \mbox{otherwise}
    \end{array}\right.
  \]
  are functions in the variable \( n \in \mathbb Z \) and variables \( n,k \in \mathbb Z \), respectively.
  The domain of any such function will be clear from the context.

  We use Knuth's notation \( n^{\underline{k}} \) for falling factorials~\cite{graham1989concrete}:
  \[
    n^{\underline{k}} := n(n-1) \ldots (n-k+1) \, ,
  \]
  where \( n \) and \( k \) are non-negative integers.
 
  Typically, to designate a family of graphs or 2-SAT-formulae, we use the first letters of its name.
  To represent its generating function and the corresponding counting sequence, we employ these letters written in uppercase serifs and lowercase Gothic fonts, respectively.
  For instance, the exponential generating function of irreducible tournaments is denoted by \( \I(z) \),
  while \( \ir_n \) means the number of irreducible tournaments of size \(n\).
  All the families that will be used in our paper are introduced in \cref{section:enumeration}.

\subsubsection{Sequences}
  For a sequence \( (a_n)_{n=0}^{\infty} \) and an integer $\M$, not necessarily positive, we write
  \begin{equation}\label{eq:approx_def}
	a_n \approx
	\sum\limits_{k\geq\M} f_k(n),
  \end{equation}
  if for all integers \( r\geq\M+1 \), as \( n\to\infty \), one has an asymptotic expansion
  \[
	a_n =
	\sum\limits_{k=M}^{r-1}f_k(n) +
	\BigO\big(f_r(n)\big),
  \]
  where the sequence \( \big(f_k\big)_{n=M}^{\infty} \) satisfies \( f_{k+1}(n) = o\big(f_k(n)\big) \) for each \( k<r \).
  In this case, we also write
  \[
    a_n \sim f_{\M}(n)
  \]
  to identify the leading term of the asymptotics.
  Note that the sum in the right-hand side of \eqref{eq:approx_def} is formal and does not necessarily have to converge.


\subsubsection{Formal power series}
  We use an operator \( [z^n] \) to extract \( n \)th coefficient of formal power series in \( z \):
  \[
    \text{if} \quad
    A(z) = \sum_{n=0}^\infty a_n z^n \, ,
    \qquad \text{then} \quad
    [z^n] A(z) := a_n \, .
  \]
  The \emph{exponential Hadamard product} of formal power series \( A(z) \) and \( B(z) \) is designated by \( A(z) \odot B(z) \).
  In other words, if
  \[
    A(z) = \sum\limits_{n=0}^{\infty}
      a_n \frac{z^n}{n!}
    \qquad \text{and} \qquad
    B(z) = \sum\limits_{n=0}^{\infty}
      b_n \frac{z^n}{n!}
    \enspace ,
  \]
  then
  \[
    A(z) \odot B(z) :=
    \sum\limits_{n=0}^{\infty}
      a_n b_n \frac{z^n}{n!}
    \enspace .
  \]
  In the case of several arguments, we use the notation \( \odot_z \) to emphasize that
  the exponential Hadamard product is taken with respect to the argument \( z \).

  For different needs, we use several \emph{types} of generating functions (GFs).
  To highlight the type of GF used, we capitalize them.
  Overall, we employ the following four types:
  \emph{Exponential} GF, \emph{Graphic} GF,  \emph{Implication} GF and \emph{Coefficient} GF.
  The details are forthcoming in the next sections
 (see formulae \eqref{eq:ExponentialGF}, \eqref{eq:GraphicGF}, \eqref{eq:ImplicationGF} and \eqref{eq:CoefficientGF}, respectively).

\subsection{Our contribution}

  Our paper is devoted to the asymptotic study of exponential generating series of a certain kind: the corresponding counting sequences diverge as powers of quadratic functions.
  For this purpose, we introduce a new type of generating series and an asymptotic transfer.

\begin{definition}
\label{definition: asymptotics}
Let \( \alpha \in \mathbb R_{>1} \) and \( \beta \in \mathbb Z_{>0} \).
\begin{enumerate}
  \item
	By \( \Ring{\alpha}{\beta} \) (the Gothic ``G'' for ``graphic'')
    we denote the set of formal power series
	\begin{equation}
	\label{eq:EGF F}
  	  A(z) = \sum_{n = 0}^\infty a_n \frac{z^n}{n!}
	\end{equation}
	whose coefficients \( a_n \) satisfy an asymptotic expansion
	\begin{equation}
	\label{eq:expansion}
    	  a_n \approx \alpha^{\beta \binom{n}{2}}
    	  \left[
      	\sum_{m\geq\M} \alpha^{-mn}
      	\sum_{\ell=0}^{\infty}
	  	  n^{\underline{\ell}} \,
    		  a_{m,\ell}^\circ
      \right]
	\end{equation}
	for some integer $\M$
	with the additional assumption that, for each \( m \in \mathbb Z_{\geq\M} \), the support of the sequence \( (a^\circ_{m,\ell})_{\ell=0}^\infty \) is finite. 
	Such series will be called \emph{graphically divergent}.
  \item
    The \emph{Coefficient GF of type \( (\alpha, \beta) \)}
    associated with Exponential GF~\eqref{eq:EGF F}
	whose coefficients \( (a_n)_{n=0}^\infty \) admit an asymptotic expansion of the form~\eqref{eq:expansion}
    is the bivariate formal power series
    \begin{equation}
    	\label{eq:CoefficientGF}
      A^\circ(z, w) :=
      \sum_{m = \M}^\infty
      \sum_{\ell=0}^\infty
        a_{m,\ell}^\circ
        \dfrac{z^m}{\alpha^{\frac{1}{\beta} \binom{m}{2}}}
      w^\ell \, .
    \end{equation}
    We denote by \( \Coeff{\alpha}{\beta} \) (the Gothic ``C'' for ``coefficient'') the set of Coefficient GF corresponding to~\( \Ring{\alpha}{\beta} \).
    \footnote{
      In addition to the sets \( \Ring{\alpha}{\beta} \) and \( \Coeff{\alpha}{\beta} \), our paper contains two more objects that depend on parameters \( \alpha \) and \( \beta \):
      namely the operators \( \Robin{\alpha} \) and \( \Change{\alpha}{\beta_1}{\beta_2} \); see~\cref{definition: Robinson operator} and~\cref{definition: Coefficient operator}, respectively.
      Unlike Borinsky, we decided to set \( \alpha \) as the subscript (and \( \beta \) as the superscript), in order to write powers of the operator \( \Robin{\alpha} \) as \( \Robin{\alpha}^m \) and avoid confusing powers and indices.
    }
  \item
    The \emph{asymptotic transfer}
    \(
      \Convert{\alpha}{\beta} \colon
      \Ring{\alpha}{\beta} \to
      \Coeff{\alpha}{\beta}
    \)
    is the linear operator that transfers a formal power series (an Exponential GF) to the respective bivariate Coefficient GFs,
    \ie for \( A \in \Ring{\alpha}{\beta}\), we have
    \begin{equation}
	\label{eq:Q-alpha-beta-Conversion}
	  \mathcal Q^\alpha_\beta A = A^\circ.
    \end{equation}
    In other words, if \( (a_n)_{n=0}^\infty \) and \( (a_{m,\ell}^\circ)_{m=M,\ell = 0}^\infty \) satisfy~\eqref{eq:EGF F} and~\eqref{eq:expansion}, then
    \[
      \sum_{n=0}^\infty a_n \dfrac{z^n}{n!}
        \quad
      \xmapsto{\;\;\Convert{\alpha}{\beta}\;\;}
        \quad
      \sum_{m=\M}^{\infty}
      \sum_{\ell=0}^\infty
        a_{m,\ell}^\circ
        \dfrac{z^m}{\alpha^{\frac{1}{\beta}\binom{m}{2}}}
        w^\ell \, .
    \]
    Here we assume that \( \binom{m}{2} = \frac{m(m-1)}{2} \) for any \( m \in \mathbb Z \).
\end{enumerate}
\end{definition}

  We will see in \cref{sec: transfers-relations} that, for a fixed parameter \( \alpha \), the sets \( \Ring{\alpha}{\beta} \) are subsequently embedded one into another:
  \[
    \Ring{\alpha}{1} \subset
    \Ring{\alpha}{2} \subset
    \Ring{\alpha}{3} \subset
    \Ring{\alpha}{4} \subset
      \ldots
  \]
  On the contrary, from the formal point of view, linear spaces \( \Coeff{\alpha}{\beta} \) are the same for different values of \( \alpha \) and \( \beta \):
  \[
	\Coeff{\alpha}{\beta} = \mathbb R[w][[z]]
	\, .
  \]
  The choice of parameters \( \alpha \) and \( \beta \) corresponds to the choice of basis in this space.

  For fixed parameters \( \alpha \) and \( \beta \), we show that the set \( \Ring{\alpha}{\beta} \) admit a ring structure,
  and that the asymptotic transfers are consistent with the ring operations.
  More precisely, the following holds.

\begin{proposition}
\label{lemma: operation transfers}
  For any fixed \( \alpha\in \mathbb R_{>1} \) and \( \beta \in \mathbb Z_{>0} \), the set \( \Ring{\alpha}{\beta} \) form a ring.
  For each pair \( A,B\in\Ring{\alpha}{\beta} \), the operations of addition and multiplication satisfy
  \begin{equation}\label{eq: sum-transfer}
    \big(
	  \Convert{\alpha}{\beta} (A + B)
    	\big)(z, w)
      =
    (\Convert{\alpha}{\beta} A)(z,w)
      +
    (\Convert{\alpha}{\beta} B)(z,w)
  \end{equation}
  and
  \begin{equation}\label{eq: prod-transfer}
    \big(
	  \Convert{\alpha}{\beta} (A \cdot B)
    \big)(z, w)
      =
    A \big(
      \alpha^{\frac{\beta+1}{2}} z^\beta w
    \big) \cdot
    (\Convert{\alpha}{\beta} B)(z,w)
      +
    B \big(
      \alpha^{\frac{\beta+1}{2}} z^\beta w
    \big) \cdot
    (\Convert{\alpha}{\beta} A)(z,w)
    \, .
  \end{equation}
\end{proposition}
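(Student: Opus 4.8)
The plan is to dispose of addition and the ring axioms first, since they are essentially formal, and then to concentrate all the real work on the product rule~\eqref{eq: prod-transfer}. For the sum, if $A$ and $B$ are graphically divergent of type $(\alpha,\beta)$ with leading indices $\M_A$ and $\M_B$, then reading off~\eqref{eq:expansion} term by term shows that $A+B$ admits the same kind of expansion, with coefficients $a^\circ_{m,\ell}+b^\circ_{m,\ell}$ (padding with zeros below each leading index), leading index $\min(\M_A,\M_B)$, and finite $\ell$-support for every $m$. This gives simultaneously closure under addition and~\eqref{eq: sum-transfer} by linearity of $\Convert{\alpha}{\beta}$. That every polynomial lies in $\Ring{\alpha}{\beta}$ and is sent to $0$ is immediate, and the constant series $1$ will serve as the multiplicative unit once multiplication is understood.

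For the product, I would start from the binomial convolution $c_n=\sum_{k=0}^n\binom{n}{k}a_k b_{n-k}$ dictated by the exponential product, and exploit the elementary identity
\[
\binom{k}{2}+\binom{n-k}{2}=\binom{n}{2}-k(n-k).
\]
Substituting the leading growth $\alpha^{\beta\binom{\cdot}{2}}$ of $a_k$ and $b_{n-k}$ turns this into an overall factor $\alpha^{\beta\binom{n}{2}}$ times a sum weighted by $\alpha^{-\beta k(n-k)}$. The crucial point is that this weight is maximal exactly at the two ends $k=0$ and $k=n$ and decays super-exponentially in the interior: for any target order $r$, choosing the cutoff $K$ large enough forces the middle range $K<k<n-K$ to contribute a quantity of size $\BigO\big(\alpha^{\beta\binom{n}{2}}\alpha^{-(r+1)n}\big)$, the super-exponential decay of $\alpha^{-\beta k(n-k)}$ dominating even the largest binomial coefficients, hence lying beyond order $r$ and discardable. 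This reduces the asymptotics of $c_n$ to the two boundary regions of bounded $k$ and bounded $n-k$, which by symmetry produce the two summands of~\eqref{eq: prod-transfer}.

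I would then extract the small-$k$ region explicitly. For fixed $k$ the number $a_k$ is exact, $\binom{n}{k}=n^{\underline{k}}/k!$, and I would insert the expansion~\eqref{eq:expansion} of $b_{n-k}$, using the two reindexing identities
\[
\binom{n-k}{2}=\binom{n}{2}-kn+\binom{k+1}{2}
\qquad\text{and}\qquad
n^{\underline{k}}\,(n-k)^{\underline{\ell}}=n^{\underline{k+\ell}}.
\]
The first collapses the prefactors into $\alpha^{\beta\binom{n}{2}}\alpha^{-(\beta k+m)n}$, identifying the contribution with order $m'=\beta k+m$; the second shows the polynomial part is the single falling factorial $n^{\underline{k+\ell}}$, identifying the degree $\ell'=k+\ell$. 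Collecting the constants, a short computation reduces the accumulated power of $\alpha$ to $m'k-\beta\binom{k}{2}$, so the boundary region yields
\[
c^\circ_{m',\ell'}=\sum_{k}\frac{a_k}{k!}\,\alpha^{\,m'k-\beta\binom{k}{2}}\,b^\circ_{m'-\beta k,\,\ell'-k}+(\text{symmetric term}).
\]
Translating this back through~\eqref{eq:CoefficientGF}, and checking that the very same power $m'k-\beta\binom{k}{2}$ is produced by the substitution $z\mapsto\alpha^{(\beta+1)/2}z^\beta w$ inside $A$, matches the first summand with $A\big(\alpha^{(\beta+1)/2}z^\beta w\big)\cdot(\Convert{\alpha}{\beta}B)(z,w)$ and the symmetric term with $B\big(\alpha^{(\beta+1)/2}z^\beta w\big)\cdot(\Convert{\alpha}{\beta}A)(z,w)$. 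Since for each $(m',\ell')$ the constraint $m'-\beta k\geq\min(\M_A,\M_B)$ leaves only finitely many $k$, and each $b^\circ$ has finite $\ell$-support, the resulting $C^\circ$ is a legitimate element of $\Coeff{\alpha}{\beta}$, establishing closure under multiplication and hence the ring structure.

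I expect the main obstacle to be the uniform error control underlying the second paragraph: one must justify simultaneously that the interior of the convolution is negligible to all orders and that truncating the asymptotic expansion of $b_{n-k}$ at a finite depth introduces an error which, after multiplication by $\binom{n}{k}a_k$ and summation over the bounded boundary range, still stays within the claimed $\BigO$-term. Keeping these two error sources (interior cutoff and truncation depth) compatible as the target order $r$ grows is the delicate bookkeeping step; once it is in place, the algebraic identities above make the identification of coefficients routine.
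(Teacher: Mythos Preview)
Your proposal is correct and follows essentially the same route as the paper: split the binomial convolution into two bounded boundary ranges plus a negligible interior, then match coefficients via the identities $\binom{n-k}{2}=\binom{n}{2}-kn+\binom{k+1}{2}$ and $n^{\underline{k}}(n-k)^{\underline{\ell}}=n^{\underline{k+\ell}}$. The only organizational difference is that the paper packages your interior estimate as a separate lemma (the sequences $(a_n/n!)$ are \emph{gargantuan}, proved via log-convexity of $x\mapsto\log d(x)$), whereas you propose to bound it directly through the weight $\alpha^{-\beta k(n-k)}$; these are the same argument in different clothing.
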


\begin{proposition}
\label{lemma: Bender's transfer}
  Let \( \alpha \in \mathbb R_{>1} \), \( \beta \in \mathbb Z_{>0} \) and \( A \in \Ring{\alpha}{\beta} \) with \( a_0=0 \).
  If \( F \) is a~function analytic in a~neighbourhood of the origin,
  and \( H(z) = \partial_x F(x)|_{x=A(z)} \),
  then \( F\circ A \in \Ring{\alpha}{\beta} \) and
  \begin{equation}\label{eq: Bender-transfer}
    \big(
	  \Convert{\alpha}{\beta} (F\circ A)
    \big)(z, w)
      =
    H \big(
      \alpha^{\frac{\beta+1}{2}} z^\beta w
    \big) \cdot
    (\Convert{\alpha}{\beta} A)(z,w)
    \, .
  \end{equation}
\end{proposition}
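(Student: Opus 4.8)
The plan is to read \eqref{eq: Bender-transfer} as a \emph{chain rule} that is forced by the Leibniz rule \eqref{eq: prod-transfer}, and to reduce the analytic statement to its polynomial version by a valuation/truncation argument that exploits the hypothesis \( a_0 = 0 \). First I would establish the power rule
\[
\big(\Convert{\alpha}{\beta}(A^k)\big)(z,w) = k\,\big(A(\alpha^{\frac{\beta+1}{2}}z^\beta w)\big)^{k-1}\,(\Convert{\alpha}{\beta}A)(z,w), \qquad k\geq 1,
\]
by induction on \( k \), using the linearity of \( \Convert{\alpha}{\beta} \) and \eqref{eq: prod-transfer}; the inductive step merges the two contributions \( A(\cdots)\cdot\Convert{\alpha}{\beta}(A^{k-1}) \) and \( \big(A(\cdots)\big)^{k-1}\cdot\Convert{\alpha}{\beta}(A) \) into \( k\big(A(\cdots)\big)^{k-1}\Convert{\alpha}{\beta}(A) \). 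Because \( \Ring{\alpha}{\beta} \) is a ring (\cref{lemma: operation transfers}), every truncation \( F_{\leq N}\circ A = \sum_{k=0}^{N} f_k A^k \) already lies in \( \Ring{\alpha}{\beta} \), and summing the power rule against the Taylor coefficients \( f_k \) of \( F \) yields, for each \( N \),
\[
\big(\Convert{\alpha}{\beta}(F_{\leq N}\circ A)\big)(z,w) = (F_{\leq N})'\!\big(A(\alpha^{\frac{\beta+1}{2}}z^\beta w)\big)\cdot(\Convert{\alpha}{\beta}A)(z,w).
\]

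The second step is a valuation estimate. Since \( a_0 = 0 \), the series \( A(\alpha^{\frac{\beta+1}{2}}z^\beta w) \) has \( z \)-valuation at least \( \beta \), so \( \big(A(\alpha^{\frac{\beta+1}{2}}z^\beta w)\big)^{k-1} \) has \( z \)-valuation at least \( \beta(k-1) \). I would translate this, through \eqref{eq:CoefficientGF}, into the statement that the lowest index \( m \) appearing in the expansion \eqref{eq:expansion} of \( A^k \) grows linearly in \( k \) (equivalently \( A^k = \BigO\big(\alpha^{\beta\binom{n}{2}-ckn}\big) \) for some constant \( c>0 \)). The same hypothesis \( a_0 = 0 \) forces \( A^k = \BigO(z^k) \) on the Exponential GF side, so that \( F\circ A := \sum_k f_k A^k \) is a well-defined formal power series each of whose coefficients is a finite sum.

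Finally I would pass to the limit \( N\to\infty \) on both sides. On the right-hand side, the valuation estimate shows that each coefficient \( [z^m w^\ell] \) of \( F'\!\big(A(\alpha^{\frac{\beta+1}{2}}z^\beta w)\big)\,\Convert{\alpha}{\beta}A \) receives contributions from only finitely many \( k \); hence this expression is a well-defined element of \( \Coeff{\alpha}{\beta}=\mathbb R[w][[z]] \) and equals the coefficientwise limit of the truncations. On the left-hand side, for each target order \( r \) the valuation estimate produces a threshold \( K(r) \) such that the asymptotic expansion of \( [z^n](F\circ A) \) up to order \( \alpha^{\beta\binom{n}{2}-rn} \) coincides with that of the ring element \( F_{\leq K(r)}\circ A \); letting \( r\to\infty \) shows that \( F\circ A \) admits an expansion of the form \eqref{eq:expansion} with the required finite support in \( \ell \), so \( F\circ A\in\Ring{\alpha}{\beta} \), and its Coefficient GF is the above limit. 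Identifying \( F'(A(z)) \) with \( H(z) \) and evaluating at \( \alpha^{\frac{\beta+1}{2}}z^\beta w \) then gives \eqref{eq: Bender-transfer}.

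The main obstacle is precisely the interchange of the infinite analytic composition with the transfer in this last step: the algebraic power rule is routine once \eqref{eq: prod-transfer} is in hand, but one must verify carefully that replacing the polynomial \( F_{\leq N} \) by the full analytic \( F \) perturbs neither a fixed coefficient of the Coefficient GF nor a fixed order of the asymptotic expansion, and that the finite-support condition on \( (a^\circ_{m,\ell})_\ell \) survives the limit. This is exactly where \( a_0 = 0 \) is indispensable, as it is what makes the \( z \)-valuation of \( A^k \) grow with \( k \) and thereby renders both passages to the limit coefficientwise finite.
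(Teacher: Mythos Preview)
Your route is different from the paper's and has a real gap at exactly the point you flag. The paper does not reduce to the Leibniz rule at all: it applies Bender's theorem (Proposition~\ref{theorem: Bender's}) directly to the analytic composition, using that $(a_n/n!)$ is gargantuan (Lemma~\ref{lemma:log-convexity}) to obtain $n![z^n](F\circ A)\approx \sum_{k\geq 0}\binom{n}{k}\eta_k\, a_{n-k}$ with $\eta_k = k![z^k]H(z)$. The coefficient identification that follows is then literally the same calculation already carried out for the product rule.

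Your power-rule argument is correct for polynomial $F$, but the valuation estimate does not control the tail. Saying that the Coefficient GF of $A^k$ has $z$-valuation at least $M+\beta(k-1)$ is an asymptotic statement \emph{for each fixed $k$}: it yields $n![z^n]A^k = O\bigl(\alpha^{\beta\binom{n}{2}-(M+\beta(k-1))n}\bigr)$ as $n\to\infty$, with an implied constant that depends on $k$. The tail you must bound is $\sum_{k=N+1}^{n} f_k\, n![z^n]A^k$, a sum whose number of terms grows with $n$; a bound uniform in $k$ is precisely what is needed and precisely what the valuation argument does not supply. Producing that uniformity---for instance by showing that the $k$-fold convolution is dominated by its extremal terms---is exactly the content of the gargantuan estimate behind Bender's theorem. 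So your approach does not bypass Bender: either you invoke it, as the paper does, or you end up reconstructing it inside your ``pass to the limit'' step.
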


  \cref{lemma: operation transfers} and \cref{lemma: Bender's transfer},
  along with their multivariate versions \cref{lemma: operation transfers and marking variables} and \cref{lemma: Bender's transfer and marking variables},
  are our key tools for establishing asymptotic expansions of graphically divergent series.
  As in the case of \cref{Borinsky's theorem}, the proof is based on a careful estimation of the growth rate of the middle terms.
  However, compared to Borinsky's paper, we can see several differences.
  First, we deal with series whose growth rate is higher: roughly speaking, it is \( \alpha^{n^2} \) versus \( \alpha^n\cdot n! \).
  Second, we also keep track of polynomial ``fluctuations'' by means of two-dimensional arrays of coefficients.
  This allows us to get more information versus one-dimensional case of Borinsky, but the exact form of our coefficient generating functions needs to be carefully designed.
  We pay for the additional information by complexifying the function \( A^\circ \), which is reflected in the term \( \alpha^{\frac{\beta+1}{2}} z^\beta w \) inserted into the Leibniz and chain rules.
  It is worth mentioning that the main advantage of Borinsky's approach is the possibility of obtaining compositions and inverses within the ring under consideration.
  In our case, only compositions with analytic functions are allowed.
  In principle, this means that all series we deal with can be treated with the help of Bender's theorem.
  Thus, our principal contribution is not to compute asymptotic expansions, but to present them in a nice looking, concise form, easy to understand, convenient to interpret and adapted to use.

  We present several applications of this technique.
  First, we revisit in a simple manner the complete asymptotic expansions for connected digraphs and irreducible tournaments
  (the asymptotic coefficients were obtained by Wright~\cite{wright1970asymptotic} and~\cite{wright1970the}; a combinatorial interpretation of these coefficients was given by Monteil and Nurligareev~\cite{monteil2021asymptotics}).
  Second, we obtain a complete asymptotic expansion of strongly connected digraphs (here, the asymptotic coefficients were known~\cite{wright1971number}, but no interpretation was given).
  We also establish a complete asymptotic expansion for digraphs with a fixed number of strongly connected components (this result is new).
  In particular, we provide a~refined version of this result, for the case when the number of source-like, sink-like and isolated components are given.
  Finally, we establish complete asymptotic expansions
for satisfiable and contradictory strongly connected 2-CNF formulae (these results are also new).
  We also discuss the case where the number of components are fixed.
  The method can be potentially refined even further to restrain the strongly connected components.

  Note that the coefficients involved into the asymptotic expansions under discussion are virtually always expressed in a relatively simple way via enumerating sequences of other combinatorial families.
  As a consequence, somehow as a byproduct of our method, they have combinatorial meanings on their own.

\subsection{Structure of the paper}

  The paper consists of six sections, the first of which is the present introduction.
  \cref{section:enumeration} can be considered as a brief listing of all necessary prerequisites.
  We introduce different generating functions, as well as recall various graph families and 2-SAT model, and provide results related to their enumeration.
  The section is divided into two parts, but this division is rather arbitrary, since the parts share common ideas.
  The described material is largely not new and can be covered by \cite{de2019symbolic, dovgal2021exact} and, for example, \cite{flajolet2009analytic}.
  However, we encourage the reader to have a look at the presentation in order to get familiar with the differences
  (for instance, we prefer operators \( \Robin{\alpha} \) and \( \Change{\alpha}{\beta_1}{\beta_2} \) rather than exponential Hadamard product).

  \cref{section:transfers} is devoted to our method of the asymptotic transfer.
  This is the core part of the paper.
  We prove that graphically divergent series form a ring, and study properties of the asymptotic transfer.
  In \cref{section:digraphs}, we consider applications of our method to asymptotics of undirected and directed graphs,
  while \cref{section:2-SAT} is devoted to applications to 2-SAT formulae.
  Numerical values corresponding to the results of \cref{section:digraphs} and \cref{section:2-SAT} are presented in \cref{section:appendix}.
  Finally, in~\cref{section:discussion}, we discuss possible extensions of the method not covered by the current paper:
  the behavior of expansions when the edge probability tends to zero, the enumeration of 2-connected graphs and blocks, and possible extensions to enumeration of the $k$-SAT formulae.

\section{Generating functions and enumeration}
\label{section:enumeration}

\subsection{Digraphs}
\label{subsection:digraphs}

  In this section, we recall the symbolic method applied for enumeration of various (undirected or directed) graph families.
  We start with observing different graph structures that are employed in our investigation.
  Note that all objects we work with are \emph{labeled};
  this is assumed throughout, and the word ``labeled'' will be omitted below.
  Next, we describe two types of generating functions, exponential and graphic, that serve for graph enumeration purposes, and explain relations between them.
  In particular, we discuss multivariate generating functions that are useful for marking patterns and parameters.
  The presentation of the topic is completed by enumeration results for diverse digraph classes,
  including close formulae for generating functions of acyclic, strongly connected and semi-strong digraphs.
  Our exposition is mainly based on the book \cite{flajolet2009analytic} and the papers \cite{de2019symbolic, de2020birth}, to which we refer the reader for further details.

\subsubsection{Graph families}
  Let us recall various graph families that will be used throughout the paper.

  A \emph{graph} is a pair \( (V, E) \),
  where \( V \) is a finite set of \emph{vertices},
  typically represented by an interval \( [n] := \{ 1,\ldots, n\} \),
  and \( E \subset \big\{ \{x,y\} \mid x,y \in V, \, x \neq y \big\} \) is the set of \emph{edges}.
  In particular, loops and multiple edges are forbidden in this model.
  In contrast to directed graphs, which are discussed below,
  these graphs are referred to as \emph{undirected}.

  A graph is \emph{connected} if any pair of its vertices is joined by a path.
  In other words, for any pair \( x,y \in V \), there exists a sequence of vertices
  \begin{equation}\label{eq:vertex sequence}
    x = v_0,\, v_1,\, \ldots,\, v_{m-1},\, v_m = y,
  \end{equation}
  such that \( \{v_{i-1},v_i\} \in E \) for all \( i=1,\ldots,m \).
  Every graph can be uniquely represented as a disjoint union of its connected components.

  A \emph{directed graph} (or, simply, a \emph{digraph})
  is a pair \( (V, E) \), where \( V \) is a finite set of vertices,
  and \( E \subset \big\{ (x,y) \mid x,y \in V, \, x \neq y \big\} \) is the set of edges.
  Contrary to undirected graphs, the order of vertices in an edge is important,
  so these edges are referred to as \emph{directed}.

  A digraph is \emph{strongly connected} if any pair \( x,y \in V \) is joined by a directed path,
  meaning that there exists a sequence of vertices~\eqref{eq:vertex sequence}
  such that \( (v_{i-1},v_i) \in E \) for all \( i=1,\ldots,m \).
  Every graph consists of several strongly connected components.
  In contrast to the undirected case, two components can be joined by directed edges.
  However, all the edges that join two distinct components must have the same direction.
  
  Depending on its nature, a strongly connected component of a digraph can be
\begin{enumerate}
  \item
    \emph{source-like}, if it does not have incoming edges from other components;
  \item
    \emph{sink-like}, if it does not have outgoing edges towards other components;
  \item
    \emph{isolated}, if it is source-like and sink-like at the same time;
  \item
    \emph{purely source-like}, if it is source-like and not isolated;
  \item
    \emph{purely sink-like}, if it is sink-like and not isolated.
\end{enumerate}
  A digraph is \emph{semi-strong}, if all its strongly connected components are isolated.

  A \emph{tournament} is a digraph such that each pair of its vertices \( x,y \in V \) is joined by exactly one of two directed edges: either \( (x,y) \) or \( (y,x) \).
  A tournament is \emph{reducible}, if there exists a partition of its set of vertices into two nonempty subsets \( A \) and \( B \)
  such that any pair of vertices \( (a,b) \in A \times B \) are joined by the edge \( (a,b) \).
  Otherwise, the tournament is \emph{irreducible}.
  Equivalently, a tournament is irreducible if and only if it is strongly connected~\cite{rado1943theorems}.
  
  Finally, a digraph is \emph{acyclic}, if it does not contain directed cycles.
  Usually, the corresponding subclass of digraphs is referred to as \emph{directed acyclic graphs}.

\subsubsection{Exponential generating functions}

  Recall that \( (a_n)_{n = 0}^\infty \) is a \emph{counting sequence} of a family \( \mathcal A \) of (undirected or directed) graphs,
  if \( a_n \) denotes the number of graphs from \( \mathcal A \) with \( n \) vertices.
  The \emph{Exponential GF} of \( \mathcal A \) is
  \begin{equation}\label{eq:ExponentialGF}
    A(z) := \sum\limits_{n=0}^{\infty}
      a_n \dfrac{z^n}{n!}
    \, .
  \end{equation}
  Exponential GFs are commonly used for enumerating labeled combinatorial classes,
  since their behavior is consistent with the labeled product.
  More precisely, if \( (a_n)_{n=0}^{\infty} \) and \( (b_n)_{n=0}^{\infty} \) are the counting sequences of classes \( \mathcal{A} \) and \( \mathcal{B} \), respectively,
  then the counting sequence \( (c_n)_{n=0}^{\infty} \) of the labeled product
  \(
    \mathcal{C} = \mathcal{A} \star \mathcal{B}
  \)
  obeys the binomial convolution rule,
  \[
    c_n =
    \sum\limits_{k=0}^{n} \binom{n}{k} a_k b_{n-k},
  \]
  which corresponds to the relation
  \(
    C(z) = A(z) B(z)
  \)
  of the Exponential GFs.
  For further details, see \cite{flajolet2009analytic}.
  
  Generating functions serve to express structural relationships between different classes of combinatorial objects in the language of algebra and vice versa.
  Thus, the Exponential GFs \( \G(z) \) and \( \CG(z) \) of graphs and connected graphs, respectively,
  satisfy the so-called \emph{exponential formula} (see, for example, \cite[Example~5.2.1]{stanley1999enumerative2}):
  \begin{equation}\label{eq:G=exp(CG)}
    \G(z) = e^{\CG(z)}.
  \end{equation}
  A similar formula gives a link between the Exponential GFs \( \E(z) \) and \( \SCC(z) \) of semi-strong digraphs and strongly connected digraphs, respectively:
  \begin{equation}\label{eq:SSD=exp(SCD)}
    \E(z) = e^{\SCC(z)}.
  \end{equation}
  Another relation following from \cite[formula~(1)]{moon1968topics} links together the Exponential GFs \( \T(z) \) and \( \I(z) \) of tournaments and irreducible tournaments, respectively:
  \begin{equation}\label{eq:T=1/(1-IT)}
    \T(z) = \dfrac{1}{1-\I(z)}
    \, .
  \end{equation}
  Note that the Exponential GFs of graphs and tournaments are the same:
  \begin{equation}\label{eq:G=T}
    \G(z) = \T(z) =
    \sum\limits_{n=0}^{\infty}
      2^{\binom{n}{2}}\dfrac{z^n}{n!}
    \, .
  \end{equation}

\subsubsection{Graphic generating functions}

  For some families of directed graphs, it is more convenient to use \emph{Graphic GFs}:
  \begin{equation}\label{eq:GraphicGF}
    \widehat A(z) := \sum_{n=0}^{\infty}
      a_n \dfrac{z^n}{2^{\binom{n}{2}}n!}
    \, .
  \end{equation}
  This is the case where, instead of the labeled product, we employ a so-called \emph{arrow product}:
  given two families of directed graphs \( \mathcal{A} \) and \( \mathcal{B} \),
  we consider a new class \( \mathcal{C} \)
  of pairs \( (a,b) \), \( a \in \mathcal{A} \), \( b \in \mathcal{B} \),
  equipped with additional edges directed from vertices of \( a \) to vertices of~\( b \).
  Indeed, the convolution rule corresponding to the arrow product is
  \[
    c_n =
    \sum\limits_{k=0}^{n} \binom{n}{k}
     2^{k(n-k)} a_k b_{n-k}.
  \]
  Hence, their Graphic GFs satisfy
  \(
    \widehat C(z) = \widehat A(z) \widehat B(z).
  \)
  For details, see~\cite{de2019symbolic}.
  
  There are certain bridges between Exponential GFs and Graphic GFs.
  For instance, the Exponential GF \( \D(z) \) and the Graphic GF \( \widehat\D(z) \) of directed graphs are:
  \[
    \D(z) = \sum_{n = 0}^\infty 2^{n(n-1)} \dfrac{z^n}{n!}
    \qquad \text{and} \qquad
    \widehat\D(z) = \sum_{n = 0}^\infty 2^{\binom{n}{2}} \dfrac{z^n}{n!} = \G(z),
  \]
  respectively.
  To proceed from the Exponential GF of a family \( \mathcal A \) to its Graphic GF,
  we use the linear operator~\( \Robin{2} \) first defined by Robinson~\cite{robinson1973counting}:
  \begin{equation}\label{eq:Robinson's operator}
    \Robin{2}A(z) := \widehat A(z). 
  \end{equation}
  This operator divides \( z^n \) by \( 2^{\binom{n}{2}} \) and can be expressed in terms of the exponential Hadamard product.
  Namely, the conversion between the Exponential GF \( A(z) \) and the corresponding Graphic GF \( \widehat A(z) \) is done according to the formula
  \begin{equation}\label{eq:GF conversion}
    \widehat A(z) = \Robin{2}A(z) = A(z) \odot \Set(z)
    \qquad \text{and} \qquad
    A(z) = \Robin{2}^{-1}A(z) = \widehat A(z) \odot \G(z)
    \, ,
  \end{equation}
  where, in notations of de Panafieu and Dovgal~\cite{de2019symbolic},
  \[
    \Set(z) =\sum\limits_{n=0}^{\infty}
      \dfrac{z^n}{2^{\binom{n}{2}} n!}
  \]
  is the Graphic GF of \emph{digraphs without edges} (or, equivalently, \emph{sets of isolated vertices}).

\subsubsection{Marking variables}
\label{subsection: marking variables}

  It is said that \( u \) is a \emph{marking variable} for the number of occurrences of a pattern \( \pi \)
  (or, simply, that \( u \) \emph{marks} the number of \( \pi \))
  in a generating function
  \[
    F(z,u) = \sum_{m=0}^{\infty} F_m(z) u^m \, ,
  \]
  if \( F_m(z) \) is a generating function for objects having exactly \( m \) occurrences of \( \pi \) for every \( m \geq 0 \).
  This concept can be recursively extended to an arbitrary number of marking variables.

  As an example of particular importance, consider undirected graphs.
  Introducing a marking variable \( w \) for the number of edges, we get the corresponding Exponential GF to be
  \begin{equation}\label{eq:marking edges in graphs}
    \G(z, w) = \sum\limits_{n=0}^{\infty}
      (1+w)^{\binom{n}{2}} \dfrac{z^n}{n!}
    \enspace .
  \end{equation}
  In this case, the definition of a Graphic GF should be modified to
  \begin{equation}\label{eq:bivariate Graphic GF}
    \widehat A(z, w) :=
    \sum\limits_{n=0}^{\infty}
      a_n(w) \dfrac{z^n}{(1+w)^{\binom{n}{2}}n!}
    \, ,
  \end{equation}
  so that conversions~\eqref{eq:GF conversion} hold. In particular,
  \begin{equation}\label{eq:marking edges in graphs - 2}
    \Set(z,w) = \sum\limits_{n=0}^{\infty}
      \dfrac{z^n}{(1+w)^{\binom{n}{2}} n!}
    \, .
  \end{equation}
  Another option is to consider a marking variable \( t \) for the number of connected components.
  Doing that, we obtain the following generalization of formula~\eqref{eq:G=exp(CG)}:
  \begin{equation}\label{eq:G(t)=exp(tCG)}
    \G(z;t) = e^{t \cdot \CG(z)}.
  \end{equation}

  Studying directed graphs, the reader may need several marking variables.
  First of all, it is reasonable to introduce a marking variable \( t \) for the number of strongly connected components.
  For instance, the bivariate Exponential GFs of semi-strong digraphs and tournaments satisfy, respectively,
  \begin{equation}\label{eq:bivariate:SSD=exp(SCD)}
    \E(z;t) = e^{t \cdot \SCC(z)}
    \qquad\mbox{and}\qquad
    \T(z;t) = \dfrac{1}{1 - t \cdot \I(z)}
    \, ,
  \end{equation}
  which generalize relations \eqref{eq:SSD=exp(SCD)} and \eqref{eq:T=1/(1-IT)}.
  We could be interested in source-like components too.
  In the next section, we also introduce marking variables for the numbers of purely source-like, purely sink-like and isolated components of digraphs.

\begin{remark}
\label{remark:introducing Erdos-Renyi}
  Introducing a marking variable for the number of edges in a graph is closely related the Erd\H{o}s-R\'enyi model \( \mathbb G(n,p) \)~\cite{erdos1959random,gilbert1959random} and the similar model \( \mathbb D(n,p) \).
  Recall that, according to these models, each edge of a (undirected or directed) graph with \( n \) vertices appears independently with a fixed positive probability \( p \).
  In particular, in \( \mathbb G(n,p) \) a graph with \( m \) edges appears with the probability of
  \[
    p^{m}(1-p)^{\binom{n}{2}-m} =
    \left(\dfrac{p}{1-p}\right)^m
      (1-p)^{\binom{n}{2}}
    \, ,
  \]
  while in \( \mathbb D(n,p) \) the probability to obtain a fixed digraph with \( m \) directed edges is
  \[
    p^{m}(1-p)^{2\binom{n}{2}-m} =
    \left(\dfrac{p}{1-p}\right)^m
      (1-p)^{2\binom{n}{2}}
    \, .
  \]
  It can be shown that if \( \mathcal F \) is a undirected (resp. directed) graph family
  whose Exponential GF (resp. Graphic GF) is \( F(z, w) \),
  then the probability that a randomly generated graph from \( \mathbb G(n,p) \) (resp. \( \mathbb D(n,p) \)) belongs to \( \mathcal F \) is exactly
  \[
    \mathbb P_{\mathcal F}(n, p)
    =
    (1 - p)^{\binom{n}{2}} n! [z^n] F
    \left(
      z, \dfrac{p}{1-p}
    \right)
  \]
  (see \cite[Lemma 2.8]{de2020birth}).
  Putting the weight of a graph to be \( (\frac{p}{1-p})^m \),
  we get the total weight of all graphs equal to \( (1 - p)^{-\binom{n}{2}} \),
  and the probability of a specific family can be obtained by dividing its weight by the total weight.
  Note, that the case \( p = \frac12 \) corresponds to enumeration of graphs,
  since \( w = \frac{p}{1-p} = 1 \) in this case.
\end{remark}

\subsubsection{Enumeration of digraphs with marking variables}
\label{subsection: enumeration of digraphs with marking variables}

  Let us now proceed to general enumerative results that provide generating functions for digraphs with various marking variables.
  They are based on a technique developed by Gessel~\cite{gessel1996counting} for counting acyclic digraphs by sources and sinks.
  Some of results presented here seem to be new, others can be found in the works~\cite{robinson1973counting, de2019symbolic, de2020birth}.

\begin{proposition}[{\cite[Theorem 3.4]{de2019symbolic}}]
\label{proposition: digraphs with given scc}
  Let \( \mathcal A \) be a family of strongly connected digraphs and \( A(z) \) be its Exponential GF.
  If \( s \) marks the number of source-like components,
  then the bivariate Graphic GF \( \widehat{D}_{\mathcal A}(z; s) \) of digraphs
  whose strongly connected components belong to \( \mathcal A \)
  is given by 
  \[
    \widehat{\D}_{\mathcal A}(z; s) =
    \dfrac
      {\Robin{2} \big( e^{(s-1)A(z)} \big)}
      {\Robin{2} \big( e^{-A(z)} \big)}
    \, .
  \]
\end{proposition}

\begin{corollary}\label{corollary: GGF of digraphs marking components}
  If \( s \) marks the number of source-like components
  and \( t \) marks the total number of strongly connected components,
  then the Graphic GF \( \widehat{\D}(z; s, t) \) of digraphs satisfies
  \begin{equation}\label{eq: GGF of digraphs marking components and source-like components}
    \widehat{\D}(z; s, t)
     =
    \dfrac
      {\Robin{2} \big(e^{(s-1)t \cdot \SCC(z)}\big)}
      {\Robin{2} \big(e^{-t \cdot \SCC(z)}\big)}
     =
    \dfrac
      {\Robin{2} \big( \E(z; (s-1)t) \big)}
      {\Robin{2} \big( \E(z; -t) \big)}
    \, .
  \end{equation}
  In particular, we have the following expression for the bivariate Graphic GF \( \widehat{\D}(z; t) \) of digraphs:
  \begin{equation}\label{eq: GGF of digraphs marking components}
    \widehat{\D}(z; t)
     =
    \dfrac
      {1}
      {\Robin{2} \big(e^{-t \cdot \SCC(z)}\big)}
     =
    \dfrac{1}{\Robin{2} \big( \E(z; -t) \big)}
    \, .
  \end{equation}
\end{corollary}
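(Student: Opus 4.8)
The plan is to deduce the corollary directly from \cref{proposition: digraphs with given scc} by specializing the family $\mathcal{A}$ and then introducing the extra marking variable $t$ through a weighting argument. First I would take $\mathcal{A}$ to be the family of \emph{all} strongly connected digraphs, so that its Exponential GF is $A(z) = \SCC(z)$. With this choice the proposition already produces a generating function $\widehat{\D}_{\mathcal A}(z;s)$ in which $s$ marks the source-like components, but it does not yet record the total number of strongly connected components.

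To bring in $t$, the key observation is that the identity of \cref{proposition: digraphs with given scc} is a formal algebraic identity: its derivation via Gessel's technique and the arrow product treats each strongly connected component as an indivisible building block, so the formula remains valid when $A(z)$ is replaced by any formal power series playing the role of the SCC generating function. I would therefore pass to the $t$-weighted version of the SCC family, in which every strongly connected component carries a weight $t$; its weighted Exponential GF is $t\cdot\SCC(z)$. Since a digraph with $k$ strongly connected components then acquires total weight $t^{k}$, the variable $t$ marks exactly the total number of components. Substituting $A(z) = t\cdot\SCC(z)$ into the proposition yields the first equality
\[
  \widehat{\D}(z;s,t) =
  \dfrac{\Robin{2}\big(e^{(s-1)t\cdot\SCC(z)}\big)}{\Robin{2}\big(e^{-t\cdot\SCC(z)}\big)}.
\]

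The second equality is then immediate from relation~\eqref{eq:bivariate:SSD=exp(SCD)}, namely $\E(z;t) = e^{t\cdot\SCC(z)}$: reading this with $(s-1)t$ and with $-t$ in place of $t$ gives $e^{(s-1)t\cdot\SCC(z)} = \E(z;(s-1)t)$ and $e^{-t\cdot\SCC(z)} = \E(z;-t)$. For the ``in particular'' statement I would set $s=1$, which kills the numerator: the exponent $(s-1)t\cdot\SCC(z)$ vanishes, and since $\Robin{2}$ merely divides $z^n$ by $2^{\binom{n}{2}}$ while $\binom{0}{2}=0$, one has $\Robin{2}(1)=1$. As $\widehat{\D}(z;t)$ is by definition $\widehat{\D}(z;1,t)$ (the source-like marking taken to be trivial), this leaves $\widehat{\D}(z;t) = 1/\Robin{2}\big(e^{-t\cdot\SCC(z)}\big) = 1/\Robin{2}\big(\E(z;-t)\big)$.

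The main obstacle I anticipate is justifying cleanly that the substitution $\SCC(z)\mapsto t\cdot\SCC(z)$ genuinely marks the \emph{total} number of strongly connected components rather than something else. This is delicate because $A(z)$ enters the proposition's formula with different coefficients in the numerator and the denominator, reflecting an inclusion--exclusion over source-like components, so the exponent of $A(z)$ is not literally the component count. The resolution is precisely the weighting viewpoint above: one argues at the level of the underlying combinatorial decomposition (a DAG of components assembled by arrow products) that each component is counted exactly once, so assigning weight $t$ per component and tracking it through the formal identity produces the factor $t^{\#\mathrm{SCC}}$; the uniform substitution into every occurrence of $A(z)$ is simply the generating-function incarnation of this per-component weighting.
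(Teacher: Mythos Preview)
Your proposal is correct and follows essentially the same approach as the paper: apply \cref{proposition: digraphs with given scc} to the $t$-weighted family of all strongly connected digraphs with Exponential GF $t\cdot\SCC(z)$, observe that a digraph with $k$ components then acquires weight $t^k$, invoke~\eqref{eq:bivariate:SSD=exp(SCD)} for the second equality, and set $s=1$ for the particular case. Your discussion of the ``obstacle'' is more explicit than the paper's treatment, which simply asserts the weighting interpretation without further comment.
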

\begin{proof}
  Let us apply \cref{proposition: digraphs with given scc} to the family \( \mathcal A = \mathcal S_t \)
  consisting of all strongly connected digraphs taken with a weight \( t \) each.
  The Exponential GF of this family is \( t \cdot \SCC(z) \),
  while \( \widehat{D}_{\mathcal S_t}(z; s) \) is the Graphic GF of weighted digraphs, 
  where a digraph with \( k \) strongly connected components has the weight~\( t^k \) 
  and \( s \) marks the number of source-like components.
  Together with \eqref{eq:bivariate:SSD=exp(SCD)}, this gives us~\eqref{eq: GGF of digraphs marking components and source-like components},
  and putting \( s = 1 \), we obtain~\eqref{eq: GGF of digraphs marking components}.
\end{proof}

\begin{proposition}\label{proposition: complete multivariate Graphic GF for digraphs}
  If \( u \), \( v \) and \( y \) mark, respectively, the numbers of purely source-like, purely sink-like and isolated components,
  while \( t \) marks the total number of strongly connected components,
  then the multivariate Exponential GF \( \D(z; u, v, y, t) \) of digraphs is given by
  \begin{equation}\label{eq: complete multivariate Graphic GF for digraphs}
    \D(z; u, v, y, t)
    =
    e^{(y-u-v+1) t \cdot \SCC(z)} \cdot
    \Robin{2}^{-1}
    \left(
      \dfrac{
        \Robin{2} \big(e^{(u-1)t \cdot \SCC(z)}\big)
        \cdot
        \Robin{2} \big(e^{(v-1)t \cdot \SCC(z)}\big)
      }
      {\Robin{2} \big(e^{-t \cdot \SCC(z)}\big)}
    \right)
    \, .
  \end{equation}
\end{proposition}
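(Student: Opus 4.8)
The plan is to prove the identity in two stages: first establish a ``double extraction'' formula that simultaneously marks source-like and sink-like components, and then correct the weight carried by the isolated components. Throughout I take \( \mathcal A = \mathcal S_t \) to be the family of strongly connected digraphs weighted by \( t \), as in the proof of \cref{corollary: GGF of digraphs marking components}, so that its Exponential GF is \( A(z) := t\cdot\SCC(z) \); I abbreviate \( N_s := \Robin{2}\big(e^{(u-1)A}\big) \), \( N_t := \Robin{2}\big(e^{(v-1)A}\big) \) and \( \Theta := \Robin{2}\big(e^{-A}\big) \), so that the asserted formula reads \( \D(z;u,v,y,t) = e^{(y-u-v+1)\,t\cdot\SCC(z)}\cdot\Robin{2}^{-1}\!\big(N_sN_t/\Theta\big) \). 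The heart of the argument is the claim that \( \Robin{2}^{-1}(N_sN_t/\Theta) \) is the Exponential GF of digraphs in which every strongly connected component carries the weight
\[
  t\cdot\Big(1 + (u-1)\,\mathbf 1_{\text{source-like}} + (v-1)\,\mathbf 1_{\text{sink-like}}\Big),
\]
so that a purely source-like component is weighted \( tu \), a purely sink-like one \( tv \), a middle component \( t \), and --- crucially --- an isolated component, being both source-like and sink-like, is weighted \( t(u+v-1) \).

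To prove this claim I would generalise the arrow-product inclusion-exclusion behind \cref{proposition: digraphs with given scc}. Working with Graphic GFs, where the arrow product is ordinary multiplication, \cref{proposition: digraphs with given scc} already encodes the extraction of the source-like layer as the factor \( N_s \) sitting on top of the base series \( \widehat{\D}_{\mathcal A} = 1/\Theta \), the increment \( (u-1) \) recording the signed contribution of each source-like component. By the transpose symmetry \( D \mapsto D^{\mathrm{op}} \), which reverses every edge and exchanges source-like with sink-like components while fixing the counting sequence, the same mechanism extracts the sink-like layer as a factor \( N_t \). The content of the claim is that these two extractions may be carried out at once, yielding \( N_s\cdot(1/\Theta)\cdot N_t = N_sN_t/\Theta \), and that the two increments then combine \emph{additively} on any component that is simultaneously source- and sink-like. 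The delicate point --- which I would settle by the same Möbius/telescoping cancellation over subsets of source-like (resp.\ sink-like) components as in the one-sided case --- is precisely the treatment of isolated components: such a component is eligible for extraction both ``from the left'' via \( N_s \) and ``from the right'' via \( N_t \), and the inclusion-exclusion must leave it with the additive weight \( (u-1)+(v-1)+1 = u+v-1 \) rather than the multiplicative weight \( uv \).

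Granting the claim, the Proposition follows by a routine correction of the isolated components. They form a set of strongly connected components with no edges to the rest of the digraph, so in the Exponential GF they factor out as the exponential of their own weighted count. In \( \Robin{2}^{-1}(N_sN_t/\Theta) \) this isolated factor is \( e^{t(u+v-1)\SCC(z)} \), whereas the target \( \D(z;u,v,y,t) \) demands the factor \( e^{ty\,\SCC(z)} \) with \( y \) marking isolated components, the non-isolated part (purely source-like \( tu \), purely sink-like \( tv \), middle \( t \)) being identical in both. Multiplying by the ratio \( e^{ty\SCC}/e^{t(u+v-1)\SCC} = e^{(y-u-v+1)\,t\cdot\SCC(z)} \) therefore replaces the weight \( u+v-1 \) by \( y \) on every isolated component and leaves all other components untouched, which is exactly the prefactor in the statement.

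I expect the main obstacle to be establishing the double-extraction claim of the first step, and within it the correct bookkeeping for the isolated components; everything afterwards is formal. A safe way to pin down the combinatorics --- and to guard against a sign or weight slip in the \( u+v-1 \) versus \( uv \) issue --- is to specialise to \( \SCC(z)=z \), where the formula counts acyclic digraphs by sources and sinks, and to check the first coefficients by hand: the single isolated vertex already exhibits the additive weight \( u+v-1 \), and the two-vertex case cleanly separates the contributions of the edgeless digraph and of the two one-edge digraphs. As further consistency checks one may set \( v=1 \) to recover the source-only \cref{corollary: GGF of digraphs marking components}, set \( u=v=1 \) to recover \( \widehat{\D}_{\mathcal A}=1/\Theta \), and set \( u=v=y=1 \) to recover the plain Graphic GF of digraphs.
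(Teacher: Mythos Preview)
Your proposal is correct and follows essentially the same route as the paper. The paper makes your ``double extraction'' step concrete by exhibiting the decomposition directly: a digraph together with a chosen disjoint triple of marked source-like, sink-like, and isolated components is the labeled product of the isolated block with the arrow product (source-like block) $\to$ (arbitrary digraph) $\to$ (sink-like block), which yields exactly your identity $\Robin{2}^{-1}(N_sN_t/\Theta)=\D(z;u,v,u+v-1,t)$ and confirms the additive weight $u+v-1$ on isolated components; the $y$-correction is then the same substitution you perform.
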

\begin{proof}
  Consider the family of all digraphs with a distinguished subset of purely source-like, purely sink-like and isolated components.
  Let each purely source-like component be marked with \( \hat{u} \),
  each purely sink-like component be marked with \( \hat{v} \)
  and each isolated component be marked with either \( \hat{u} \), \( \hat{v} \) or \( \hat{y} \).
  If, additionally, \( t \) marks the total number of connected components,
  then the Graphic GF of such digraphs is
  \( \widehat{\D}(z; 1+\hat{u}, 1+\hat{v}, 1+\hat{u}+\hat{v}+\hat{y}, t) \).

  The constructed family can be decomposed into the labeled product of the class of isolated components marked by \( \hat{y} \) and another digraph family.
  The latter, in its turn, is the arrow product of the following three digraph families:
  the class of source-like components marked by \( \hat{u} \) (some of which may be isolated),
  the class of arbitrary digraphs,
  and the class of sink-like components marked by \( \hat{v} \) (again, some of them may be isolated).
  For example, \cref{fig:sink:source:decomposition} shows the following components of a digraph from the family, from left to right:
  the one marked with \( \hat{u} \) (violet),
  the non-distinguished one (blue),
  the one marked with \( \hat{v} \) (green),
  and the one marked with \( \hat{y} \) (orange).
  Note that each component is marked by \( t \) as well.

\begin{figure}[hbt!]
    \begin{center}
        \begin{tikzpicture}[>=stealth',thick, scale = 0.9]
\draw
%
%
node[arnVioletGrande](a) at (0,0) {$\circlearrowright$}
node[arnVioletGrande](b) at (0,-1) {$\circlearrowright$}
%
%
node[arnBleuGrande](c) at (2,0) {$\circlearrowright$}
node[arnBleuGrande](d) at (2,-1) {$\circlearrowright$}
node[arnBleuGrande](e) at (3.5,0) {$\circlearrowright$}
node[arnBleuGrande](f) at (3.5,-1) {$\circlearrowright$}
%
%
node[arnVertGrande](g) at (5.5,-0.5) {$\circlearrowright$}
%
%
node[arnOrangeGrande](i) at (7.5,0) {$\circlearrowright$}
node[arnOrangeGrande](h) at (7.5,-1) {$\circlearrowright$}
%
;

\node[rectangle,dashed,draw, fit=(a)(b),
      rounded corners = 3mm,inner sep = 7pt, bviolet, very thick] {};
\node[rectangle,dashed,draw, fit=(c)(d)(e)(f),
      rounded corners = 3mm,inner sep = 7pt, bblue, very thick] {};
\node[rectangle,dashed,draw, fit=(g),
      rounded corners = 3mm,inner sep = 9pt, bgreen, very thick] {};
\node[rectangle,dashed,draw, fit=(h)(i),
      rounded corners = 3mm,inner sep = 7pt, borange, very thick] {};

\path (a) edge [blackred,thick, bend right=-5,
    decoration={markings,mark=at position 0.7 with
    {\arrow[ultra thick,blackred, rotate=0]{>}}}, postaction={decorate}
    ] node {} (c);

\path (a) edge [blackred,thick, bend right=-50,
    decoration={markings,mark=at position 0.8 with
    {\arrow[ultra thick,blackred, rotate=0.1]{>}}}, postaction={decorate}
    ] node {} (g);

\path (c) edge [blackred,thick, bend right=-3,
    decoration={markings,mark=at position 0.99 with
    {\arrow[ultra thick,blackred, rotate=0]{>}}}, postaction={decorate}
    ] node {} (e);

\path (c) edge [blackred,thick, bend right=-3,
    decoration={markings,mark=at position 0.99 with
    {\arrow[ultra thick,blackred, rotate=0]{>}}}, postaction={decorate}
    ] node {} (f);

\path (d) edge [blackred,thick, bend right=3,
    decoration={markings,mark=at position 0.99 with
    {\arrow[ultra thick,blackred, rotate=0]{>}}}, postaction={decorate}
    ] node {} (f);

\path (f) edge [blackred,thick, bend right=3,
    decoration={markings,mark=at position 0.7 with
    {\arrow[ultra thick,blackred, rotate=0]{>}}}, postaction={decorate}
    ] node {} (g);
    
\draw (0,1) node {$\hat{u}$};
\draw (5.5,1) node {$\hat{v}$};
\draw (7.5,1) node {$\hat{y}$};
\end{tikzpicture}
    \end{center}
    \caption{\label{fig:sink:source:decomposition}Decomposition with marked source-like,
    sink-like and isolated components.}
\end{figure}
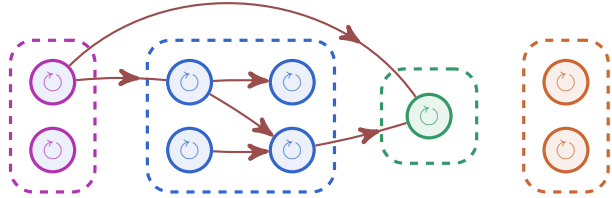

  The above decomposition can be turned into a functional equation with several intermediate conversions between Exponential and Graphic GFs:
  \[
    \D(z; 1+\hat{u}, 1+\hat{v}, 1+\hat{u}+\hat{v}+\hat{y}, t)
    =
    e^{\hat{y}t \cdot \SCC(z)} \cdot
    \Robin{2}^{-1}
    \left(
      \Robin{2} \big(e^{\hat{u}t \cdot \SCC(z)}\big)
      \cdot
      \widehat \D(z; t)
      \cdot
      \Robin{2} \big(e^{\hat{v}t \cdot \SCC(z)}\big)
    \right)
    \, ,
  \]
  where
  \( \widehat \D(z;t) = \widehat \D(z; 1, 1, 1, t) \),
  is the Graphic GF for digraphs with marking variable \( t \) for the total number of strongly connected components.
  The above equation can be solved by putting
  \( \hat{u} = u-1 \), \( \hat{v} = v-1 \), \( \hat{y} = y - u - v + 1 \)
  and substituting \( \widehat \D(z;t) \) from~\cref{corollary: GGF of digraphs marking components},
  which completes the proof.
\end{proof}

\begin{remark}
  Following de Panafieu and Dovgal~\cite{de2019symbolic}, relations \eqref{eq: GGF of digraphs marking components and source-like components}-\eqref{eq: complete multivariate Graphic GF for digraphs} can be rewritten with the help of the exponential Hadamard product in the following way:
  \[
    \widehat{\D}(z; s, t) =
    \dfrac
      {e^{(s-1)t \cdot \SCC(z)} \odot_z \Set(z)}
      {e^{-t \cdot \SCC(z)} \odot_z \Set(z)}
    \, , \qquad\qquad
    \widehat{\D}(z; t) =
    \dfrac
      {1}
      {e^{-t \cdot \SCC(z)} \odot_z \Set(z)}
  \]
  and
  \begin{multline*}
    \D(z; u, v, y, t) =\\
    e^{(y-u-v+1)t \cdot \SCC(z)}
      \left(
        \left[
          \dfrac{
            \big(e^{(u-1)t \cdot \SCC(z)} \odot_z \Set(z)\big)
            \big(e^{(v-1)t \cdot \SCC(z)} \odot_z \Set(z)\big)
          }{
            e^{-t \cdot \SCC(z)} \odot_z \Set(z)
          }
        \right]
        \odot_z \G(z)
      \right)
    \, .
  \end{multline*}
\end{remark}

\subsubsection{Enumeration of acyclic, strongly connected and semi-strong digraphs}

  Here, we recall exact enumeration results for directed acyclic graphs, strongly connected digraphs and semi-strong digraphs.
  They appeared for the first time in the paper of Robinson~\cite{robinson1973counting},
  and later were rewritten in terms of the exponential Hadamard product by de Panafieu and Dovgal~\cite{de2019symbolic}.

\begin{proposition}[{\cite[Corollary 1]{robinson1973counting}}]
\label{proposition: graphic gf of dag}
  The Graphic GF \( \widehat{\DAG}(z) \) of acyclic digraphs is given by
  \[
    \widehat{\DAG}(z)
    =
    \dfrac{1}{\Robin{2} (e^{-z})}
    \, .
  \]
\end{proposition}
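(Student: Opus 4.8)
The plan is to prove the equivalent multiplicative identity
\[
  \widehat{\DAG}(z) \cdot \Robin{2}(e^{-z}) = 1
\]
and then divide, which is legitimate since \( \Robin{2}(e^{-z}) \) has constant term \( 1 \) and is therefore invertible as a formal power series. The right-hand factor unwinds to
\(
  \Robin{2}(e^{-z}) = \sum_{k\geq0} (-1)^k \frac{z^k}{2^{\binom{k}{2}} k!},
\)
so it is the Graphic GF of \emph{edgeless} digraphs carrying the sign \( (-1)^k \) on \( k \) vertices. By the arrow-product rule stated above, the product \( \widehat{\DAG}(z)\cdot\Robin{2}(e^{-z}) \) is then the Graphic GF of the arrow product of acyclic digraphs with these signed edgeless digraphs.

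First I would unwind this arrow product combinatorially. A generic term consists of a DAG \( H \) placed on a vertex subset \( A \), an edgeless digraph on the complementary set \( B \) weighted by \( (-1)^{|B|} \), together with an arbitrary set of arrows directed from \( A \) to \( B \). The key structural observation is that the assembled digraph \( G \) on \( A\cup B \) is again acyclic: any directed cycle would have to leave \( B \), but the vertices of \( B \) have no outgoing edges at all (they are internally edgeless and receive only \( A\to B \) arrows), so every cycle would live inside the acyclic \( H \). Moreover, every vertex of \( B \) is a \emph{sink} of \( G \). Conversely, for any DAG \( G \) and any subset \( B \) of its sinks, deleting \( B \) leaves a DAG on \( A = V\setminus B \) and all arrows incident to \( B \) point from \( A \) to \( B \); hence the assembly is a bijection onto pairs \( (G,B) \) where \( G \) is a DAG and \( B\subseteq\mathrm{Sinks}(G) \).

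With this bijection in hand, the graphic coefficient of the product equals
\(
  \sum_{G}\sum_{B\subseteq\mathrm{Sinks}(G)}(-1)^{|B|},
\)
where \( G \) ranges over DAGs on \( [n] \). The inner sum collapses to \( (1-1)^{|\mathrm{Sinks}(G)|} \), which vanishes unless \( G \) has no sink. Since every nonempty acyclic digraph has at least one sink (the endpoint of a maximal directed path), only the empty digraph survives, and it does so exactly when \( n=0 \). Thus the graphic coefficient is \( \mathbf 1_{n=0} \), the product equals the constant series \( 1 \), and the claimed formula follows.

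I expect the main obstacle to be making the bijection airtight rather than the final cancellation: one must check that the arrow-product weights \( \binom{n}{k}\,2^{k(n-k)} \) account for precisely the choice of \( B \), the DAG structure on \( A \), and the \( A\to B \) arrows, with neither over- nor under-counting, and that every assembled configuration is acyclic with \( B \) sitting exactly inside the sink set. Once the correspondence is verified, the sign cancellation \( (1-1)^{|\mathrm{Sinks}(G)|} \) is immediate, so the entire content of the proposition reduces to the single elementary fact that a nonempty acyclic digraph always possesses a sink. (One could equally run the argument with sources in place of sinks, reading the arrow product in the opposite order; the two versions are interchangeable because the Graphic GF product is commutative.)
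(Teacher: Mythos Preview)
Your proof is correct, but it takes a genuinely different route from the paper's. The paper proves this proposition in one line by specialising \cref{proposition: digraphs with given scc}: taking \( \mathcal A \) to be the family consisting of the single one-vertex digraph (so \( A(z)=z \)) and setting \( s=1 \) in the formula
\(
  \widehat{\D}_{\mathcal A}(z;s)=\Robin{2}\big(e^{(s-1)A(z)}\big)\big/\Robin{2}\big(e^{-A(z)}\big)
\)
immediately yields \( \widehat{\DAG}(z)=1/\Robin{2}(e^{-z}) \). Your argument instead gives a self-contained combinatorial proof: you read the product \( \widehat{\DAG}(z)\cdot\Robin{2}(e^{-z}) \) as counting pairs \( (G,B) \) of a DAG with a distinguished subset of sinks, signed by \( (-1)^{|B|} \), and collapse the inner sum to \( 0^{|\mathrm{Sinks}(G)|} \). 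This is essentially the classical Robinson-style inclusion--exclusion and has the virtue of being elementary and not relying on the heavier machinery of \cref{proposition: digraphs with given scc}; the paper's approach, on the other hand, exhibits the DAG formula as the simplest instance of a uniform template that also delivers the later results on strongly connected components. Your remark that the argument works equally well with sources is exactly right and mirrors the source/sink symmetry underlying Gessel's technique referenced in the paper.
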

\begin{proof}
  Apply Proposition~\ref{proposition: digraphs with given scc} to the family \( \mathcal A \) consisting of a single vertex and put \( s = 1 \).
\end{proof}

\begin{proposition}[{\cite[Corollary 3.5]{de2019symbolic}}]
\label{proposition: exponential gf of scc}
  The Exponential GF \( \SCC(z) \) of strongly connected digraphs is given by
  \[
    \SCC(z) = - \log \left(
      \G(z) \odot \dfrac{1}{\G(z)}
    \right)
    \, .
  \]
\end{proposition}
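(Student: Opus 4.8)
The plan is to reduce the claim to an identity between formal power series that follows from the enumeration of \emph{all} digraphs by their strongly connected components. Since $\E(z) = e^{\SCC(z)}$ by~\eqref{eq:SSD=exp(SCD)}, taking $-\log$ shows that the asserted formula is equivalent to
\[
  \G(z) \odot \dfrac{1}{\G(z)} = e^{-\SCC(z)}\,,
\]
where $1/\G(z)$ denotes the reciprocal in the ring of formal power series (it exists because $\G(0)=1$). So I would aim to prove this last equality and then apply the logarithm.

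First, I would specialize \cref{corollary: GGF of digraphs marking components} at $t = 1$. Equation~\eqref{eq: GGF of digraphs marking components} then reads $\widehat\D(z) = 1/\Robin{2}\big(e^{-\SCC(z)}\big)$, with the Graphic GF of all digraphs on the left. Next I would recall the explicit evaluation $\widehat\D(z) = \sum_{n} 2^{\binom n2} z^n/n! = \G(z)$. Combining these two facts gives $\Robin{2}\big(e^{-\SCC(z)}\big) = 1/\G(z)$.

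The remaining step is to transport this back through the operator $\Robin{2}$. By the conversion formulae~\eqref{eq:GF conversion}, $\Robin{2}$ acts as the exponential Hadamard product with $\Set(z)$, and its inverse acts as the Hadamard product with $\G(z)$; coefficientwise one has $(X \odot \Set) \odot \G = X$ for every series $X$, since the $n$-th exponential coefficient is first multiplied by $2^{-\binom n2}$ and then by $2^{\binom n2}$. Applying $\Robin{2}^{-1}$ to both sides of $\Robin{2}\big(e^{-\SCC(z)}\big) = 1/\G(z)$ therefore yields $e^{-\SCC(z)} = \G(z) \odot \frac{1}{\G(z)}$, and taking $-\log$ concludes the argument.

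The only genuinely delicate points are bookkeeping ones: checking that the reciprocal $1/\G(z)$ and the series $-\log(\cdot)$ are well defined as formal power series (both hold, as the relevant constant terms equal $1$), and confirming the identity $(X \odot \Set)\odot\G = X$ that underlies $\Robin{2}^{-1}\Robin{2}=\mathrm{id}$. I expect no real obstacle here, since the proof is essentially a one-line rewriting of the digraph-by-SCC enumeration in \cref{corollary: GGF of digraphs marking components}; a more self-contained alternative would derive $\widehat\D(z)=1/\Robin{2}(e^{-\SCC(z)})$ directly from the condensation of a digraph into the acyclic structure of its strongly connected components, mirroring Robinson's computation for directed acyclic graphs in \cref{proposition: graphic gf of dag}.
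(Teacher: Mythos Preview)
Your argument is correct. The paper does not actually supply a proof of this proposition; it merely cites it as \cite[Corollary~3.5]{de2019symbolic}. Your derivation is precisely the natural one within the paper's own framework: specialize \cref{corollary: GGF of digraphs marking components} at \(t=1\) to obtain \(\widehat\D(z)=1/\Robin{2}\big(e^{-\SCC(z)}\big)\), use \(\widehat\D(z)=\G(z)\), and invert \(\Robin{2}\) via~\eqref{eq:GF conversion}. This is exactly how the result is obtained in the cited source as well (it is the inversion of the digraph-by-SCC decomposition behind \cref{proposition: digraphs with given scc}), so there is nothing to add.
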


\begin{corollary}
\label{corollary: exponential gf of ssd and scc}
  The Exponential GFs \( \E(z) \) and \( \SCC(z) \) of semi-strong digraphs and strongly connected digraphs, respectively, are given by
  \begin{equation}\label{eq: semi-strong as sequences}
    \E(z) = \dfrac{1}{1 - \I(z) \odot \G(z)}
    \qquad \text{and} \qquad
    \SCC(z) = \log \dfrac{1}{1 - \I(z) \odot \G(z)}
    \, .
  \end{equation}
\end{corollary}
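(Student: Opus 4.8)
The plan is to reduce everything to \cref{proposition: exponential gf of scc}, which already expresses $\SCC(z)$ as $-\log\bigl(\G(z)\odot\frac{1}{\G(z)}\bigr)$, and then to rewrite the Hadamard product inside the logarithm in terms of irreducible tournaments. The only external ingredients I need are the tournament identity \eqref{eq:T=1/(1-IT)} together with \eqref{eq:G=T}, and the exponential formula \eqref{eq:SSD=exp(SCD)} linking $\E(z)$ and $\SCC(z)$.

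First I would record the algebraic identity $\frac{1}{\G(z)} = 1 - \I(z)$. Indeed, \eqref{eq:G=T} gives $\G(z) = \T(z)$, and \eqref{eq:T=1/(1-IT)} gives $\T(z) = \frac{1}{1-\I(z)}$, so taking reciprocals yields $1/\G(z) = 1 - \I(z)$ as formal power series.

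Next comes the key step: evaluating $\G(z)\odot\frac{1}{\G(z)}$. Substituting the identity above and using the bilinearity of the exponential Hadamard product, I would write
\[
\G(z)\odot\frac{1}{\G(z)} = \G(z)\odot\bigl(1 - \I(z)\bigr) = \G(z)\odot 1 - \G(z)\odot\I(z).
\]
The product against the constant series picks off the constant coefficient of $\G$: since $\G(z)\odot 1 = 2^{\binom{0}{2}} = 1$ by \eqref{eq:G=T}, and since the Hadamard product is symmetric, this simplifies to
\[
\G(z)\odot\frac{1}{\G(z)} = 1 - \I(z)\odot\G(z).
\]

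Finally, feeding this into \cref{proposition: exponential gf of scc} gives
\[
\SCC(z) = -\log\bigl(1 - \I(z)\odot\G(z)\bigr) = \log\frac{1}{1 - \I(z)\odot\G(z)},
\]
which is the second claimed formula; exponentiating and invoking the exponential formula \eqref{eq:SSD=exp(SCD)} then yields $\E(z) = e^{\SCC(z)} = \frac{1}{1-\I(z)\odot\G(z)}$, the first one. I do not expect a genuine obstacle here, since the computation is a short chain of substitutions. The only point requiring a little care is the evaluation $\G(z)\odot 1 = 1$, that is, recognising that Hadamard-multiplying by the constant series extracts the constant term of $\G$, which happens to equal $1$; everything else is bookkeeping.
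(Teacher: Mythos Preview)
Your proof is correct and follows essentially the same approach as the paper: apply \cref{proposition: exponential gf of scc}, rewrite $1/\G(z)$ as $1-\I(z)$ via \eqref{eq:T=1/(1-IT)} and \eqref{eq:G=T}, and then invoke \eqref{eq:SSD=exp(SCD)}. You have simply spelled out the Hadamard-product computation that the paper leaves implicit.
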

\begin{proof}
  It is sufficient to apply Proposition~\ref{proposition: exponential gf of scc} and formulae~(\ref{eq:SSD=exp(SCD)}-\ref{eq:G=T}).
\end{proof}

\begin{notation}
\label{notation: dag, scd, ssd}
  We designate by \( \dgg_n \), \( \scc_n \) and \( \ssd_n \), respectively,
  the numbers of acyclic, strongly connected and semi-strong digraphs on \( n \) vertices, so that
  \[
    \widehat{\DAG}(z)
     =
    \sum\limits_{n=0}^\infty \dgg_n
     \dfrac{z^n}{2^{\binom{n}{2}} n!}
    \, ,\qquad
    \SCC(z)
     =
    \sum\limits_{n=0}^\infty \scc_n
     \dfrac{z^n}{n!}
    \, ,\qquad
    \E(z)
     =
    \sum\limits_{n=0}^\infty \ssd_n
     \dfrac{z^n}{n!}
    \, .
  \]
\end{notation}

\begin{remark}
\label{remark: meaning of G^(-1)}
  While \cref{proposition: exponential gf of scc} also holds in the case of marked edges,
  a proper generalization of \cref{corollary: exponential gf of ssd and scc} is more subtle.
  Behind the proof of \cref{corollary: exponential gf of ssd and scc}, we can see an additional combinatorial meaning of the function~\( \big(\G(z)\big)^{-1} \), namely,
  \begin{equation}\label{eq:1/G=1-IT}
    \dfrac{1}{\G(z)} = 1 - \I(z)
  \end{equation}
  (the proof of~\eqref{eq:1/G=1-IT} follows immediately from relations~\eqref{eq:T=1/(1-IT)} and~\eqref{eq:G=T}).
  While formula~\eqref{eq:G=exp(CG)} is directly generalized to
  \[
    \G(z,w) = e^{\CG(z,w)}
  \]
  by marking edges, the way to generalize relation~\eqref{eq:T=1/(1-IT)} is hidden.
  The natural analogue for tournaments would be marking \emph{descents},
  that is, directed edges \( (s,t) \) whose labels satisfy \( s  < t \).
  However, as it was shown in~\cite{archer2020counting}, this approach fails, since relation~\eqref{eq:T=1/(1-IT)} remains true only
  if we replace Exponential GFs \( \T(z) \) and \( \I(z) \) with the so-called \emph{Eulerian GFs}.
  
  Nevertheless, it turns out that a required generalization is possible~\cite{monteilANTISEQ}.
  The key idea is to pass to another tournament analogue of the Erd\H{o}s-R\'enyi model \( \mathbb G(n,p) \) for random graphs discussed in \cref{remark:introducing Erdos-Renyi}.
  In order to do so, for \( p\in[1/2,1] \) and \( q = 1-p \), we introduce a \emph{tournament with ties} of size \( n \) as a directed graph with the set of vertices \( [n] \).
  Each pair of vertices \( x \) and \( y \) in this graph, independently of other pairs,
  is joined by the edge \( (x,y) \), the edge \( (y,x) \), and by both of them
  with probability \( 1-p \), \( 1-p \), and \( 2p-1 \), respectively.
  Putting the weight of such a tournament to be
  \[
    (w-1)^m = \left(\dfrac{2p-1}{1-p}\right)^m
    \, ,
  \]
  where \( m \) is the total number of \emph{ties} in the tournament (that is, pairs of vertices joined by two directed edges) and \( w = \frac{p}{1-p} \),
  we have the total weight of all tournaments with ties of size \( n \) equal to \( (1-p)^{-\binom{n}{2}} \),
  and
  \[
    \T(z;w) = \dfrac{1}{1 - \I(z;w)}
    \, .
  \]
  However, it is important to emphasize that it is not the variable \( w \) that counts ties now, but \( u = w-1 \).
\end{remark}

\subsection{2-SAT formulae}
\label{section:2sat}

  In this section, we shortly describe the symbolic method for enumeration of 2-SAT formulae and implication digraphs recently developed in~\cite{dovgal2021exact}.
  We start by recalling the model and related notions, such as implication digraphs and contradictory components.
  Next, we discuss the Implication GFs employed for enumeration purposes and their connections to other types of generating functions.
  Finally, we give exact expressions for generating functions of satisfiable formulae, contradictory strongly connected implication digraphs and implication digraphs with marking variables for contradictory and ordinary strongly connected components.

\subsubsection{Definitions and basic properties}

  A $k$-\emph{conjunctive normal form} ($k$-CNF) formula with \( n \) Boolean variables
  \( \{ x_1, \ldots, x_n \} \)
  and \( m \) clauses is a~conjunction of the form
  \[
	\bigwedge\limits_{i=1}^m (c_{i1} \lor \ldots \lor c_{ik}),
  \]
  where each of the \emph{literals} \( c_{ij} \) belongs to the set
  \( \{ x_1, \ldots, x_n, \n x_1, \ldots, \n x_n \} \).
  These formulae are also called \( k \)-SAT formulae.
  A formula is \emph{satisfiable} if it takes a \textsf{True} value under at least one variable assignment.

  In the case \( k = 2 \), a 2-CNF can be mapped to a so-called \emph{implication digraph}.
  The vertices of this digraph are the literals \( \{ x_1, \ldots, x_n, \n x_1, \ldots, \n x_n \} \).
  Each clause \( u \lor v \) corresponds to two edges \( \n u \to v \) and \( \n v \to u \) in the implication digraph.
  The meaning of each edge is a logical implication under satisfiability.
  We assume that there is no clause of types \( (x \lor x) \) and \( (x \lor \n x) \) in our model.
  Also, we suppose that each clause in a 2-CNF can occur at most once.
  As a consequence, the corresponding implication digraph has no loops and no multiple edges.

  It is now a well-known property of the 2-SAT problem (see~\cite{aspvall1979linear}) that
  a formula is not satisfiable if and only if there is a \emph{contradictory variable},
  \ie a pair of literals \( x \) and \( \n x \) such that there exists a directed path from \( x \) to \( \n x \) and from \( \n x \) to \( x \).
  It can be shown that if there is at least one contradictory variable inside a strongly connected component of the implication digraph,
  then all the variables included into this component are contradictory.
  We say that a \emph{component} is \emph{contradictory} if it contains at least one contradictory variable, and \emph{ordinary} otherwise.

\subsubsection{Implication generating function}

  Let \( (a_n)_{n=0}^\infty \) be the counting sequence of a class \( \mathcal A \) of 2-SAT formulae,
  \ie \( a_n \) denotes the number of Boolean 2-SAT formulae from \( \mathcal A \) with \( n \) variables.
  The \emph{Implication GF} of \( \mathcal A \)
is
  \begin{equation}\label{eq:ImplicationGF}
    \ddot A(z)
     :=
    \sum_{n = 0}^\infty a_n \dfrac{z^n}{2^{n^2} n!}
    \, .
  \end{equation}
  For example, the Implication GF of all 2-SAT formulae is
  \begin{equation}\label{eq:CNF:IGF}
    \CNF(z)
     =
    \sum_{n = 0}^\infty 2^{n(n-1)} \dfrac{z^n}{2^n n!}
     =
    D(z/2)
    \, .
  \end{equation}
  Clearly, the Exponential GF and Implication GF of the same class can be expressed in terms of each other:
  \begin{equation}\label{eq:EGF via IGF}
    A(z)
     = 
    \Robin{2}^{-2} \Big(\ddot A(2z) \Big)
     = 
    \ddot A(z) \odot \D(2z)
  \end{equation}
  and
  \begin{equation}\label{eq:IGF via EGF}
    \ddot A(z)
     = 
    \Robin{2}^2 \Big( A\left(z/2\right) \Big)
     = 
    A(z) \odot \dSet(z)
    \, ,
  \end{equation}
  where, in notations of de Panafieu, Dovgal and Ravelomanana \cite{dovgal2021exact},
  \[
    \dSet(z) := \sum_{n=0}^\infty
      \dfrac{z^n}{2^{n^2} n!}
  \]
  is the Implication GF of digraphs without vertices.

  Similarly to Exponential GFs and Graphic GF, the design of Implication GFs comes from enumerative needs.
  We will not discuss this issue here.
  For more detailed information, we refer the reader to~\cite{dovgal2021exact}, in particular to Proposition 3.11.

  If we introduce a marking variable \( w \) for the total number of clauses, 
  then the bivariate Implication GF takes the following form:
  \[
    \ddot A(z, w) := \sum_{n=0}^\infty a_n(w)
      \dfrac{z^n}{(1 + w)^{n(n-1)}2^n n!}
    \, .
  \]
  In particular,
  \[
    \dSet(z,w) := \sum_{n=0}^\infty
      \dfrac{z^n}{(1 + w)^{n(n-1)}2^n n!}
    \, ,
  \]
  which is used for the bivariate analogue of~\eqref{eq:IGF via EGF}.

\subsubsection{Enumeration of 2-SAT}

\begin{proposition}[{\cite[Proposition 4.5]{dovgal2021exact}}]
\label{proposition:cnf:types}
  Let \( \mathcal S \) and \( \mathcal C \) be two families of strongly connected digraphs whose Exponential GFs are \( S(z) \) and \( C(z) \), respectively.
  Then the Implication GF \( \CNF_{\mathcal S, \mathcal C}(z) \) of implication digraphs
  whose ordinary and contradictory strongly connected components belong to families \( \mathcal S \) and \( \mathcal C \), respectively,
  is given by
  \[
    \CNF_{\mathcal S, \mathcal C}(z)
     =
    \dfrac
      {\Robin{2}^2 \big( e^{C(z/2)-S(z)/2} \big)}
      {\Robin{2} \big( e^{-S(z)} \big)}
    \, .
  \]
\end{proposition}

\begin{corollary}[{\cite[Theorem 4.6]{dovgal2021exact}}]
\label{proposition:igf:sat}
  The Implication GF \( \SAT(z) \) of satisfiable 2-CNFs is given by
  \begin{equation}\label{eq:igf:sat}
    \SAT(z) = \G(z) \cdot
      \Robin{2}^2
      \left(
        e^{-\tfrac12 \SCC(z)}
      \right)
    \, .
  \end{equation}
\end{corollary}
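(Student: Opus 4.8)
\noindent The plan is to obtain \( \SAT(z) \) as a direct specialisation of \cref{proposition:cnf:types}, followed by a single simplification of the denominator. First I would invoke the satisfiability criterion recalled above: a \( 2 \)-CNF is satisfiable if and only if its implication digraph contains no contradictory variable, equivalently no contradictory strongly connected component. Hence the satisfiable formulae are exactly those implication digraphs all of whose strongly connected components are ordinary, with no further restriction on the ordinary components. This is precisely the regime of \cref{proposition:cnf:types} in which the family of contradictory components is taken empty, while the family of ordinary components consists of all strongly connected digraphs, that is, has Exponential GF \( \SCC(z) \).

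Next I would carry out this substitution in the formula of \cref{proposition:cnf:types}. Forbidding contradictory components removes their contribution to the numerator exponent, while letting the ordinary components range over all strongly connected digraphs inserts \( \SCC(z) \) in the surviving places; concretely, the exponent \( C(z/2) - S(z)/2 \) collapses to \( -\tfrac12 \SCC(z) \) and the denominator \( \Robin{2}(e^{-S(z)}) \) becomes \( \Robin{2}(e^{-\SCC(z)}) \). This yields
\[
  \SAT(z) = \dfrac{\Robin{2}^2\big( e^{-\tfrac12 \SCC(z)} \big)}{\Robin{2}\big( e^{-\SCC(z)} \big)} \, .
\]

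Finally I would simplify the denominator. By \cref{corollary: GGF of digraphs marking components} with \( t = 1 \), the Graphic GF of all digraphs satisfies \( \widehat{\D}(z) = 1/\Robin{2}(e^{-\SCC(z)}) \), and this Graphic GF is known to equal \( \G(z) \). Substituting \( 1/\Robin{2}(e^{-\SCC(z)}) = \G(z) \) into the displayed ratio gives exactly \( \SAT(z) = \G(z) \cdot \Robin{2}^2(e^{-\tfrac12 \SCC(z)}) \), as claimed. I expect the main obstacle to lie in the bookkeeping of the first two paragraphs: one must track which strongly connected family feeds the numerator and which feeds the denominator, as well as the half-argument \( z/2 \) and the half-exponent that originate from the skew-symmetric, literal-versus-variable structure of the implication digraph, since a single misplaced factor of two there would corrupt every coefficient. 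Once the specialisation is pinned down correctly, the closing identification \( 1/\Robin{2}(e^{-\SCC(z)}) = \G(z) \) is immediate.
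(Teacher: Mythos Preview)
Your proof is correct and follows essentially the same route as the paper: specialise \cref{proposition:cnf:types} by taking the contradictory family empty and the ordinary family equal to all strongly connected digraphs, then identify the denominator with \( \G(z)^{-1} \). The only cosmetic difference is that the paper cites \cref{proposition: exponential gf of scc} for the identity \( \Robin{2}(e^{-\SCC(z)}) = \G(z)^{-1} \), whereas you reach it via \cref{corollary: GGF of digraphs marking components} at \( t=1 \) together with \( \widehat{\D}(z)=\G(z) \); both are one-line justifications of the same fact.
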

\begin{proof}
  Apply \cref{proposition:cnf:types} with \( \mathcal C = \emptyset \) and \( \mathcal S \) consisting of all strongly connected digraphs
  and take into account that 
  \(
    \Robin{2} \big( e^{-\SCC(z)} \big) = G^{-1}(z)
  \)
  by \cref{proposition: exponential gf of scc}. 
\end{proof}

\begin{corollary}[{\cite[Theorem 4.8]{dovgal2021exact}}]
\label{proposition:egf:cscc}
  The Exponential GF \( \CSCC(z) \) of contradictory strongly connected implication digraphs is given by
  \begin{equation}\label{eq:egf:cscc}
    \CSCC(z) = \dfrac{1}{2} \SCC(2z) +
    \log \left(
      \Robin{2}^{-2}
       \Big( \D(z) \big( 1 - \I(2z) \big) \Big)
    \right)
    \, .
  \end{equation}
\end{corollary}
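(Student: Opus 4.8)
The plan is to determine $\CSCC$ from a single specialization of \cref{proposition:cnf:types}, applied to the class of \emph{all} implication digraphs. The structural input is twofold. By the basic property of $2$-SAT recalled in \cref{section:2sat}, a component is contradictory as soon as it contains one contradictory variable, so every contradictory component is strongly connected and closed under the complementation $x\mapsto\n x$; it is therefore itself a contradictory strongly connected implication digraph. The ordinary components, on the other hand, occur in complementary pairs, each pair being described by a single strongly connected digraph. Feeding into \cref{proposition:cnf:types} the family of ordinary components (Exponential GF $\SCC(z)$) and the family of contradictory components (Exponential GF $\CSCC(z)$) must thus reproduce the Implication GF of all implication digraphs, namely $\CNF(z)$ from~\eqref{eq:CNF:IGF}. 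This yields the master equation
\[
  \CNF(z)
  =
  \dfrac{\Robin{2}^2\big(e^{\CSCC(z/2)-\SCC(z)/2}\big)}{\Robin{2}\big(e^{-\SCC(z)}\big)}
  \, ,
\]
in which $\CSCC$ occurs only once and can hence be isolated.

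To solve it, I would first replace the denominator using the identity $\Robin{2}\big(e^{-\SCC(z)}\big)=\G(z)^{-1}$, recorded in the proof of \cref{proposition:igf:sat} as a consequence of \cref{proposition: exponential gf of scc} (equivalently of~\eqref{eq:1/G=1-IT}). Together with~\eqref{eq:CNF:IGF} this turns the master equation into $\Robin{2}^2\big(e^{\CSCC(z/2)-\SCC(z)/2}\big)=\CNF(z)\,\G(z)^{-1}$. Since the right-hand side has constant term $1$, I may apply the inverse operator $\Robin{2}^{-2}$ and take a logarithm, obtaining
\[
  \CSCC(z/2)
  =
  \tfrac12\SCC(z)
  +
  \log\Robin{2}^{-2}\big(\CNF(z)\,\G(z)^{-1}\big)
  \, .
\]

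It then remains to rescale $z\mapsto 2z$. Both $\Robin{2}^{-2}$ and the substitution $z\mapsto 2z$ act diagonally on the exponential coefficients (multiplying the $n$-th one by $2^{n(n-1)}$ and by $2^{n}$, respectively), so they commute, and I only need to simplify the argument after the substitution: by~\eqref{eq:CNF:IGF} one has $\CNF(2z)=\D(z)$, while $\G(2z)^{-1}=1-\I(2z)$ by~\eqref{eq:1/G=1-IT}. This produces exactly
\[
  \CSCC(z)
  =
  \tfrac12\SCC(2z)
  +
  \log\Robin{2}^{-2}\big(\D(z)\,(1-\I(2z))\big)
  \, ,
\]
as claimed. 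I expect the genuine obstacle to lie in the opening step rather than in the formal manipulations: one must be certain that the contradictory components enter \cref{proposition:cnf:types} with their own generating function $\CSCC$ (and the ordinary ones with $\SCC$), which is precisely the structural content making the master equation valid. Everything afterwards is inversion of the $\Robin{2}$ operators together with a diagonal rescaling.
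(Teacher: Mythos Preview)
Your proposal is correct and follows essentially the same route as the paper: specialize \cref{proposition:cnf:types} with $\mathcal C$ the class of contradictory strongly connected implication digraphs and $\mathcal S$ the class of all strongly connected digraphs, then invert using $\Robin{2}\bigl(e^{-\SCC(z)}\bigr)=\G(z)^{-1}$ and $\CNF(2z)=\D(z)$. The paper records this in one sentence, citing \cref{corollary: exponential gf of ssd and scc} and~\eqref{eq:CNF:IGF}; your write-up simply spells out the inversion and the commutation of $\Robin{2}^{-2}$ with the rescaling $z\mapsto 2z$.
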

\begin{proof}
  Apply \cref{proposition:cnf:types} with \( \mathcal C \) and \( \mathcal S \) consisting of all contradictory strongly connected and all strongly connected digraphs, respectively,
  and use \cref{corollary: exponential gf of ssd and scc} and relation~\eqref{eq:CNF:IGF}.
\end{proof}

\begin{corollary}
\label{corollary:cnf:types}
  If \( s \) marks the number of contradictory strongly connected components
  and \( t \) marks the number of pairs of ordinary strongly connected components
  in the corresponding implication digraph,
  then the multivariate Implication GF \( \CNF(z; s, t) \) of 2-CNFs is given by
  \begin{equation}\label{eq:cnf:types}
    \CNF(z; s, t)
     =
    \dfrac
      {\Robin{2}^2 \big( e^{s \cdot \CSCC(z/2) - t/2 \cdot \SCC(z)} \big)}
      {\Robin{2} \big( e^{-t \cdot \SCC(z)} \big)}
    \, .
  \end{equation}
\end{corollary}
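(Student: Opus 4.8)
The plan is to derive \eqref{eq:cnf:types} as a weighted specialization of \cref{proposition:cnf:types}, in exactly the way \cref{corollary: GGF of digraphs marking components} was obtained from \cref{proposition: digraphs with given scc}: one substitutes into the general formula two component families whose Exponential GFs already carry the marking variables. Since \cref{proposition:cnf:types} is stated for arbitrary families $\mathcal C$ and $\mathcal S$ prescribed only through their Exponential GFs $C(z)$ and $S(z)$, allowing these families to be weighted with $s$ and $t$ treated as formal parameters is legitimate, precisely as weights were admitted in the proof of \cref{corollary: GGF of digraphs marking components}.

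First I would fix which slot of \cref{proposition:cnf:types} governs which kind of component, reading this off from the two special cases already established. In \cref{proposition:igf:sat} one forces satisfiability, i.e.\ forbids contradictory components, by emptying $\mathcal C$; and in \cref{proposition:egf:cscc} one recovers $\CSCC(z)$ by letting $\mathcal C$ range over all contradictory strongly connected implication digraphs. Hence $C(z)$ is the Exponential GF of the family allowed for the contradictory components and $S(z)$ that of the family allowed for the ordinary ones. Accordingly I set $\mathcal C$ to consist of all contradictory strongly connected implication digraphs taken with weight $s$ each, so that $C(z) = s\cdot\CSCC(z)$, and $\mathcal S$ to consist of all strongly connected digraphs taken with weight $t$ each, so that $S(z) = t\cdot\SCC(z)$. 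Substituting $C(z/2) = s\,\CSCC(z/2)$ and $S(z) = t\,\SCC(z)$ into \cref{proposition:cnf:types} yields
\[
  \CNF(z;s,t)
   =
  \dfrac{\Robin{2}^2\big(e^{\,s\,\CSCC(z/2)\,-\,\frac{t}{2}\SCC(z)}\big)}
        {\Robin{2}\big(e^{-t\,\SCC(z)}\big)},
\]
which is precisely \eqref{eq:cnf:types}.

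The algebra above is immediate; the step demanding genuine care, and the main obstacle, is the combinatorial bookkeeping that legitimizes the marking semantics. I must confirm that the weights are multiplicative over the strongly connected components, so that expanding the exponentials in powers of $s$ and $t$ genuinely counts components; in the present exponential-formula setting this is the usual fact that weighting each component by $w$ turns the component Exponential GF into $e^{w\cdot(\cdots)}$, and it is inherited by \cref{proposition:cnf:types} through the operations $\Robin{2}$, products, and $\exp$ out of which its formula is built. More delicate is the claim that $t$ records \emph{pairs} of ordinary components rather than individual ones. Here I would invoke the skew-symmetry of an implication digraph, the anti-automorphism $\ell\mapsto\n\ell$ reversing every edge: it fixes precisely the contradictory components and sends every ordinary strongly connected component to a distinct mirror component. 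Thus an ordinary pair is specified by a single representative drawn from $\mathcal S$, its mirror being forced, so the weight $t$ on members of $\mathcal S$ attaches to pairs and no square root of $t$ intervenes; a contradictory component is its own mirror and is therefore marked individually by $s$. The final task is to check that this pairing is exactly the one built into \cref{proposition:cnf:types}, consistent with the half- and full-weight occurrences of $S(z)$ in its numerator and denominator, so that the specialization above carries the announced interpretation.
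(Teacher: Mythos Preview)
Your proposal is correct and follows essentially the same approach as the paper: both specialize \cref{proposition:cnf:types} by setting \(C(z)=s\cdot\CSCC(z)\) and \(S(z)=t\cdot\SCC(z)\). The paper's proof is a one-line substitution without the combinatorial bookkeeping you supply about skew-symmetry and the pairing of ordinary components, so your version is, if anything, more thorough on the semantic justification.
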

\begin{proof}
  The statement is obtained by equipping the Exponential GFs of the strongly connected components from \cref{proposition:cnf:types} with marking variables for their weights.
  In other words, we put \( C(z) = s \cdot \CSCC(z) \) and \( S(z) = t \cdot \SCC(z) \).
\end{proof}

\begin{remark}
  Initially, in~\cite{dovgal2021exact}, relations \eqref{eq:igf:sat}-\eqref{eq:cnf:types} were stated in terms of the exponential Hadamard product, respectively, as
  \[
    \SAT(z) = \G(z) \cdot
      \left(
        e^{-\tfrac12 \SCC(z)} \odot \dSet(2z)
      \right)
    \, ,
  \]
  \[
    \CSCC(z) = \dfrac{1}{2} \SCC(2z) +
    \log \left(
      \D(z) \odot \dfrac{\D(z)}{\G(2z)}
    \right)
  \]  
  and
  \[
   \CNF(z; s, t)
    =
    \dfrac
        {e^{s \cdot \CSCC(z) - t/2 \cdot \SCC(2z)} \odot_z \dSet(z)}
        {e^{-t \cdot \SCC(z)} \odot_z \Set(z)}
    \, .
  \]
\end{remark}

%
%

\section{Asymptotic transfers for graphically divergent series}\label{section:transfers}

  This section is devoted to the study of graphically divergent series and the corresponding Coefficient~GFs.
  We show that the graphically divergent series admit a ring structure, and that the asymptotic transfers are consistent with ring operations and compositions with analytic functions,
  as well as with several other operations such as differentiation and integration.
  Moreover, the concept of asymptotic transfer can be extended to the case of marking variables.

\subsection{Proofs of the main asymptotic transfer properties}
\label{sec: transfers-proofs}

  The goal of this section is to provide proofs of \cref{lemma: operation transfers} and~\cref{lemma: Bender's transfer}.
  In the case of compositions with analytic functions, a common tool for establishing asymptotic expansions is Bender's theorem~\cite{bender1975asymptotic} (see \cref{theorem: Bender's}).
  To simplify the presentation, we introduce \emph{gargantuan sequences}, which are implicitly employed in its statement as one of its hypotheses.
  We show that all counting sequences under consideration are gargantuan, and therefore, Bender's theorem is applicable.
  The concept of gargantuan sequences is also useful for the product of two series, although in this case the proof is rather straightforward.

\begin{definition}
\label{def: gargantuan sequence}
  We will call a sequence \( (a_n) \) \emph{gargantuan}, if for any positive integer $R$ the following two conditions hold, as \( n\to\infty \):
 \[
   \mbox{ (i) }\quad
     \dfrac{a_{n-1}}{a_n}\to0,\,\,
   \qquad\qquad\qquad
   \mbox{ (ii) }\quad
     \sum\limits_{k=R}^{n-R}|a_ka_{n-k}| =
     \BigO\big(a_{n-R}\big).
 \]
\end{definition}

\begin{lemma}
\label{lemma:log-convexity}
  Let \( \alpha \in \mathbb R_{> 1} \),
  \( \beta \in \mathbb Z_{>0} \) and
  \( m, \ell \in \mathbb Z_{\geq 0} \)
  be fixed numbers. Suppose that
  \[
    c_n \sim \dfrac{\alpha^{\beta \binom{n}{2}} \alpha^{-mn} n^\ell}{n!},
  \]
  as \( n \to \infty \).
  Then the sequence \( (c_n) \) is gargantuan.
\end{lemma}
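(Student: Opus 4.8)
The plan is to reduce everything to the explicit positive comparison function
\[
  f(n) := \dfrac{\alpha^{\beta\binom{n}{2}}\,\alpha^{-mn}\,n^\ell}{n!},
\]
for which $c_n \sim f(n)$. Since $f(n)>0$ for all $n\geq 1$ and $c_n/f(n)\to 1$, there are constants $0<c_-\leq c_+$ with $c_-\,f(n)\leq |c_n|\leq c_+\,f(n)$ for every $n\geq 1$. Hence in condition~(ii) it suffices to bound $\sum_{k=R}^{n-R} f(k)f(n-k)$ by $\BigO\big(f(n-R)\big)$: indeed $|c_k c_{n-k}|\leq c_+^2\, f(k)f(n-k)$, while $|c_{n-R}|\geq c_-\,f(n-R)$ for $n$ large. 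Likewise condition~(i) may be checked on $f$. For~(i) I would simply compute, using $\binom{n}{2}-\binom{n-1}{2}=n-1$,
\[
  \frac{f(n-1)}{f(n)} = \alpha^{m}\,n\,\frac{(n-1)^\ell}{n^\ell}\,\alpha^{-\beta(n-1)},
\]
and observe that since $\alpha>1$ and $\beta\geq 1$, the factor $\alpha^{-\beta(n-1)}$ decays super-exponentially and overwhelms the polynomial growth, so $f(n-1)/f(n)\to 0$.

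The heart of the matter is condition~(ii), and the mechanism is the strong convexity underlying the lemma's name. Writing $g(k):=\binom{k}{2}+\binom{n-k}{2}$, a direct computation gives $g(k)=k^2-nk+\binom{n}{2}$, so that
\[
  g(k)-g(R) = (k-R)(k+R-n) = -(k-R)\,(n-R-k)\leq 0
  \qquad (R\leq k\leq n-R).
\]
Thus $\log f$ is convex and $f(k)f(n-k)$ attains its maximum over the range at the endpoints $k\in\{R,n-R\}$. Endpoint domination \emph{alone}, however, only yields $\sum_{k=R}^{n-R}f(k)f(n-k)=\BigO\big(n\,f(R)f(n-R)\big)$, which is off by a factor of $n$; what removes this factor is the \emph{quantitative} super-exponential decay encoded in $g(k)-g(R)$.

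To exploit it, set $j:=k-R$ and estimate the exact ratio $f(k)f(n-k)\big/\big(f(R)f(n-R)\big)$. The $\alpha^{-mn}$ factors cancel; the $\alpha$-exponent contributes $\alpha^{\beta(g(k)-g(R))}=\alpha^{-\beta j(n-2R-j)}$; the factorial ratio $\tfrac{R!\,(n-R)!}{(R+j)!\,(n-R-j)!}$ is at most $(n-R)^j$; and the polynomial factor is at most $(1+j)^\ell$. Restricting by the symmetry $k\leftrightarrow n-k$ to $R\leq k\leq n/2$, one has $n-2R-j\geq n/4$ once $n\geq 4R$, whence
\[
  \frac{f(k)f(n-k)}{f(R)f(n-R)} \leq C\,(1+j)^\ell\,\big(n\,\alpha^{-\beta n/4}\big)^{j}.
\]
Setting $q_n:=n\,\alpha^{-\beta n/4}\to 0$, the tail $\sum_{j\geq 1}(1+j)^\ell q_n^{\,j}=\BigO(q_n)=o(1)$, so together with the $j=0$ term and doubling for symmetry the full sum over $k$ is $\big(1+o(1)\big)\,f(R)f(n-R)$. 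As $f(R)$ is a constant, this is $\BigO\big(f(n-R)\big)$, which establishes~(ii).

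The main obstacle I anticipate is technical rather than conceptual: making the passage from the asymptotic equivalence $c_n\sim f(n)$ to two-sided bounds valid uniformly over \emph{all} indices in the sum (including the fixed boundary indices near $k=R$ and $k=n-R$, where $c_k$ is merely a fixed constant), and verifying that the super-exponential factor $\alpha^{-\beta j(n-2R-j)}$ genuinely dominates both the factorial growth $(n-R)^j$ and the polynomial $(1+j)^\ell$ uniformly in $j$ across the summation range. Everything else is a routine, if slightly lengthy, manipulation of binomial coefficients and factorials.
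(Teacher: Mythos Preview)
Your argument is correct and essentially equivalent to the paper's in spirit, but you carry it out by a more direct, purely discrete route. The paper extends $d_n$ to a continuous function via $\Gamma(x+1)$, shows $\log d(x)$ is convex for large $x$ using the asymptotics $(\log\Gamma)''(x)\sim 1/x$, deduces that $d_kd_{n-k}$ attains its maximum at the boundary of $[N,n-N]$ for a fixed large $N>R$, and then absorbs the factor $(n-2N+1)$ coming from counting terms by exploiting the gap between $d_{n-N}$ and $d_{n-R}$. You instead compute the quadratic exponent $g(k)=\binom{k}{2}+\binom{n-k}{2}$ explicitly, get the precise decay $\alpha^{-\beta j(n-2R-j)}$, and sum a geometrically dominated series, which directly gives $\sum_k f(k)f(n-k)=(1+o(1))\cdot 2f(R)f(n-R)$ without any extra $n$ factor to dispose of. Your version avoids the digression through $\Gamma$-asymptotics and yields a slightly sharper intermediate estimate; the paper's version is a bit more conceptual (pure convexity) and shorter to write. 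Two minor remarks: your claim that $c_-\,f(n)\leq |c_n|$ for \emph{every} $n\geq 1$ need not hold if some early $c_n$ vanishes, but you only use this lower bound at $n-R\to\infty$, which is fine; and $\alpha^{-\beta(n-1)}$ decays exponentially rather than super-exponentially, though of course that is ample to beat the linear factor~$n$.
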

\begin{proof}
  Let \( d_n \) denote
  \( \alpha^{\beta \binom{n}{2}-mn} n^\ell/n! \).
  Since \( c_n \sim d_n \) as \( n \to \infty \),
  we have \( c_n \leq 2d_n \) for large enough~\( n \).
  Furthermore, \( d_n = o(d_{n+1}) \) as \( n \to \infty \),
  and hence, \( c_n = o(c_{n+1}) \) as \( n \to \infty \).
  Thus, condition (i) of \cref{def: gargantuan sequence} holds.
  In particular, \( d_n \leq d_{n+1} \) for large enough \( n \).
  
  In order to verify condition (ii), we prove that the maximum of the product \( d_kd_{n-k} \) over \( k\in[R,n-R] \)
  is attained on the boundary of the interval \( [R,n-R] \), as \( n \) is sufficiently large.
  For this aim, consider the function
  \[
    d \colon \mathbb R_{>0} \to \mathbb R,\qquad
    d(x) = \dfrac{\alpha^{\frac{\beta x(x-1)}{2}-mx} x^\ell}{\Gamma(x+1)},
  \]
  coinciding with \( (d_n)_{n=0}^{\infty} \) at positive integers.
  It is sufficient to show that \( \log d(x) \) is a convex function for large enough argument \( x \).
  Indeed, according to \cite[(6.4.12)]{abramowitz1988handbook},
  \[
    \dfrac{\partial^2}{\partial x^2}(\log \Gamma(x)) \sim \frac{1}{x} \, ,
    \quad \text{and therefore,} \quad
    \dfrac{\partial^2}{\partial x^2} \log d(x) = \beta \log \alpha
    + \BigO\left( \dfrac{1}{x} \right)
    \, ,
  \]
  as \( x \to \infty \).
  The latter quantity is positive for large \( x \), since \( \alpha > 1 \) and \( \beta > 0 \).

  Now, let \( N \) be an integer, large enough, but fixed, such that
  the inequalities \( c_k \leq 2 d_k \) and \( d_k \leq d_{k+1} \) hold for all \( k \geq N \).
  Assume further that \( N \geq R + 1 \).
  Then, as \( n \to \infty \), we have
  \begin{align*}
    \sum_{k=R}^{n-R} |c_k c_{n-k}|
    &=
    2\sum_{k=R}^{N-1} |c_k c_{n-k}|
    + \sum_{k=N}^{n-N} |c_k c_{n-k}|
    \\
    &\leq
    4(N-R) \max_{k\in [R,N]} |c_k| \cdot d_{n-R}
    + 4 \sum_{k=N}^{n-N} d_k d_{n-k}
    \\
    &\leq
    \BigO(d_{n-R}) + 4(n-2N+1) d_N d_{n-N}
    \\
    &= \BigO(d_{n-R})
    = \BigO(c_{n-R})\, .
  \end{align*}
\end{proof}

\begin{lemma}
\label{lemma: technical lemma for product}
  If a sequence \( (a_n) \) is gargantuan
  and a sequence \( (b_n) \) satisfies
  \( b_n = \BigO(a_n) \), as \( n \to \infty \),
  then, for any positive integer \( R \),
  \[
    \sum_{k=R}^{n-R} |b_k a_{n-k}| = \BigO(a_{n-R}),
  \]
  as \( n \to \infty \).
\end{lemma}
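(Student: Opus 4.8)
The plan is to split the sum at a fixed cutoff index: a bounded initial block, which I handle by monotonicity, and the remaining tail, which I bound directly by the gargantuan estimate. First I would extract the facts I need from the two hypotheses. From $b_n = \BigO(a_n)$ there is a constant $C > 0$ and an index $N_0$ with $|b_k| \le C\,|a_k|$ for all $k \ge N_0$. From condition~(i) of \cref{def: gargantuan sequence}, the ratio $a_{n-1}/a_n \to 0$, so $(a_n)$ is eventually positive and strictly increasing; fix an index $N_1$ beyond which this monotonicity holds. Setting $N = \max(N_0, N_1, R)$, a constant independent of $n$, I would write, for $n$ large,
\[
  \sum_{k=R}^{n-R} |b_k a_{n-k}|
  = \sum_{k=R}^{N-1} |b_k a_{n-k}|
  + \sum_{k=N}^{n-R} |b_k a_{n-k}| \, .
\]

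For the finite initial block $R \le k \le N-1$ the number of terms is the fixed quantity $N - R$, and each $|b_k|$ is a fixed finite number. Since $k \ge R$ forces $n-k \le n-R$, and all the arguments $n-k$ are large once $n$ is (because $k$ is bounded), the eventual monotonicity of $(a_n)$ gives $a_{n-k} \le a_{n-R}$. Hence this block is at most $\big(\sum_{k=R}^{N-1}|b_k|\big)\, a_{n-R} = \BigO(a_{n-R})$. For the tail $k \ge N \ge N_0$ I would use $|b_k| \le C\,|a_k|$ to compare with the sum governed by the gargantuan hypothesis:
\[
  \sum_{k=N}^{n-R} |b_k a_{n-k}|
  \le C \sum_{k=N}^{n-R} |a_k a_{n-k}|
  \le C \sum_{k=R}^{n-R} |a_k a_{n-k}|
  = \BigO(a_{n-R}) \, ,
\]
where the middle inequality only drops the nonnegative terms with $R \le k < N$ (legitimate since $N \ge R$), and the final equality is condition~(ii) of \cref{def: gargantuan sequence} applied with our $R$. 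Adding the two estimates yields the claim.

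The one genuinely delicate point — the step I would be most careful about — is the initial block: the hypothesis $b_n = \BigO(a_n)$ gives no control over $|b_k|$ for small $k$, so those terms cannot be absorbed into $C\,|a_k|$ and must be treated separately. It is precisely the eventual monotonicity coming from condition~(i) that lets me replace each of the finitely many factors $a_{n-k}$ by the single target $a_{n-R}$, after which the finiteness of the block does the rest. Everything beyond this is a direct invocation of the gargantuan estimate~(ii), so no further analytic input is required.
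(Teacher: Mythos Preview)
Your proof is correct and follows essentially the same strategy as the paper's: split the sum at a fixed cutoff $N$, control the finitely many initial terms via condition~(i), and absorb the tail into condition~(ii) after replacing $|b_k|$ by $C|a_k|$. One small quibble: condition~(i) does not force eventual positivity of $(a_n)$, only eventual strict growth of $|a_n|$; your monotonicity step should read $|a_{n-k}| \le |a_{n-R}|$, after which the argument goes through unchanged (the paper handles this block slightly differently, isolating $k=R$ and using $a_{n-k}=o(a_{n-R})$ for $k>R$, but the effect is the same).
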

\begin{proof}
  The proof of this statement is direct.
  Since \( b_n = \BigO(a_n) \),
  there exist constants \( C \in \mathbb R_{>0} \) and \( N \in \mathbb Z_{>0} \)
  such that, for \( k \geq N \),
  we have \( |b_k| \leq C |a_k| \).
  Hence, the sum can be split into two parts:
  \begin{align*}
    \sum_{k=R}^{n-R} |b_k a_{n-k}| &\leq
    \sum_{k=R}^{N-1} |b_k a_{n-k}|
    + C\sum_{k=N}^{n-R} |a_k a_{n-k}|
    \\
    &\leq |b_R a_{n-R}|
    + \max_{k \in [R, N-1]} |b_k|
    \cdot (N-R-1) \cdot o(a_{n-R})
    + C\sum_{k=R}^{n-R} |a_k a_{n-k}|
    \\
    &= \BigO(a_{n-R}) \, .
  \end{align*}
\end{proof}

\begin{proof}[Proof of \cref{lemma: operation transfers}]
  Let \( A,B\in\Ring{\alpha}{\beta} \).
  In this case, formula~\eqref{eq: sum-transfer} holds, since
  \[
    a_n+b_n \approx \alpha^{\beta \binom{n}{2}}
    	\left[
      \sum_{m\geq\M} \alpha^{-mn}
      \sum_{\ell=0}^{\infty}
      	n^{\underline{\ell}}
      	(a_{m,\ell}^\circ + b_{m,\ell}^\circ)
    	\right]
    	\, ,
  \]
  where \( M \) is the minimum of the constants corresponding to expansions of \( a_n \) and \( b_n \).
  
  To get~\eqref{eq: prod-transfer}, note that,
  according to \cref{lemma:log-convexity},
  the sequences \( (a_n/n!) \) and \( (b_n/n!) \) are gargantuan.
  Fixing a positive integer \( R \), we have
  \[
    n![z^n] A(z) B(z) =
    \sum_{k=0}^{R-1} \binom{n}{k}
      \big(a_k b_{n-k} + b_k a_{n-k}\big) +         	\sum_{k=R}^{n-R} \binom{n}{k} b_k a_{n-k}
    	\, .
  \]
  Due to \cref{lemma: technical lemma for product}, the second sum is negligible.
  Thus, it is sufficient to rewrite the first sum in asymptotic form~\eqref{eq:expansion}
  and verify that its coefficients coincide with those of the right-hand side of~\eqref{eq: prod-transfer}.
  Given a fixed integer \( k \),
  the asymptotics of the shifted sequence \( (a_{n-k}) \), as \( n\to\infty \), is
  \begin{align*}
    \binom{n}{k} b_k a_{n-k}
    &\approx
    \dfrac{b_k}{k!}
    \alpha^{\beta \binom{n-k}{2}}
      \sum_{m\geq M} \alpha^{-m(n-k)}
        \sum_{\ell \geq 0}
          n^{\underline{k}}
          (n-k)^{\underline{\ell}}
          a_{m,\ell}^\circ
    \\&\approx
    \dfrac{b_k}{k!}
    \alpha^{\beta \binom{n}{2}+\beta \binom{k}{2} + \beta k}
    \sum_{m \geq M} \alpha^{-mn-\beta kn}
      \sum_{\ell \geq 0}
        n^{\underline{\ell+k}}
        a_{m,\ell}^\circ \alpha^{mk}
    \\&\approx
    \dfrac{b_k}{k!}
    \alpha^{\beta \binom{n}{2}+\beta \binom{k}{2} + \beta k}
    \sum_{m \geq M + \beta k} \alpha^{-mn}
      \sum_{\ell \geq k}
        n^{\underline{\ell}}
        a_{m-\beta k,\ell-k}^\circ
        \alpha^{(m-\beta k)k}
    \\&\approx
    \alpha^{\beta \binom{n}{2}}
    \sum_{m \geq M + \beta k} \alpha^{-mn}
      \sum_{\ell \geq k}
        n^{\underline{\ell}}
        a_{m-\beta k,\ell-k}^\circ
        \dfrac{b_k \alpha^{km}}{k! \alpha^{\beta \binom{k}{2}}}
    \, .
  \end{align*}
  At the same time, by expanding a product of the form
  \(
    B \big(
      \alpha^{\frac{\beta+1}{2}}z^\beta w
    \big)
     \cdot
    (\Convert{\alpha}{\beta} A)(z, w)
  \), we obtain:
  \begin{align*}
	&
    \alpha^{\frac{1}{\beta} \binom{m}{2}}
    [z^m w^\ell]
    \left(
      \sum_{k=0}^{\infty}
        b_k
        \dfrac{ \big(
          \alpha^{\frac{\beta+1}{2}} z^\beta w
        \big)^k }{k!}
    \right)
    \left(
      \sum_{r=M}^{\infty}
        \sum_{s=0}^{\infty}
          a_{r,s}^\circ
          \dfrac{z^r}{\alpha^{\frac{1}{\beta} \binom{r}{2}}} w^s
    \right)
    \\ &=
    \sum_{k \geq 0}
      \dfrac{b_k}{k!}
      a^\circ_{m-\beta k, \ell-k}
      \alpha^{
        \frac{(\beta+1)k}{2}
        + \frac{1}{\beta}\binom{m}{2}
        - \frac{1}{\beta}\binom{m-\beta k}{2}
        }
    \\ &=
    \sum_{k \geq 0}
      \dfrac{b_k \alpha^{km}}{k! \alpha^{\beta \binom{k}{2}}}
      a^\circ_{m-\beta k, \ell-k}
    \, .
  \end{align*}
  That is why, comparing this expression with the previous one
  summed up over \( k \) such that \( k \leq \ell \) and \( M+\beta k \leq m\),
  we conclude that formula~\eqref{eq: prod-transfer} is valid.
\end{proof}

  To prove \cref{lemma: Bender's transfer}, we use Bender's theorem~\cite{bender1975asymptotic}
  that commonly serves to provide asymptotic expansions for divergent formal power series.
  We cite here an adaptation of his theorem originally presented in a more general form.

\begin{proposition}[{\cite[Theorem 2]{bender1975asymptotic}}]\label{theorem: Bender's}
  Consider a formal power series
  \[
    A(z) = \sum\limits_{n=1}^{\infty}a_nz^n
  \]
  and a function $F(x)$, which is analytic in some neighborhood of origin.
  Define
  \[
    B(z) = \sum\limits_{n=0}^{\infty}b_nz^n =
    F\big(A(z)\big)
    \qquad\qquad
    \mbox{and}
    \qquad\qquad
     C(z) = \sum\limits_{n=0}^{\infty}c_n z^n =
     \left.     
       \dfrac{\partial}{\partial x} F(x)
     \right|_{x=A(z)}
    \, .
  \]
  Assume that the sequence $(a_n)$ is gargantuan and $a_n\ne 0$ for any positive integer $n>0$.
  Then
  \[
    b_n \approx \sum\limits_{k\geq0}c_ka_{n-k}
  \]
  and the sequence $(b_n)$ is gargantuan.
\end{proposition}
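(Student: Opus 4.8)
The plan is to give a self-contained argument; although this statement is the classical theorem of Bender, it fits neatly into the gargantuan framework already developed, and I would prove its two assertions — the expansion \( b_n \approx \sum_{k\geq 0} c_k a_{n-k} \) and the gargantuan-ness of \( (b_n) \) — together, working entirely at the level of coefficient extraction. Since \( F \) is analytic near the origin, write \( F(x) = \sum_{j\geq 0} f_j x^j \) with \( |f_j| \leq M\rho^{-j} \) for some \( \rho > 0 \); because \( a_0 = 0 \), the series \( A(z)^j \) has valuation at least \( j \), so for each fixed \( n \) the sum \( b_n = \sum_{j=1}^{n} f_j\,[z^n]A(z)^j \) is finite, and likewise \( c_k = \sum_{j\geq 1} j f_j\,[z^k]A(z)^{j-1} = [z^k]F'(A(z)) \). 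The goal is then to prove, for each fixed \( r\geq 1 \), that \( b_n = \sum_{k=0}^{r-1} c_k a_{n-k} + \BigO(a_{n-r}) \) as \( n\to\infty \), the ordering \( c_{k+1} a_{n-k-1} = o(c_k a_{n-k}) \) being immediate from condition (i) of \cref{def: gargantuan sequence}.

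The two technical inputs are a uniform bound on \( [z^n]A^j \) and a one-big-jump estimate. For the bound, I would show by induction on \( j \), using the convolution \( [z^n]A^j = \sum_m a_m\,[z^{n-m}]A^{j-1} \) together with \cref{lemma: technical lemma for product}, that \( [z^n]A^j = \BigO(K^j a_{n-j+1}) \) for a constant \( K \) independent of \( n \) and \( j \); here the vanishing of the low-order coefficients of \( A^{j-1} \) forces the scale \( a_{n-j+1} \), while the middle of the convolution is absorbed by the gargantuan condition. Since \( a_{n-j+1} \) decays super-geometrically in \( j \) (condition (i) makes \( a_m/a_{m-1}\to\infty \)), this bound lets me truncate the \( j \)-sum at \( j = r+1 \) with error \( \BigO(a_{n-r}) \), so only the finitely many terms \( j \leq r \) remain.

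For such a fixed \( j \), I would estimate \( [z^n]A^j = \sum_{i_1 + \dots + i_j = n} \prod_l a_{i_l} \) by isolating the compositions in which a single part equals \( n-k \), for bounded \( k \), while the remaining \( j-1 \) parts sum to \( k \): choosing the position of the large part contributes a factor \( j \), and the small parts assemble into \( [z^k]A^{j-1} \) (for large \( n \) the large part is uniquely determined, so this is a genuine bijection and the factor \( j \) is exact). This yields \( [z^n]A^j = \sum_{k=0}^{r-1} j\,[z^k]A^{j-1}\,a_{n-k} + \BigO(a_{n-r}) \). The remainder gathers the compositions with two or more large parts, and its control — the two-big-jump estimate \( \sum_{s,t\geq R} a_s a_t = \BigO(a_{n-R}) \) supplied by condition (ii) and \cref{lemma: technical lemma for product} — is the main obstacle; truncating the \( j \)-sum first, as above, is what lets me invoke it for only finitely many \( j \) and sidestep uniformity in \( j \).

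Finally, I would assemble the pieces: summing the head over \( j\leq r \) and collecting the coefficient of \( a_{n-k} \) gives \( \sum_{j\geq 1} j f_j\,[z^k]A^{j-1} = c_k \) (the sum is finite for \( k \leq r-1 \), since \( [z^k]A^{j-1} = 0 \) once \( j > k+1 \)), whence \( b_n = \sum_{k=0}^{r-1} c_k a_{n-k} + \BigO(a_{n-r}) \). For the gargantuan-ness of \( (b_n) \), observe that \( b_n \sim c_{k_0} a_{n-k_0} \), where \( k_0 \) is the least index with \( c_{k_0}\neq 0 \) (the degenerate case \( F'\circ A \equiv 0 \) being trivial); conditions (i) and (ii) for \( (b_n) \) then follow from those for the shifted sequence \( (a_{n-k_0}) \), which inherits them from \( (a_n) \). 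The only genuinely delicate step remains the uniform control of the two-big-jump remainder, everything else being bookkeeping built on \cref{lemma:log-convexity} and \cref{lemma: technical lemma for product}.
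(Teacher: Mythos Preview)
The paper does not give a proof of this proposition; it is quoted as \cite[Theorem~2]{bender1975asymptotic} and used as an external black box in the proof of \cref{lemma: Bender's transfer}. Your sketch therefore goes beyond what the paper does, and it is essentially correct.

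Two comments on the details. First, the uniform bound $|[z^n]A^j|\le K^j|a_{n-j+1}|$ really can be made to hold for \emph{all} $n\ge j$ with a single $K$: after the substitution $l=m-j+2$ your inductive step reduces to $\sum_{l=1}^{N-1}|a_l a_{N-l}|\le K|a_{N-1}|$ with $N=n-j+2$, and the gargantuan condition (ii) with $R=1$ gives this for $N$ large, while the hypothesis $a_n\neq 0$ lets you absorb the finitely many small $N$ into $K$. This is what makes the truncation of the $j$-sum legitimate.

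Second, your description of the remainder in the one-big-jump step is slightly off: for $n$ large there cannot be two parts exceeding $n-r$, so the remainder actually consists of compositions with \emph{no} part exceeding $n-r$. The bound you need comes from singling out the maximum part $m\in[\lceil n/j\rceil,\,n-r]$, bounding the remaining $j-1$ parts by the uniform estimate $K^{j-1}|a_{n-m-j+2}|$, and then applying condition (ii) with $R=r-j+2$ to the resulting two-index sum over $m$; this yields $O(a_{n-r})$ as required. With that adjustment the argument is sound.
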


\begin{proof}[Proof of \cref{lemma: Bender's transfer}]
  Let \( A\in\Ring{\alpha}{\beta} \) and \( F \) be a~function analytic in a neighbourhood of the origin.
  According to \cref{theorem: Bender's} 
  (which is applicable due to \cref{lemma:log-convexity}),
  \begin{equation}
   	\label{eq: Bender's transfer proof}
    n![z^n] (F\circ A)(z) \approx
    \sum_{k \geq 0} \binom{n}{k} \eta_k a_{n-k}
    	\, ,
  \end{equation}
  where \( \eta_k \) are the coefficients of the \( H(z) = \partial_x F(x)|_{x=A(z)} \),
  \[
    H(z) = \sum\limits_{k=0}^{\infty}
    \eta_k \dfrac{z^k}{k!}
    \, .
  \]
  Now we follow the scheme of the proof of \cref{lemma: operation transfers}.
  Namely, we rewrite~\eqref{eq: Bender's transfer proof} in asymptotic form~\eqref{eq:expansion}
  and verify that its coefficients are equivalent to those of
  \(
    H \big(
      \alpha^{\frac{\beta+1}{2}} z^\beta w
    \big) \cdot
    (\Convert{\alpha}{\beta} A)(z,w).
  \)
  Indeed, for fixed \( k\in\mathbb Z_{>0} \) and \( n\to\infty \), we have
  \[
    \binom{n}{k} \eta_k a_{n-k} \approx
    \alpha^{\beta \binom{n}{2}}
    \sum_{m \geq M + \beta k} \alpha^{-mn}
      \sum_{\ell \geq k}
        n^{\underline{\ell}}
        a_{m-\beta k,\ell-k}^\circ
        \dfrac{\eta_k\alpha^{km}}{k!\alpha^{\beta \binom{k}{2}}}
  \]
  and
  \[
    \alpha^{\frac{1}{\beta} \binom{m}{2}}
    [z^m w^\ell]
    H \big(
      \alpha^{\frac{\beta+1}{2}}z^\beta w
    \big)
    (\Convert{\alpha}{\beta} A)(z, w) =
    \sum_{k \geq 0}
      \dfrac{\eta_k \alpha^{km}}{k! \alpha^{\beta \binom{k}{2}}}
      a^\circ_{m-\beta k, \ell-k}
    \, .
  \]
  Hence, formula~\eqref{eq: Bender-transfer} is valid.
\end{proof}
\subsection{Other transfer properties}
\label{sec: other properties}

  In this section, we discuss four more properties of the asymptotic transfer \( \Convert{\alpha}{\beta} \).
  First, we express explicitly the behavior of \( \Convert{\alpha}{\beta} \) with respect to powers of a series \( A \in \Ring{\alpha}{\beta} \).
  Next, we study its relations with linear change of variable \( z \mapsto \alpha z \).
  Finally, we discuss how the asymptotic transfer \( \Convert{\alpha}{\beta} \) can be applied together with the operations of differentiation and integration.

\begin{lemma}
\label{lemma: power transfer}
  If \( \alpha \in \mathbb R_{>1} \), \( \beta \in \mathbb Z_{>0} \) and \( A \in \Ring{\alpha}{\beta} \),
  then \( A^m \in \Ring{\alpha}{\beta} \) for each \( m\in\mathbb Z_{\geq0} \) with
  \begin{equation}\label{eq: power-transfer}
    \big(
	  \Convert{\alpha}{\beta} A^m
    \big)(z, w)
      =
    m \cdot A^{m-1} \big(
      \alpha^{\frac{\beta+1}{2}} z^\beta w
    \big) \cdot
    (\Convert{\alpha}{\beta} A)(z,w)
    \, .
  \end{equation}
  If, additionally, \( a_0 = 1 \), then \eqref{eq: power-transfer} holds for any \( m\in\mathbb Q \).
\end{lemma}
\begin{proof}
  For non-negative integer \( m \), this follows from~\eqref{eq: prod-transfer} by induction.
  In the other cases, we apply \cref{lemma: Bender's transfer} to the series \( \big(A(z) - 1\big) \) and the function
  \(
    F(x) = (1+x)^m.
  \)
\end{proof}

\begin{lemma}
\label{lemma: transfer shift}
  Let \( \alpha \in \mathbb R_{>1} \), \( \beta \in \mathbb Z_{>0} \),
  and \( A, B \in \Ring{\alpha}{\beta} \).
  If \( B(z) = A(\alpha^d z) \) for some \( d \in \mathbb Z \), then
  \[
	(\Convert{\alpha}{\beta} B)(z,w) = 
	\dfrac{
	  (\Convert{\alpha}{\beta} A)(\alpha^{d/\beta}z,w)
	}
	{
	  \alpha^{\frac{1}{\beta} \binom{d+1}{2}} z^d
	}
    \, .
  \]
\end{lemma}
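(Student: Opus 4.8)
The plan is to reduce everything to the elementary observation that the substitution $z \mapsto \alpha^d z$ acts on coefficients by a pure exponential rescaling, and then to track how this rescaling is absorbed into the canonical shape of expansion~\eqref{eq:expansion}. Writing $A(z) = \sum_n a_n z^n/n!$, the identity $B(z) = A(\alpha^d z)$ gives $b_n = \alpha^{dn} a_n$ directly at the level of coefficients. Substituting the expansion of $a_n$ and folding the factor $\alpha^{dn}$ into the inner sum, I would write
\[
  b_n \approx \alpha^{\beta\binom{n}{2}}
  \left[
    \sum_{m\geq\M} \alpha^{-(m-d)n}
    \sum_{\ell=0}^\infty n^{\underline{\ell}}\, a_{m,\ell}^\circ
  \right]
  \, .
\]

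The second step is to reindex. Setting \( m' = m - d \), the exponent $\alpha^{-(m-d)n}$ becomes $\alpha^{-m'n}$, and the expansion takes the canonical form of~\eqref{eq:expansion} with starting index $\M - d$ and coefficients $b_{m',\ell}^\circ = a_{m'+d,\ell}^\circ$. Since multiplying every term by the common factor $\alpha^{dn}$ preserves the ordering of the summands (each term remains $o$ of its predecessor), and since the support of $(b_{m',\ell}^\circ)_\ell$ equals that of $(a_{m'+d,\ell}^\circ)_\ell$ and is therefore finite, this is a legitimate expansion of the required type. This is consistent with the hypothesis $B \in \Ring{\alpha}{\beta}$ and pins down its Coefficient GF.

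Third, I would assemble the Coefficient GF from these coefficients and match it against the claimed right-hand side. By definition,
\[
  (\Convert{\alpha}{\beta} B)(z,w) =
  \sum_{m'\geq\M-d} \sum_{\ell=0}^\infty
  a_{m'+d,\ell}^\circ \dfrac{z^{m'}}{\alpha^{\frac{1}{\beta}\binom{m'}{2}}} w^\ell
  \, ,
\]
and substituting $m = m' + d$ turns this into a sum over $m \geq \M$ with powers $z^{m-d}$ and weights $\alpha^{-\frac{1}{\beta}\binom{m-d}{2}}$. On the other hand, expanding $A^\circ(\alpha^{d/\beta}z,w)$ produces the same coefficients $a_{m,\ell}^\circ$ but with the weight $\alpha^{dm/\beta}\cdot\alpha^{-\frac{1}{\beta}\binom{m}{2}}$ attached to $z^m$. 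Dividing by $\alpha^{\frac{1}{\beta}\binom{d+1}{2}}z^d$ shifts the power to $z^{m-d}$, and the two expressions agree term by term precisely when
\[
  \binom{m}{2} - \binom{m-d}{2} = dm - \binom{d+1}{2}
  \, ,
\]
an identity valid for all integers $m$ and $d$, immediately checked by expanding both sides.

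The computation has no genuine obstacle; the only points demanding care are purely bookkeeping. First, the binomial identity above is exactly what forces the precise normalizing factor $\alpha^{\frac{1}{\beta}\binom{d+1}{2}}z^d$ to appear, so I would verify it explicitly rather than guess the constant. Second, since $d$ may be negative, the factor $z^{-d}$ (equivalently the division by $z^d$) and the shifted starting index $\M - d$ should be read in the formal sense discussed in \cref{remark: basis change}; no convergence issue arises, because the entire argument is a formal rearrangement of the already-known expansion of $a_n$, with the asymptotic content entering only through the elementary relation $b_n = \alpha^{dn}a_n$.
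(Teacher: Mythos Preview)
Your proposal is correct and follows essentially the same route as the paper: both arguments start from $b_n=\alpha^{dn}a_n$, reindex to obtain $b^\circ_{m',\ell}=a^\circ_{m'+d,\ell}$, and then match the two sides of the claimed identity via the binomial relation $\binom{m}{2}-\binom{m-d}{2}=dm-\binom{d+1}{2}$. If anything, your write-up is slightly more careful than the paper's, since you explicitly note the preservation of the asymptotic ordering and the finiteness of supports.
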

\begin{proof}
  If~\eqref{eq:expansion} holds for the coefficients \( (a_n)_{n=0}^\infty \) of \( A(z) \),
  then the coefficients of \( B(z) \) satisfy
  \[
    b_n := \alpha^{dn} a_n \approx
     \alpha^{\beta \binom{n}{2}}
    	\left[
      \sum_{m\geq\M-d} \alpha^{-mn}
      \sum_{\ell=0}^{\infty}
	  	n^{\underline{\ell}} \,
    		a_{m+d,\ell}^\circ
    	\right]
	\, .
  \]
  Hence, using \cref{definition: asymptotics} and the relation
  \(
    \binom{m-d}{2} =
    \binom{m}{2} - md + \binom{d+1}{2},
  \)
  we obtain
  \[
    (\Convert{\alpha}{\beta} B)(z, w) =
    \dfrac{z^{-d}}{\alpha^{\frac{1}{\beta} \binom{d+1}{2}}}
    \sum_{m = \M}^\infty
    \sum_{\ell=0}^\infty
      a_{m,\ell}^\circ
      \dfrac{(\alpha^d z)^m}{\alpha^{\frac{1}{\beta} \binom{m}{2}}}
      w^\ell
    \, ,
  \]
  which implies the statement of the lemma.
\end{proof}

\begin{corollary}
\label{corollary: transfer shift}
  Let \( \alpha \in \mathbb R_{>1} \), \( \beta \in \mathbb Z_{>0} \),
  and \( A, B, C \in \Ring{\alpha}{\beta} \).
  If \( B(z) = A(\alpha z) \) and \( C(z) = A(z / \alpha) \), then
  \[
	(\Convert{\alpha}{\beta} B)(z,w) = 
	\alpha^{-1/\beta} z^{-1} \cdot
    (\Convert{\alpha}{\beta} A)(\alpha^{1/\beta}z,w)
	\qquad\mbox{and}\qquad
	(\Convert{\alpha}{\beta} C)(z,w) = z \cdot 
	(\Convert{\alpha}{\beta} A)(\alpha^{-1/\beta}z,w)
    \, .
  \]
\end{corollary}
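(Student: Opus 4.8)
The plan is to derive both identities as direct specializations of \cref{lemma: transfer shift}, taking $d = 1$ for the function $B$ and $d = -1$ for the function $C$. Since $B(z) = A(\alpha z) = A(\alpha^{1} z)$ and $C(z) = A(z/\alpha) = A(\alpha^{-1} z)$, each satisfies the hypothesis $B(z) = A(\alpha^d z)$ of the lemma with the indicated integer value of $d$, and both $B$ and $C$ are assumed to belong to $\Ring{\alpha}{\beta}$, so the lemma applies verbatim without any additional convergence or membership checks.

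First I would treat $B$ by substituting $d = 1$ into the conclusion of \cref{lemma: transfer shift}, obtaining
\[
  (\Convert{\alpha}{\beta} B)(z,w) =
  \frac{(\Convert{\alpha}{\beta} A)(\alpha^{1/\beta} z, w)}{\alpha^{\frac{1}{\beta}\binom{2}{2}}\, z}.
\]
Using the convention $\binom{m}{2} = \tfrac{m(m-1)}{2}$ fixed in \cref{definition: asymptotics}, we have $\binom{2}{2} = 1$, so the denominator is $\alpha^{1/\beta} z$, which rearranges to the first claimed identity $(\Convert{\alpha}{\beta} B)(z,w) = \alpha^{-1/\beta} z^{-1} \cdot (\Convert{\alpha}{\beta} A)(\alpha^{1/\beta} z, w)$.

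Next I would treat $C$ by substituting $d = -1$, which gives
\[
  (\Convert{\alpha}{\beta} C)(z,w) =
  \frac{(\Convert{\alpha}{\beta} A)(\alpha^{-1/\beta} z, w)}{\alpha^{\frac{1}{\beta}\binom{0}{2}}\, z^{-1}}.
\]
The same convention yields $\binom{0}{2} = 0$, so $\alpha^{\frac{1}{\beta}\binom{0}{2}} = 1$, and dividing by $z^{-1}$ amounts to multiplication by $z$; this produces the second identity $(\Convert{\alpha}{\beta} C)(z,w) = z \cdot (\Convert{\alpha}{\beta} A)(\alpha^{-1/\beta} z, w)$.

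The computation is entirely routine, so there is no genuine obstacle here; the only point that requires care is to invoke the extended binomial convention of \cref{definition: asymptotics} at the values $m = 0$ and $m = 2$, since it is precisely this convention that makes the two prefactors collapse to $\alpha^{1/\beta}$ and $1$ respectively and thereby matches the stated form of the corollary.
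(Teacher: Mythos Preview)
Your proof is correct and follows exactly the same approach as the paper, which simply says to apply \cref{lemma: transfer shift} with $d=1$ and $d=-1$; you have merely spelled out the binomial evaluations $\binom{2}{2}=1$ and $\binom{0}{2}=0$ that the paper leaves implicit.
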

\begin{proof}
  Apply \cref{lemma: transfer shift} for \( d = 1 \) and \( d = -1 \), respectively.
\end{proof}

\begin{proposition}
\label{lemma: derivative transfer}
  If \( \alpha \in \mathbb R_{>1} \), \( \beta \in \mathbb Z_{>0} \) and \( A \in \Ring{\alpha}{\beta} \), then
  \[
	(\Convert{\alpha}{\beta} A')(z,w) = 
	\dfrac{
	  (\Convert{\alpha}{\beta} A)(z,w) +
	  \frac{\partial}{\partial w}(\Convert{\alpha}{\beta} A)(z,w)
	}
	{
	  \alpha^{\frac{\beta+1}{2}} z^\beta
	}
	\, .
  \]
  and
  \[
	\left(\Convert{\alpha}{\beta} \int A\right)(z,w) = 
    \alpha^{\frac{\beta+1}{2}} z^\beta
	\sum\limits_{k=0}^{\infty}
	  (-1)^k \frac{\partial^k}{\partial w^k}\big(\Convert{\alpha}{\beta} A\big)(z,w)
	\, .
  \]
\end{proposition}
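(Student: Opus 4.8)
The plan is to establish the differentiation rule by a direct computation on the asymptotic coefficients, and then to read off the integration rule by inverting the operator that appears.

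For the derivative, I would start from $A'(z) = \sum_{n\geq 0} a_{n+1}\frac{z^n}{n!}$, so that the relevant sequence is the shift $(a_{n+1})$. Substituting $n+1$ into the expansion~\eqref{eq:expansion} and invoking the two elementary identities $\binom{n+1}{2} = \binom{n}{2}+n$ and $(n+1)^{\underline{\ell}} = n^{\underline{\ell}} + \ell\,n^{\underline{\ell-1}}$, the prefactor picks up an extra $\alpha^{\beta n}$, which I absorb through the substitution $m \mapsto m-\beta$ exactly as in \cref{lemma: transfer shift}; the same expansion produces a factor $\alpha^{-m}$ from $\alpha^{-m(n+1)}$. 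Collecting the coefficient of $n^{\underline{\ell}}$ then gives
\[
  (a')^\circ_{m,\ell} = \alpha^{-(m+\beta)}\big(a^\circ_{m+\beta,\ell} + (\ell+1)\,a^\circ_{m+\beta,\ell+1}\big),
\]
where the second term is the contribution of $\ell\,n^{\underline{\ell-1}}$ with the index raised by one. The finite-support hypothesis is preserved, so $A'\in\Ring{\alpha}{\beta}$.

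Next I assemble $(\Convert{\alpha}{\beta}A')(z,w)$ from these coefficients via~\eqref{eq:CoefficientGF} and re-index by $r=m+\beta$. The decisive arithmetic is the identity $\binom{m}{2} = \binom{r}{2} - \beta r + \binom{\beta+1}{2}$ (the case $d=\beta$ of the relation already used in \cref{lemma: transfer shift}), which rewrites $\alpha^{-\frac1\beta\binom{m}{2}}$ as $\alpha^{-\frac1\beta\binom{r}{2}}\,\alpha^{r}\,\alpha^{-\frac1\beta\binom{\beta+1}{2}}$. The factor $\alpha^{r}$ cancels the $\alpha^{-(m+\beta)}=\alpha^{-r}$ carried by the coefficient, while $z^{m}=z^{-\beta}z^{r}$ and $\alpha^{-\frac1\beta\binom{\beta+1}{2}}=\alpha^{-\frac{\beta+1}{2}}$ (using $\frac1\beta\binom{\beta+1}{2}=\frac{\beta+1}{2}$) emerge as the common prefactor $\big(\alpha^{\frac{\beta+1}{2}}z^\beta\big)^{-1}$. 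The two surviving sums are precisely $(\Convert{\alpha}{\beta}A)(z,w)$ and, after the shift $\ell\mapsto\ell+1$, its derivative $\partial_w(\Convert{\alpha}{\beta}A)(z,w)$, which yields the first formula.

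For the integral I set $B=\int A$, so $b_n=a_{n-1}$, and first note $B\in\Ring{\alpha}{\beta}$ by the same shift argument: re-expressing $(n-1)^{\underline{\ell}}$ in the basis $\{n^{\underline{k}}\}$ is a finite linear combination, so the support in $\ell$ stays finite. Since $B'=A$, the differentiation rule applied to $B$ reads $\alpha^{\frac{\beta+1}{2}}z^\beta\,(\Convert{\alpha}{\beta}A) = (1+\partial_w)(\Convert{\alpha}{\beta}B)$, and it remains only to invert $1+\partial_w$. As every element of $\Coeff{\alpha}{\beta}=\mathbb R[w][[z]]$ is a polynomial in $w$ coefficientwise in $z$, the operator $\partial_w$ is locally nilpotent, so the Neumann series $(1+\partial_w)^{-1}=\sum_{k\geq0}(-1)^k\partial_w^{\,k}$ terminates on each coefficient and is a well-defined operator; applying it delivers the second formula. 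The main obstacle is the index bookkeeping in the first two steps — tracking how the $\alpha^{\beta n}$ prefactor and the falling-factorial shift interact so that the coefficients reorganize into the canonical form~\eqref{eq:expansion}, and checking that the accumulated $\alpha$-exponents collapse to exactly $\alpha^{\frac{\beta+1}{2}}$; the integral step is then essentially formal, the only delicate point being the legitimacy of $(1+\partial_w)^{-1}$.
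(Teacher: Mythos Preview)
Your proof is correct. For the derivative you follow exactly the route the paper sketches: shift $n\mapsto n+1$, use $\binom{n+1}{2}=\binom{n}{2}+n$ and $(n+1)^{\underline{\ell}}=n^{\underline{\ell}}+\ell\,n^{\underline{\ell-1}}$, reindex $m\mapsto m+\beta$, and collapse the $\alpha$-exponents via $\frac{1}{\beta}\binom{\beta+1}{2}=\frac{\beta+1}{2}$ (the paper's displayed identity has a typo and should read $\ell\,n^{\underline{\ell-1}}$, as you write).

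For the integral you diverge slightly: the paper proceeds by a second direct computation, expanding $(n-1)^{\underline{\ell}}=\sum_{k=0}^{\ell}(-1)^{\ell-k}\ell^{\underline{\ell-k}}\,n^{\underline{k}}$ and reading off the coefficients one by one; you instead first note $\int A\in\Ring{\alpha}{\beta}$ (which still needs the finite re-expansion of $(n-1)^{\underline{\ell}}$) and then invert the already-proved derivative rule via the Neumann series $(1+\partial_w)^{-1}=\sum_{k\geq 0}(-1)^k\partial_w^{\,k}$, justified by the local nilpotence of $\partial_w$ on $\mathbb{R}[w][[z]]$. Your route is a bit cleaner in that it explains where the alternating series comes from rather than having to match it term by term; the paper's direct route avoids the separate verification that $\int A$ lies in the ring. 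Either way the content is the same.
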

\begin{proof}
  Similarly to the proof of \cref{lemma: transfer shift}, this follows from \cref{definition: asymptotics} with the help of direct calculations.
  Given coefficients \( (a_n)_{n=0}^\infty \) satisfying~\eqref{eq:expansion},
  the idea is to express asymptotics expansions of \( a_{n+1} \) and \( a_{n-1} \) in the same form as the initial sequence.
  That can be done due to relations
  \[
    (n+1)^{\underline{\ell}} =
    n^{\underline{\ell}} + \ell n^{\underline{\ell-1}}
  \]
  and
  \[
    (n-1)^{\underline{\ell}} =
    \sum\limits_{k=0}^\ell
  	  (-1)^{\ell-k}
  	  \ell^{\underline{\ell-k}}
  	  n^{\underline{k}}
  	\, .
  \]
  Since the calculations are straightforward, we allow ourselves to omit the details.
\end{proof}

\subsection{Relations between different rings of graphically divergent series}
\label{sec: transfers-relations}

  The goal of this section is to study relations between rings \( \Ring{\alpha}{\beta} \) for a fixed parameter \( \alpha \) and different values of \( \beta \).
  First, we show that there is a natural inclusion
  \[
    \Ring{\alpha}{1} \subset
    \Ring{\alpha}{2} \subset
    \Ring{\alpha}{3} \subset
    \Ring{\alpha}{4} \subset
    \ldots
  \]
  and that each ring \( \Ring{\alpha}{\beta} \) in this row belongs to the ``kernel'' of the next one,
  meaning that asymptotic coefficients of the elements of \( \Ring{\alpha}{\beta} \) are zeroes with respect to \( \Ring{\alpha}{\beta+1} \).
  To ``compare'' elements of \( \Ring{\alpha}{\beta} \) and \( \Ring{\alpha}{\beta+1} \),
  we introduce a linear operator \( \Robin{\alpha} \) that changes the growth rate of a formal power series, leaving the asymptotic coefficients the same.
  The operator \( \Robin{\alpha} \) comes together with another family of operators, denoted by \( \Change{\alpha}{\beta_1}{\beta_2} \),
  that change the type of a Coefficient GF, leaving its coefficients unchanged.
  The connections between these operators and asymptotic transfers are reflected by the commutative diagram described in \cref{lemma: commutative conversion}.

\begin{lemma}
\label{lemma: ring inclusion}
  If \( \alpha \in \mathbb R_{>1} \)
  and \( \beta_1, \beta_2 \in \mathbb Z_{>0} \),
  such that \( \beta_1 < \beta_2 \),
  then
  \[
	\Ring{\alpha}{\beta_1} \subset \Ring{\alpha}{\beta_2}
    \, .
  \]
  Moreover, for any \( A \in \Ring{\alpha}{\beta_1} \),
  we have
  \[
	(\Convert{\alpha}{\beta_2} A)(z,w) = 0
    \, .
  \]
\end{lemma}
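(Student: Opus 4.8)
The plan is to take $A \in \Ring{\alpha}{\beta_1}$ and show directly that its coefficient sequence $(a_n)$ fits the defining expansion of $\Ring{\alpha}{\beta_2}$ with every asymptotic coefficient equal to zero; this simultaneously yields both the inclusion and the vanishing of $\Convert{\alpha}{\beta_2} A$. Set $\delta := \beta_2 - \beta_1$, which is a positive integer since $\beta_1 < \beta_2$. The guiding idea is that rewriting the growth factor $\alpha^{\beta_1 \binom{n}{2}}$ as $\alpha^{\beta_2 \binom{n}{2}} \cdot \alpha^{-\delta \binom{n}{2}}$ produces a correction $\alpha^{-\delta \binom{n}{2}}$ whose exponent is \emph{quadratic} in $n$, and therefore dwarfs every ``linear-exponential'' gauge $\alpha^{-mn}$ appearing in the $\beta_2$-expansion. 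Relative to the scale $\alpha^{\beta_2 \binom{n}{2}}$, the sequence $a_n$ is thus smaller than each individual term, so no nonzero asymptotic coefficient can be recorded.

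First I would extract a crude upper bound from membership in $\Ring{\alpha}{\beta_1}$. By \cref{definition: asymptotics}, the expansion~\eqref{eq:expansion} begins at some index $M$, and retaining the leading term of the $\approx$ relation provides a polynomial growth bound of the form
\[
  a_n = \BigO\!\left( \alpha^{\beta_1 \binom{n}{2} - Mn} \, n^{L} \right),
\]
where $L$ bounds the degree of the leading polynomial $\sum_{\ell} n^{\underline{\ell}} a_{M,\ell}^\circ$ (finite by the support assumption in~\eqref{eq:expansion}). The main step is then the estimate: for every integer $r$,
\[
  |a_n| \, \alpha^{-\beta_2 \binom{n}{2} + rn}
  = \BigO\!\left( \alpha^{-\delta \binom{n}{2} + (r-M)n} \, n^{L} \right)
  \xrightarrow[n\to\infty]{} 0 \, ,
\]
because the exponent $-\delta \binom{n}{2} + (r-M)n$ tends to $-\infty$ (the quadratic term $-\delta \tfrac{n(n-1)}{2}$ dominates the linear one, as $\delta \geq 1$ and $\alpha > 1$). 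Consequently $a_n = \BigO(\alpha^{\beta_2 \binom{n}{2} - rn})$ for every $r$; that is, $a_n$ decays faster than each gauge of the $\beta_2$-expansion.

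It remains to interpret this beyond-all-orders bound in the language of \cref{definition: asymptotics}. The estimate just obtained says precisely that, relative to the gauge sequence $\alpha^{\beta_2 \binom{n}{2} - rn}$, the coefficients $a_n$ admit the expansion~\eqref{eq:expansion} for the parameter $\beta_2$ with all coefficients $a_{m,\ell}^\circ$ (computed with respect to $\beta_2$) equal to zero; the finite-support requirement is then vacuously satisfied. Hence $A \in \Ring{\alpha}{\beta_2}$, and by~\eqref{eq:CoefficientGF} its Coefficient GF of type $(\alpha, \beta_2)$ is the zero series, so $(\Convert{\alpha}{\beta_2} A)(z,w) = 0$.

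I expect the only delicate point to be this last step: confirming that the convention behind the symbol $\approx$ genuinely accommodates a zero Coefficient GF. Concretely, one must read ``$a_n \approx 0$ in $\Ring{\alpha}{\beta_2}$'' as the family of remainder estimates $a_n = \BigO(\alpha^{\beta_2 \binom{n}{2} - rn})$ holding for all $r$, with the natural gauges $\alpha^{\beta_2 \binom{n}{2} - rn}$ playing the role of the $f_k$ in the definition of $\approx$ even though their coefficients vanish. Once this reading is fixed, the rest is routine, and the quadratic-versus-linear comparison of exponents in the displayed limit is the entire mathematical content of the lemma.
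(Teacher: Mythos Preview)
Your argument is correct and follows essentially the same idea as the paper's proof: the leading growth $\alpha^{\beta_1\binom{n}{2}}$ differs from $\alpha^{\beta_2\binom{n}{2}}$ by a factor $\alpha^{-\delta\binom{n}{2}}$ whose quadratic exponent overwhelms any linear correction $\alpha^{-mn}$, forcing every coefficient in the $\beta_2$-expansion to vanish. The paper compresses this into a single sentence, while you spell out the crude bound, the limit computation, and the ``beyond-all-orders'' interpretation explicitly; your added discussion of how the zero Coefficient GF fits the $\approx$ convention is a welcome clarification that the paper leaves implicit.
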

\begin{proof}
  Since, for some integers \( m \) and \( \ell \) and constant $C$,
  \[
    n! [z^n] \sim C \alpha^{\beta_1 \binom{n}{2}} \alpha^{-mn}
    n^{\underline{\ell}} \, ,
  \]
  all the coefficients of expansion~\eqref{eq:expansion} are zeroes if
  \( \alpha^{\beta_2 \binom{n}{2}} \) is chosen as the main term.
\end{proof}

\begin{remark}
\label{remark: conversion kernel}
  Power series whose coefficients grow exponentially or factorially
  also take part of the ring \( \Ring{\alpha}{\beta} \) for any \( \alpha \in \mathbb R_{>1} \) and \( \beta \in \mathbb Z_{>0} \).
  So are power series with non-zero radius of convergence.
  Indeed, if \( A \) is a series of one of the mentioned kind,
  then its expansion coefficients \( a^{\circ}_{m, \ell} \) are all zeroes.
  In particular, \( (\Convert{\alpha}{\beta} A)(z,w) = 0 \).
\end{remark}

\begin{definition}
\label{definition: Robinson operator}
  Let \( \alpha \in \mathbb R_{>1} \) and \( \beta \in \mathbb Z_{>1} \).
  The linear operator
  \( \Robin{\alpha} \colon
  \Ring{\alpha}{\beta} \to
  \Ring{\alpha}{\beta-1} \)
  is defined by
  \begin{equation}\label{eq:Robinson operator}
    \Robin{\alpha}
    \left(
      \sum_{n=0}^\infty a_n \dfrac{z^n}{n!}
    \right)
    :=
    \sum_{n=0}^{\infty}
      \dfrac{a_n}{\alpha^{\binom{n}{2}}} 
      \dfrac{z^n}{n!}
    \, ,
  \end{equation}
  where \( (a_n)_{n=0}^\infty \) satisfy~\eqref{eq:EGF F} and~\eqref{eq:expansion}.
\end{definition}

\begin{remark}
\label{remark: Robinson operator via Hadamard product}
  Taking into account \cref{remark: conversion kernel},
  we can consider \( \Robin{\alpha} \) as an operator
  \( \Ring{\alpha}{\beta} \to \Ring{\alpha}{\beta} \)
  for any positive integer \( \beta \), including \( \beta = 1 \).
  This allows us to make sense of the power \( \Robin{\alpha}^m A \) for any \( A \in \Ring{\alpha}{\beta} \) and \( m \in \mathbb Z \).
  The operator \( \Robin{2} \) that we have seen in~\eqref{eq:Robinson's operator} is the particular case of the above operator \( \Robin{\alpha} \) with \( \alpha = 2 \).
  As well as for \( \Robin{2} \), the action of \( \Robin{\alpha} \) can be expressed in terms of the exponential Hadamard product.
  To this end, in relations~\eqref{eq:GF conversion} we need to replace Exponential GFs \( \G(z) \) and \( \Set(z) \) by their bivariate analogues~\eqref{eq:marking edges in graphs} and~\eqref{eq:marking edges in graphs - 2} taken at \( w=\alpha-1 \):
  \[
    \Robin{\alpha}A(z) = A(z) \odot \Set(z,\alpha-1)
	\qquad \text{and} \qquad
    \Robin{\alpha}^{-1}A(z) = A(z) \odot \G(z,\alpha-1) \, .
  \]
  More generally, for any non-zero integer $m$, we have
  \begin{equation}\label{eq:general GF conversion}
    \Robin{\alpha}^{m}A(z) = A(z) \odot \G(z,\alpha^{-m}-1) \, .
  \end{equation}
  This observation is particularly useful for numerical calculations.
\end{remark}

\begin{lemma}
\label{lemma: QDA = 0}
  If \( \alpha \in \mathbb R_{>1} \),
  \( \beta \in \mathbb Z_{>0} \)
  and \( A \in \Ring{\alpha}{\beta} \),
  then
  \[
	\big(
	  \Convert{\alpha}{\beta} (\Robin{\alpha} A)
	\big)(z,w) = 0
    \, .
  \]
\end{lemma}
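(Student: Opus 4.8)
The plan is to read off the coefficients of \( \Robin{\alpha} A \) explicitly, recognise that \( \Robin{\alpha} \) lowers the quadratic growth exponent \( \beta \) by one while preserving every asymptotic coefficient, and then observe that the resulting honest growth rate is too small to register against the scale prescribed by \( \Convert{\alpha}{\beta} \). Concretely, by \cref{definition: Robinson operator} the coefficients of \( \Robin{\alpha} A \) are \( b_n = a_n/\alpha^{\binom{n}{2}} \), and substituting the expansion~\eqref{eq:expansion} of \( (a_n) \) gives
\[
  b_n \approx \alpha^{(\beta-1)\binom{n}{2}}
  \left[
    \sum_{m \geq \M} \alpha^{-mn}
    \sum_{\ell=0}^{\infty} n^{\underline{\ell}}\, a_{m,\ell}^\circ
  \right].
\]
Thus division by \( \alpha^{\binom{n}{2}} \) merely shifts the exponent from \( \beta \) to \( \beta - 1 \), leaving \( \M \) and each \( a_{m,\ell}^\circ \) untouched.

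The crux is then the comparison of this growth rate with the leading factor \( \alpha^{\beta\binom{n}{2}} \) that \( \Convert{\alpha}{\beta} \) demands in~\eqref{eq:expansion}. Writing \( b_n \approx \alpha^{\beta\binom{n}{2}}\cdot\alpha^{-\binom{n}{2}}[\cdots] \), the extracted ratio \( b_n/\alpha^{\beta\binom{n}{2}} \) is of order \( \alpha^{-\binom{n}{2}}\alpha^{-\M n} \) times a polynomial, which decays faster than every basis term \( \alpha^{-mn} n^{\underline{\ell}} \), since the super-exponential factor \( \alpha^{-\binom{n}{2}} \) dominates. Hence each coefficient in the \( (\alpha,\beta) \)-expansion of \( (b_n) \) is forced to vanish, and by \cref{definition: asymptotics} this is precisely the assertion \( \big(\Convert{\alpha}{\beta}(\Robin{\alpha} A)\big)(z,w) = 0 \). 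This is the same log-scale mechanism already exploited in the proof of \cref{lemma: ring inclusion}.

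For \( \beta \geq 2 \) the most economical route is to invoke \cref{lemma: ring inclusion} as a black box: the display above shows \( \Robin{\alpha} A \in \Ring{\alpha}{\beta-1} \), and since \( \beta-1 < \beta \) the lemma yields the vanishing at once. The only situation not literally covered is \( \beta = 1 \) (where \cref{lemma: ring inclusion} would require \( \beta_1 = 0 \notin \mathbb Z_{>0} \)); there \( \Robin{\alpha} A \) has coefficients of order at most \( \alpha^{-\M n} \) times a polynomial, hence at most exponential, so I would instead appeal to \cref{remark: conversion kernel}, which records that series of exponential (or slower) growth have vanishing Coefficient GFs. I anticipate no genuine obstacle; the single point deserving care is the bookkeeping showing that, after extracting \( \alpha^{\beta\binom{n}{2}} \), the remainder is \( o\big(\alpha^{-mn} n^{\underline{\ell}}\big) \) for every fixed \( m \) and \( \ell \), which is exactly the routine comparison already carried out in \cref{lemma: ring inclusion}.
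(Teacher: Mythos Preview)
Your proposal is correct and follows essentially the same approach as the paper: the paper's proof is the single sentence ``This follows directly from \cref{lemma: ring inclusion} and \cref{remark: conversion kernel},'' and you have simply unpacked those two citations, including the case split at \(\beta=1\) that the paper leaves implicit.
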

\begin{proof}
  This follows directly from \cref{lemma: ring inclusion} and \cref{remark: conversion kernel}.
\end{proof}

\begin{definition}
\label{definition: Coefficient operator}
  Let \( \alpha \in \mathbb R_{>1} \) and \( \beta_1,\beta_2 \in \mathbb Z_{>0} \).
  The linear operator
  \( \Change{\alpha}{\beta_1}{\beta_2} \colon
  \Coeff{\alpha}{\beta_1} \to
  \Coeff{\alpha}{\beta_2} \)
  is the mapping that transfers
  a Coefficient GF of type \( (\alpha,\beta_1) \)
  to the Coefficient GF of type \( (\alpha,\beta_2) \) with the same coefficients.
  In other words,
  \begin{equation}\label{eq:Coefficient operator}
    \Change{\alpha}{\beta_1}{\beta_2}
    \left(
      \sum_{m=\M}^{\infty}
      \sum_{\ell=0}^\infty
    		a_{m,\ell}^\circ
    		\dfrac{z^m}{\alpha^{\frac{1}{\beta_1}\binom{m}{2}}}
    		w^\ell
    \right)
    :=
    \sum_{m=\M}^{\infty}
    \sum_{\ell=0}^\infty
      a_{m,\ell}^\circ
      \dfrac{z^m}{\alpha^{\frac{1}{\beta_2}\binom{m}{2}}}
      w^\ell \, ,
  \end{equation}
  where \( \M\in\mathbb{Z} \) and the support of the sequence \( (a^\circ_{m,\ell})_{\ell=0}^\infty \) is finite for any \( m \in \mathbb{Z}_{m\geq\M} \).
\end{definition}

\begin{remark}
  We have already mentioned that the sets \( \Coeff{\alpha}{\beta} \) represent different choices of bases in the same linear space \( \mathbb R[w][[z]] \).
  In its turn, the operator \( \Change{\alpha}{\beta_1}{\beta_2} \) represents a change of basis in this space.
  This corresponds to the change of the Coefficient GF associated with a series,
  when we pass from \( A \in \Ring{\alpha}{\beta_1} \)
  to \( B = \Robin{\alpha}^{\beta_1-\beta_2}A \in \Ring{\alpha}{\beta_2} \)
  whose coefficients are defined by
  \( b_n = \alpha^{\binom{n}{2}(\beta_2-\beta_1)} a_n \).
  As well as~\(\Robin{\alpha} \), the operator \( \Change{\alpha}{\beta_1}{\beta_2} \) can be expressed in terms of the exponential Hadamard product:
  \[
    \Change{\alpha}{\beta_1}{\beta_2}
      A^\circ(z, w) 
    = A^\circ(z, w)
      \odot_z
      \G(z, \alpha^{\frac{1}{\beta_1}-\frac{1}{\beta_2}} - 1) \, .
  \]
  For calculations, the following identity, understood in terms of the Hadamard product, could be useful:
  \[
    \Change{\alpha}{\beta_1}{\beta_2} =
    \Robin{\alpha}^{\frac{1}{\beta_2}-\frac{1}{\beta_1}}.
  \]
\end{remark}

\begin{lemma}
\label{lemma: commutative conversion}
  If \( \alpha \in \mathbb R_{>1} \)
  and \( \beta_1, \beta_2 \in \mathbb Z_{>0} \),
  then, for any \( A\in\Ring{\alpha}{\beta_1} \),
  \[
    \big(
	  \Convert{\alpha}{\beta_2}
	  (\Robin{\alpha}^{\beta_1-\beta_2}A)     
	\big)(z, w)
	=
	\Change{\alpha}{\beta_1}{\beta_2}     
	\Big(
		(\Convert{\alpha}{\beta_1}A)(z, w)
    	\Big)
    \, .
  \]
  In other words, the following diagram is commutative.
  \[
  \begin{CD}
	\Ring{\alpha}{\beta_1}
	@>\Convert{\alpha}{\beta_1}>>
	\Coeff{\alpha}{\beta_1} \\
	@V\Robin{\alpha}^{\beta_1-\beta_2}VV
	@VV\Change{\alpha}{\beta_1}{\beta_2}V\\
	\Ring{\alpha}{\beta_2}
	@>\Convert{\alpha}{\beta_2}>>
	\Coeff{\alpha}{\beta_2} \\
  \end{CD}
  \]
\end{lemma}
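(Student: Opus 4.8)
The plan is to verify the identity directly at the level of coefficients, tracking how the operator \( \Robin{\alpha}^{\beta_1-\beta_2} \) transforms the asymptotic expansion of \( A \), and then matching the result against the action of \( \Change{\alpha}{\beta_1}{\beta_2} \) on the Coefficient GF. Everything reduces to bookkeeping with \cref{definition: asymptotics}, \cref{definition: Robinson operator} and \cref{definition: Coefficient operator}.

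First I would set \( B := \Robin{\alpha}^{\beta_1-\beta_2} A \) and unwind the Robinson operator. By \cref{definition: Robinson operator}, extended to arbitrary integer powers as in \cref{remark: Robinson operator via Hadamard product}, we have \( b_n = \alpha^{(\beta_2-\beta_1)\binom{n}{2}} a_n \). Since \( A \in \Ring{\alpha}{\beta_1} \), its coefficients obey the expansion~\eqref{eq:expansion} with growth factor \( \alpha^{\beta_1 \binom{n}{2}} \). Multiplying through by \( \alpha^{(\beta_2-\beta_1)\binom{n}{2}} \) converts this prefactor into \( \alpha^{\beta_2 \binom{n}{2}} \), while leaving the inner double sum \( \sum_{m\geq\M}\alpha^{-mn}\sum_\ell n^{\underline{\ell}} a_{m,\ell}^\circ \) — and in particular every asymptotic coefficient \( a_{m,\ell}^\circ \) — untouched. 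This simultaneously shows that \( B \in \Ring{\alpha}{\beta_2} \) and that its asymptotic coefficients coincide with those of \( A \), that is \( b_{m,\ell}^\circ = a_{m,\ell}^\circ \) for all \( m \) and \( \ell \).

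Next I would compute both sides from the definitions and observe they agree term by term. On the left, \( (\Convert{\alpha}{\beta_2} B)(z,w) \) collects the \( b_{m,\ell}^\circ \) with the type-\( (\alpha,\beta_2) \) normalization \( z^m / \alpha^{\frac{1}{\beta_2}\binom{m}{2}} \). On the right, \( (\Convert{\alpha}{\beta_1} A)(z,w) \) carries the type-\( (\alpha,\beta_1) \) normalization \( z^m / \alpha^{\frac{1}{\beta_1}\binom{m}{2}} \), and by \cref{definition: Coefficient operator} the operator \( \Change{\alpha}{\beta_1}{\beta_2} \) is designed precisely to replace the exponent \( \frac{1}{\beta_1}\binom{m}{2} \) by \( \frac{1}{\beta_2}\binom{m}{2} \) while preserving each coefficient. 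Because \( b_{m,\ell}^\circ = a_{m,\ell}^\circ \), the two resulting bivariate series are identical monomial by monomial, which is the asserted equality; the commutativity of the displayed diagram is merely a restatement of this.

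I do not expect a genuine obstacle: the argument is a purely formal comparison of normalizations. The one point requiring care is to give meaning to \( \Robin{\alpha}^{\beta_1-\beta_2} \) when \( \beta_2 > \beta_1 \), so that the exponent is negative. This is legitimized by \cref{remark: Robinson operator via Hadamard product}, where \( \Robin{\alpha}^m \) is defined for every \( m \in \mathbb Z \) via the exponential Hadamard product, and it also guarantees that \( B \) lands in \( \Ring{\alpha}{\beta_2} \) irrespective of the sign of \( \beta_1-\beta_2 \). Once this is in place, confirming that the falling-factorial/polynomial part of the expansion is invariant under a pure rescaling by a power of \( \alpha^{\binom{n}{2}} \) is immediate.
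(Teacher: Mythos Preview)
Your proposal is correct and is exactly the approach the paper takes: the paper's proof is a single sentence stating that the result follows directly from \cref{definition: asymptotics}, \cref{definition: Robinson operator} and \cref{definition: Coefficient operator}, and you have simply unpacked what that direct verification entails.
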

\begin{proof}
  This follows directly from \cref{definition: asymptotics}, \cref{definition: Robinson operator} and~\cref{definition: Coefficient operator}.
\end{proof}

\subsection{Transfers and marking variables}
\label{sec: transfers-variables}

  The theory developed in the previous sections can be naturally extended for the case of marking variables.
  The aim of this section is to convince the reader that the results we have seen above are still valid for this extension.
  For simplicity, we consider only one marking variable \( u \).
  The reader will see that the statements we formulate are fulfilled in the case of several variables as well.
  
  Given \( \alpha \in \mathbb R_{>1} \) and \( \beta \in \mathbb Z_{>0} \),
  let \( \Ring{\alpha}{\beta}(u) \) be the set of formal power series \( A = A(z;u) \) of form~\eqref{eq:EGF F}
  whose coefficients \( a_n = a_n(u) \) satisfy~\eqref{eq:expansion} with
  \[
    a^\circ_{m,\ell} = a_{m,\ell}^\circ(u) =
    \sum_{k=0}^{\infty}
      a_{m,\ell;k}^\circ u^k
    \, .
  \]
  Here we suppose that the support of the two-dimensional array \( (a^\circ_{m,\ell;k})_{\ell,k=0}^\infty \) is finite for each \( m \in \mathbb Z_{\geq\M} \).
  In particular, \( a_{m,\ell}^\circ(u) \) are polynomials in \( u \), and relation~\eqref{eq:expansion} can be rewritten as
  \[
	a_n(u) \approx \alpha^{\beta \binom{n}{2}}
	\left[
      \sum_{m\geq\M} \alpha^{-mn}
      	\sum_{\ell=0}^{L_m}
	  	n^{\underline{\ell}} \,
          \sum_{k=0}^{K_m}
           a_{m,\ell;k}^\circ u^k
	\right]
  \]
  for some constants \( L_m \) and \( K_m \).
  In this case, similarly to \cref{definition: asymptotics},
  the Coefficient GF of type~\( (\alpha, \beta) \) associated with \( A(z;u) \) is
  the formal power series \( A^\circ = A^\circ (z,w;u) \) defined by~\eqref{eq:CoefficientGF},
  and the set of Coefficient GFs is denoted by \( \Coeff{\alpha}{\beta}(u) \).
  The operator
  \(
    \Convert{\alpha}{\beta}\colon
    \Ring{\alpha}{\beta}(u) \to
    \Coeff{\alpha}{\beta}(u)
  \)
  is defined as before,
  so that \( \Convert{\alpha}{\beta}A = A^\circ \).

\begin{lemma}
  \label{lemma:[u^k] and Q commute}
  If \( \alpha \in \mathbb R_{>1} \), \( \beta \in \mathbb Z_{>0} \) and \( A \in \Ring{\alpha}{\beta}(u) \), then for any \( \kappa \in \mathbb Z_{\geq 0} \)
  \[
    [u^\kappa] \, (\Convert{\alpha}{\beta}A)(z,w;u)
    =
    \big(
      \Convert{\alpha}{\beta} \, [u^\kappa]A
    \big)(z,w)
    \, .
  \]
\end{lemma}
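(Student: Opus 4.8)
The plan is to reduce the identity to the observation that both the coefficient-extraction $[u^\kappa]$ and the transfer $\Convert{\alpha}{\beta}$ act diagonally on the same double array of asymptotic coefficients. Concretely, I would first show that $[u^\kappa]A \in \Ring{\alpha}{\beta}$ and identify its asymptotic coefficients, and then simply expand both sides of the claimed equality via \cref{definition: asymptotics} and compare them term by term.

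First I would extract $[u^\kappa]$ from the defining expansion of $A \in \Ring{\alpha}{\beta}(u)$. Writing $a^\circ_{m,\ell}(u) = \sum_k a^\circ_{m,\ell;k}u^k$, the hypothesis is that
\[
  a_n(u) \approx \alpha^{\beta\binom{n}{2}}\left[\sum_{m\geq\M}\alpha^{-mn}\sum_{\ell=0}^{L_m}n^{\underline{\ell}}\sum_{k=0}^{K_m}a^\circ_{m,\ell;k}u^k\right],
\]
where for each $m$ the array $(a^\circ_{m,\ell;k})_{\ell,k}$ has finite support. The crucial point is that this finiteness makes the expansion a bundle of finitely many ordinary (scalar) asymptotic expansions graded by powers of $u$: each successive term $f_m(n) = \alpha^{\beta\binom{n}{2}-mn}\sum_\ell n^{\underline{\ell}}a^\circ_{m,\ell}(u)$ is a polynomial in $u$ of bounded degree, and so is the associated $\BigO$ error term. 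Hence I can extract the coefficient of $u^\kappa$ on both sides to obtain
\[
  [u^\kappa]a_n(u) \approx \alpha^{\beta\binom{n}{2}}\left[\sum_{m\geq\M}\alpha^{-mn}\sum_{\ell=0}^{L_m}n^{\underline{\ell}}\,a^\circ_{m,\ell;\kappa}\right].
\]
This is exactly the statement that $[u^\kappa]A \in \Ring{\alpha}{\beta}$, with asymptotic coefficients $([u^\kappa]A)^\circ_{m,\ell} = a^\circ_{m,\ell;\kappa}$.

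Finally I would read off both sides of the desired equality from \cref{definition: asymptotics}. On one hand,
\[
  (\Convert{\alpha}{\beta}A)(z,w;u) = \sum_{m=\M}^\infty\sum_{\ell=0}^\infty\Big(\sum_k a^\circ_{m,\ell;k}u^k\Big)\dfrac{z^m}{\alpha^{\frac{1}{\beta}\binom{m}{2}}}w^\ell,
\]
so applying $[u^\kappa]$ --- which commutes with the formal sums in $z$ and $w$, since $u$ is an independent variable --- gives $\sum_{m,\ell}a^\circ_{m,\ell;\kappa}\,z^m\alpha^{-\frac{1}{\beta}\binom{m}{2}}w^\ell$. On the other hand, by the previous step this double series is precisely $(\Convert{\alpha}{\beta}[u^\kappa]A)(z,w)$, which closes the argument.

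The only genuine subtlety --- and thus the step I expect to require the most care --- is the interchange of $[u^\kappa]$ with the asymptotic expansion in the first step. Everything hinges on the finite-support hypothesis built into the definition of $\Ring{\alpha}{\beta}(u)$: without a uniform bound $K_m$ on the $u$-degree of each term and its error, one could not guarantee that extracting a fixed power of $u$ preserves the $\BigO$ estimate. Once that bookkeeping is made explicit, the remainder is a direct comparison of coefficients.
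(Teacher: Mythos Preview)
Your proposal is correct and follows essentially the same approach as the paper: both sides are expanded via \cref{definition: asymptotics} and shown to equal $\sum_{m,\ell}a^\circ_{m,\ell;\kappa}\,z^m\alpha^{-\frac{1}{\beta}\binom{m}{2}}w^\ell$. The paper dispatches this in a single line (``straightforward calculations''), whereas you supply the intermediate step of verifying $[u^\kappa]A\in\Ring{\alpha}{\beta}$ and flag the finite-support hypothesis as the reason the interchange of $[u^\kappa]$ with the asymptotic expansion is legitimate; this is a welcome clarification of a point the paper leaves implicit.
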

\begin{proof}
  Straightforward calculations show that both expressions are equal to
  \[
    \sum_{m=\M}^{\infty}
    \sum_{\ell=0}^{\infty}
      a_{m,\ell;\kappa}^\circ
        \dfrac{z^m}{\alpha^{\frac{1}{\beta} \binom{m}{2}}}
        w^\ell \, .
  \]
\end{proof}

\begin{notation}
  For a fixed \( \kappa \in \mathbb Z_{\geq 0} \), denote
  \[
    A_\kappa(z) := [u^\kappa] \, A(z;u) \, .
  \]
  It follows from the above that if \( A(z;u) \in \Ring{\alpha}{\beta}(u) \), then \( A_\kappa(z) \in \Ring{\alpha}{\beta} \).
\end{notation}

\begin{proposition}
\label{lemma: operation transfers and marking variables}
  For any fixed \( \alpha\in \mathbb R_{>1} \) and \( \beta \in \mathbb Z_{>0} \), the set \( \Ring{\alpha}{\beta}(u) \) form a ring.
  For each pair \( A,B\in\Ring{\alpha}{\beta}(u) \), the operations of addition and multiplication satisfy
  \begin{equation}\label{eq: sum-transfer and marking variables}
    \big(
      \Convert{\alpha}{\beta} (A + B)
    	\big)(z, w; u)
    =
    (\Convert{\alpha}{\beta} A)(z,w;u)
    +
    (\Convert{\alpha}{\beta} B)(z,w;u)
  \end{equation}
  and
  \begin{equation}\label{eq: prod-transfer and marking variables}
    \big(
	  \Convert{\alpha}{\beta} (A \cdot B)
    \big)(z, w; u)
    =
    A \big(
      \alpha^{\frac{\beta+1}{2}} z^\beta w; u
    \big) \cdot
    (\Convert{\alpha}{\beta} B)(z,w;u)
    +
    B \big(
      \alpha^{\frac{\beta+1}{2}} z^\beta w; u
    \big) \cdot
    (\Convert{\alpha}{\beta} A)(z,w;u)
    \, .
  \end{equation}
\end{proposition}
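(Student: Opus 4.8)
The statement to prove is \cref{lemma: operation transfers and marking variables}, which is the extension of \cref{lemma: operation transfers} to the ring with one marking variable. The plan is to reduce the marked case to the unmarked case coefficient-by-coefficient in $u$, relying on \cref{lemma:[u^k] and Q commute}, which already tells us that extracting a coefficient $[u^\kappa]$ commutes with the transfer $\Convert{\alpha}{\beta}$. Since a polynomial identity in $u$ holds if and only if it holds coefficientwise, it suffices to verify \eqref{eq: sum-transfer and marking variables} and \eqref{eq: prod-transfer and marking variables} after applying $[u^\kappa]$ for every $\kappa \in \mathbb Z_{\geq 0}$.

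First I would establish that $\Ring{\alpha}{\beta}(u)$ is a ring. The additive closure is immediate, since the coefficientwise asymptotic expansion of $a_n(u)+b_n(u)$ is obtained by adding the arrays $a^\circ_{m,\ell;k}$ and $b^\circ_{m,\ell;k}$, and the finiteness-of-support hypothesis is preserved. For multiplicative closure one observes that each $A_\kappa(z) = [u^\kappa]A(z;u)$ lies in $\Ring{\alpha}{\beta}$ by the \textbf{Notation} preceding the statement, and that $[u^\kappa](A\cdot B) = \sum_{j=0}^{\kappa} A_j\,B_{\kappa-j}$ is a finite sum of products of elements of the ring $\Ring{\alpha}{\beta}$, hence itself in $\Ring{\alpha}{\beta}$ by \cref{lemma: operation transfers}; the support conditions in $u$ are again inherited from finiteness of the arrays. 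This reduces everything to the single-variable ring already treated.

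For the transfer identities, the additive formula \eqref{eq: sum-transfer and marking variables} follows by applying \cref{lemma:[u^k] and Q commute} to both sides and invoking the unmarked additive identity \eqref{eq: sum-transfer} on $A_\kappa$ and $B_\kappa$. For the product formula, I would apply $[u^\kappa]$ to the left-hand side of \eqref{eq: prod-transfer and marking variables}, use \cref{lemma:[u^k] and Q commute} to move the coefficient extraction inside $\Convert{\alpha}{\beta}$, expand $[u^\kappa](A\cdot B)=\sum_{j}A_j B_{\kappa-j}$, apply the unmarked product rule \eqref{eq: prod-transfer} to each term $A_j B_{\kappa-j}$, and then recognize the resulting double sum as $[u^\kappa]$ of the right-hand side. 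The key bookkeeping point is that the substitution argument $\alpha^{\frac{\beta+1}{2}}z^\beta w$ in \eqref{eq: prod-transfer} acts only on $z$ and $w$, so it commutes with the variable $u$; consequently
\[
  [u^\kappa]\Big(A(\alpha^{\frac{\beta+1}{2}}z^\beta w;u)\cdot(\Convert{\alpha}{\beta}B)(z,w;u)\Big)
  = \sum_{j=0}^{\kappa} A_j(\alpha^{\frac{\beta+1}{2}}z^\beta w)\cdot(\Convert{\alpha}{\beta}B_{\kappa-j})(z,w),
\]
which matches the reassembled terms exactly. Summing the identity over all $\kappa$ recovers the full formula in $u$.

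The main obstacle is conceptual rather than computational: one must be careful that the finiteness hypotheses defining $\Ring{\alpha}{\beta}(u)$—namely that the three-dimensional array $(a^\circ_{m,\ell;k})$ has finite support in $(\ell,k)$ for each fixed $m$—are genuinely preserved under the ring operations, so that the Cauchy-type products in $u$ remain finite and each coefficient $A_\kappa$ really does land in $\Ring{\alpha}{\beta}$. Once this is checked, no new analytic input beyond \cref{lemma: operation transfers} and \cref{lemma:[u^k] and Q commute} is required, and the remaining steps are formal manipulations of power series in the independent variables $z$, $w$ and $u$.
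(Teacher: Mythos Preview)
Your proposal is correct and follows essentially the same approach as the paper's proof: both reduce the marked identities to the unmarked ones by extracting $[u^\kappa]$, invoking \cref{lemma:[u^k] and Q commute} to commute the extraction with $\Convert{\alpha}{\beta}$, expanding the Cauchy product $[u^\kappa](AB)=\sum_s A_s B_{\kappa-s}$, and then applying \cref{lemma: operation transfers} termwise. Your treatment is slightly more explicit than the paper's about verifying closure of $\Ring{\alpha}{\beta}(u)$ under multiplication and about the finiteness-of-support bookkeeping, but the underlying argument is the same.
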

\begin{proof}
  Relation~\eqref{eq: sum-transfer and marking variables} comes directly from the definitions.
  In order to verify formula~\eqref{eq: prod-transfer and marking variables}, we prove that the corresponding coefficients in \( u \) are the same.
  Indeed, according to \cref{lemma:[u^k] and Q commute} and \cref{lemma: operation transfers}, we have
  \begin{align*}
    [u^\kappa] \, (\Convert{\alpha}{\beta}AB)(z,w;u)
    &=
    \big(
      \Convert{\alpha}{\beta} \, [u^\kappa]AB
    \big)(z,w)
    \\
    &=
    \left(
      \Convert{\alpha}{\beta} 
      \sum\limits_{s=0}^{\kappa}A_s B_{\kappa-s}
    \right)(z,w)
    \\
    &=
    \sum\limits_{s=0}^{\kappa}
    \left(
      A_s \big(
        \alpha^{\frac{\beta+1}{2}} z^\beta w
      \big) \cdot
      (\Convert{\alpha}{\beta} B_{\kappa-s})(z,w)
      +
      B_{\kappa-s} \big(
        \alpha^{\frac{\beta+1}{2}} z^\beta w
      \big) \cdot
      (\Convert{\alpha}{\beta} A_s)(z,w)
    \right)
    \, .
  \end{align*}
  On the other hand, the same tools give us
  \begin{align*}
    [u^\kappa]
    \left(
    A \big(
      \alpha^{\frac{\beta+1}{2}} z^\beta w; u
    \big) \cdot
    (\Convert{\alpha}{\beta} B)(z,w;u)
    \right)
    &=
    \sum\limits_{s=0}^{\kappa}
    A_s \big(
      \alpha^{\frac{\beta+1}{2}} z^\beta w
    \big) \cdot
    [u^{\kappa-s}] \, 
    \big(
      \Convert{\alpha}{\beta} B
    \big)(z,w;u)
    \\
    &=
    \sum\limits_{s=0}^{\kappa}
      A_s \big(
        \alpha^{\frac{\beta+1}{2}} z^\beta w
      \big) \cdot
      (\Convert{\alpha}{\beta} B_{\kappa-s})(z,w)
  \end{align*}
  for the first summand of the right-hand side of~\eqref{eq: prod-transfer and marking variables},
  and a similar relation holds for the second summand.
  Comparing the obtained expressions, we conclude that relation~\eqref{eq: prod-transfer and marking variables} holds.
\end{proof}

\begin{proposition}
\label{lemma: Bender's transfer and marking variables}
  Let \( \alpha \in \mathbb R_{>1} \), \( \beta \in \mathbb Z_{>0} \) and \( A \in \Ring{\alpha}{\beta} \) with \( a_0=0 \).
  If \( F(x;u) \) is a~function analytic in a~neighbourhood of the origin,
  \( G(z;u) = F\big( A(z); u \big) \),
  and \( H(z;u) = \partial_x F(x;u)|_{x=A(z)} \),
  then \( G \in \Ring{\alpha}{\beta}(u) \) and
  \begin{equation}\label{eq: Bender-transfer and marking variables}
    \big(
	  \Convert{\alpha}{\beta} G
    \big)(z, w ;u)
    =
    H \big(
      \alpha^{\frac{\beta+1}{2}} z^\beta w;u
    \big) \cdot
    (\Convert{\alpha}{\beta} A)(z,w)
    \, ,
  \end{equation}
\end{proposition}
\begin{proof}
  Similarly to the proof of \cref{lemma: operation transfers and marking variables}, we verify that the coefficients in \( u \) are the same for both expressions.
  For this purpose, for any \( \kappa \in \mathbb Z_{\geq 0} \), let us introduce
  \[
    F_\kappa(x) := [u^\kappa] \, F(x;u)
    \qquad \mbox{and} \qquad
    H_\kappa(z) := [u^\kappa] \, H(z;u)
    \, .
  \]
  Since
  \(
    [u^\kappa] G (z; u)
     =
    [u^\kappa] F \big( A(z); u \big)
     =
    F_\kappa \big( A(z) \big)
  \)
  and \( H_\kappa(z) = \partial_x F_\kappa(x)|_{x=A(z)} \), due to \cref{lemma:[u^k] and Q commute} and \cref{lemma: Bender's transfer} we have
  \begin{align*}
    [u^\kappa]
    \big(
	  \Convert{\alpha}{\beta} G
    \big)(z, w ;u)
    &=
    \big(
      \Convert{\alpha}{\beta} (F_\kappa\circ A)
    \big)(z,w)
    \\
    &=
    H_\kappa \big(
      \alpha^{\frac{\beta+1}{2}} z^\beta w
    \big) \cdot
    (\Convert{\alpha}{\beta} A)(z,w)
    \\
    &=
    [u^\kappa]
    H \big(
      \alpha^{\frac{\beta+1}{2}} z^\beta w;u
    \big) \cdot
    (\Convert{\alpha}{\beta} A)(z,w)
    \, .
  \end{align*}
\end{proof}

\begin{proposition}
\label{lemma: power transfer and marking variables}
  If \( \alpha \in \mathbb R_{>1} \), \( \beta \in \mathbb Z_{>0} \) and \( A \in \Ring{\alpha}{\beta}(u) \),
  then \( A^m \in \Ring{\alpha}{\beta}(u) \) for each \( m\in\mathbb Z_{\geq0} \) with
  \begin{equation}\label{eq: power-transfer and marking variables}
    \big(
	  \Convert{\alpha}{\beta} A^m
    \big)(z, w; u)
     =
    m \cdot A^{m-1} \big(
      \alpha^{\frac{\beta+1}{2}} z^\beta w; u
    \big)
     \cdot
    (\Convert{\alpha}{\beta} A)(z, w; u)
    \, .
  \end{equation}
  If, additionally, \( [u^0]A(z;u) = 1 \), then \eqref{eq: power-transfer} holds for any \( m\in\mathbb Z \).
\end{proposition}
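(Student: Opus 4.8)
The plan is to follow the same route as the proof of \cref{lemma: power transfer}, upgrading each ingredient to its marking-variable version. Throughout, write \( \sigma(A) \) for the shifted evaluation \( A\big(\alpha^{\frac{\beta+1}{2}}z^\beta w;u\big) \) and \( Q_A := (\Convert{\alpha}{\beta}A)(z,w;u) \), so that the product rule \eqref{eq: prod-transfer and marking variables} of \cref{lemma: operation transfers and marking variables} reads \( Q_{AB}=\sigma(A)\,Q_B+\sigma(B)\,Q_A \). The crucial structural observation is that \( \sigma \) is a ring homomorphism on formal power series (it is merely the substitution \( z\mapsto\alpha^{\frac{\beta+1}{2}}z^\beta w \)), so \( \sigma(A^m)=\sigma(A)^m \) and, whenever \( A \) is invertible, \( \sigma(A^{-1})=\sigma(A)^{-1} \). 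For a non-negative integer \( m \) I would argue by induction: the case \( m=0 \) is immediate since \( A^0=1 \) lies in the kernel of \( \Convert{\alpha}{\beta} \) by \cref{remark: conversion kernel}, and the inductive step uses \( A^{m+1}=A\cdot A^m \) together with the product rule, giving \( Q_{A^{m+1}}=\sigma(A)Q_{A^m}+\sigma(A^m)Q_A=\sigma(A)\,m\,\sigma(A)^{m-1}Q_A+\sigma(A)^mQ_A=(m+1)\sigma(A)^mQ_A \), which is exactly \eqref{eq: power-transfer and marking variables} for \( m+1 \). Closure \( A^m\in\Ring{\alpha}{\beta}(u) \) for \( m\ge0 \) is free, since \cref{lemma: operation transfers and marking variables} already makes \( \Ring{\alpha}{\beta}(u) \) a ring.

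For the additional claim covering negative \( m \), the first step is to extract the base case \( m=-1 \) purely algebraically: applying the product rule to \( A\cdot A^{-1}=1 \) and using \( Q_1=0 \) yields \( 0=\sigma(A)Q_{A^{-1}}+\sigma(A)^{-1}Q_A \), hence \( Q_{A^{-1}}=-\sigma(A)^{-2}Q_A \), which is \eqref{eq: power-transfer and marking variables} for \( m=-1 \). I would then descend by induction: writing \( A^{m-1}=A^{m}\cdot A^{-1} \) and feeding the already-known formulae for \( A^{m} \) and \( A^{-1} \) into the product rule, the two contributions collapse (using \( \sigma(A^{m})\sigma(A)^{-2}=\sigma(A)^{m-2} \) and \( \sigma(A)^{-1}\sigma(A)^{m-1}=\sigma(A)^{m-2} \)) to \( m\,\sigma(A)^{m-1}Q_A \), completing the step. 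This uses only the product rule and the homomorphism property of \( \sigma \), so no separate Bender-type argument is needed once invertibility is in hand.

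The part I expect to be the real obstacle is \emph{closure under inversion}: one must show \( A^{-1}\in\Ring{\alpha}{\beta}(u) \) in the first place. This is where the normalisation hypothesis must be used, exactly as \( a_0=1 \) is used in \cref{lemma: power transfer}. Writing \( A=1+N \) with \( N=A-1 \), the inverse is the composition \( A^{-1}=\sum_{j\ge0}(-N)^j \) with \( F(x)=(1+x)^{-1} \); for this to define an element of \( \Ring{\alpha}{\beta}(u) \) one needs \( N \) to have positive order in \( z \), so that \( N^j=\BigO(z^j) \) and only finitely many \( j \) contribute to each coefficient in \( z \). Under that condition each partial power \( N^j \) lies in \( \Ring{\alpha}{\beta}(u) \) by ring closure, and the finiteness at every \( z \)-order guarantees that the two-dimensional asymptotic array of \( A^{-1} \) still has finite support in \( (\ell,k) \) for each fixed \( m \) — the defining requirement of \( \Ring{\alpha}{\beta}(u) \). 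I would verify this finite-support preservation coefficientwise in \( u \), invoking \cref{lemma:[u^k] and Q commute} to reduce to the \( u \)-free statement and then the closure already established for \( \Ring{\alpha}{\beta} \); the delicate point, and the one to check most carefully, is that the infinite geometric summation does not destroy finiteness of the \( u \)-support, which is precisely what positive \( z \)-order of \( N \) secures. Note that one cannot simply invoke the marking-variable Bender transfer \cref{lemma: Bender's transfer and marking variables} here, since that result requires the \emph{inner} series to be free of \( u \), whereas \( N \) depends on \( u \); this is why the product-rule and coefficient-extraction route is the natural one.
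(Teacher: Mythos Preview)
Your treatment of non-negative \(m\) by induction on the product rule is correct and is essentially what the paper does (it simply cites \cref{lemma: operation transfers and marking variables}). The product-rule identity \(0=Q_{A\cdot A^{-1}}=\sigma(A)\,Q_{A^{-1}}+\sigma(A)^{-1}\,Q_A\) is also a clean way to recover the formula for \(m=-1\), \emph{once} closure under inversion is known.

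The genuine gap is in the closure step. The hypothesis \([u^0]A(z;u)=1\) says that \(N=A-1\) is divisible by \(u\), \emph{not} by \(z\); for instance \(A(z;u)=1+u\,\G(z)\) satisfies the hypothesis but has \([z^0]N=u\neq0\). Hence your claim that ``\(N\) has positive order in \(z\)'' is unfounded, and the \(z\)-adic truncation you rely on is unavailable. The paper exploits the correct grading: since \(N\) is divisible by \(u\), the power \(N^s\) is divisible by \(u^{s}\), so for every fixed \(\kappa\) one has the \emph{finite} sum \([u^\kappa]A^{-1}=\sum_{s=0}^{\kappa}(-1)^s[u^\kappa]N^s\). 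Then \cref{lemma:[u^k] and Q commute} pushes \(\Convert{\alpha}{\beta}\) through \([u^\kappa]\), landing in the marking-free ring \(\Ring{\alpha}{\beta}\), where \cref{lemma: operation transfers} and \cref{lemma: power transfer} apply termwise; the paper then checks that the result matches \([u^\kappa]\) of \(-\sigma(A)^{-2}Q_A\). Your proposal already gestures at working coefficientwise in \(u\) and invoking \cref{lemma:[u^k] and Q commute}; the fix is to base the truncation on \(u\)-divisibility of \(N\) rather than on a non-existent \(z\)-divisibility.
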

\begin{proof}
  The essential part of the proof concerns the case when \( m = -1 \),
  since the rest follows from~\cref{lemma: operation transfers and marking variables}.
  Let us check that, for each \( \kappa \in \mathbb Z_{>0} \),
  extracting \( \kappa \)th coefficients from both sides of~\eqref{eq: power-transfer and marking variables} leads to the same result.
  To do this, first notice that \( A_+(z;u) = A(z;u) - 1 \) is divisible by \( u \).
  Hence, \( \big(A_+(z;u)\big)^k \) is divisible by \( u^{\kappa+1} \) for any \( k > \kappa \) and
  \[
    [u^\kappa]\big(A(z;u)\big)^{-1}
     =
    \sum\limits_{s=0}^\kappa (-1)^s \,
      [u^\kappa]\big(A_+(z;u)\big)^s
    \, .
  \]
  As a consequence, \cref{lemma:[u^k] and Q commute} and \cref{lemma: operation transfers} imply that
  \[
    [u^\kappa]\big(
	  \Convert{\alpha}{\beta} A^{-1}
    \big)(z, w; u)
     =
    [u^\kappa]
    \left[
      \sum\limits_{s=1}^\kappa (-1)^s s \cdot
      A_+^{s-1} \big(
        \alpha^{\frac{\beta+1}{2}} z^\beta w; u
      \big)
       \cdot
      (\Convert{\alpha}{\beta} A_+)(z, w; u)
    \right]
    \, .
  \]
  On the other hand,
  \begin{multline*}
    -[u^\kappa]
    \Bigg[
      A^{-2} \big(
        \alpha^{\frac{\beta+1}{2}} z^\beta w; u
      \big) \cdot
      (\Convert{\alpha}{\beta} A)(z, w; u)
    \Bigg]
     = \\
    -[u^\kappa]
    \left[
      \left(
        \sum\limits_{s=0}^\infty (-1)^s 
        A_+^s \big(
          \alpha^{\frac{\beta+1}{2}} z^\beta w; u
        \big)          
      \right)^2
      (\Convert{\alpha}{\beta} A)(z, w; u)
    \right]
    \, ,
  \end{multline*}
  and, taking into account that
  \(
    (\Convert{\alpha}{\beta} A)(z, w; u)
     =
    (\Convert{\alpha}{\beta} A_+)(z, w; u)
  \)
  is divisible by \( u \), this gives us the same expression. 
\end{proof}

  Similarly to what has been done in \cref{sec: transfers-relations}, we define operators
  \(
    \Robin{\alpha} \colon
    \Ring{\alpha}{\beta}(u) \to
    \Ring{\alpha}{\beta-1}(u)
  \)
  and
  \(
    \Change{\alpha}{\beta_1}{\beta_2} \colon
    \Coeff{\alpha}{\beta_1}(u) \to
    \Coeff{\alpha}{\beta_2}(u)
    \, ,
  \)
  so that they satisfy relations~\eqref{eq:Robinson operator} and~\eqref{eq:Coefficient operator}, respectively.
  In this case, the following generalization of \cref{lemma: commutative conversion} holds.

\begin{lemma}
\label{lemma: generalized commutative conversion}
  If \( \alpha \in \mathbb R_{>1} \)
  and \( \beta_1, \beta_2 \in \mathbb Z_{>0} \),
  then, for any \( A\in\Ring{\alpha}{\beta_1}(u) \),
  \[
    \big(
	  \Convert{\alpha}{\beta_2}
	  (\Robin{\alpha}^{\beta_1-\beta_2}A)     
	\big)(z, w; u)
	=
	\big(
		\Change{\alpha}{\beta_1}{\beta_2}     
		(\Convert{\alpha}{\beta_1}A)
    	\big)(z, w; u)
    \, .
  \]
\end{lemma}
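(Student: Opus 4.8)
The plan is to follow exactly the reduction-to-the-univariate-case strategy already used in the proofs of \cref{lemma: operation transfers and marking variables} and \cref{lemma: Bender's transfer and marking variables}: verify the claimed identity coefficient-by-coefficient in the marking variable \( u \), and on each such coefficient invoke the univariate \cref{lemma: commutative conversion}. Concretely, I would fix \( \kappa \in \mathbb Z_{\geq 0} \) and apply the extraction operator \( [u^\kappa] \) to both sides, aiming to show that the two resulting elements of \( \Coeff{\alpha}{\beta_2} \) coincide.

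The central observation to establish first is that both operators appearing in the diagram, \( \Robin{\alpha} \) and \( \Change{\alpha}{\beta_1}{\beta_2} \), commute with \( [u^\kappa] \). This is immediate from their definitions: by \cref{definition: Robinson operator}, \( \Robin{\alpha} \) rescales the \( n \)th coefficient \( a_n(u) \) by the \( u \)-independent factor \( \alpha^{-\binom{n}{2}} \), so it acts on each power of \( u \) separately; likewise, by \cref{definition: Coefficient operator}, \( \Change{\alpha}{\beta_1}{\beta_2} \) only alters the power of \( \alpha \) attached to \( z^m \) and leaves the coefficients \( a^\circ_{m,\ell}(u) \), hence their \( u \)-expansions, untouched. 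Combining this with \cref{lemma:[u^k] and Q commute}, which lets \( [u^\kappa] \) pass through \( \Convert{\alpha}{\beta_2} \), turns the left-hand side into \( \big( \Convert{\alpha}{\beta_2} (\Robin{\alpha}^{\beta_1-\beta_2} A_\kappa) \big)(z,w) \) and the right-hand side into \( \Change{\alpha}{\beta_1}{\beta_2}\big( (\Convert{\alpha}{\beta_1} A_\kappa)(z,w) \big) \), where \( A_\kappa(z) = [u^\kappa] A(z;u) \in \Ring{\alpha}{\beta_1} \).

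At this point the two expressions are precisely the two sides of the univariate \cref{lemma: commutative conversion} applied to \( A_\kappa \), so they are equal. Since \( \kappa \) was arbitrary and two elements of \( \mathbb R[w][[z]] \) with matching \( u \)-coefficients are identical, the stated identity follows.

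I would expect no genuine obstacle here: the only point requiring care is the bookkeeping that both \( \Robin{\alpha} \) and \( \Change{\alpha}{\beta_1}{\beta_2} \) are \emph{scalar in \( u \)}, i.e.\ their defining rescalings depend only on \( n \) (respectively \( m \)) and not on \( u \). Once this is noted, the commutation with \( [u^\kappa] \) and the appeals to \cref{lemma:[u^k] and Q commute} and \cref{lemma: commutative conversion} are entirely routine.
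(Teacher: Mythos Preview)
Your proposal is correct and amounts to the same argument the paper gives: the paper's proof is simply ``This follows directly from definitions,'' and your coefficient-by-coefficient reduction via \( [u^\kappa] \) together with \cref{lemma:[u^k] and Q commute} and \cref{lemma: commutative conversion} is precisely an explicit unpacking of that claim. The key observation you isolate---that \( \Robin{\alpha} \) and \( \Change{\alpha}{\beta_1}{\beta_2} \) act scalarly in \( u \)---is exactly what makes the definitions extend verbatim to the marked setting.
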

\begin{proof}
  This follows directly from definitions.
\end{proof}

\section{Digraphs}
\label{section:digraphs}

\subsection{Asymptotics for graphs and tournaments}
\label{sec:graphs-tournaments}

  In this section, as a first example of the application of our method, we discuss asymptotics of undirected graphs and tournaments.
  Both asymptotics are known:
  they were established for the first time in 1970 by Wright~\cite{wright1970asymptotic, wright1970the}
  and were combinatorially interpreted in 2021 by Monteil and Nurligareev~\cite{monteil2021asymptotics}.
  Here we, first, revisit these results in terms of Coefficient GFs by applying asymptotic transfer,
  and second, employ the obtained results to get more general asymptotics.
  Namely, we obtain the Coefficient GFs of graphs and tournaments with a marking variables for the number of connected graphs and irreducible tournaments, respectively.
  The latter result appears to be novel in its general form, although essential information related to the asymptotics under consideration (including the interpretation of the coefficients) can be obtained in another way; see~\cite{monteilSEQ} and~\cite{monteilANTISEQ}.

\begin{lemma}
\label{example: graphs and tournaments}
  The Exponential GFs \( \G(z) \), \( \T(z) \), and \( \D(z) \) of undirected graphs, tournaments, and directed graphs, respectively,
  belong to the rings \( \Ring{2}{1} \), \( \Ring{2}{1} \), and \( \Ring{2}{2} \).
  Their Coefficient GFs of type \( (2,1) \), \( (2,1) \), and \( (2,2) \), respectively, satisfy
  \[
	(\Convert{2}{1}\, \G)(z,w) =
	(\Convert{2}{1}\, \T)(z,w) = 1
	\qquad \mbox{and} \qquad
	(\Convert{2}{2}\, \D)(z,w) = 1.
  \]
\end{lemma}
\begin{proof}
  This follows immediately from \cref{definition: asymptotics}, since
  \[
	\G(z) = \T(z) = \sum_{n=0}^{\infty} 2^{\binom{n}{2}} \dfrac{z^n}{n!}
	\qquad \mbox{and} \qquad
	\D(z) = \sum_{n=0}^{\infty} 2^{2\binom{n}{2}} \dfrac{z^n}{n!}
	\, .
  \]
\end{proof}

\begin{theorem}
\label{theorem: connected graphs asymptotics}
  The Exponential GF \( \CG(z) \) of connected graphs belongs to the ring \( \Ring{2}{1} \) and its Coefficient GF of type \( (2,1) \) satisfies
  \begin{equation}\label{eq: connected graphs asymptotics}
	(\Convert{2}{1}\, \CG) = 1 - \I(2zw).
  \end{equation}
\end{theorem}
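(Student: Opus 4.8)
The plan is to realize $\CG$ as an analytic function applied to $\G$, so that \cref{lemma: Bender's transfer} applies directly. The starting point is the exponential formula \eqref{eq:G=exp(CG)}, which gives $\CG(z) = \log \G(z)$. Since there is no connected graph on the empty vertex set, $\CG(0)=0$, while $\G(0)=1$; I would therefore write $\CG = \log\bigl(1 + (\G - 1)\bigr) = F \circ A$ with $F(x) = \log(1+x)$ and $A = \G - 1$. The function $F$ is analytic in a neighbourhood of the origin, and by \cref{example: graphs and tournaments} together with \cref{remark: conversion kernel} (the constant $1$ has a positive radius of convergence, hence transfers to $0$), the series $A = \G - 1$ lies in $\Ring{2}{1}$ with $a_0 = 0$ and $(\Convert{2}{1} A)(z,w) = (\Convert{2}{1}\G)(z,w) = 1$. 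This is exactly the input required by \cref{lemma: Bender's transfer}.

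Next I would apply \cref{lemma: Bender's transfer} with $\alpha = 2$ and $\beta = 1$, for which $\alpha^{\frac{\beta+1}{2}} z^\beta w = 2zw$. The lemma simultaneously gives $\CG = F \circ A \in \Ring{2}{1}$ and
\[
(\Convert{2}{1}\CG)(z,w) = H(2zw)\cdot(\Convert{2}{1}A)(z,w),
\]
where $H(z) = \partial_x F(x)|_{x = A(z)} = F'\bigl(\G(z) - 1\bigr) = \dfrac{1}{\G(z)}$. The crucial combinatorial input is relation \eqref{eq:1/G=1-IT}, namely $\dfrac{1}{\G(z)} = 1 - \I(z)$, which identifies $H(z) = 1 - \I(z)$ and hence $H(2zw) = 1 - \I(2zw)$. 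Since $(\Convert{2}{1}A)(z,w) = 1$, I would conclude $(\Convert{2}{1}\CG)(z,w) = \bigl(1 - \I(2zw)\bigr)\cdot 1 = 1 - \I(2zw)$, which is precisely \eqref{eq: connected graphs asymptotics}, and membership $\CG \in \Ring{2}{1}$ comes for free from the same lemma.

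The argument is essentially mechanical once the decomposition is chosen, so the work is conceptual rather than computational, and the main obstacle is recognising the right setup. The key point is that one should write $\CG$ as $\log \G$ (rather than viewing $\G = e^{\CG}$), because it is $\G$ — not $\CG$ — that is known a priori to lie in $\Ring{2}{1}$ with transfer $1$; applying the transfer in this direction is what lets us deduce $\CG \in \Ring{2}{1}$ instead of assuming it. The second conceptual ingredient is the identity $1/\G = 1 - \I$, which is exactly what converts the derivative factor $H$ into the irreducible-tournament series and explains the appearance of $\I$ in the answer. The remaining points merely need checking: the analyticity of $\log(1+x)$ at the origin and the condition $a_0 = 0$ for $\G - 1$ (the hypotheses of \cref{lemma: Bender's transfer}), together with the linearity step $(\Convert{2}{1}(\G-1)) = (\Convert{2}{1}\G)$, which follows from \cref{lemma: operation transfers} and \cref{remark: conversion kernel}.
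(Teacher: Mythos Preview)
Your proof is correct and follows essentially the same route as the paper: write $\CG = \log\bigl(1+(\G-1)\bigr)$, apply \cref{lemma: Bender's transfer} with $\alpha=2$, $\beta=1$ to $A=\G-1$ and $F(x)=\log(1+x)$, and identify $H(z)=1/\G(z)=1-\I(z)$ via \eqref{eq:1/G=1-IT}. Your extra care in justifying $(\Convert{2}{1}(\G-1))=(\Convert{2}{1}\G)$ through \cref{remark: conversion kernel} and linearity is a welcome level of detail that the paper leaves implicit.
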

\begin{proof}
  As we have seen in \cref{example: graphs and tournaments},
  the Exponential GF \( \G(z) \) of graphs belongs to \( \Ring{2}{1} \) with
  \[
	(\Convert{2}{1}\, \G)(z,w) = 1.
  \]
  Since the Exponential GF of connected graphs satisfy the exponential formula
  \[
	\CG(z) = \log\big(\G(z)\big),
  \]
  we can apply \cref{lemma: Bender's transfer} to \( A(z) = \G(z) - 1 \) and \( F(x) = \log(1 + x)\) with \( \alpha = 2 \) and \( \beta = 1 \).
  Taking into account formulae~\eqref{eq:T=1/(1-IT)} and~\eqref{eq:G=T}, we have 
  \[
	H(z) = \dfrac{1}{\G(z)} = 1 - \I(z),
  \]
  which implies target relation~\eqref{eq: connected graphs asymptotics}.
\end{proof}

\begin{corollary}
\label{cor: graphs with m components asymptotics}
  The bivariate Exponential GF \( G(z; t) \) of graphs
  with a variable \( t \) that marks the number of connected components
  belongs to the ring \( \Ring{2}{1}(t) \) and
  its Coefficient GF of type \( (2,1) \) satisfies
  \[
	(\Convert{2}{1}\, \G)(z,w;t)
	 =
	t \cdot \G(2zw;t) \cdot \big( 1 - \I(2zw) \big)
	\, .
  \]
  In particular, for any \( m \in \mathbb Z_{\geq0} \), the asymptotics of graphs with \( (m+1) \) connected components is given by
  \begin{equation}\label{eq: graphs with fixed number of connected components}
	[t^{m+1}](\Convert{2}{1}\, \G)(z,w;t)
	 =
    \dfrac{1}{m!} \CG^m(2zw)
	 \cdot
	\big( 1 - \I(2zw) \big)
	\, .  
  \end{equation}
\end{corollary}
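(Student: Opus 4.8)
The plan is to obtain both identities as a direct application of the transfer rule for compositions with an analytic function carrying a marking variable, namely \cref{lemma: Bender's transfer and marking variables}, combined with the expansion of connected graphs from \cref{theorem: connected graphs asymptotics}. The starting point is the bivariate exponential formula~\eqref{eq:G(t)=exp(tCG)},
\[
  \G(z;t) = e^{t \cdot \CG(z)},
\]
which exhibits $\G(z;t)$ as the composition $F\big(\CG(z);t\big)$ with the analytic function $F(x;t) = e^{tx}$. Since $\CG \in \Ring{2}{1}$ by \cref{theorem: connected graphs asymptotics} and has vanishing constant term ($\CG(0)=0$, as a connected graph has at least one vertex), the hypotheses of \cref{lemma: Bender's transfer and marking variables} are met with $\alpha = 2$ and $\beta = 1$, the marking variable $u$ being replaced by $t$.

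First I would compute the auxiliary series $H$ prescribed by that proposition. Differentiating $F(x;t) = e^{tx}$ in $x$ and evaluating at $x = \CG(z)$ gives
\[
  H(z;t) = \partial_x F(x;t)\big|_{x=\CG(z)} = t\, e^{t\,\CG(z)} = t \cdot \G(z;t).
\]
Since $\alpha^{(\beta+1)/2} z^\beta w = 2^{1} z\, w = 2zw$ for $\alpha=2$, $\beta=1$, the transfer rule~\eqref{eq: Bender-transfer and marking variables} yields
\[
  (\Convert{2}{1}\, \G)(z,w;t) = H(2zw;t) \cdot (\Convert{2}{1}\, \CG)(z,w) = t \cdot \G(2zw;t) \cdot (\Convert{2}{1}\, \CG)(z,w).
\]
Substituting $(\Convert{2}{1}\, \CG)(z,w) = 1 - \I(2zw)$ from \cref{theorem: connected graphs asymptotics} produces the first claimed identity, and at the same time certifies the membership $\G(z;t) \in \Ring{2}{1}(t)$.

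For the refined statement on graphs with exactly $m+1$ components, I would extract the coefficient of $t^{m+1}$ from both sides of the identity just established. As $\I(2zw)$ does not depend on $t$, the factor $1 - \I(2zw)$ passes outside the extraction, and it remains to evaluate $[t^{m+1}]\big(t \cdot \G(2zw;t)\big) = [t^{m}]\,\G(2zw;t)$. Expanding the exponential formula as $\G(z;t) = \sum_{k\geq 0} t^k \CG^k(z)/k!$ gives $[t^m]\,\G(2zw;t) = \CG^m(2zw)/m!$, whence
\[
  [t^{m+1}](\Convert{2}{1}\, \G)(z,w;t) = \frac{1}{m!}\,\CG^m(2zw)\cdot\big(1-\I(2zw)\big),
\]
which is formula~\eqref{eq: graphs with fixed number of connected components}.

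I do not anticipate a genuine obstacle: once the correct instance of \cref{lemma: Bender's transfer and marking variables} is set up, the argument is pure bookkeeping. The only point demanding slight care is the assignment of roles, namely that $\CG$ (and not $\G(z;t)$) plays the part of the series $A \in \Ring{2}{1}$, while the entire $t$-dependence is absorbed into the analytic function $F(x;t) = e^{tx}$; this is exactly what makes the $t$-free transfer $(\Convert{2}{1}\, \CG)$ appear on the right-hand side, and hence what reduces the whole computation to the known expansion of \cref{theorem: connected graphs asymptotics}.
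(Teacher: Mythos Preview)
Your proof is correct and follows essentially the same approach as the paper: apply \cref{lemma: Bender's transfer and marking variables} to $A(z)=\CG(z)$ and $F(x;t)=e^{tx}$ with $\alpha=2$, $\beta=1$, then invoke \cref{theorem: connected graphs asymptotics}. Your write-up is in fact more detailed than the paper's, which simply names the ingredients and leaves the computation of $H$ and the extraction of $[t^{m+1}]$ to the reader.
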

\begin{proof}
  Taking into account relation~\eqref{eq:G(t)=exp(tCG)}, it is sufficient to apply \cref{lemma: Bender's transfer and marking variables} to \( A(z) = \CG(z) \) and \( F(x;t) =e^{tx} \) with \( \alpha = 2 \) and \( \beta = 1 \).
\end{proof}

\begin{remark}
\label{remark: graphs references}
  Relation~\eqref{eq: connected graphs asymptotics} corresponds to the asymptotic expansion
  \[
	\cg_n \approx 2^{\binom{n}{2}}
	\left(1 -
	  \sum\limits_{k\geq1}
	  \ir_k\binom{n}{k}
		\dfrac{2^{\binom{k+1}{2}}}{2^{kn}}
	  \right)
  \]
  proved in~\cite{monteil2021asymptotics}, where \( \cg_n \) and \( \ir_n \) are the numbers of connected graphs and irreducible tournaments of size $n$, respectively.
  Formula~\eqref{eq: graphs with fixed number of connected components} reflects the asymptotics from~\cite[Theorem 7.3.1]{nurligareev2022irreducibility},
  see also~\cite{monteilANTISEQ}.
\end{remark}

\begin{theorem}
\label{theorem: irreducible tournaments asymptotics}
  The Exponential GF \( \I(z) \) of irreducible tournaments belongs to the ring \( \Ring{2}{1} \) and its Coefficient GF of type \( (2,1) \) satisfies
  \begin{equation}\label{eq: irreducible tournaments asymptotics}
	(\Convert{2}{1}\, \I)(z,w) = \big(1 - \I(2zw)\big)^2.
  \end{equation}
\end{theorem}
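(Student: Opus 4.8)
The plan is to mirror the proof of \cref{theorem: connected graphs asymptotics}, applying \cref{lemma: Bender's transfer} to a suitable composition. The starting point is the closed form for \( \I(z) \) obtained from relations~\eqref{eq:T=1/(1-IT)} and~\eqref{eq:G=T}: since \( \T(z) = \G(z) = 1/\big(1 - \I(z)\big) \), one has
\[
  \I(z) = 1 - \dfrac{1}{\G(z)}.
\]
I would write this as a composition \( \I = F \circ A \), where \( A(z) = \G(z) - 1 \) and \( F(x) = \dfrac{x}{1+x} \). Here \( A \in \Ring{2}{1} \) (by \cref{example: graphs and tournaments}) with \( a_0 = 0 \), as required, and \( F \) is analytic in a neighbourhood of the origin since its only singularity is at \( x = -1 \). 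This setup simultaneously establishes that \( \I \in \Ring{2}{1} \), since \cref{lemma: Bender's transfer} guarantees \( F \circ A \in \Ring{2}{1} \).

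Next I would compute the function \( H \) appearing in \cref{lemma: Bender's transfer}, namely \( H(z) = \partial_x F(x)|_{x = A(z)} \). Since \( F'(x) = (1+x)^{-2} \), substituting \( x = \G(z) - 1 \) gives
\[
  H(z) = \dfrac{1}{\G(z)^2}.
\]
Invoking relation~\eqref{eq:1/G=1-IT}, which states \( \G(z)^{-1} = 1 - \I(z) \), this becomes \( H(z) = \big(1 - \I(z)\big)^2 \).

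Finally, I would apply \cref{lemma: Bender's transfer} with \( \alpha = 2 \) and \( \beta = 1 \), so that \( \alpha^{(\beta+1)/2} z^\beta w = 2zw \). By \cref{example: graphs and tournaments} we have \( (\Convert{2}{1}\,\G)(z,w) = 1 \), and hence \( (\Convert{2}{1}\, A)(z,w) = 1 \) as well, the constant \( 1 \) lying in the kernel by \cref{remark: conversion kernel}. The transfer rule~\eqref{eq: Bender-transfer} then yields
\[
  (\Convert{2}{1}\, \I)(z,w) = H(2zw) \cdot (\Convert{2}{1}\, A)(z,w) = \big(1 - \I(2zw)\big)^2,
\]
which is exactly~\eqref{eq: irreducible tournaments asymptotics}.

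I expect essentially no serious obstacle here: the only points requiring care are verifying that \( A = \G - 1 \) has vanishing constant term (so that \cref{lemma: Bender's transfer} applies) and choosing the rational function \( F \) correctly so that \( F \circ (\G - 1) \) reproduces \( 1 - \G^{-1} \). Alternatively, one could bypass the explicit \( F \) and instead apply the power-transfer \cref{lemma: power transfer} with \( m = -1 \) to \( \G \) directly, using \( (\Convert{2}{1}\,\G)(z,w)=1 \); this gives \( (\Convert{2}{1}\,\G^{-1})(z,w) = -\G^{-2}(2zw) \), and then the linear relation \( \I = 1 - \G^{-1} \) produces the same answer once one notes that the constant term contributes nothing to the Coefficient GF.
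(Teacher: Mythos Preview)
Your proof is correct and follows essentially the same approach as the paper: the paper also applies \cref{lemma: Bender's transfer} to \( A(z)=\T(z)-1 \) (identical to your \( \G(z)-1 \) by~\eqref{eq:G=T}) and \( F(x)=1-(1+x)^{-1} \), which is the same rational function as your \( x/(1+x) \). The only cosmetic difference is that the paper phrases everything in terms of \( \T \) rather than \( \G \).
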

\begin{proof}
  Due to \cref{example: graphs and tournaments},
  the Exponential GF of tournaments belongs to \( \Ring{2}{1} \) and
  \[
	(\Convert{2}{1}\, \T)(z,w) = 1.
  \]
  According to~\eqref{eq:T=1/(1-IT)}, the Exponential GF of irreducible tournaments satisfy
  \[
	\I(z) = 1 -	\dfrac{1}{\T(z)}.
  \]
  Hence, to get relation~\eqref{eq: irreducible tournaments asymptotics},
  it is sufficient to apply \cref{lemma: Bender's transfer} to \( A(z) = \T(z) - 1 \) and \( F(x) = 1 - (1+x)^{-1}\) with \( \alpha = 2 \) and \( \beta = 1 \).
\end{proof}

\begin{corollary}
\label{cor: tournament with m irreducible parts}
  The bivariate Exponential GF \( T(z; t) \) of tournaments
  with a variable \( t \) that marks the number of irreducible parts
  belongs to the ring \( \Ring{2}{1}(t) \) and
  its Coefficient GF of type \( (2,1) \) satisfies
  \[
	(\Convert{2}{1}\, \T)(z,w;t)
	 =
	t \cdot \left(
	  \T(2zw;t) \cdot \big( 1 - \I(2zw) \big)
	\right)^2
	\, .
  \]
  In particular, for any \( m \in \mathbb Z_{\geq0} \), the asymptotics of tournaments with \( (m+1) \) irreducible parts is given by
  \begin{equation}\label{eq: tournaments with fixed number of irreducible parts}
	[t^{m+1}](\Convert{2}{1}\, \T)(z,w;t)
	 =
    (m+1) \cdot \I^m(2zw)
	 \cdot
	\big( 1 - \I(2zw) \big)^2
	\, .
  \end{equation}
\end{corollary}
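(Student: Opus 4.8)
The plan is to follow exactly the template of the proof of \cref{cor: graphs with m components asymptotics}, replacing the exponential formula for graphs by the rational relation for tournaments and invoking the marking-variable version of Bender's transfer. Recall from the bivariate relation~\eqref{eq:bivariate:SSD=exp(SCD)} that marking irreducible parts gives
\[
  \T(z;t) = \dfrac{1}{1 - t\,\I(z)},
\]
so that \( \T(\cdot\,;t) = F\bigl(\I(\cdot\,);t\bigr) \) for the analytic function \( F(x;t) = (1-tx)^{-1} \). By \cref{theorem: irreducible tournaments asymptotics} we already know \( \I \in \Ring{2}{1} \), and its constant term vanishes: since \( \T(0)=1 \) forces \( \I = 1 - 1/\T \) to vanish at the origin, the hypothesis \( a_0 = 0 \) of \cref{lemma: Bender's transfer and marking variables} is satisfied. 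Hence \( \T(z;t) \in \Ring{2}{1}(t) \), which is the first assertion.

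First I would compute the auxiliary series \( H(z;t) = \partial_x F(x;t)\big|_{x=\I(z)} \). Since \( \partial_x (1-tx)^{-1} = t(1-tx)^{-2} \), we obtain \( H(z;t) = t\,(1 - t\,\I(z))^{-2} = t\,\T(z;t)^2 \). Applying \cref{lemma: Bender's transfer and marking variables} with \( \alpha = 2 \) and \( \beta = 1 \), so that the inserted argument is \( \alpha^{\frac{\beta+1}{2}} z^\beta w = 2zw \), and substituting the value \( (\Convert{2}{1}\I)(z,w) = (1-\I(2zw))^2 \) from \cref{theorem: irreducible tournaments asymptotics}, yields
\[
  (\Convert{2}{1}\T)(z,w;t)
  = H(2zw;t)\cdot(\Convert{2}{1}\I)(z,w)
  = t\,\T(2zw;t)^2\,\bigl(1-\I(2zw)\bigr)^2,
\]
which is the claimed closed form after grouping the square.

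For the explicit coefficient extraction, I would expand \( \T(2zw;t)^2 \) as a power series in \( t \). Using \( (1-x)^{-2} = \sum_{j\geq 0}(j+1)x^j \) with \( x = t\,\I(2zw) \) gives
\[
  t\,\T(2zw;t)^2 = \sum_{j\geq 0}(j+1)\,t^{j+1}\,\I(2zw)^j,
\]
so extracting \( [t^{m+1}] \) selects \( j = m \) with coefficient \( (m+1) \). Multiplying by the \( t \)-free factor \( \bigl(1-\I(2zw)\bigr)^2 \) then produces precisely~\eqref{eq: tournaments with fixed number of irreducible parts}.

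There is no real obstacle here: the whole argument is a direct substitution into the marking-variable transfer lemma, entirely parallel to the graph case. The only points requiring a moment's care are verifying that \( \I \) lies in \( \Ring{2}{1} \) with vanishing constant term (so that the lemma applies) and tracking the factor \( 2zw \) arising from \( \alpha^{\frac{\beta+1}{2}} z^\beta w \); both are immediate from the results already established.
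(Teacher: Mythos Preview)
Your proposal is correct and follows exactly the paper's approach: apply \cref{lemma: Bender's transfer and marking variables} to \( A(z) = \I(z) \) and \( F(x;t) = (1 - tx)^{-1} \) with \( \alpha = 2 \), \( \beta = 1 \), using the bivariate relation~\eqref{eq:bivariate:SSD=exp(SCD)} and the value of \( (\Convert{2}{1}\I) \) from \cref{theorem: irreducible tournaments asymptotics}. You simply spell out a few steps (the computation of \( H \), the vanishing of \( \I(0) \), and the coefficient extraction) that the paper leaves implicit.
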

\begin{proof}
  Due to the second of relations~\eqref{eq:bivariate:SSD=exp(SCD)}, it is sufficient to apply \cref{lemma: Bender's transfer and marking variables} to \( A(z) = \I(z) \) and \( F(x;t) =(1-tx)^{-1} \) with \( \alpha = 2 \) and \( \beta = 1 \).
\end{proof}

\begin{remark}
\label{remark: tournaments references}
  Formula~\eqref{eq: irreducible tournaments asymptotics} reflects the asymptotic expansion
  \[
	\ir_n \approx 2^{\binom{n}{2}}
	\left(1 -
	  \sum\limits_{k\geq1}
	  \big(2\ir_k-\ir_k^{(2)}\big)
	    \binom{n}{k}
		  \dfrac{2^{\binom{k+1}{2}}}{2^{kn}}
	\right)
  \]
  proved in~\cite{monteil2021asymptotics},
  where \( \ir_n \) and \( \ir_n^{(2)} \) are the number of irreducible tournaments and tournaments consisting of two irreducible parts (both of size $n$).
  Formula~\eqref{eq: tournaments with fixed number of irreducible parts} is consistent with the asymptotics established in~\cite{monteilSEQ} and~\cite[Theorem 5.3.1]{nurligareev2022irreducibility},
  see also~\cite{monteilANTISEQ}.
\end{remark}

\begin{remark}
\label{remark: asymptotics of G(n p)}
  Taking into account \cref{remark:introducing Erdos-Renyi}, we can establish the asymptotic behavior of graphs within the Erd\H{o}s-R\'enyi model.
  Indeed, denoting \( \alpha = (1 - p)^{-1} \), we get the Exponential GF of graphs expressed as
  \[
    \G(z) = \sum\limits_{n=0}^\infty
     \alpha^{\binom{n}{2}} \dfrac{z^n}{n!}
   \, .
  \]
  Relations \eqref{eq:G=exp(CG)} and \eqref{eq:G(t)=exp(tCG)} remain valid, which lead us to
  \[
	(\Convert{\alpha}{1}\CG)(z,w)
	 = 
	\dfrac{1}{\G(\alpha zw)}
	 = 
	e^{-\CG(\alpha zw)}
  \]
  and
  \[
	(\Convert{\alpha}{1}\G)(z,w;t)
	 =
	t \cdot \dfrac{\G(\alpha zw;t)}{\G(\alpha zw)}
	 =
	t \cdot e^{(t-1) \cdot \CG(\alpha zw)}
	\, .
  \]
  However, there is no combinatorial interpretation in terms of irreducible tournaments anymore, see \cref{remark: meaning of G^(-1)}.
\end{remark}

\subsection{Asymptotics for strongly connected digraphs}

  This section is devoted to the asymptotic behavior of strongly connected digraphs.
  A classical enumeration result related to this combinatorial class was first obtained by Wright~\cite{wright1971number} in~1971 and reproved by Bender~\cite{bender1975asymptotic} several years later
  (see also the papers of Liskovets~\cite{liskovets1969, liskovets1970number}).
  Here, we establish the corresponding Coefficient GF and the exact form of its coefficients in terms of semi-strong digraphs and tournaments.
  This allows us to rewrite the previously known asymptotics in a compact form
  and provide a combinatorial meaning to the involved coefficients.  

\begin{theorem}\label{theorem:asymptotics:scc}
  The Exponential GF \( \SCC(z) \) of strongly connected digraphs belongs to the ring~\( \Ring{2}{2} \)
  and its Coefficient GF of type \( (2,2) \) satisfies
  \begin{equation}\label{equation:scc Graphic GF}
    (\Convert{2}{2}\, \SCC)(z,w) =
    \E(2^{3/2}z^2w) \cdot
      \Change{2}{1}{2} \big(1 - \I(2zw)\big)^2
    \, .
  \end{equation}
\end{theorem}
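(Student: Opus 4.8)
The plan is to realise \( \SCC \) as a composition \( F\circ B \) of an analytic function with a single element \( B\in\Ring{2}{2} \) whose Coefficient GF is already under control, and then to invoke Bender's transfer (\cref{lemma: Bender's transfer}). The starting point is \cref{corollary: exponential gf of ssd and scc}, which gives both \( \SCC(z) = \log\frac{1}{1-\I(z)\odot\G(z)} \) and \( \E(z)=\frac{1}{1-\I(z)\odot\G(z)} \). The bridge identity is that the Hadamard product \( \I(z)\odot\G(z) \) is exactly the inverse Robinson operator applied to \( \I \): since \( \Robin{2}^{-1} \) multiplies the coefficient of \( z^n/n! \) by \( 2^{\binom{n}{2}} \), which is the exponential Hadamard product with \( \G(z) \) by~\eqref{eq:GF conversion}, we have \( \I(z)\odot\G(z)=\Robin{2}^{-1}\I(z) \). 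Writing \( B:=\Robin{2}^{-1}\I \), the functional equation becomes \( \SCC = F\circ B \) with \( F(x)=-\log(1-x) \).

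First I would pin down the ring membership and the Coefficient GF of \( B \). By \cref{example: graphs and tournaments} and \cref{theorem: irreducible tournaments asymptotics}, \( \I\in\Ring{2}{1} \) with \( (\Convert{2}{1}\I)(z,w)=(1-\I(2zw))^2 \). Applying \( \Robin{2}^{-1} \) raises the parameter \( \beta \) from \( 1 \) to \( 2 \), so \( B\in\Ring{2}{2} \); moreover, the commutative diagram of \cref{lemma: commutative conversion} with \( \beta_1=1 \), \( \beta_2=2 \) and \( A=\I \) yields directly
\[
  (\Convert{2}{2} B)(z,w) = \Change{2}{1}{2}\big((\Convert{2}{1}\I)(z,w)\big) = \Change{2}{1}{2}\big(1 - \I(2zw)\big)^2 .
\]
I would also record \( b_0 = 2^{\binom{0}{2}}\ir_0 = 0 \), the hypothesis required by Bender's transfer.

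Next I would apply \cref{lemma: Bender's transfer} to \( B \) and \( F(x)=-\log(1-x) \) with \( \alpha=2 \), \( \beta=2 \). This simultaneously certifies \( \SCC = F\circ B\in\Ring{2}{2} \) and produces
\[
  (\Convert{2}{2}\SCC)(z,w) = H\big(2^{3/2} z^2 w\big)\cdot (\Convert{2}{2} B)(z,w),
\]
where \( H(z)=F'(B(z))=\frac{1}{1-B(z)} \) and the prefactor is \( 2^{(\beta+1)/2}z^\beta = 2^{3/2}z^2 \). The final identification uses the other half of \cref{corollary: exponential gf of ssd and scc}: since \( \E(z)=\frac{1}{1-\I(z)\odot\G(z)}=\frac{1}{1-B(z)} \), we get precisely \( H=\E \). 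Substituting \( H=\E \) together with the expression for \( (\Convert{2}{2}B) \) gives the claimed formula~\eqref{equation:scc Graphic GF}.

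The routine computations are light; the real content lies in the two bridge identities, namely that \( \I\odot\G=\Robin{2}^{-1}\I \) lets the univariate datum \( \I\in\Ring{2}{1} \) feed into the \( \beta=2 \) setting through \cref{lemma: commutative conversion}, and that the Bender correction factor \( H=1/(1-B) \) coincides with the semi-strong GF \( \E \). Spotting that the same quantity \( 1-B \) occurs both as the argument of the logarithm and, inverted, as the derivative of \( F \) is the step that makes the answer collapse into the compact shape of the statement; this is the part I expect to require the most care, while the invocations of \cref{lemma: Bender's transfer} and \cref{lemma: commutative conversion} are then mechanical.
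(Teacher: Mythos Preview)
Your argument is correct and follows essentially the same route as the paper: identify \( \I\odot\G = \Robin{2}^{-1}\I \in \Ring{2}{2} \), compute its Coefficient GF via the commutative diagram of \cref{lemma: commutative conversion} together with \cref{theorem: irreducible tournaments asymptotics}, and then apply \cref{lemma: Bender's transfer} with \( F(x) = -\log(1-x) \), recognising the correction factor \( H = 1/(1-B) \) as \( \E \) from \cref{corollary: exponential gf of ssd and scc}. The paper's proof is essentially the same sequence of steps; your explicit check that \( b_0=0 \) is a detail the paper leaves implicit.
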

\begin{proof}
  Recall that, according to \cref{corollary: exponential gf of ssd and scc}, the Exponential GF of strongly connected graphs satisfy
  \[
    \SCC(z) = \log \dfrac{1}{1 - \I(z) \odot \G(z)}
    \, .
  \]
  As we have seen in \cref{theorem: irreducible tournaments asymptotics}, the Exponential GF of irreducible tournaments belongs to \( \Ring{2}{1} \) with
  \[
	(\Convert{2}{1}\, \I)(z,w) = \big(1 - \I(2zw)\big)^2.
  \]
  Hence, \( \big( \Robin{2}^{-1} \I \big)(z) = \I(z) \odot \G(z) \) belongs to \( \Ring{2}{2} \), and to obtain
  \(
    (\Convert{2}{2}\, \SCC)(z,w)
  \)
  we can apply \cref{lemma: Bender's transfer} to
  \[
    A(z) = \I(z) \odot \G(z)
    \qquad \text{and} \qquad
	F(x) = -\log(1-x)
  \]
  with \( \alpha = \beta = 2 \). Taking into account relations~\eqref{eq: semi-strong as sequences}, in the case in hand we have 
  \[
    \left.\dfrac{\partial F}{\partial x}
    \right|_{x=A(z)} =
    \dfrac{1}{1 - \I(z) \odot \G(z)} = \E(z).
  \]
  To finish the proof, we use \cref{lemma: commutative conversion} and relation~\eqref{eq: irreducible tournaments asymptotics}:
  \[
    \big(
      \Convert{2}{2}(\Robin{2}^{-1}\I)
    \big)(z,w)
    =
    \big(
      \Change{2}{1}{2} (\Convert{2}{1} \, \I)
    \big)(z,w)
    =
    \Change{2}{1}{2} \big(1 - \I(2zw)\big)^2
    \, .
  \]
\end{proof}

\begin{corollary}\label{corollary:scc probability}
  The probability \( p_n \) that a uniform random digraph with \( n \) vertices is strongly connected satisfies
  \begin{equation}\label{equation:scc probability}
    p_n \approx
    \sum_{m\geq0} \dfrac{1}{2^{nm}}
      \sum_{\ell=\lceil m/2 \rceil}^{m}
        n^{\underline{\ell}}\,
        \scc^\circ_{m,\ell}
    \, ,
  \end{equation}
  where
  \[
    \scc^\circ_{m,\ell}
    =
    2^{m(m+1)/2 + \ell(\ell-m)}
    \dfrac{\ssd_{m-\ell}}{(m-\ell)!}
    \dfrac{\mathbf 1_{m=2\ell} - 2\ir_{2\ell-m} + \ir^{(2)}_{2\ell-m}}{(2\ell-m)!}
    \, ,
  \]
  and \( \ssd_k \), \( \ir_k \) and \( \ir_k^{(2)} \) denote
  the numbers of semi-strong digraphs, irreducible tournaments and tournaments with two irreducible components, respectively
  (all of them are of size \( k \)).
\end{corollary}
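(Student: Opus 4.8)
The plan is to convert the closed form for $(\Convert{2}{2}\,\SCC)(z,w)$ obtained in \cref{theorem:asymptotics:scc} into an asymptotic expansion for $\scc_n$, and thence for $p_n$. First I would note that a uniform random digraph on $n$ vertices is one of $2^{n(n-1)}=2^{2\binom{n}{2}}$ equally likely objects, so that $p_n=\scc_n/2^{2\binom{n}{2}}$. Since $\SCC\in\Ring{2}{2}$, reading expansion~\eqref{eq:expansion} with $\alpha=2$, $\beta=2$ gives $\scc_n\approx 2^{2\binom{n}{2}}\sum_{m}2^{-mn}\sum_{\ell}n^{\underline{\ell}}\,\scc^\circ_{m,\ell}$; dividing by $2^{2\binom{n}{2}}$ produces exactly the shape of~\eqref{equation:scc probability}, so the entire task reduces to identifying the coefficients $\scc^\circ_{m,\ell}$. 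By the definition of the Coefficient GF~\eqref{eq:CoefficientGF}, these are recovered as $\scc^\circ_{m,\ell}=2^{\frac{1}{2}\binom{m}{2}}\,[z^m w^\ell]\,(\Convert{2}{2}\,\SCC)(z,w)$.

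Next I would expand the two factors in~\eqref{equation:scc Graphic GF}. The first expands directly, $\E(2^{3/2}z^2 w)=\sum_{k\geq0}\frac{\ssd_k}{k!}\,2^{3k/2}z^{2k}w^k$. For the second I would write $(1-\I(2zw))^2=\sum_{j\geq0}c_j\frac{(2zw)^j}{j!}$ with $c_j=\mathbf 1_{j=0}-2\ir_j+\ir_j^{(2)}$, using $\I(z)^2=\sum_j\ir_j^{(2)}z^j/j!$. The key point is that, regarded as a Coefficient GF of type $(2,1)$, this series is supported on the diagonal $\ell=m$; hence applying $\Change{2}{1}{2}$ merely replaces the factor $2^{-\binom{m}{2}}$ by $2^{-\frac{1}{2}\binom{m}{2}}$, so that $\Change{2}{1}{2}(1-\I(2zw))^2=\sum_{j\geq0}c_j\frac{2^{\binom{j+1}{2}}}{j!}\frac{z^jw^j}{2^{\frac{1}{2}\binom{j}{2}}}$.

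Multiplying the two factors, a monomial $z^m w^\ell$ arises only by pairing $z^{2k}w^k$ with $z^jw^j$ subject to $2k+j=m$ and $k+j=\ell$, which forces $k=m-\ell$ and $j=2\ell-m$. Nonnegativity of $k$ and $j$ gives precisely $\lceil m/2\rceil\leq\ell\leq m$, the summation range appearing in~\eqref{equation:scc probability}; in particular the expansion starts at $m=0$ with $\scc^\circ_{0,0}=1$, recovering $p_n\to1$. Substituting $k=m-\ell$ and $j=2\ell-m$ and multiplying by $2^{\frac{1}{2}\binom{m}{2}}$ then yields $\scc^\circ_{m,\ell}=2^{E}\frac{\ssd_{m-\ell}}{(m-\ell)!}\frac{c_{2\ell-m}}{(2\ell-m)!}$, and since $j=0$ is equivalent to $m=2\ell$ the factor $c_{2\ell-m}$ equals the numerator $\mathbf 1_{m=2\ell}-2\ir_{2\ell-m}+\ir^{(2)}_{2\ell-m}$.

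The only step that requires genuine care is the bookkeeping of the powers of $2$: one collects the exponent $E=\frac{1}{2}\binom{m}{2}+\frac{3k}{2}+\binom{j+1}{2}-\frac{1}{2}\binom{j}{2}$ and must verify, after substituting $k=m-\ell$ and $j=2\ell-m$, that it collapses to $\frac{m(m+1)}{2}+\ell(\ell-m)$. This is precisely where the argument scaling $2^{3/2}z^2w$ inside $\E$ has to be reconciled with the type change $\Change{2}{1}{2}$; once this identity is checked, the remaining factorials and the combinatorial numerator fall into place and the formula for $\scc^\circ_{m,\ell}$ follows.
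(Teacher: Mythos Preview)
Your proposal is correct and follows essentially the same route as the paper: identify $p_n=\scc_n/2^{2\binom{n}{2}}$, extract $\scc^\circ_{m,\ell}=2^{\frac{1}{2}\binom{m}{2}}[z^m w^\ell](\Convert{2}{2}\,\SCC)(z,w)$, expand the two factors of~\eqref{equation:scc Graphic GF}, use the diagonal support of $(1-\I(2zw))^2$ to force $k=m-\ell$, $j=2\ell-m$, and simplify the power of $2$. The only cosmetic difference is that you apply $\Change{2}{1}{2}$ explicitly to the second factor before multiplying, whereas the paper carries the type change through as a factor $2^{\frac{1}{2}\binom{m-2k}{2}}$ during extraction; both lead to the same exponent identity, which you correctly flag as the one nontrivial bookkeeping step.
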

\begin{proof}
  By definition, we have the relation
  \(
    p_n = \scc_n/2^{2\binom{n}{2}}.
  \)
  According to \cref{theorem:asymptotics:scc}, the Exponential GF \( \SCC(z) \) belongs to \( \Ring{2}{2} \).
  Hence, it follows from~\eqref{eq:expansion} and~\eqref{equation:scc Graphic GF} that
  \[
    p_n \approx
    \sum_{m \geq 0} \dfrac{1}{2^{nm}}
      \sum_{\ell=0}^{\infty}
        n^{\underline{\ell}}\,
        \scc^\circ_{m,\ell}
    \, .
  \]
  To establish the limits of summation, let us denote, for any \( n,k\in\mathbb{Z}_{\geq0} \),
  \begin{equation}\label{equation:b_n}
    b_{n}
    :=
    n! [z^n] \big(1 - \I(z)\big)^2
    =
    \mathbf 1_{n=0} - 2\ir_n + \ir_n^{(2)}
  \end{equation}
  and
  \[
    b_{n,k}
    :=
    n! [z^n w^k] \big(1 - \I(zw)\big)^2
    =
    b_n \cdot \mathbf 1_{n=k}
    \, .
  \]
  Since the coefficients \( b_{n,k} \) are non-zero when \( n = k \) only,
  by direct calculations we obtain
  \begin{align*}
    \scc_{m,\ell}^\circ &=
    2^{\frac{1}{2} \binom{m}{2}}
      [z^m w^\ell] (\Convert{2}{2}\, \SCC)(z,w)
    \\& =
    \sum_{k=0}^{\infty}
      2^{3k/2} \dfrac{\ssd_k}{k!}
      \cdot
      2^{\frac{1}{2} \binom{m-2k}{2}}
      [z^{m-2k} w^{\ell-k}]
        \big(1 - \I(2zw)\big)^2
    \\&=
    \sum_{k=0}^{\infty}
      2^{m(m+1)/2 + k(k-m)}
      \dfrac{\ssd_k}{k!}
      \dfrac{b_{m-2k,\ell-k}}{(m-2k)!}
    \\&=
      2^{m(m+1)/2 + \ell(\ell-m)}
      \dfrac{\ssd_{m-\ell}}{(m-\ell)!}
      \dfrac{b_{2\ell-m}}{(2\ell-m)!}
    \, ,
  \end{align*}
  which is non-zero for \( \ell \leq m \leq 2\ell \) only. 
\end{proof}

\begin{remark}
  Relation~\eqref{equation:scc Graphic GF} corresponds to the asymptotic expansion of the form
  \[
    \scc_n \approx 2^{n^2-n}
      \sum_{m\geq0} \dfrac{w_m(n)}{2^{nm}}
  \]
  investigated by Wright~\cite{wright1971number} who established a recursive method of computing the polynomials \( w_m(n) \).
  The latter asymptotics was also studied by Bender~\cite{bender1975asymptotic} who proposed a direct way of computing these polynomials based on \cref{theorem: Bender's}.
  For numerical values of \( \scc_{m,\ell}^\circ \) and \( w_m(n) \), as well as for more details, see \cref{Appendix: strongly connected digraphs}.
\end{remark}

\subsection{Fixed number of strongly connected components}

  This section is devoted to the asymptotics of digraphs with a marking variable for the number of strongly connected components.
  We start with establishing the Coefficient GF of semi-strong digraphs.
  Next, we proceed to the Coefficient GF of all digraphs.
  Finally, we provide the leading term of the probability that a random digraph has a fixed number of strongly connected components and
  indicate the combinatorial meaning of this term, which involves directed acyclic graphs.
  All the results discussed here appear to be novel.

\begin{theorem}
\label{theorem:cgf:ssd}
  The bivariate Exponential GF \( \E(z; t) \) of semi-strong digraphs with the marking variable \( t \) for the number of strongly connected components
  belongs to the ring \( \Ring{2}{2}(t) \) and
  the corresponding Coefficient GF of type \( (2,2) \) satisfies
  \begin{equation}\label{eq:cgf:ssd}
    (\Convert{2}{2}\, \E)(z, w; t) = t \cdot
    \E(2^{3/2}z^2w; 1+t) \cdot
      \Change{2}{1}{2} \big(1 - \I(2zw)\big)^2
    \, .  
  \end{equation}
  In particular, for any \( m \in \mathbb Z_{\geq0} \), the asymptotics of semi-strong digraphs with \( (m+1) \) strongly connected components is given by
  \begin{equation}\label{eq:[t^k]cgf:ssd}
	[t^{m+1}](\Convert{2}{2}\, \E)(z,w;t)
	 =
    \dfrac{1}{m!} \SCC^m(2^{3/2}z^2w)
	 \cdot
	(\Convert{2}{2}\, \SCC)(z,w)
	\, .
  \end{equation}
\end{theorem}
\begin{proof}
  Taking into account relation~\eqref{eq:bivariate:SSD=exp(SCD)},
  it is sufficient to apply \cref{lemma: Bender's transfer and marking variables} to \( A(z) = \SCC(z) \) and \( F(x;t) = e^{tx} \) with \( \alpha = \beta = 2 \).
  To complete the proof, we use \cref{theorem:asymptotics:scc}.
\end{proof}

\begin{theorem}
\label{theorem:cgf:several:scc}
  The Graphic GF \( \widehat{\D}(z; t) \) of digraphs with the marking variable \( t \) for the number of strongly connected components
  belongs to the ring \( \Ring{2}{1}(t) \) and
  the corresponding Coefficient GF of type \( (2,1) \) is given by
  \begin{equation}
  \label{eq:cgf:several:scc}
    (\Convert{2}{1}\, \widehat{\D})(z, w; t)
      =
    \left(
      \widehat{\D}(2zw; t)
    \right)^2
      \cdot
    \Change{2}{2}{1}
    \Big(
      (\Convert{2}{2}\, \E)(z, w; -t)
    \Big)
    \, .  
  \end{equation}
\end{theorem}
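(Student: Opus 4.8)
The plan is to reduce the statement to a single application of the power-transfer rule with exponent $-1$, starting from the closed form for the Graphic GF with marked strongly connected components. By \cref{corollary: GGF of digraphs marking components} we have
\[
  \widehat{\D}(z;t) = \dfrac{1}{\Robin{2}\big(\E(z;-t)\big)} \, .
\]
Accordingly, I would set $P(z;t) := \Robin{2}\E(z;-t)$, so that $\widehat{\D}(z;t) = P(z;t)^{-1}$, and then compute the Coefficient GF of $P$ and invert.

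The first task is to locate $P$ in the appropriate ring and express its Coefficient GF. By \cref{theorem:cgf:ssd} we know $\E(z;t) \in \Ring{2}{2}(t)$; substituting $t\mapsto -t$ preserves polynomiality of the asymptotic coefficients in $t$ and commutes with $\Convert{2}{2}$, so $\E(z;-t)\in\Ring{2}{2}(t)$ with Coefficient GF $(\Convert{2}{2}\E)(z,w;-t)$. Applying the generalized commutative conversion of \cref{lemma: generalized commutative conversion} with $\beta_1=2$, $\beta_2=1$ (so that $\Robin{2}^{\beta_1-\beta_2}=\Robin{2}$) then gives both $P\in\Ring{2}{1}(t)$ and
\[
  (\Convert{2}{1}P)(z,w;t) = \Change{2}{2}{1}\big((\Convert{2}{2}\E)(z,w;-t)\big) \, .
\]

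Next I would verify the normalization needed to raise $P$ to the power $-1$. Since $\E(z;0)=1$ and $\Robin{2}$ fixes the constant series, one has $[t^0]P(z;t)=\Robin{2}(1)=1$. Hence \cref{lemma: power transfer and marking variables} applies with $m=-1$, $\alpha=2$, $\beta=1$; noting that $\alpha^{\frac{\beta+1}{2}}z^\beta w = 2zw$, this yields
\[
  (\Convert{2}{1}\widehat{\D})(z,w;t)
  = (\Convert{2}{1}P^{-1})(z,w;t)
  = -\,P^{-2}(2zw;t)\cdot(\Convert{2}{1}P)(z,w;t) \, .
\]
Substituting $P^{-1}=\widehat{\D}$ back, so that $P^{-2}(2zw;t)=\big(\widehat{\D}(2zw;t)\big)^2$, and inserting the Coefficient GF of $P$ computed above, produces exactly the claimed identity~\eqref{eq:cgf:several:scc}; the membership $\widehat{\D}\in\Ring{2}{1}(t)$ is delivered simultaneously by \cref{lemma: power transfer and marking variables}.

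The main obstacle is the bookkeeping of the first step: one must track carefully that the single application of $\Robin{2}$ lowers the growth type from $(2,2)$ to $(2,1)$ precisely as encoded in the commutative diagram of \cref{lemma: generalized commutative conversion}, and that $\Change{2}{2}{1}$ correctly records the associated change of basis in the Coefficient GF. Once the membership $P\in\Ring{2}{1}(t)$ and the normalization $[t^0]P=1$ are secured, the remainder is a mechanical invocation of the power rule, with no further analytic input required.
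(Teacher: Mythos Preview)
Your proposal is correct and follows essentially the same route as the paper: you start from the identity $\widehat{\D}(z;t)=\big(\Robin{2}\E(z;-t)\big)^{-1}$ of \cref{corollary: GGF of digraphs marking components}, invoke \cref{lemma: generalized commutative conversion} to pass from $\Ring{2}{2}(t)$ to $\Ring{2}{1}(t)$, and then apply \cref{lemma: power transfer and marking variables} with $m=-1$. Your write-up simply makes explicit the intermediate checks (the substitution $t\mapsto -t$, the normalization $[t^0]P=1$) that the paper's one-sentence proof leaves to the reader.
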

\begin{proof}
  It is sufficient to apply \cref{lemma: power transfer and marking variables}, \cref{lemma: generalized commutative conversion} and \cref{lemma: Bender's transfer and marking variables}
  to the Graphic GF \( \widehat{\D}(z; t) \) of digraphs written in the form
  \[
    \widehat{\D}(z; t) =
    \dfrac
      {1}
      {\Robin{2} \big(e^{-t\cdot\SCC(z)}\big)}
    \, .
  \]
\end{proof}

\begin{corollary}
\label{corollary:dominant:m-fixed}
  The probability \( p_{n,m+1} \) that a uniform random digraph on \( n \) vertices has \( (m+1) \) strongly connected components
  satisfies the following asymptotic behavior, as \( n \to \infty \):
  \begin{equation}\label{eq:dominant:m-fixed}
    p_{n,m+1}
    \sim
    \binom{n}{m}
    \dfrac{2^m \dgg_m^{(\vec{2})}}{2^{mn}}
    \, ,
  \end{equation}
  where \( \dgg_m^{(\vec{2})} \) are the coefficients of the Graphic GF \( \widehat{\DAG}^2(z) \).
\end{corollary}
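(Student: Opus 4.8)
The plan is to read $p_{n,m+1}$ off the Coefficient GF supplied by \cref{theorem:cgf:several:scc} and then isolate its leading contribution as $n\to\infty$. Write $\widehat{\D}(z;t)=\sum_{n\ge0}\widehat d_n(t)\,z^n/n!$, where $\widehat d_n(t)=\sum_{k}\bigl(d_{n,k}/2^{\binom n2}\bigr)t^k$ and $d_{n,k}$ is the number of digraphs on $n$ vertices with $k$ strongly connected components. Since there are $2^{2\binom n2}$ digraphs on $n$ vertices, we have $p_{n,m+1}=d_{n,m+1}/2^{2\binom n2}=\bigl([t^{m+1}]\widehat d_n(t)\bigr)/2^{\binom n2}$. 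As $\widehat{\D}(z;t)\in\Ring{2}{1}(t)$, the defining expansion~\eqref{eq:expansion} together with~\eqref{eq:CoefficientGF} and \cref{lemma:[u^k] and Q commute} would give
\[
  p_{n,m+1}\approx\sum_{\mu}2^{-\mu n}\sum_{\ell}n^{\underline{\ell}}\,[t^{m+1}]\widehat d_{\mu,\ell}^{\circ}(t),\qquad
  \widehat d_{\mu,\ell}^{\circ}(t)=2^{\binom{\mu}{2}}[z^{\mu}w^{\ell}]\,(\Convert{2}{1}\,\widehat{\D})(z,w;t).
\]
The dominant term then corresponds to the smallest $\mu$ with $[t^{m+1}]\widehat d_{\mu,\ell}^{\circ}(t)\ne0$ for some $\ell$, and to the largest such $\ell$.

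Next I would substitute \cref{theorem:cgf:several:scc} and \cref{theorem:cgf:ssd}, the two minus signs cancelling, to get
\[
  (\Convert{2}{1}\,\widehat{\D})(z,w;t)=t\,\bigl(\widehat{\D}(2zw;t)\bigr)^{2}\,\Change{2}{2}{1}\!\Bigl(\E(2^{3/2}z^2w;1-t)\cdot\Change{2}{1}{2}\bigl(1-\I(2zw)\bigr)^{2}\Bigr).
\]
The heart of the argument is a degree count. Each power of $t$ drawn from $\bigl(\widehat{\D}(2zw;t)\bigr)^{2}$ costs at least one power of $z$, since $\deg_t\widehat d_n(t)\le n$; each power of $t$ drawn from $\E(2^{3/2}z^2w;1-t)$ costs at least two powers of $z$, since $\SCC(2^{3/2}z^2w)=\BigO(z^2)$; and $\Change{2}{1}{2}(1-\I(2zw))^2$ carries no $t$. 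Producing $t^{m+1}$ thus needs the prefactor $t$ plus $m$ further powers of $t$, hence at least $z^{m}$, with equality \emph{only} when all $m$ come from $\bigl(\widehat{\D}(2zw;t)\bigr)^{2}$ while the other two factors are taken at $z^{0}$ (where the $\Change$ operators act trivially and contribute $1$). This identifies the smallest index as $\mu=m$; and since every factor has $w$-degree at most its $z$-degree, the $z^m$ part is purely $w^m$, so the relevant $\ell$ equals $m$.

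Finally I would evaluate the extremal coefficient. The $t$-top diagonal of $\widehat{\D}(z;t)$ is $\widehat{\DAG}(tz)$, because a digraph on $n$ vertices has $n$ strongly connected components precisely when it is acyclic, whence $[t^n]\widehat d_n(t)=\dgg_n/2^{\binom n2}$. Consequently
\[
  [t^m z^m]\bigl(\widehat{\D}(2zw;t)\bigr)^{2}=(2w)^m[z^m]\widehat{\DAG}^2(z)=(2w)^m\,\frac{\dgg_m^{(\vec{2})}}{2^{\binom m2}m!},
\]
so that $[t^{m+1}]\widehat d_{m,m}^{\circ}(t)=2^{\binom m2}\cdot 2^m\dgg_m^{(\vec{2})}/(2^{\binom m2}m!)=2^m\dgg_m^{(\vec{2})}/m!$. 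Substituting into the expansion of $p_{n,m+1}$, and noting that the next contributions ($\mu=m+1$) are smaller by a factor $2^{-n}$, yields
\[
  p_{n,m+1}\sim 2^{-mn}\,n^{\underline{m}}\,\frac{2^m\dgg_m^{(\vec{2})}}{m!}=\binom nm\frac{2^m\dgg_m^{(\vec{2})}}{2^{mn}}.
\]
I expect the degree-count step — rigorously excluding any lower power of $z$ and confirming that the minimal power is supplied exactly by the acyclic diagonal of $\bigl(\widehat{\D}(2zw;t)\bigr)^2$ — to be the main obstacle; the subsequent identification of the extremal coefficient with $\dgg_m^{(\vec{2})}$ through $\widehat{\DAG}^2(z)$ is a short computation.
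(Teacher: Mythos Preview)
Your proof is correct and follows essentially the same strategy as the paper: express $(\Convert{2}{1}\widehat{\D})(z,w;t)$ via \cref{theorem:cgf:several:scc} and \cref{theorem:cgf:ssd}, perform a degree count in $z$ versus $t$ to locate the minimal exponent $\mu=m$, and identify the extremal coefficient with $\widehat{\DAG}^2$. The only cosmetic difference is in that last identification: the paper rewrites $(\widehat{\D}(2zw;t))^2$ as $(\Robin{2}(e^{-t\SCC(2zw)}))^{-2}$, replaces $\SCC$ by its leading term $z$, and invokes \cref{proposition: graphic gf of dag} to recognise $(\Robin{2}(e^{-z}))^{-2}=\widehat{\DAG}^2(z)$; you instead observe combinatorially that the $t$-top diagonal of $\widehat{\D}(z;t)$ is $\widehat{\DAG}(tz)$ because a digraph on $n$ vertices has $n$ strongly connected components exactly when it is acyclic --- an equivalent and arguably cleaner route to the same endpoint.
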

\begin{proof}
  In order to obtain the dominant term of the probability \( p_{n,m+1} \),
  we need to trace the term with the smallest power of \( z \) in
  \(
    [t^{m+1}] (\Convert{2}{1}\, \widehat{\D})(z, w; t)
    \, .
  \)
  In order to do that, rewrite~\eqref{eq:cgf:several:scc} as
  \begin{equation}
  \label{eq:cgf:several:scc-2}
    (\Convert{2}{1}\, \widehat{\D})(z, w; t)
     =
    t
     \cdot
    \dfrac
    {
      \Change{2}{2}{1}
      \left(
        e^{(1-t) \cdot \SCC(2^{3/2}z^2 w)}
          \cdot
        \Change{2}{1}{2}
        \big( 1 - \I(2zw) \big)^2
      \right)
    }
    {
      \left(
        \Robin{2}
        \big( e^{-t \cdot \SCC(2zw)} \big)
      \right)^2
    }
    \, .  
 \end{equation}
  Note that the (only) smallest strongly connected digraph has one vertex,
  hence, \( \SCC(z) \) starts with~\( z \).
  As a consequence, for every \( t^m \),
  the smallest exponent in \( z \) in the numerator of~\eqref{eq:cgf:several:scc-2} is equal to \( 2m \).
  At the same time, the smallest exponent in \( z \) in the denominator is \( m \),
  which is smaller for every \( m > 0 \).
  Since the numerator starts with \( 1 \), the whole fraction can be simplified in the following way:
  \begin{align*}
    [t^{m+1}] (\Convert{2}{1}\, \widehat{\D})(z, w; t)
    &=
    [t^m]
    \dfrac{1}{\big( e^{-2tzw} \odot_z \Set(z) \big)^2}
    = 2^m z^{m} w^{m}
    [t^m] \left(\dfrac{1}{\Set(-t)} \right)^2
    \, .
  \end{align*}
  \cref{proposition: graphic gf of dag} implies that
  \[
    \left( \dfrac{1}{\Set(-z)} \right)^2
    =
    \left( \dfrac{1}{\Robin{2} (e^{-z})} \right)^2
    =
    \widehat{\DAG}^2(z)
    \, .
  \]
  Thus, the expansion coefficient at \( 2^{-mn} n^{\underline{m}} \) is equal to
  \[
    2^{\binom{m}{2}} [z^{m} w^{m}]
    [t^{m+1}] (\Convert{2}{1}\, \widehat{\D})(z, w; t)
    =
    2^{m} \dfrac{\dgg_m^{(\vec{2})}}{m!} \, .
  \]
  The proof is completed by noting that \( \binom{n}{m} = n^{\underline{m}}/m! \).
\end{proof}

\begin{remark}
\label{remark:dominant:m-fixed}
  There is also a direct combinatorial way to establish asymptotics~\eqref{eq:dominant:m-fixed},
  which is based on the structural analysis of involved digraphs.
  This method works for more general scenarios as well,
  even when generating functions are not available,
  but obtaining secondary terms in this case may become too tedious.
  The key idea is that the main contribution into the asymptotics of digraphs with \( (m+1) \) components
  is given by digraphs whose \( m \) components contain one vertex each,
  and the remaining component contains \( (n-m) \) vertices.
  We will call the one-vertex components the \emph{small} ones,
  and the component with \( (n-m) \) vertices the \emph{large} one.
  As \( n \to \infty \), with high probability each of the small components is connected to the large component by at least one edge.
  Note that, for any small component, the connections can only be in one direction,
  otherwise the small component would be merged into the large one.

  This observation allows us to repartition the \( m \) one-vertex components into the group of \( k \) components having the edges \emph{towards} the large component,
  and the group of \( (m-k) \) components that have the edges \emph{from} the large component (see \cref{fig:two:dags}).
  \begin{figure}[hbt!]
    \begin{center}
      \begin{tikzpicture}[>=stealth',thick, scale = 0.9]
%
%
%
\draw
node[arnBleuGrande](c) at (3,0) {$\circlearrowright$}
node[arnBleuPetit](l1) at (1,1) {}
node[arnBleuPetit](l2) at (0,-1) {}
node[arnBleuPetit](r1) at (5,1) {}
node[arnBleuPetit](r2) at (7, 0) {}
node[arnBleuPetit](r3) at (6,-1) {}
;

%
%
%
\node[rectangle,dashed,draw, fit=(l1)(l2),
      rounded corners = 3mm,inner sep = 7pt, bviolet, very thick] {};
\node[rectangle,dashed,draw, fit=(r1)(r2)(r3),
      rounded corners = 3mm,inner sep = 7pt, bviolet, very thick] {};

%
%
%
\draw node at (0.5,-2) {$k$};
\draw node at (6,-2) {$m-k$};
\draw node at (3,-1) {$n-m$};

%
%
%
\path (l1) edge [blackred,thick, bend right=-5,
    decoration={markings,mark=at position 0.7 with
    {\arrow[ultra thick,blackred, rotate=0]{>}}}, postaction={decorate}
    ] node {} (c);
\path (l2) edge [blackred,thick, bend right=-5,
    decoration={markings,mark=at position 0.7 with
    {\arrow[ultra thick,blackred, rotate=0]{>}}}, postaction={decorate}
    ] node {} (c);

\path (c) edge [blackred,thick, bend right=-5,
    decoration={markings,mark=at position 0.7 with
    {\arrow[ultra thick,blackred, rotate=0]{>}}}, postaction={decorate}
    ] node {} (r1);
\path (c) edge [blackred,thick, bend right=-5,
    decoration={markings,mark=at position 0.7 with
    {\arrow[ultra thick,blackred, rotate=0]{>}}}, postaction={decorate}
    ] node {} (r2);
\path (c) edge [blackred,thick, bend right=-5,
    decoration={markings,mark=at position 0.7 with
    {\arrow[ultra thick,blackred, rotate=0]{>}}}, postaction={decorate}
    ] node {} (r3);

%
%
%
\path (l1) edge [blackred,thick, bend right=-5,
    decoration={markings,mark=at position 0.7 with
    {\arrow[ultra thick,blackred, rotate=0]{>}}}, postaction={decorate}
    ] node {} (l2);
\path (r1) edge [blackred,thick, bend right=-5,
    decoration={markings,mark=at position 0.7 with
    {\arrow[ultra thick,blackred, rotate=0]{>}}}, postaction={decorate}
    ] node {} (r2);
\path (l1) edge [blackred,thick, bend left=60,
    decoration={markings,mark=at position 0.7 with
    {\arrow[ultra thick,blackred, rotate=0]{>}}}, postaction={decorate}
    ] node {} (r2);
\end{tikzpicture}
    \end{center}
    \caption{\label{fig:two:dags}}
  \end{figure}
  The vertices within each of the above two groups form a directed acyclic graph structure,
  with \( k \) and \( (m-k) \) vertices, respectively.
  Furthermore, there can be additional edges from the first group to the second one.
  Hence, the number of arrangements of \( m \) vertices beyond the large component is
  \[
    \sum\limits_{k=0}^m
    \binom{m}{k} 2^{k(m-k)}
    \dgg_k\dgg_{m-k}
     =
    \dgg_n^{(\vec{2})}
  \]
  and the total number of digraphs on \( n \) vertices defined in such a way is asymptotically equal to
  \[
    \binom{n}{m} 2^{(n-m)(n-m-1)} 2^{m(n-m)}
    \dgg_m^{(\vec{2})}
    \, .
  \]
  Thus, the dominant term of the probability that a random digraph contains \( (m+1) \) strongly connected components is
  \[
    \dfrac{1}{2^{n(n-1)}}
    \binom{n}{m} 2^{(n-m)(n-m-1)} 2^{m(n-m)}
    \dgg_m^{(\vec{2})}
    =
    \binom{n}{m}
    \dfrac{ 2^{m} \dgg_m^{(\vec{2})} } {2^{mn}}
    \, .
  \]
\end{remark}

\subsection{Different kinds of strongly connected components}

  In this section, we provide complete asymptotic expansions for two more multivariate versions of digraphs.
  The advantage of the corresponding Coefficient GFs lies in possessing information related to the asymptotic behavior of digraphs with an arbitrary number of components of arbitrary types.
  At the same time, the presented new results are a straightforward application of the asymptotic transfer to the relations discussed in \cref{subsection: enumeration of digraphs with marking variables}. 
  That is why we omit tedious details of their proofs.

\begin{theorem}
\label{theorem:cgf:trivariate:digraphs}
  The Graphic GF \( \widehat{\D}(z; s, t) \) of digraphs with marking variables \( s \) and \( t \) for the numbers of source-like and all strongly connected components, respectively,
  belongs to the ring \( \Ring{2}{1}(s,t) \)
  and the corresponding Coefficient GF of type \( (2,1) \) is given by
  \begin{multline*}
    (\Convert{2}{1}\, \widehat{\D})(z, w; s, t)
     = \\
    \widehat{\D}(2zw; t)
    \left[
      \Change{2}{2}{1}
       \Big(
         (\Convert{2}{2}\, \E) \big( z,w; (s-1)t \big)
       \Big)
       +
      \widehat{\D}(2zw; s, t)
       \cdot
      \Change{2}{2}{1}
       \Big( (\Convert{2}{2}\, \E)(z,w; -t) \Big)
    \right]
    \, .
  \end{multline*}
\end{theorem}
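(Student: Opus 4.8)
The plan is to start from the factorized form of $\widehat{\D}(z; s, t)$ supplied by \cref{corollary: GGF of digraphs marking components}. Combining~\eqref{eq: GGF of digraphs marking components and source-like components} with~\eqref{eq: GGF of digraphs marking components} gives
\[
  \widehat{\D}(z; s, t) = \widehat{\D}(z; t) \cdot \Robin{2}\big(\E(z; (s-1)t)\big),
\]
so the whole object is a \emph{product} of two series to which the multiplicative transfer rule applies. First I would check that both factors sit in $\Ring{2}{1}(s,t)$: the factor $\widehat{\D}(z; t)$ belongs to $\Ring{2}{1}(t)$ by \cref{theorem:cgf:several:scc}, while $\E(z; (s-1)t) \in \Ring{2}{2}(s,t)$ by \cref{theorem:cgf:ssd} after the substitution $t \mapsto (s-1)t$, whence $\Robin{2}\big(\E(z; (s-1)t)\big) \in \Ring{2}{1}(s,t)$ by \cref{definition: Robinson operator}. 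This legitimizes the use of~\eqref{eq: prod-transfer and marking variables}.

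Next I would invoke the product rule of \cref{lemma: operation transfers and marking variables} with $\alpha = 2$, $\beta = 1$, so that $\alpha^{\frac{\beta+1}{2}} z^\beta w = 2zw$. Writing $N(z; s, t) := \Robin{2}\big(\E(z; (s-1)t)\big)$, this yields
\[
  (\Convert{2}{1}\, \widehat{\D})(z, w; s, t)
  =
  \widehat{\D}(2zw; t) \cdot (\Convert{2}{1}\, N)(z, w; s, t)
  +
  N(2zw; s, t) \cdot (\Convert{2}{1}\, \widehat{\D})(z, w; t).
\]
The two transfers on the right are then evaluated separately. For the first, \cref{lemma: generalized commutative conversion} with $\beta_1 = 2$, $\beta_2 = 1$ (so that $\Robin{2}^{\beta_1 - \beta_2} = \Robin{2}$) gives
\[
  (\Convert{2}{1}\, N)(z, w; s, t)
  =
  \Change{2}{2}{1}\Big( (\Convert{2}{2}\, \E)\big(z, w; (s-1)t\big) \Big),
\]
while the second is exactly formula~\eqref{eq:cgf:several:scc} of \cref{theorem:cgf:several:scc}, namely $-\big(\widehat{\D}(2zw; t)\big)^2 \cdot \Change{2}{2}{1}\big( (\Convert{2}{2}\, \E)(z, w; -t) \big)$.

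Finally I would remove $N(2zw; s, t)$ by reusing the same enumerative identities: from $\widehat{\D}(z; s, t) = \widehat{\D}(z; t) \cdot N(z; s, t)$ one reads off $N(2zw; s, t) = \widehat{\D}(2zw; s, t) / \widehat{\D}(2zw; t)$, so the second summand collapses to $-\widehat{\D}(2zw; s, t) \cdot \widehat{\D}(2zw; t) \cdot \Change{2}{2}{1}\big( (\Convert{2}{2}\, \E)(z, w; -t) \big)$. Factoring $\widehat{\D}(2zw; t)$ out of both terms produces precisely the claimed identity. The only genuinely delicate point is the bookkeeping of the ring memberships that justify applying the product rule and the commutation rule simultaneously in the presence of the two marking variables; once those are settled, everything reduces to a direct substitution, which is why the tedious verifications can be suppressed.
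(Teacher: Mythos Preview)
Your proof is correct and follows essentially the same route as the paper: factor $\widehat{\D}(z;s,t)=\widehat{\D}(z;t)\cdot\Robin{2}\big(\E(z;(s-1)t)\big)$ via~\eqref{eq: GGF of digraphs marking components and source-like components}--\eqref{eq: GGF of digraphs marking components}, apply the product rule of \cref{lemma: operation transfers and marking variables}, evaluate the two summands with \cref{lemma: generalized commutative conversion} and \cref{theorem:cgf:several:scc}, and then reuse the factorization to replace $N(2zw;s,t)$ by $\widehat{\D}(2zw;s,t)/\widehat{\D}(2zw;t)$. Your write-up is in fact more explicit than the paper's sketch about the ring-membership checks and the commutation step.
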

\begin{proof}
  It is sufficient to apply \cref{lemma: operation transfers and marking variables} to relation~\eqref{eq: GGF of digraphs marking components and source-like components} in the form
  \[
    \widehat{\D}(z; s, t)
     =
    \Robin{2} \Big( \E \big( z; (s-1)t \big) \Big)
     \cdot
    \widehat{\D}(z; t)
    \, .
  \]
  The first summand of the result comes directly from the chain rule.
  To get the second summand, we additionally apply \cref{theorem:cgf:several:scc} and use relation~\eqref{eq: GGF of digraphs marking components and source-like components} again, but in the opposite direction.
\end{proof}

\begin{theorem}
\label{theorem:cgf:multivariate:digraphs}
  The Exponential GF \( \D(z; u, v, y, t) \) of digraphs
  with marking variables  \( u, v, y \) and \( t \) for the numbers of purely source-like, purely sink-like, isolated and all strongly connected components, respectively,
  belongs to the ring \( \Ring{2}{2}(u,v,y,t) \)
  and the corresponding Coefficient GF of type \( (2,2) \) is given by
  \begin{equation}\label{eq:cgf:multivariate:digraphs}
    (\Convert{2}{2}\, \D)(z, w; u, v, y, t) =
    \D^\circ_1
     +
    \D^\circ_{20} \cdot
    \Change{2}{1}{2}
    \Big(
      \D^\circ_{21} + \D^\circ_{22} + \D^\circ_{23}
    \Big)
    \, ,
  \end{equation}
  where
  \begin{align*}
    \D_1^\circ(z,w; u,v,y,t) &=
      (y-u-v+1)t
       \cdot
      \D(2^{3/2} z^2w; u,v,y,t)
       \cdot
      (\Convert{2}{2}\, \SCC)(z,w)
    \, ; \\
    \D_{20}^\circ(z,w; u,v,y,t) &=
      \E\big( 2^{3/2}z^2w; (y-u-v+1)t \big)
    \, ;  \\
    \D_{21}^\circ(z,w; u,v,y,t) &=
      \widehat{\D}(2zw; u,t)
       \cdot
      \Change{2}{2}{1}
       \Big(
         (\Convert{2}{2}\, \E) \big( z,w; (v-1)t \big) 
       \Big)
    \, ; \\
    \D_{22}^\circ(z,w; u,v,y,t) &=
      \widehat{\D}(2zw; v,t)
       \cdot
      \Change{2}{2}{1}
       \Big(
         (\Convert{2}{2}\, \E) \big( z,w; (u-1)t \big) 
       \Big)
    \, ; \\
    \D_{23}^\circ(z,w; u,v,y,t) &=
      \widehat{\D}(2zw; u,t)
       \cdot
      \widehat{\D}(2zw; v,t)
       \cdot
      \Change{2}{2}{1}
       \Big( (\Convert{2}{2}\, \E)(z,w; -t) \Big)
    \, .
  \end{align*}
\end{theorem}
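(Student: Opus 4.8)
The plan is to feed the closed form of \cref{proposition: complete multivariate Graphic GF for digraphs} into the transfer calculus of \cref{section:transfers}, in exactly the spirit of the proof of \cref{theorem:cgf:trivariate:digraphs}. Writing $\E(z;c)=e^{c\cdot\SCC(z)}$, I would first record the factorisation
\[
\D(z;u,v,y,t)=\E\big(z;(y-u-v+1)t\big)\cdot\Robin{2}^{-1}(R),
\qquad
R=\frac{\Robin{2}\big(\E(z;(u-1)t)\big)\cdot\Robin{2}\big(\E(z;(v-1)t)\big)}{\Robin{2}\big(\E(z;-t)\big)}.
\]
Membership $\D\in\Ring{2}{2}(u,v,y,t)$ is then automatic: $\SCC\in\Ring{2}{2}$ by \cref{theorem:asymptotics:scc}, each $\Robin{2}\big(\E(z;\cdot)\big)$ lands in $\Ring{2}{1}$, hence $R\in\Ring{2}{1}$ and $\Robin{2}^{-1}(R)\in\Ring{2}{2}$, and the ring is stable under the operations of \cref{lemma: operation transfers and marking variables,lemma: Bender's transfer and marking variables,lemma: power transfer and marking variables}.

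Applying the product rule~\eqref{eq: prod-transfer and marking variables} to this factorisation splits $(\Convert{2}{2}\D)$ into two summands. In the summand where the first factor is transferred, I would evaluate $\Convert{2}{2}\big(\E(z;(y-u-v+1)t)\big)$ by the chain rule~\eqref{eq: Bender-transfer and marking variables} applied to $F(x;t)=e^{(y-u-v+1)t\,x}$ and $A=\SCC$, which produces the derivative factor $(y-u-v+1)t\cdot\E(2^{3/2}z^2w;(y-u-v+1)t)$ against $(\Convert{2}{2}\SCC)(z,w)$; recombining this shifted prefactor with the second factor evaluated at $2^{3/2}z^2w$ back into $\D(2^{3/2}z^2w;u,v,y,t)$ gives exactly $\D_1^\circ$. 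In the other summand the shifted prefactor of the first factor is precisely $\D_{20}^\circ=\E(2^{3/2}z^2w;(y-u-v+1)t)$, multiplied by $\big(\Convert{2}{2}(\Robin{2}^{-1}R)\big)(z,w)$. The commutative diagram of \cref{lemma: generalized commutative conversion}, taken with $\beta_1=1$, $\beta_2=2$, rewrites this last factor as $\Change{2}{1}{2}\big((\Convert{2}{1}R)(z,w)\big)$, so it remains only to identify $(\Convert{2}{1}R)=\D_{21}^\circ+\D_{22}^\circ+\D_{23}^\circ$.

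For the inner transfer I would use \cref{corollary: GGF of digraphs marking components} to write $R=\Robin{2}\big(\E(z;(u-1)t)\big)\cdot\widehat{\D}(z;v,t)$ and apply the product rule~\eqref{eq: prod-transfer and marking variables} at level $(2,1)$. The factor $\Robin{2}\big(\E(z;(u-1)t)\big)$ transfers through \cref{lemma: generalized commutative conversion} to $\Change{2}{2}{1}\big((\Convert{2}{2}\E)(z,w;(u-1)t)\big)$, contributing the term $\D_{22}^\circ$; its companion $\widehat{\D}(z;v,t)$ transfers through \cref{theorem:cgf:trivariate:digraphs} (itself relying on \cref{theorem:cgf:several:scc}), contributing $\D_{21}^\circ$ together with a cross term proportional to $\widehat{\D}(2zw;u,t)\,\widehat{\D}(2zw;v,t)$. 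Re-expressing the evaluation shifts through $\Robin{2}\big(\E(\cdot;(u-1)t)\big)(2zw)=\widehat{\D}(2zw;u,t)/\widehat{\D}(2zw;t)$ lets the spurious $\widehat{\D}(2zw;t)$ factors cancel, leaving the three announced contributions.

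I expect the main obstacle to be bookkeeping rather than conceptual. One must keep the two argument rescalings $2zw$ (produced at every $\beta=1$ step) and $2^{3/2}z^2w$ (produced at every $\beta=2$ step) consistent, and must thread the change-of-type operators $\Change{2}{1}{2}$ and $\Change{2}{2}{1}$ through the nested formulas so that the inner $\Change{2}{2}{1}$ factors already present in \cref{theorem:cgf:trivariate:digraphs,theorem:cgf:several:scc} align with the outer $\Change{2}{1}{2}$. The single most delicate point is verifying that the cross contribution proportional to $\widehat{\D}(2zw;u,t)\,\widehat{\D}(2zw;v,t)$ collects correctly into the one term $\D_{23}^\circ$ with the right sign; this is precisely the sort of ``tedious detail'' the statement permits us to suppress.
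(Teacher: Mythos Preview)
Your proposal is correct and follows essentially the same route as the paper's own proof: both start from \cref{proposition: complete multivariate Graphic GF for digraphs} and push it through the Leibniz and chain rules of \cref{lemma: operation transfers and marking variables,lemma: Bender's transfer and marking variables} together with the commutation of \cref{lemma: generalized commutative conversion}, simplifying via \cref{corollary: GGF of digraphs marking components}, \cref{theorem:cgf:ssd} and \cref{theorem:cgf:several:scc}. The only cosmetic difference is that you package the inner computation of $(\Convert{2}{1}R)$ through \cref{theorem:cgf:trivariate:digraphs}, itself a consequence of \cref{theorem:cgf:several:scc}, rather than invoking the latter directly as the paper does.
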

\begin{proof}
  The proof is a straightforward application of \cref{lemma: operation transfers and marking variables} and \cref{lemma: generalized commutative conversion} to relation~\eqref{eq: complete multivariate Graphic GF for digraphs}
  with further simplifications done by \cref{corollary: GGF of digraphs marking components}, \cref{theorem:cgf:ssd} and \cref{theorem:cgf:several:scc}.
\end{proof}

\begin{remark}
  Similarly to \cref{corollary:dominant:m-fixed}, we can prove that
  the probability that a uniform random digraph on \( n \) vertices has \( (m+1) \) strongly connected components,
  from which \( i > 1 \) are purely source-like, \( j > 1 \) are purely sink-like and \( \ell \geq 0 \) are isolated, is asymptotically
  \[
    2^{-n(m+\ell)}
    \binom{n}{m} \binom{m}{\ell}
    2^{m(\ell+1)}
    \sum_k \binom{m-\ell}{k}
    2^{k(m-\ell-k)}
    \dgg_{k,i}
    \dgg_{m-\ell-k,j}
    \, ,
  \]
  where \( \dgg_{m,j} \) denotes the number of directed acyclic graphs with \( m \) vertices,
  from which \( j \) are source-like.
  The idea is to trace the term with the smallest exponent with respect to \( z \) in
  \(
    [t^{m+1} u^i v^j y^\ell]
    (\Convert{2}{2}\, \D)(z, w; u, v, y, t)
  \).
  The easiest way is to start with extracting \( [y^\ell] \) in the right-hand side of expression~\eqref{eq:cgf:multivariate:digraphs},
  and then extract other variables.
  At some point, extraction of the dominant term can be obtained by expressing all parts in terms of \( \SCC \) and  replacing all \( \SCC(z) \) with \( z \).

  Again, one can obtain the dominant term in the asymptotics combinatorially, by considering the structure of involved digraphs.
  Indeed, if a digraph has \( (m+1) \) strongly connected components,
  then, with high probability, all the components except one consist of a single vertex.
  Moreover, the large component containing the rest \( (n-m) \) vertices cannot be isolated, source-like or sink-like,
  with an exception of the two cases:
  either when all the components are isolated,
  or when there is only one source-like or sink-like component. 
  Once the large component is marked, the digraph splits into several parts, 
  and computations similar to those carried out in \cref{remark:dominant:m-fixed} yield the above result.
\end{remark}

\begin{remark}
\label{remark: asymptotics of D(n p)}
  Similarly to the undirected case discussed in \cref{remark: asymptotics of G(n p)}, we can establish asymptotics of digraphs within the \( \mathbb D(n,p) \) model.
  Again, we put \( \alpha = (1 - p)^{-1} \), so that the Exponential GF of digraphs is equal to
  \[
    \D(z) = \sum\limits_{n=0}^\infty
     \alpha^{2\binom{n}{2}} \dfrac{z^n}{n!}
   \, .
  \]
  As we have observed in \cref{remark:introducing Erdos-Renyi}, all enumerative results, with the exception of \cref{corollary: exponential gf of ssd and scc} and relation~\eqref{eq:1/G=1-IT}, remain the same.
  Therefore, to obtain the correct statements that generalize the ones we have seen in \cref{section:digraphs}, we need to replace 2 by \( \alpha \) and \( 1 - \I(z) \) by \( \G^{-1}(z) \).
  In particular, the asymptotics of strongly connected digraphs is described by
  \[
    (\Convert{\alpha}{2}\SCC)(z,w) =
    \E(\alpha^{3/2}z^2w) \cdot
      \Change{\alpha}{1}{2} \Big(\G^{-2}(\alpha zw)\Big)
    \, ,
  \]
  the generalizations of formulae \eqref{eq:cgf:ssd} and \eqref{eq:cgf:several:scc} are, respectively,
  \[
    (\Convert{\alpha}{2}\E)(z, w; t) = t \cdot
    \E(\alpha^{3/2}z^2w; 1+t) \cdot
      \Change{\alpha}{1}{2} \Big(\G^{-2}(\alpha zw)\Big)
  \]
  and
  \[
    (\Convert{\alpha}{1} \widehat{\D})(z, w; t)
      =
    \left(
      \widehat{\D}(\alpha z w; t)
    \right)^2
      \cdot
    \Change{\alpha}{2}{1}
      \Big( (\Convert{\alpha}{2}\E)(z, w; -t) \Big)
    \, ,  
  \]
  and so on.
\end{remark}

\section{2-SAT formulae}
\label{section:2-SAT}

\subsection{Asymptotics for satisfiable 2-CNFs}

  In this section, we provide the complete asymptotic expansion of satisfiable 2-CNFs,
  both in the form of the Coefficient GF and as a series.
  Additionally, we give a combinatorial interpretation of the leading term of the asymptotics of the number of satisfiable 2-CNF formulae.
  All the results presented in this section are novel.

\begin{theorem}
\label{theorem:cgf:sat}
  The Implication GF \( \SAT(z) \) of satisfiable 2-CNF formulae belongs to the ring \( \Ring{2}{1} \) and its Coefficient GF of type \( (2,1) \) is given by
  \[
    (\Convert{2}{1}\, \SAT)(z, w)
     = 
    \SAT(2zw) \big(1 - \I(2zw) \big)
    \, .
  \]
\end{theorem}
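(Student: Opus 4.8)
The plan is to start from the closed form provided by \cref{proposition:igf:sat}, namely $\SAT(z) = \G(z)\cdot\Robin{2}^2\big(e^{-\frac12\SCC(z)}\big)$, and to abbreviate $P(z) := \Robin{2}^2\big(e^{-\frac12\SCC(z)}\big)$, so that $\SAT = \G\cdot P$. Since $\G\in\Ring{2}{1}$ with $(\Convert{2}{1}\,\G)(z,w)=1$ by \cref{example: graphs and tournaments}, the whole computation reduces to understanding $P$ and then invoking the product rule~\eqref{eq: prod-transfer} with $\alpha=2$, $\beta=1$, for which $\alpha^{\frac{\beta+1}{2}}z^\beta w = 2zw$.

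First I would show that $P$ lies in $\Ring{2}{1}$ but sits in the \emph{kernel} of $\Convert{2}{1}$, i.e. $(\Convert{2}{1}\,P)(z,w)=0$. Indeed, by \cref{theorem:asymptotics:scc} we have $\SCC\in\Ring{2}{2}$, and since $\scc_0=0$, \cref{lemma: Bender's transfer} applied to $A=\SCC$ and $F(x)=e^{-x/2}$ shows $e^{-\frac12\SCC}\in\Ring{2}{2}$ with Coefficient GF $-\frac12\,e^{-\frac12\SCC(2^{3/2}z^2w)}\cdot(\Convert{2}{2}\,\SCC)(z,w)$, a power series in $z$ starting at $z^0$ with constant term $-\frac12$. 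Hence the coefficients of $e^{-\frac12\SCC}$ grow like $2^{2\binom{n}{2}}$, and applying $\Robin{2}^2$, which divides the $n$th coefficient by $2^{2\binom{n}{2}}$, leaves $P$ with bounded coefficients tending to $-\frac12$. A bounded-coefficient series cannot match any nonzero term of an expansion whose leading factor is $2^{\binom{n}{2}}$, so $P$ falls into the kernel described in \cref{remark: conversion kernel}.

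Then I would apply the product rule~\eqref{eq: prod-transfer} to $\SAT=\G\cdot P$. Because $(\Convert{2}{1}\,P)=0$ and $(\Convert{2}{1}\,\G)=1$, the first summand vanishes and only $P(2zw)\cdot 1$ survives, giving $(\Convert{2}{1}\,\SAT)(z,w)=P(2zw)$; ring closure (\cref{lemma: operation transfers}) simultaneously yields $\SAT\in\Ring{2}{1}$. It remains to identify $P$ combinatorially: from $\SAT=\G\cdot P$ we have $P=\SAT\cdot\G^{-1}$, and relation~\eqref{eq:1/G=1-IT}, $\G^{-1}=1-\I$, turns this into $P(z)=\SAT(z)\big(1-\I(z)\big)$. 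Substituting $z\mapsto 2zw$ gives exactly the claimed identity $(\Convert{2}{1}\,\SAT)(z,w)=\SAT(2zw)\big(1-\I(2zw)\big)$.

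The main obstacle I anticipate is the middle step: rigorously confirming that $\Robin{2}^2$ cancels the growth of $e^{-\frac12\SCC}$ precisely, so that $P$ lands in the kernel. This rests on the leading asymptotic order of $\SCC$ being exactly $2^{2\binom{n}{2}}$ with no lower (``negative-$m$'') frequencies, a fact read off from the shape of the Coefficient GF in \cref{theorem:asymptotics:scc} together with $\E(0)=1$ and $\I(0)=0$. Once this is secured, the remaining manipulations are purely formal.
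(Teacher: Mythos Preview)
Your proposal is correct and follows essentially the same route as the paper: factor $\SAT=\G\cdot P$ with $P=\Robin{2}^2\big(e^{-\frac12\SCC}\big)$, show $P$ lies in the kernel of $\Convert{2}{1}$, apply the product rule~\eqref{eq: prod-transfer}, and then rewrite $P=\SAT\cdot\G^{-1}=\SAT\cdot(1-\I)$. The only cosmetic difference is that the paper dispatches the kernel step in one line by invoking \cref{lemma: QDA = 0} (since $e^{-\frac12\SCC}\in\Ring{2}{2}$ implies $\Robin{2}\big(e^{-\frac12\SCC}\big)\in\Ring{2}{1}$, and then one more $\Robin{2}$ lands in $\ker\Convert{2}{1}$), whereas you unwind this by hand via growth estimates and \cref{remark: conversion kernel}; both arguments are valid, and yours would compress to the same citation.
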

\begin{proof}
  This follows directly from \cref{lemma: operation transfers} applied to relation~\eqref{eq:igf:sat},
  \[
    \SAT(z) = \G(z) \cdot
      \Robin{2}^2
      \left(
        e^{-\tfrac12 \SCC(z)}
      \right)
    \, .
  \]
  Indeed, as we have seen in \cref{example: graphs and tournaments}, \( \G(z) \) belongs to \( \Ring{2}{1} \) and \( (\Convert{2}{1}\, \G)(z,w) = 1 \).
  On the other hand, according to \cref{theorem:cgf:ssd}, the Exponential GF \( \E(z) \) belongs to the ring \( \Ring{2}{2} \),
  and hence, by \cref{lemma: power transfer},
  so does \( e^{-\tfrac12 \SCC(z)} = \big( \E(z) \big)^{-1/2} \).
  As a consequence, due to \cref{lemma: QDA = 0},
  \[
    \Robin{2}^2
    \left(
      e^{-\tfrac12 \SCC(z)}
    \right)
    \xmapsto{\;\;\Convert{2}{1}\;\;} 0
    \, .
  \]
  Thus, 
  \[
    (\Convert{2}{1}\, \SAT)(z, w)
     = 
    (\Convert{2}{1}\, \G)(z,w) \cdot
      \Robin{2}^2
      \left(
        e^{-\tfrac12 \SCC(2zw)}
      \right)
     = 
    \dfrac{\SAT(z)}{\G(2zw)}
  \]
  and the relation \( \big(\G(2zw)\big)^{-1} = 1 - \I(2zw) \) completes the proof.
\end{proof}

\begin{corollary}
\label{corollary:asymptotics:sat_n}
  The number \( \sat_n \) of satisfiable 2-CNF formulae with \( n \) Boolean variables satisfies
  \[
    \sat_n
     \approx
    2^{3 \binom{n}{2} + n}
    \left(
      1
       - 
      \sum_{m\geq1}
       s_m^\circ
        \binom{n}{m} 
         \dfrac{2^{\binom{m+1}{2}}}{2^{mn}}
    \right)
    \, ,
  \]
  where
  \[
    s_m^\circ
     =
    \left(
      \sum_{k=0}^{m-1} \binom{m}{k}
       \dfrac{\sat_k \ir_{m-k}}{2^{k^2}}
    \right)
     -
    \dfrac{\sat_m}{2^{m^2}}
    \, ,
  \]
  and \( \ir_k \) denotes the number of irreducible tournaments with \( k \) vertices.
\end{corollary}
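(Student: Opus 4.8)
The plan is to read the series expansion straight off the Coefficient GF produced by \cref{theorem:cgf:sat}, unwinding \cref{definition: asymptotics}. First I would record that, as a formal power series, the Implication GF is $\SAT(z) = \sum_{n\geq 0}\big(\sat_n/2^{n^2}\big)z^n/n!$, so that its coefficients \emph{as an element of} $\Ring{2}{1}$ are $a_n = \sat_n/2^{n^2}$. By \cref{definition: asymptotics} these satisfy $a_n \approx 2^{\binom n2}\sum_{m\geq \M}2^{-mn}\sum_{\ell\geq 0}n^{\underline\ell}\,a^\circ_{m,\ell}$, where the array is recovered from the Coefficient GF by $a^\circ_{m,\ell} = 2^{\binom m2}\,[z^m w^\ell]\,(\Convert{2}{1}\,\SAT)(z,w)$.

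The key structural observation is that the right-hand side of \cref{theorem:cgf:sat}, namely $\SAT(2zw)\big(1-\I(2zw)\big)$, depends on $z$ and $w$ only through the product $zw$. Hence it is supported on the diagonal, so $a^\circ_{m,\ell} = 0$ unless $\ell = m$, and the inner sum over $\ell$ collapses to leave $a_n \approx 2^{\binom n2}\sum_{m\geq 0}2^{-mn}\,n^{\underline m}\,a^\circ_{m,m}$. To evaluate $a^\circ_{m,m}$ I would expand $\SAT(2zw)\big(1-\I(2zw)\big)$ as the Cauchy product of $\SAT(x)=\sum_k \sat_k x^k/(2^{k^2}k!)$ and $1-\I(x)=1-\sum_{j\geq 1}\ir_j x^j/j!$ at $x=2zw$ (using $\ir_0=0$); extracting $[z^m w^m]$ and multiplying by $2^{\binom m2}$ then gives a closed form for $a^\circ_{m,m}$ in terms of $\sat_k$ and $\ir_{m-k}$.

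Finally I would convert back to $\sat_n$. Writing $n^{\underline m} = m!\binom nm$ and multiplying the expansion by $2^{n^2}$, the identity $n^2+\binom n2 = 3\binom n2 + n$ produces the announced prefactor $2^{3\binom n2+n}$; the $m=0$ term contributes the constant $1$ since $a^\circ_{0,0} = \sat_0 = 1$, and for $m\geq 1$ the substitution $s_m^\circ := -m!\,a^\circ_{m,m}$ yields the stated sum. The ordering of terms demanded by the symbol $\approx$ needs no separate argument, as it is already part of the assertion $\SAT\in\Ring{2}{1}$ supplied by \cref{theorem:cgf:sat}. The only genuine work — and the step I expect to be the main obstacle — is the bookkeeping of the three competing normalizations (the $2^{n^2}$ of the Implication GF, the $2^{\binom n2}$ growth rate of $\Ring{2}{1}$, and the $2^{\binom m2}$ weight built into the Coefficient GF): one must collect all powers of two together with the factor $\binom mk = m!/\big(k!(m-k)!\big)$ correctly in order to pin down the exact prefactor in the formula for $s_m^\circ$.
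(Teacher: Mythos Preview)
Your approach is correct and is exactly what the paper does: its proof consists of the single sentence ``This follows from \cref{theorem:cgf:sat} with the help of the definitions of the Implication GF and the Coefficient GFs by extracting the required coefficient,'' and your proposal is precisely a detailed unpacking of that sentence. The observation that the Coefficient GF depends only on the product $zw$, so that the double sum collapses to the diagonal $\ell=m$, is the right structural point, and the rest is indeed just the power-of-two bookkeeping you flag.
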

\begin{proof}
  This follows from \cref{theorem:cgf:sat} with the help of the definitions of the Implication GF and the Coefficient GFs by extracting the required coefficient.
\end{proof}

\begin{remark}
\label{remark:asymptotics:sat}
  The fact that \( \sat_n \sim 2^{3\binom{n}{2}+n} \) has a simple combinatorial explanation.
  With high probability, the implication digraph of a typical satisfiable formula
  consists of one pair of ordinary strongly connected components,
  supplied with additional edges going from one of these components to another.
  There are \( 2^n \) ways to choose which literals go into each of the components.
  Furthermore, there are \( 2^{n(n-1)} \) ways to choose directed edges within the first of these components
  (the edges of the second component are uniquely defined by this choice).
  Finally, there are \( 2^{\binom{n}{2}} \) ways to draw directed edges between the components,
  \ie to choose pairs of edges \( x \to \n y \) and \( y \to \n x \) such that vertices \( x, y \) belong to the first component, and \( \n x, \n y \) belong to the second one.
\end{remark}

\subsection{Further asymptotics}

  This section contains two more new asymptotic results.
  First, we establish the Coefficient GF of contradictory strongly connected implication digraphs.
  The second result is more general.
  Namely, we obtain the Coefficient GF of implication digraphs
  with marking variables for the numbers of contradictory and ordinary strongly connected components.
  We also briefly discuss the combinatorial interpretation of its leading term.

\begin{theorem}
\label{theorem:asymptotics:cscc}
  The Exponential GF \( \CSCC(z) \) of contradictory strongly connected implication digraphs belongs to the ring \( \Ring{2}{4} \),
  and its Coefficient GF of type \( (2,4) \) is given by
  \[
    (\Convert{2}{4}\, \CSCC)(z, w)
     =
    \exp \left(
      \dfrac{1}{2} \SCC(2^{7/2} z^4 w)
       -
      \CSCC(2^{5/2} z^4 w)
    \right)
     \cdot
    \Change{2}{2}{4}
     \Big( 1 - \I(2^{5/2}z^2w) \Big)
    \, .
  \]
\end{theorem}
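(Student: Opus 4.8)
The plan is to start from the closed form for $\CSCC(z)$ provided by \cref{proposition:egf:cscc}, namely
\[
\CSCC(z) = \tfrac{1}{2}\SCC(2z) + \log\!\Big( \Robin{2}^{-2}\big( \D(z)(1 - \I(2z)) \big) \Big),
\]
and to transfer each summand separately. Write $P(z) := \D(z)\big(1 - \I(2z)\big)$. First I would track the rings: by \cref{example: graphs and tournaments} we have $\D \in \Ring{2}{2}$ with $\Convert{2}{2}\D = 1$, while $1 - \I(2z) \in \Ring{2}{1} \subset \Ring{2}{2}$ by \cref{theorem: irreducible tournaments asymptotics} and \cref{lemma: ring inclusion}; hence $P \in \Ring{2}{2}$, and applying $\Robin{2}^{-2}$ raises the type twice (\cref{definition: Robinson operator}), so $\Robin{2}^{-2}(P) \in \Ring{2}{4}$ with constant term $1$. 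Since $\tfrac{1}{2}\SCC(2z)$ lies in $\Ring{2}{2}$, \cref{lemma: ring inclusion} forces $\Convert{2}{4}\big(\tfrac{1}{2}\SCC(2z)\big) = 0$, so the entire transfer reduces to that of the logarithmic term; this simultaneously shows $\CSCC \in \Ring{2}{4}$.

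Next I would apply Bender's transfer (\cref{lemma: Bender's transfer}) to $A(z) = \Robin{2}^{-2}(P(z)) - 1 \in \Ring{2}{4}$ (which has $a_0 = 0$) and $F(x) = \log(1+x)$, with $\alpha = 2$, $\beta = 4$, so that $\alpha^{(\beta+1)/2} = 2^{5/2}$ and $z^\beta = z^4$. This gives
\[
(\Convert{2}{4}\, \CSCC)(z,w) = H\big(2^{5/2} z^4 w\big)\cdot (\Convert{2}{4} A)(z,w),
\]
where $H(z) = \big(\Robin{2}^{-2}(P(z))\big)^{-1}$. The key simplification is to feed the defining relation back into $H$: exponentiating \cref{proposition:egf:cscc} yields $\Robin{2}^{-2}(P(z)) = \exp\big(\CSCC(z) - \tfrac12 \SCC(2z)\big)$, whence $H(z) = \exp\big(\tfrac12\SCC(2z) - \CSCC(z)\big)$. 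Substituting $z \mapsto 2^{5/2} z^4 w$ and using $2\cdot 2^{5/2} = 2^{7/2}$ reproduces exactly the exponential factor in the statement.

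It remains to evaluate $\Convert{2}{4} A = \Convert{2}{4}\big(\Robin{2}^{-2}(P)\big)$, the additive constant $1$ being annihilated by $\Convert{2}{4}$. Here I would invoke the commutative diagram of \cref{lemma: commutative conversion} with $\beta_1 = 2$, $\beta_2 = 4$ to get $\Convert{2}{4}\big(\Robin{2}^{-2}P\big) = \Change{2}{2}{4}\big(\Convert{2}{2} P\big)$, reducing the computation to type $(2,2)$. Then the product rule \cref{lemma: operation transfers} applied to $P = \D\cdot(1 - \I(2z))$ collapses: the term carrying $\Convert{2}{2}(1-\I(2z))$ vanishes (that factor is a $\Ring{2}{1}$ element, so its type-$(2,2)$ transfer is zero), leaving $\Convert{2}{2}P = \big(1 - \I(2z)\big)\big|_{z\mapsto 2^{3/2}z^2 w}\cdot \Convert{2}{2}\D = 1 - \I(2^{5/2} z^2 w)$. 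Applying $\Change{2}{2}{4}$ produces the second factor, and multiplying by $H(2^{5/2}z^4 w)$ yields the claimed formula.

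The main obstacle is conceptual rather than computational: the summand $\tfrac12 \SCC(2z)$ disappears under the direct transfer $\Convert{2}{4}$, yet it must reappear inside the exponential correction factor. The device that resolves this is to recognise the derivative factor $H$ of Bender's rule as $\big(\Robin{2}^{-2}P\big)^{-1}$ and then re-express it through the very relation defining $\CSCC$, which is what makes the final answer self-referential but compact. A secondary point requiring care is the bookkeeping of the type parameter $\beta$ as it moves $2 \to 4$ under $\Robin{2}^{-2}$, together with the matching change-of-basis operator $\Change{2}{2}{4}$, so that both the argument rescalings ($2^{5/2}z^2w$ inside $\Change{2}{2}{4}$ versus $2^{5/2}z^4 w$ and $2^{7/2}z^4 w$ in the exponential) and the prefactor powers of $2$ line up consistently.
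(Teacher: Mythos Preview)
Your proposal is correct and follows essentially the same route as the paper's own proof: start from \cref{proposition:egf:cscc}, kill the $\tfrac12\SCC(2z)$ contribution via \cref{lemma: ring inclusion}, apply Bender's transfer to the logarithm with $\alpha=2$, $\beta=4$, rewrite the derivative factor $H$ by feeding the defining relation back in, and compute $\Convert{2}{4}A$ through the commutative diagram and the product rule in $\Ring{2}{2}$. Your bookkeeping of the substitution $2^{5/2}z^4w$ is in fact cleaner than the paper's printed proof, which contains a typo ($z^2$ for $z^4$) in that very step.
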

\begin{proof}
  The proof is rather straightforward.
  Recall that, according to \cref{proposition:egf:cscc},
  \[
    \CSCC(z) = \dfrac{1}{2} \SCC(2z) +
    \log \left(
      \Robin{2}^{-2}
       \Big( \D(z) \big( 1 - \I(2z) \big) \Big)
    \right)
    \, .
  \]  
  First of all, \( \SCC(2z) \in \Ring{2}{2} \) and \( \big( 1 - \I(2z) \big) \in \Ring{2}{1} \) by \cref{theorem:asymptotics:scc} and \cref{theorem: irreducible tournaments asymptotics}, respectively.
  Hence, due to \cref{lemma: ring inclusion},
  \[
    \SCC(2z) \xmapsto{\;\;\Convert{2}{4}\;\;} 0
    \qquad\mbox{and}\qquad
    \big( 1 - \I(2z) \big)\xmapsto{\;\;\Convert{2}{2}\;\;} 0
    \, .
  \]
  This implies that
  \(
    A(z) = \left(
      \Robin{2}^{-2}
       \Big( \D(z) \big( 1 - \I(2z) \big) \Big) - 1
    \right) \in \Ring{2}{4}
  \)  
  and, according to \cref{lemma: commutative conversion} and \cref{lemma: operation transfers},
  \(
    (\Convert{2}{4} A)(z, w) =
    \Change{2}{2}{4}
     \Big( 1 - \I(2^{5/2}z^2w) \Big)
  \).
  Now, applying \cref{lemma: Bender's transfer} to \( A(z) \) and \( F(x) = \log(x+1) \) with \( \alpha = 2 \) and \( \beta = 4 \),
  we have
  \[
    H(z) = \dfrac{1}{A(z) + 1} =
    \exp \left(
      \dfrac{1}{2} \SCC(2z) - \CSCC(z)
    \right)
  \]
  and
  \(
    (\Convert{2}{4}\, \CSCC)(z, w)
     =
    H(2^{5/2}z^4w) \cdot (\Convert{2}{4} A)(z, w)
  \).
\end{proof}

\begin{theorem}
\label{theorem:cgf:cnf:types}
  The Implication GF \( \CNF(z; s, t) \) of 2-CNF formulae with variables \( s \) and \( t \) that mark, respectively,
  the numbers of contradictory strongly connected components and
  pairs of ordinary strongly connected components in the corresponding implication digraph,
  belongs to the ring \( \Ring{2}{2}(s,t) \) and the corresponding Coefficient GF of type \( (2,2) \) is given by
  \begin{multline*}
    (\Convert{2}{2}\, \CNF)(z, w; s, t)
     =
    s \cdot \widehat{\D}(2^{3/2} z^2 w; t)
     \cdot \\
    \Change{2}{4}{2}
    \left[
      z \cdot
      \exp \left(
        \big(s-1\big) \cdot \CSCC(2^{3/2} z^4 w)
         +
        \dfrac{(1-t)}{2} \cdot \SCC(2^{5/2} z^4 w)
      \right)
       \cdot
      \Change{2}{2}{4}
       \Big( 1 - \I(4z^2w) \Big)
    \right]
    \, .
  \end{multline*}
\end{theorem}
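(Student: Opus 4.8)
The plan is to reduce the statement to the transfer rules already proved and thereby avoid computing any asymptotics directly. Starting from the closed form \eqref{eq:cnf:types} of \cref{corollary:cnf:types} and using \cref{corollary: GGF of digraphs marking components} to identify the denominator via $1/\Robin{2}(e^{-t\cdot\SCC(z)})=\widehat{\D}(z;t)$, I would rewrite the Implication GF as a product
\[
  \CNF(z;s,t)=\Robin{2}^2\big(e^{E(z;s,t)}\big)\cdot\widehat{\D}(z;t),\qquad E(z;s,t)=s\cdot\CSCC(z/2)-\tfrac{t}{2}\cdot\SCC(z).
\]
Since $\CSCC\in\Ring{2}{4}$ and $\SCC\in\Ring{2}{2}\subset\Ring{2}{4}$ by \cref{theorem:asymptotics:cscc}, \cref{theorem:asymptotics:scc} and \cref{lemma: ring inclusion}, the exponent $E$ lies in $\Ring{2}{4}(s,t)$ with zero constant term, so $e^{E}\in\Ring{2}{4}(s,t)$ and $\Robin{2}^2(e^{E})\in\Ring{2}{2}(s,t)$. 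Combined with $\widehat{\D}(z;t)\in\Ring{2}{1}(t)$ (\cref{theorem:cgf:several:scc}), this establishes the claimed membership $\CNF\in\Ring{2}{2}(s,t)$.

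The decisive simplification comes from the Leibniz rule \eqref{eq: prod-transfer and marking variables} of \cref{lemma: operation transfers and marking variables} applied to this product: because $\widehat{\D}(z;t)\in\Ring{2}{1}(t)$, \cref{lemma: ring inclusion} forces $(\Convert{2}{2}\widehat{\D})(z,w;t)=0$, so the first Leibniz term vanishes and only
\[
  (\Convert{2}{2}\CNF)(z,w;s,t)=\widehat{\D}(2^{3/2}z^2w;t)\cdot\big(\Convert{2}{2}\Robin{2}^2 e^{E}\big)(z,w;s,t)
\]
remains. The commutative diagram of \cref{lemma: generalized commutative conversion} (with $\beta_1=4$, $\beta_2=2$) then rewrites the remaining factor as $\Change{2}{4}{2}\big((\Convert{2}{4}e^{E})(z,w;s,t)\big)$, which is exactly why the outer operator $\Change{2}{4}{2}$ appears in the statement.

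To compute $\Convert{2}{4}(e^{E})$ I would split $e^{E}=e^{s\cdot\CSCC(z/2)}\cdot e^{-\frac{t}{2}\SCC(z)}$ and apply the product rule once more. Here the factor $e^{-\frac{t}{2}\SCC(z)}$ lies in $\Ring{2}{2}(t)$ (Bender's rule applied to $\SCC\in\Ring{2}{2}$), so its $\Convert{2}{4}$-image is zero and only the term carrying $\Convert{2}{4}(e^{s\CSCC(z/2)})$ survives. For that term I would use Bender's rule \eqref{eq: Bender-transfer and marking variables} with $F(x;s)=e^{sx}$ applied to $\CSCC(z/2)$, which produces the prefactor $s\,e^{s\CSCC(2^{3/2}z^4w)}$ times $(\Convert{2}{4}\CSCC(z/2))(z,w)$; the shift \cref{corollary: transfer shift} then rewrites the latter as $z\cdot(\Convert{2}{4}\CSCC)(2^{-1/4}z,w)$, into which I substitute the explicit Coefficient GF from \cref{theorem:asymptotics:cscc}. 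Collecting the two surviving exponentials (the one from $e^{-\frac{t}{2}\SCC}$ evaluated at $2^{5/2}z^4w$ and the one inside the shifted $\CSCC$-transfer) gives the combined weight $(s-1)\CSCC(2^{3/2}z^4w)+\tfrac{1-t}{2}\SCC(2^{5/2}z^4w)$, matching the exponent in the statement.

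The main bookkeeping obstacle is verifying that the dilation $z\mapsto 2^{-1/4}z$ sends $\Change{2}{2}{4}\big(1-\I(2^{5/2}z^2w)\big)$ to $\Change{2}{2}{4}\big(1-\I(4z^2w)\big)$; this does \emph{not} follow from naive substitution, since $\Change{2}{2}{4}$ rescales the coefficient of $z^m$ by $2^{\frac14\binom{m}{2}}$. Instead I would check it on the coefficient of $z^{2n}w^n$: the factor $2^{5n/2}$ coming from $\I(2^{5/2}z^2w)$, after the dilation factor $2^{-n/2}$, matches the factor $2^{2n}$ from $\I(4z^2w)$, while the $\Change$-weight $2^{\frac14\binom{2n}{2}}$ is common to both sides. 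Once this identity and the parallel rescalings of the $\SCC$- and $\CSCC$-arguments ($2^{7/2}z^4w\mapsto 2^{5/2}z^4w$ and $2^{5/2}z^4w\mapsto 2^{3/2}z^4w$) are confirmed, applying the outer $\Change{2}{4}{2}$ and multiplying by $s\cdot\widehat{\D}(2^{3/2}z^2w;t)$ reproduces precisely the asserted formula.
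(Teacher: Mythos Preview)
Your proposal is correct and follows essentially the same route as the paper: factor $\CNF$ as $\widehat{\D}(z;t)\cdot\Robin{2}^2(e^{E})$, kill the $\widehat{\D}$-contribution in the Leibniz rule via \cref{lemma: ring inclusion}, pass through $\Change{2}{4}{2}$ by the commutative diagram, and then combine Bender's transfer for $e^{s\,\CSCC(z/2)}$ with the vanishing of $\Convert{2}{4}(e^{-t/2\,\SCC})$, finishing with \cref{corollary: transfer shift} and \cref{theorem:asymptotics:cscc}. Your explicit check that the dilation $z\mapsto 2^{-1/4}z$ carries $\Change{2}{2}{4}\big(1-\I(2^{5/2}z^2w)\big)$ to $\Change{2}{2}{4}\big(1-\I(4z^2w)\big)$ is a step the paper leaves to the reader, and your coefficient-level argument for it is sound.
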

\begin{proof}
  The main idea of the proof is to apply \cref{lemma: operation transfers and marking variables} and \cref{lemma: generalized commutative conversion} to relation~\eqref{eq:cnf:types} written in the form
  \[
    \CNF(z; s, t)
     =
    \widehat{\D}(z; t) \cdot
    \Robin{2}^2 \left(
      e^{s \cdot \CSCC(z/2) - t/2 \cdot \SCC(z)}
    \right)
    \, .
  \]
  Due to \cref{lemma: ring inclusion},
  \[
    \widehat{\D}(z; t) \xmapsto{\;\;\Convert{2}{2}\;\;} 0
    \qquad\mbox{and}\qquad
    e^{t/2 \cdot \SCC(z)} \xmapsto{\;\;\Convert{2}{4}\;\;} 0
    \, .
  \]
  Therefore, the essential part of the proof comes from \cref{lemma: Bender's transfer and marking variables} applied to \( A(z) = \CSCC(z/2) \) and \( F(x;s) = e^{sx} \) with \( \alpha = 2 \) and \( \beta = 4 \):
  \[
    (\Convert{2}{2}\, \CNF)(z, w; s, t)
     =
    s \cdot \widehat{\D}(2^{3/2} z^2 w; t) \cdot
    \Change{2}{4}{2}
    \left(
      e^{
        s \cdot \CSCC(2^{3/2} z^4 w)
         -
        t/2 \cdot \SCC(2^{5/2} z^4 w)
      }
       \cdot
      (\Convert{2}{4}A)(z,w)
    \right)
    \, .
  \]
  To complete the proof, we use \cref{corollary: transfer shift}, so that
  \(
    (\Convert{2}{4}A)(z,w)
     =
    z \cdot (\Convert{2}{4}\, \CSCC)\big( 2^{-1/4}z, w \big)
  \),
  and finally, \cref{theorem:asymptotics:cscc}.
\end{proof}

\begin{remark}
\label{remark: structure of 2-SAT with m cscc}
  By analysing the leading term of the Coefficient GF in \cref{theorem:cgf:cnf:types},
  we can obtain the structure of 2-CNF formulae with given constraints.
  Thus, the structure of a typical 2-CNF implication digraph with \( (m+1) \) contradictory components is as follows.
  One large component contains almost all the variables,
  and the remaining \( m \) components contain \( 2 \) Boolean variables each.
  Note that this implication digraph, with high probability, consists of isolated contradictory components only.

  Next, if the numbers of contradictory components and ordinary components are fixed,
  then, with high probability, all the ordinary components contain one vertex each,
  and they are connected by an edge with the large contradictory component.
  More precisely, one-node components are partitioned into two copies of directed acyclic graphs:
  one of them points towards the large contradictory component,
  and the other has edges directed from that component.
  There can be, in addition, an arbitrary subset of directed edges from the first of these directed acyclic graphs towards the small 2-variable contradictory components
  (as well as the complementary subset of edges directed from small contradictory components toward the second directed acyclic graph).
\end{remark}

\begin{remark}
\label{remark:2-sat:Erdos-Renyi}
  Similarly to the Erd\H{o}s-R\'enyi model,
  let us define \( \mathbb F(n, p) \) to be the random 2-SAT model with \( n \) Boolean variables,
  so that each of the \( n(n-1) \) possible clauses appears independently with a fixed positive probability \( p \).
  In this case, the probability that a randomly generated
2-CNF formula from \( \mathbb F(n,p) \) belongs to a family \( \mathcal F \) whose Implication GF is \( \ddot F(z,w) \) is equal to
  \[
    \mathbb P_{\mathcal F}(n, p)
    = (1 - p)^{n(n-1)} 2^n n! [z^n] \ddot F \left(
        z, \dfrac{p}{1 - p}
    \right)
  \]
  (\cf \cite[Proposition 3.3]{dovgal2021exact}, compare with \cref{remark:introducing Erdos-Renyi}).
  It is natural, as we have it done for undirected and directed graphs, to introduce \( \alpha = (1 - p)^{-1} \).
  However, this leads to
  \[
    \CNF(z)
     =
    \sum_{n = 0}^\infty \alpha^{n(n-1)} \dfrac{z^n}{2^n n!}
  \]
  and the complete asymptotic expansion of the counting sequence corresponding to a 2-SAT family is not homogeneous anymore.
  Now, different powers of $2$ and $\alpha$ are interlacing, which complicates the order of terms in the full asymptotic expansion.
  Indeed, if
  \[
    f_n \approx
    \alpha^{\beta \binom{n}{2}}
      \sum_{m \geq \M} \alpha^{-mn}
      \sum_{\ell=0}^\infty
        n^{\underline{\ell}} f^\circ_{m,\ell}
    \, ,
  \]
  then, dividing the argument by \( 2 \), we have
  \[
    f_n 2^{-n} \approx
    \alpha^{\beta \binom{n}{2}}
      \sum_{m \geq \M} \alpha^{-mn} 2^{-n}
      \sum_{\ell=0}^\infty
        n^{\underline{\ell}} f^\circ_{m,\ell}
    \, .
  \]
  When the corresponding generating function is multiplied by another function or participates in a~functional composition, the order of the first few dominant terms of the expansion depends on the value of \( \alpha \).
  Furthermore, the sequence of expansion coefficients can no longer be captured by a~conventional Coefficient GF, unless when \( \log_2 \alpha \) is rational.
  All these observations show that the presented method is not applicable to the \( \mathbb F(n, p) \) model in full generality.
\end{remark}

\section{Discussion and open problems}
\label{section:discussion}

  In this section, we discuss open problems related to the asymptotic transfer method presented in this paper and to its applications.
  We have seen that this method works well for various dense graph families and 2-SAT formulae.
  Also, it can be used to study the Erd\H{o}s-R\'enyi model where edges of a random graph are drawn independently with a constant probability \( p \in (0,1) \), see \cref{remark: asymptotics of G(n p)} and \cref{remark: asymptotics of D(n p)}.
  Apparently, as \( p \to 0 \), the limit where the method ceases to be applicable is at \( p = \Theta(\log n / n) \),
  since the divergence of a quadratic term \( (1 - p)^{-\beta \binom{n}{2}} \) should be faster than the divergence of a factorial.
  At this threshold, the terms of the asymptotic expansion may start having a comparable order.
  This phenomenon has a heuristic combinatorial explanation:
  in random Erd\H{o}s-R\'enyi graphs, \( p \sim \log n/ n \) indicates the connectivity threshold where all but one component consists of a single vertex.
  A~similar phenomenon must take place for
random digraphs and 2-CNF formulae.
  Our method could be potentially applied to the case where other components have finite sizes by summing all the
contributions and using a delicate generating function argument.
  Below the threshold \( p = \Theta(\log n / n) \), a different approach is clearly required.
  The development of the mentioned technique is one of the directions for a future research.
  Note that, in the case of 2-SAT, this might be tricky because \( \alpha \) and \( 2 \) are on average algebraically independent, see~\cref{remark:2-sat:Erdos-Renyi}.

  It is of interest whether our method can be extended to the case of a functional composition of two graphically divergent generating functions, as it happens in the case of factorially divergent series (see the Borinsky's paper~\cite{borinsky2018generating}).
  The positive answer to this question would potentially unlock refined asymptotic enumeration of \emph{2-vertex-connected graphs} (also known as \emph{nonseparable graphs} or \emph{blocks}, \ie connected graphs without cutpoints) and \emph{2-edge-connected graphs} (connected graphs without bridges),
  whose respective Exponential GFs \( B(x) \) and \( H(x) \) satisfy functional equations
  \[
    \CG'(x) = e^{B'(x \CG'(x))}
    \qquad \mbox{and} \qquad
    \CG'(x) = H'(x e^{x\CG'(x)})
    \, .
  \]
  Another possible way to get this enumeration would be to restate Lagrange inversion of appropriate generating series in terms of Coefficient GFs
  (see the paper~\cite{bender1984asymptotic} of Bender and Richmond who established the first of these asymptotics by developing the method for treating inverses and functional compositions with analytic functions).

  Next questions are related to the \( k \)-SAT problem.
  As we have seen in \cref{remark:asymptotics:sat}, the fact that the number of satisfiable 2-SAT formulae grows asymptotically as \( 2^{n + 3 \binom{n}{2}} \) has a simple combinatorial interpretation in terms of implication digraphs.
  Would there be an equivalently simple heuristic explanation for the asymptotic number of satisfiable \( k \)-CNF formulae on \( n \) Boolean variables?
  Furthermore, the Coefficient GF of the satisfiable 2-SAT formulae has a remarkably simple form
  \(
    (\Convert{2}{1}\, \SAT)(z, w)
     = 
    \SAT(2zw) \big( 1 - \I(2zw) \big)
  \).
  Could there be an equally simple expression for a sort of Coefficient GF for \( k \)-SAT with \( k > 2 \), even if their exact enumeration is elusive?

  It is worth mentioning that, for problems like \( k \)-SAT, the logarithm of the total number of objects is growing faster than a quadratic function.
  This may suggest that the required analogue of the Coefficient GF must have more than two dimensions, which leads to another question.
  Namely, could the asymptotic transfer method be meaningfully generalized to higher dimensions as well?
  The latter would be useful, for instance, to count families of hypergraphs and directed hypergraphs.

  We conclude our review of open problems with a particular question related to the enumeration of digraphs.
  Curiously, the statement of \cref{corollary: exponential gf of ssd and scc} suggest that there might be a combinatorial explanation of expressions~\eqref{eq: semi-strong as sequences}.
  The first of them could follow from the fact that the family of semi-strong digraphs would be in a natural one-to-one correspondence with sequences of irreducible tournaments decorated with an arbitrary subset of \( \binom{n}{2} \) edges of additional color.
  The corresponding counting sequence \( (\eta_n)_{n=1}^\infty \) first appeared in the paper of Wright~\cite{wright1971number}
  who obtained the following recurrence for the counting sequence \( (\scc_n) \) of strongly connected digraphs:
  \[
    \scc_n = \eta_n + \sum_{t = 1}^{n - 1} \binom{n-1}{t-1} \scc_t \eta_{n-t} \, .
  \]
  Liskovets later discovered in~\cite{liskovets1975} that
  \( \eta_n 2^{-\binom{n}{2}} \) indeed enumerates irreducible tournaments and even extended this enumeration result to the case of unlabelled structures.
  Recently, Archer, Gessel, Graves and Liang~\cite{archer2020counting}, among other
results, revealed some fine enumerative properties of the combinatorial class corresponding to \( \eta_n \).
  They also noted that there might be a natural bijection between strong digraphs and cycles of irreducible decorated tournaments, but could not identify such a bijection
  (which would correspond to the second expression of~\eqref{eq: semi-strong as sequences}).
  Unfortunately, despite our attempts, we have not been able to find any of such bijections either and, to our best knowledge, they still remain an open problem.

\paragraph*{Acknowledgements.}
  Sergey Dovgal was supported by the EIPHI Graduate School (contract ANR-17-EURE-0002), FEDER and Région Bourgogne Franche-Comté.

  Khaydar Nurligareev was supported by the project PICS ANR-22-CE48-0002, as well as the ANR-FWF project PAnDAG ANR-23-CE48-0014-01, both funded by l’Agence Nationale de la Recherche.

\appendix
\section{Numerical values of Coefficient GFs}
\label{section:appendix}

\subsection{Connected graphs}
\label{Appendix: connected graphs}

  According to \cref{theorem: connected graphs asymptotics}, the Coefficient GF of type \( (2,1) \) of connected graphs satisfies
  \[
	(\Convert{2}{1}\, \CG)(z,w) = 1 - \I(2zw).
  \]
  As a consequence, the corresponding asymptotic coefficients \( \cg^\circ_{m,\ell} \) are of form
  \[
    \cg^\circ_{m,\ell}
    =
    \mathbf 1_{m = \ell = 0}
    -
    \mathbf 1_{m = \ell > 0}
    \cdot \ir_m \cdot
	\dfrac{2^{\binom{m+1}{2}}}{m!}
	\, .
  \]
  The sequence \( (\ir_m)_{m=1}^{\infty} \) counts irreducible tournaments and is given by \href{https://oeis.org/A054946}{A054946} from the OEIS:
  \[
    (\ir_m)
     =
    1,\,
    0,\,
    2,\,
    24,\,
    544,\,
    22\,320,\,
    1\,677\,488,\,
    236\,522\,496,\,
    64\,026\,088\,576,\,
    33\,832\,910\,196\,480,\,
    \ldots
  \]
  Thus, the sequence \( \big(\cg^\circ_{m,m}\big)_{m=0}^{\infty} \) starts by
  \[
    \big(\cg^\circ_{m,m}\big)
     = 
    1,\,
    -2,\,
    0,\,
    -\dfrac{64}{3},\,
    -1024,\,
    -\dfrac{2\,228\,224}{15},\,
    -65\,011\,712,\,
    -\dfrac{28\,143\,578\,513\,408}{315},\,
    \ldots
  \]

\subsection{Graphs, counting connected components}
\label{Appendix: counting components of graphs}
  According to \cref{cor: graphs with m components asymptotics}, the Coefficient GF of type \( (2,1) \) of graphs, enriched with a marking variable \( t \) for counting connected components, satisfies
  \[
	(\Convert{2}{1}\, \G)(z,w;t)
	 =
	t \cdot \G(2zw;t) \cdot \big( 1 - \I(2zw) \big)
	\, .
  \]
  This series starts with the following terms:
  \[
	(\Convert{2}{1}\, \G)(z,w;t)
	 =
	t + 2(t^2-t)zw + 2(t^3-t^2)z^2w^2 + \dfrac{4}{3}(t^4+t^2-2t)z^3w^3 + \ldots
  \]
  The corresponding asymptotic coefficients \( \g^\circ_{m,\ell}(t) \) can be written as
  \[
    \g^\circ_{m,\ell}(t)
    =
    \mathbf 1_{m = \ell = 0}
    -
    \mathbf 1_{m = \ell > 0} \cdot
    \left(\bar{\g}_m^{(m+1)} t^{m+1} + \ldots + \bar{\g}_m^{(1)} t\right)
	\cdot \dfrac{2^{\binom{m+1}{2}}}{m!}
	\, ,
  \]
  where \( \bar{\g}_m^{(k)} \) are integers listed in \cref{tab:graphs}.

\begin{table}[ht!]
  \caption{\label{tab:graphs} Values of the coefficients \( \bar{\g}_m^{(k)} \) for \( m,k \leq 9 \).}
 \(
  \begin{array}{c|cccccccccc}
   m & 0 & 1 & 2 & 3 & 4 & 5 & 6 & 7 & 8 & 9 \\
   \hline
  \big(\bar{\g}_m^{(1)}\big) &  1 & -1 & 0 & -2 & -24 & -544 & -22\,320 & -1\,677\,488 & -236\,522\,496 & -64\,026\,088\,576 \\
  \big(\bar{\g}_m^{(2)}\big) & 0 & 1 & -1 & 1 & 14 & 398 & 18\,552 & 1\,505\,644 & 222\,306\,448 & 61\,826\,469\,776 \\ 
  \big(\bar{\g}_m^{(3)}\big) & 0 & 0 & 1 & 0 & 7 & 115 & 3\,238 & 156\,576 & 13\,457\,052 & 2\,131\,689\,876 \\ 
  \big(\bar{\g}_m^{(4)}\big) & 0 & 0 & 0 & 1 & 2 & 25 & 455 & 13\,783 & 711\,788 & 65\,405\,368 \\ 
  \big(\bar{\g}_m^{(5)}\big) & 0 & 0 & 0 & 0 & 1 & 5 & 65 & 1\,330 & 43\,673 & 2\,400\,363 \\ 
  \big(\bar{\g}_m^{(6)}\big) & 0 & 0 & 0 & 0 & 0 & 1 & 9 & 140 & 3\,248 & 115\,689 \\
  \big(\bar{\g}_m^{(7)}\big) & 0 & 0 & 0 & 0 & 0 & 0 & 1 & 14 & 266 & 7\,014 \\
  \big(\bar{\g}_m^{(8)}\big) & 0 & 0 & 0 & 0 & 0 & 0 & 0 & 1 & 20 & 462 \\
  \big(\bar{\g}_m^{(9)}\big) & 0 & 0 & 0 & 0 & 0 & 0 & 0 & 0 & 1 & 27 \\
  \end{array}
 \)
\end{table}
  
  For a fixed positive integer \( k \), the sequence  \( \big(\bar{\g}_m^{(k)}\big)_{m=0}^{\infty} \) appears in the asymptotics of graphs with \( k \) connected components,
  which is given by \( [t^k] (\Convert{2}{1}\, \G)(z,w;t) \), see relation~\eqref{eq: graphs with fixed number of connected components}.
  In particular, the case \( k = 1 \) corresponds to the asymptotics of connected graphs discussed in \cref{Appendix: connected graphs},
  and
  \[
    \bar{\g}_m^{(1)} = -\ir_m
    \, .
  \]

\subsection{Irreducible tournaments}
\label{Appendix: irreducible tournaments}

  According to \cref{theorem: irreducible tournaments asymptotics}, the Coefficient GF of irreducible tournaments of type \( (2,1) \) satisfies
  \[
	(\Convert{2}{1}\, \I)(z,w) = \big(1 - \I(2zw)\big)^2.
  \]
  This gives us the asymptotic coefficients \( \ir^\circ_{m,\ell} \) that turn out to be
  \[
    \ir^\circ_{m,\ell}
    =
    \mathbf 1_{m = \ell = 0}
    -
    \mathbf 1_{m = \ell > 0}
    \cdot \big(2\ir_m - \ir_m^{(2)}\big) \cdot
	\dfrac{2^{\binom{m+1}{2}}}{m!}
	\, ,
  \]
  where the counting sequence \( (\ir_m)_{m=1}^{\infty} \) of irreducible tournaments is described in \cref{Appendix: connected graphs},
  and the sequence \( \big(\ir_m^{(2)}\big)_{m=1}^{\infty} \) of tournaments with exactly two irreducible parts is given by
  \[
    \big(\ir_m^{(2)}\big)
     = 
    0,\,
    2,\,
    0,\,
    16,\,
    240,\,
    6\,608,\,
    315\,840,\,
    27\,001\,984,\,
    4\,268\,194\,560,\,
    1\,281\,626\,527\,232,\,
    \ldots
  \]
  Thus, we have the following starting values of \( \big(\ir^\circ_{m,m}\big)_{m=0}^{\infty} \):
  \[
    \big(\ir^\circ_{m,m}\big)
     =
    1,\,
    -4,\,
    8,\,
    -\dfrac{128}{3},\,
    -\dfrac{4\,096}{3},
    -\dfrac{3\,473\,408}{15},\,
    -\dfrac{4\,984\,930\,304}{45},\,
    -\dfrac{50\,988\,241\,125\,376}{315},\,
    \ldots
  \]

\subsection{Tournaments, counting irreducible parts}
\label{Appendix: counting parts of tournaments}
  According to \cref{cor: tournament with m irreducible parts}, the Coefficient GF of type \( (2,1) \) of tournaments, enriched with a~marking variable \( t \) for counting irreducible parts, satisfies
  \[
	(\Convert{2}{1}\, \T)(z,w;t)
	 =
	t \cdot \left(
	  \T(2zw;t) \cdot \big( 1 - \I(2zw) \big)
	\right)^2
	\, .
  \]
  The first several terms of this series are
  \[
	(\Convert{2}{1}\, \T)(z,w;t)
	 =
	t + 4(t^2-t)zw + 4(3t^3-4t^2+t)z^2w^2 + \dfrac{16}{3}(6t^4-9t^3+4t^2-t)z^3w^3 + \ldots
  \]
  The corresponding asymptotic coefficients \( \tu^\circ_{m,\ell}(t) \) can be written as
  \[
    \tu^\circ_{m,\ell}(t)
    =
    \mathbf 1_{m = \ell = 0}
    -
    \mathbf 1_{m = \ell > 0} \cdot
    \left(\bar{\tu}_m^{(m+1)} t^{m+1} + \ldots + \bar{\tu}_m^{(1)} t\right)
	\cdot \dfrac{2^{\binom{m+1}{2}}}{m!}
	\, ,
  \]
  where \( \bar{\tu}_m^{(k)} \) are integers listed in \cref{tab:tournaments}.

\begin{table}[ht!]
  \caption{\label{tab:tournaments} Values of the coefficients \( \bar{\tu}_m^{(k)} \) for \( m,k \leq 9 \).}
  \small
 \(
  \begin{array}{c|cccccccccc}
   m & 0 & 1 & 2 & 3 & 4 & 5 & 6 & 7 & 8 & 9 \\
   \hline
  \big(\bar{\tu}_m^{(1)}\big) & 1 & -2 & 2 & -4 & -32 & -848 & -38\,032 & -3\,039\,136 & -446\,043\,008 & -123\,783\,982\,592 \\ 
  \big(\bar{\tu}_m^{(2)}\big) & 0 & 2 & -8 & 16 & -16 & 368 & 22\,528 & 2\,232\,064 & 372\,697\,856 &  111\,712\,858\,112 \\
  \big(\bar{\tu}_m^{(3)}\big) & 0 & 0 & 6 & -36 & 120 & 0 & 9\,744 & 586\,656 & 60\,297\,600 & 10\,743\,552\,000 \\
  \big(\bar{\tu}_m^{(4)}\big) & 0 & 0 & 0 & 24 & -192 & 960 & 960 & 153\,216 & 10\,063\,872 & 1\,129\,843\,200 \\
  \big(\bar{\tu}_m^{(5)}\big) & 0 & 0 & 0 & 0 & 120 & -1\,200 & 8\,400 & 16\,800 & 2\,177\,280 & 156\,844\,800 \\ 
  \big(\bar{\tu}_m^{(6)}\big) & 0 & 0 & 0 & 0 & 0 & 720 & -8\,640 & 80\,640 & 241\,920 & 30\,723\,840 \\
  \big(\bar{\tu}_m^{(7)}\big) & 0 & 0 & 0 & 0 & 0 & 0 & 5\,040 & -70\,560 & 846\,720 & 3\,386\,880 \\
  \big(\bar{\tu}_m^{(8)}\big) & 0 & 0 & 0 & 0 & 0 & 0 & 0 & 40\,320 & -645\,120 & 9\,676\,800 \\
  \big(\bar{\tu}_m^{(9)}\big) & 0 & 0 & 0 & 0 & 0 & 0 & 0 & 0 & 362\,880 & -6\,531\,840 \\
  \end{array}
 \)
\end{table}
  
  For a fixed positive integer \( k \), the sequence  \( \big(\bar{\tu}_m^{(k)}\big)_{m=0}^{\infty} \) appears in the asymptotics of tournaments with \( k \) irreducible parts,
  which is given by \( [t^k] (\Convert{2}{1}\, \T)(z,w;t) \), see relation~\eqref{eq: tournaments with fixed number of irreducible parts}.
  In particular,
  \[
    \bar{\tu}_m^{(k)} =
    k \cdot \left(
      \ir_m^{(k-1)} - 2\ir_m^{(k)} + \ir_m^{(k+1)}
    \right)
    \, ,
  \]
  where \( \ir_m^{(k)} \) is the number of tournaments of size \( m \) with \( k \) irreducible parts,
  supplemented by the convention \( \ir_m^{(0)} = \mathbf 1_{m = 0} \) that leads us to the asymptotics of irreducible tournaments discussed in \cref{Appendix: irreducible tournaments}.

\subsection{The Erd\H{o}s-R\'enyi model}
\label{Appendix: Erdos-Renyi}
  As it was mentioned in \cref{remark: asymptotics of G(n p)},
  the Coefficient GF of type \( (\alpha,1) \) of graphs within the Erd\H{o}s-R\'enyi model \(\mathbb{G}(n,p) \) satisfies
  \[
	(\Convert{\alpha}{1}\G)(z,w;t)
	 =
	t \cdot \dfrac{\G(\alpha zw;t)}{\G(\alpha zw)}
	 =
	t \cdot e^{(t-1) \cdot \CG(\alpha zw)}
	\, ,
  \]
  where \( \alpha = (1 - p)^{-1} \)
  and \( t \) is the marking variable for connected components.
  The corresponding asymptotic coefficients \( \g^\circ_{m,\ell}(\alpha,t) \) can be written as
  \[
    \g^\circ_{m,\ell}(\alpha,t)
    =
    \mathbf 1_{m = \ell = 0}
    -
    \mathbf 1_{m = \ell > 0} \cdot
    \left(\bar{\g}_m^{(m+1)}(\alpha)\, t^{m+1} + \ldots + \bar{\g}_m^{(1)}(\alpha)\, t\right)
	\cdot \dfrac{\alpha^{\binom{m+1}{2}}}{m!}
	\, ,
  \]
  where \( \bar{\g}_m^{(k)}(\alpha) \) are polynomials in \( \alpha \) listed in \cref{tab:Erdos-Renyi}.

\begin{table}[ht!]
  \caption{\label{tab:Erdos-Renyi} Values of the coefficients \( \bar{\g}_m^{(k)}(\alpha) \) for \( m,k \leq 4 \).}
 \(
  \begin{array}{c|cccccc}
   m & 0 & 1 & 2 & 3 & 4 \\ 
   \hline
  \big(\bar{\g}_m^{(1)}(\alpha)\big) & 1 & -1 & -\alpha + 2 & -\alpha^3 + 6\alpha - 6 & -\alpha^6 + 8\alpha^3 + 6\alpha^2 - 36\alpha + 24 \\ 
  \big(\bar{\g}_m^{(2)}(\alpha)\big) & 0 & 1 & \alpha - 3 & \alpha^3 - 9\alpha + 11 & \alpha^6 - 12\alpha^3 - 9\alpha^2 + 66\alpha - 50 \\ 
  \big(\bar{\g}_m^{(3)}(\alpha)\big) & 0 & 0 & 1 & 3\alpha - 6 & 4\alpha^3 + 3\alpha^2 - 36\alpha + 35 \\ 
  \big(\bar{\g}_m^{(4)}(\alpha)\big) & 0 & 0 & 0 & 1 & 6\alpha - 10 \\ 
  \end{array}
 \)
\end{table}

  We observe that the column sums in \cref{tab:Erdos-Renyi} equal zero,
  except for the column that corresponds to \( m = 0 \).
  Clearly, this can be explained by the fact that the total weight of all graphs within the Erd\H{o}s-R\'enyi model is \( \alpha^{\binom{n}{2}} \).
  Another observation is that the case where \( p = 1/2 \), \ie \( \alpha = 2 \), corresponds to the situation discussed in \cref{Appendix: counting components of graphs}.
  In other words, substituting the value \( \alpha = 2 \) into \cref{tab:Erdos-Renyi}, we obtain \cref{tab:graphs}.

 For a fixed non-negative integer \( k \), the sequence  \( \big(\bar{\g}_m^{(k+1)}(\alpha)\big)_{m=0}^{\infty} \) appears in the asymptotics of graphs with exactly \( (k+1) \) connected components,
  which is given by the relation
  \[
    [t^{k+1}] (\Convert{\alpha}{1}\, \G)(z,w;t)
	 =
    \dfrac{1}{k!}
	 \cdot
	\dfrac{\CG^k(\alpha zw)}{\G(\alpha zw)}
    \, .
  \]
  In particular, the case \( k = 0 \) corresponds to the asymptotics of connected graphs:
  \[
	(\Convert{\alpha}{1}\CG)(z,w)
	 = 
	\dfrac{1}{\G(\alpha zw)}
	 = 
	e^{-\CG(\alpha zw)}
	\, .
  \]
  The last relation can be interpreted in the following way:
  the probability \( p_n \) that a random graph of size \( n \) within the Erd\H{o}s-R\'enyi model is connected satisfies
  \[
	p_n \approx
	1 - \sum\limits_{m\geq1}
	 P_m(\alpha)
	  \binom{n}{m}
	   \dfrac{\alpha^{\binom{m+1}{2}}}{\alpha^{mn}}
	\, ,
  \]
  where \( P_m(\alpha) = - \bar{\g}_m^{(1)}(\alpha)\).
  The first six polynomials \( P_m(\alpha) \) are
  \begin{align*}
    P_1(\alpha) &= 1, \\
    P_2(\alpha) &= \alpha - 2, \\
    P_3(\alpha) &= \alpha^3 - 6\alpha + 6, \\
    P_4(\alpha) &= \alpha^6 - 8\alpha^3 - 6\alpha^2 + 36\alpha - 24, \\
    P_5(\alpha) &= \alpha^{10} - 10\alpha^6 - 20\alpha^4 + 60\alpha^3 + 90\alpha^2 - 240\alpha + 120, \\
    P_6(\alpha) &= \alpha^{15} - 12\alpha^{10} - 30\alpha^7 + 70\alpha^6 + 360\alpha^4 - 390\alpha^3 - 1080\alpha^2 + 1800\alpha - 720 \, . \\
  \end{align*}
  
\subsection{Strongly connected digraphs}
\label{Appendix: strongly connected digraphs}

  According to \cref{theorem:asymptotics:scc}, the Coefficient GF of type \( (2,2) \) of strongly connected digraphs satisfies
  \[
    (\Convert{2}{2}\, \SCC)(z,w)
     =
    \E(2^{3/2}z^2w)
     \cdot
    \Change{2}{1}{2} \big(1 - \I(2zw)\big)^2
    \, ,
  \]
  or, in terms of the exponential Hadamard product,
  \[
    (\Convert{2}{2}\, \SCC)(z,w)
     =
    \E(2^{3/2}z^2w)
     \cdot
    \left(
      \big(1 - \I(2zw)\big)^2
       \odot_z
      \G(z, \sqrt2 - 1)
    \right)
    \, .
  \]
  Due to \cref{corollary:scc probability}, the corresponding coefficients \( \scc^\circ_{m,\ell} \) are
  \[
    \scc^\circ_{m,\ell}
     =
    2^{m(m+1)/2 + \ell(\ell-m)}
    \dfrac{\ssd_{m-\ell}}{(m-\ell)!}
    \dfrac{\mathbf 1_{m=2\ell} - 2\ir_{2\ell-m} + \ir^{(2)}_{2\ell-m}}{(2\ell-m)!}
	\, .
  \]
  The sequence \( (\ssd_k)_{k=0}^{\infty} \) counts semi-strong digraphs and is given by \href{https://oeis.org/A054948}{A054948} from the OEIS:
 \[
  (\ssd_k)
   =
  1,\,
  1,\,
  2,\,
  22,\,
  1\,688,\,
  573\,496,\,
  738\,218\,192,\,
  3\,528\,260\,038\,192,\,
  63\,547\,436\,065\,854\,848,\,
  \ldots
 \]
 Together with the values of \( (\ir_k)_{k=1}^{\infty} \) and \( \big(\ir_k^{(2)}\big)_{k=1}^{\infty} \) indicated in \cref{Appendix: connected graphs} and \cref{Appendix: irreducible tournaments}, respectively,
 this gives us numerical values of \( \scc^\circ_{m,\ell} \) indicated in \cref{tab:scc}.
\begin{table}[ht!]
  \caption{\label{tab:scc} Values of the coefficients \( \scc^\circ_{m,\ell} \) for \( m,\ell \leq 7 \).}
 \(
  \begin{array}{c|cccccccc}
   \ell & 0 & 1 & 2 & 3 & 4 & 5 & 6 & 7 \\
   \hline
   \scc^\circ_{0,\ell} & 1 & 0 & 0 & 0 & 0 & 0 & 0 & 0 \\
   \scc^\circ_{1,\ell} & 0 & -4 & 0 & 0 & 0 & 0 & 0 & 0 \\
   \scc^\circ_{2,\ell} & 0 & 4 & 8 & 0 & 0 & 0 & 0 & 0 \\
   \scc^\circ_{3,\ell} & 0 & 0 & -32 & -\frac{128}{3} & 0 & 0 & 0 & 0 \\
   \scc^\circ_{4,\ell} & 0 & 0 & 64 & 128 & -\frac{4\,096}{3} & 0 & 0 & 0 \\
   \scc^\circ_{5,\ell} & 0 & 0 & 0 & -1\,024 & -\frac{4\,096}{3} & -\frac{3\,473\,408}{15} & 0 & 0 \\
   \scc^\circ_{6,\ell} & 0 & 0 & 0 & \frac{45\,056}{3} & 8\,192 & -\frac{262\,144}{3} & -\frac{4\,984\,930\,304}{45} & 0 \\
   \scc^\circ_{7,\ell} & 0 & 0 & 0 & 0 & -\frac{1\,441\,792}{3} & -\frac{524\,288}{3} & -\frac{444\,596\,224}{15} & -\frac{50\,988\,241\,125\,376}{315} \\
  \end{array}
 \)
\end{table}

  If we rewrite the above relation as
  \[
    \scc_n \approx 2^{2\binom{n}{2}}
      \sum_{m\geq0} \dfrac{w_m(n)}{2^{nm}}
    \, ,
	\qquad\mbox{where}\quad
    w_m(n) = \sum_{\ell=\lceil m/2 \rceil}^{m}
      n^{\underline{\ell}}\,
      \scc^\circ_{m,\ell}
    \, ,
  \] 
  then, for \( m\leq6 \), the polynomials \( w_m(n) \) have the following explicit form:
  \begin{align*}
    w_0(n) &= 1, \\
    w_1(n) &= -4n, \\
    w_2(n) &= 4n(2n-1), \\
    w_3(n) &= -\dfrac{32}{3} n(n-1)(4n-5), \\
    w_4(n) &= -\dfrac{64}{3} n(n-1)(64n^2-326n+393), \\
    w_5(n) &= -\dfrac{1\,024}{15} n(n-1)(n-2) (3\,392 n^2 - 23\,724n + 40\,659) \, , \\
    w_6(n) &= -\dfrac{4\,096}{45} n(n-1)(n-2) (1\,217\,024n^3 - 14\,603\,328n^2 + 57\,193\,318n - 73\,009\,815) \, .
  \end{align*}
  This corresponds to the asymptotic expansion of strongly connected digraphs established by Wright~\cite{wright1971number} and Bender~\cite{bender1975asymptotic}.
  In their papers, a different notations were used: the asymptotics was expressed via three sequences \( (\eta_k)_{k=0}^\infty \), \( (\gamma_k)_{k=0}^\infty \) and \( (\xi_k)_{k=0}^\infty \) determined by certain recurrences.
  It follows from \cref{theorem:asymptotics:scc} and \cref{corollary:scc probability} that these sequences satisfy the following relations:
  \[
    \eta_k = 2^{\binom{k}{2}}\ir_k \, ,
    \qquad
    \gamma_k = \dfrac{\ssd_k}{k!} \, ,
    \qquad
    \xi_k = \dfrac{b_k}{k!} \, ,
  \]
  where the sequence \( (b_k)_{k=0}^\infty \) is defined by~\eqref{equation:b_n}.

  Furthermore, Wright explicitly computed the polynomials of \( w_m(n) \) for  \( m\leq5 \).
  We use the occasion to fix a typo in his expression for \( w_5(n) \).
  The corrected value is indicated above, while Wright mistakenly omitted the last digit in the number \( 23\,724 \).

\subsection{Semi-strong digraphs, counting strongly connected components}
\label{Appendix: semi-strong digraphs}
  According to \cref{theorem:cgf:ssd}, the Coefficient GF of type \( (2,2) \) of semi-strong digraphs satisfies
  \[
    (\Convert{2}{2}\, \E)(z, w; t) = t \cdot
    \E(2^{3/2}z^2w; 1+t) \cdot
      \Change{2}{1}{2} \big(1 - \I(2zw)\big)^2
    \, ,  
  \]
  where \( t \) is the marking variable for strongly connected components.
  In terms of the exponential Hadamard product, we can rewrite this formula in the following way:
  \[
    (\Convert{2}{2}\, \E)(z, w; t)
     =
    t \cdot
    e^{(t+1) \cdot \SCC(2^{3/2}z^2w)}
     \cdot
    \left(
      \big(1 - \I(2zw)\big)^2
       \odot_z
      \G(z, \sqrt2 - 1)
    \right)
    \, .
  \]
  The corresponding coefficients \( \ssd^\circ_{m,\ell}(t) \) are polynomials in \( t \) listed in \cref{tab:ssd(t)}.
  
\begin{table}[ht!]
  \caption{\label{tab:ssd(t)} Values of the coefficients \( \ssd^\circ_{m,\ell}(t) \) for \( m,\ell \leq 5 \).}
 \(
  \begin{array}{c|cccccccc}
   \ell & 0 & 1 & 2 & 3 & 4 & 5 \\
   \hline
   \ssd^\circ_{0,\ell}(t) & t & 0 & 0 & 0 & 0 & 0 \\
   \ssd^\circ_{1,\ell}(t) & 0 & -4t & 0 & 0 & 0 & 0 \\
   \ssd^\circ_{2,\ell}(t) & 0 & 4(t^2+t) & 8t & 0 & 0 & 0 \\
   \ssd^\circ_{3,\ell}(t) & 0 & 0 & -32(t^2+t) & -\frac{128}{3}t & 0 & 0 \\
   \ssd^\circ_{4,\ell}(t) & 0 & 0 & 32(t^3+3t^2+2t) & 128(t^2+t) & -\frac{4\,096}{3}t & 0 \\
   \ssd^\circ_{5,\ell}(t) & 0 & 0 & 0 & -512(t^3+3t^2+2t) & -\frac{4\,096}{3}(t^2+t) & -\frac{3\,473\,408}{15}t \\
  \end{array}
 \)
\end{table}

  Putting \( t = 1 \), we obtain the asymptotic coefficients of semi-strong digraphs without reference to strongly connected components, see \cref{tab:ssd}.
  
\begin{table}[ht!]
  \caption{\label{tab:ssd} Values of the coefficients \( \ssd^\circ_{m,\ell} \) for \( m,\ell \leq 7 \).}
 \(
  \begin{array}{c|cccccccc}
   \ell & 0 & 1 & 2 & 3 & 4 & 5 & 6 & 7 \\
   \hline
   \ssd^\circ_{0,\ell} & 1 & 0 & 0 & 0 & 0 & 0 & 0 & 0 \\
   \ssd^\circ_{1,\ell} & 0 & -4 & 0 & 0 & 0 & 0 & 0 & 0 \\
   \ssd^\circ_{2,\ell} & 0 & 8 & 8 & 0 & 0 & 0 & 0 & 0 \\
   \ssd^\circ_{3,\ell} & 0 & 0 & -64 & -\frac{128}{3} & 0 & 0 & 0 & 0 \\
   \ssd^\circ_{4,\ell} & 0 & 0 & 192 & 256 & -\frac{4\,096}{3} & 0 & 0 & 0 \\
   \ssd^\circ_{5,\ell} & 0 & 0 & 0 & -3\,072 & -\frac{8\,192}{3} & -\frac{3\,473\,408}{15} & 0 & 0 \\
   \ssd^\circ_{6,\ell} & 0 & 0 & 0 & \frac{114\,688}{3} & 24\,576 & -\frac{524\,288}{3} & -\frac{4\,984\,930\,304}{45} & 0 \\
   \ssd^\circ_{7,\ell} & 0 & 0 & 0 & 0 & -\frac{3\,670\,016}{3} & -524\,288 & -\frac{889\,192\,448}{15} & -\frac{50\,988\,241\,125\,376}{315} \\
  \end{array}
 \)
\end{table}

  Another option is to extract \( k \)th coefficient of \( (\Convert{2}{2}\, \E)(z, w; t) \) in \( t \),
  which leads to the asymptotics of semi-strong digraphs with \( k \) strongly connected components (see relation~\eqref{eq:[t^k]cgf:ssd}).
  For \( k = 1 \), these digraphs are strongly connected, and their asymptotics was discussed in \cref{Appendix: semi-strong digraphs}, see \cref{tab:scc}.
  For the cases \( k = 2 \) and \( k = 3 \), we present the asymptotic coefficients in \cref{tab:2-scc_in_ssd} and \cref{tab:3-scc_in_ssd}, respectively.

\begin{table}[ht!]
  \caption{\label{tab:2-scc_in_ssd} Values of the coefficients \( [t^2]\ssd^\circ_{m,\ell}(t) \) for \( m,\ell \leq 8 \).}
 \(
  \begin{array}{c|ccccccccc}
   \ell & 0 & 1 & 2 & 3 & 4 & 5 & 6 & 7 & 8 \\
   \hline
   \verb"["t^2\verb"]"\ssd^\circ_{0,\ell}(t) & 0 & 0 & 0 & 0 & 0 & 0 & 0 & 0 & 0 \\
   \verb"["t^2\verb"]"\ssd^\circ_{1,\ell}(t) & 0 & 0 & 0 & 0 & 0 & 0 & 0 & 0 & 0 \\
   \verb"["t^2\verb"]"\ssd^\circ_{2,\ell}(t) & 0 & 4 & 0 & 0 & 0 & 0 & 0 & 0 & 0 \\
   \verb"["t^2\verb"]"\ssd^\circ_{3,\ell}(t) & 0 & 0 & -32 & 0 & 0 & 0 & 0 & 0 & 0 \\
   \verb"["t^2\verb"]"\ssd^\circ_{4,\ell}(t) & 0 & 0 & 96 & 128 & 0 & 0 & 0 & 0 & 0 \\
   \verb"["t^2\verb"]"\ssd^\circ_{5,\ell}(t) & 0 & 0 & 0 & -1\,536 & -\frac{4\,096}{3} & 0 & 0 & 0 & 0 \\
   \verb"["t^2\verb"]"\ssd^\circ_{6,\ell}(t) & 0 & 0 & 0 & 18\,432 & 12\,288 & -\frac{262\,144}{3} & 0 & 0 & 0 \\
   \verb"["t^2\verb"]"\ssd^\circ_{7,\ell}(t) & 0 & 0 & 0 & 0 & -589\,824 & 262\,144 & -\frac{444\,596\,224}{15} & 0 & 0 \\
   \verb"["t^2\verb"]"\ssd^\circ_{8,\ell}(t) & 0 & 0 & 0 & 0 & \frac{233\,046\,016}{3} & 9\,437\,184 & -33\,554\,432 & -\frac{1\,276\,142\,157\,824}{45} & 0 \\
  \end{array}
 \)
\end{table}
  
\begin{table}[ht!]
  \caption{\label{tab:3-scc_in_ssd} Values of the coefficients \( [t^3]\ssd^\circ_{m,\ell}(t) \) for \( m,\ell \leq 9 \).}
 \(
  \begin{array}{c|cccccccccc}
   \ell & 0 & 1 & 2 & 3 & 4 & 5 & 6 & 7 & 8 & 9 \\
   \hline
   \verb"["t^3\verb"]"\ssd^\circ_{0,\ell}(t) & 0 & 0 & 0 & 0 & 0 & 0 & 0 & 0 & 0 & 0 \\
   \verb"["t^3\verb"]"\ssd^\circ_{1,\ell}(t) & 0 & 0 & 0 & 0 & 0 & 0 & 0 & 0 & 0 & 0 \\
   \verb"["t^3\verb"]"\ssd^\circ_{2,\ell}(t) & 0 & 0 & 0 & 0 & 0 & 0 & 0 & 0 & 0 & 0 \\
   \verb"["t^3\verb"]"\ssd^\circ_{3,\ell}(t) & 0 & 0 & 0 & 0 & 0 & 0 & 0 & 0 & 0 & 0 \\
   \verb"["t^3\verb"]"\ssd^\circ_{4,\ell}(t) & 0 & 0 & 32 & 0 & 0 & 0 & 0 & 0 & 0 & 0 \\
   \verb"["t^3\verb"]"\ssd^\circ_{5,\ell}(t) & 0 & 0 & 0 & -512 & 0 & 0 & 0 & 0 & 0 & 0 \\
   \verb"["t^3\verb"]"\ssd^\circ_{6,\ell}(t) & 0 & 0 & 0 & 4\,096 & 4\,096 & 0 & 0 & 0 & 0 & 0 \\
   \verb"["t^3\verb"]"\ssd^\circ_{7,\ell}(t) & 0 & 0 & 0 & 0 & -131\,072 & -\frac{262\,144}{3} & 0 & 0 & 0 & 0 \\
   \verb"["t^3\verb"]"\ssd^\circ_{8,\ell}(t) & 0 & 0 & 0 & 0 & 4\,325\,376 & 2\,097\,152 & -\frac{33\,554\,432}{3} & 0 & 0 & 0 \\
   \verb"["t^3\verb"]"\ssd^\circ_{9,\ell}(t) & 0 & 0 & 0 & 0 & 0 & -276\,824\,064 & -\frac{268\,435\,456}{3} & -\frac{113\,816\,633\,344}{15} & 0 & 0 \\
  \end{array}
 \)
\end{table}

\subsection{Digraphs, counting strongly connected components, part 1}
\label{Appendix: digraphs with marking variable t}
  According to \cref{theorem:cgf:several:scc}, the Coefficient GF of type \( (2,1) \) of digraphs with the marking variable~\( t \) for the number of strongly connected components satisfies
  \[
    (\Convert{2}{1}\, \widehat{\D})(z, w; t)
      =
    \left(
      \widehat{\D}(2zw; t)
    \right)^2
      \cdot
    \Change{2}{2}{1}
    \Big(
      (\Convert{2}{2}\, \E)(z, w; -t)
    \Big)
    \, ,
  \]
  or, in terms of the exponential Hadamard product,
  \[
    (\Convert{2}{1} \widehat{\D})(z, w; t)
      =
    t
      \cdot
    \dfrac
    {
      \left(
        e^{(1-t) \cdot \SCC(2^{3/2}z^2 w)}
          \cdot
	    \Big(
    		  \big(1 - \I(2zw)\big)^2
       		\odot_z
      	  \G(z, \sqrt2 - 1)
    		\Big)
      \right)
    		\odot_z
    	  \G(z, 1/\sqrt2 - 1)
    }
    {
      \left(
        \Robin{2}
        \big( e^{-t \cdot \SCC(2zw)} \big)
      \right)^2
    }
    \, .  
  \]
  The corresponding coefficients \( \hat{\dd}^\circ_{m,\ell}(t) \) are polynomials in \( t \).
  For small values of the parameters \( m \) and \( \ell \), they are given in \cref{tab:hatd(t)}.
  
\begin{table}[ht!]
  \caption{\label{tab:hatd(t)} Values of the coefficients \( \hat{\dd}^\circ_{m,\ell}(t) \) for \( m,\ell \leq 3 \).}
 \(
  \begin{array}{c|cccc}
   \ell & 0 & 1 & 2 & 3 \\
   \hline
   \hat{\dd}^\circ_{0,\ell}(t) & t & 0 & 0 & 0 \\
   \hat{\dd}^\circ_{1,\ell}(t) & 0 & 4(t^2-t) & 0 & 0 \\
   \hat{\dd}^\circ_{2,\ell}(t) & 0 & -4(t^2-t) & 4(5t^3-7t^2+2t) & 0 \\
   \hat{\dd}^\circ_{3,\ell}(t) & 0 & 0 & -32(2t^3-3t^2+t) & 8\big(\frac{61}{3}t^4-29t^3+14t^2-\frac{16}{3}t\big) \\
  \end{array}
 \)
\end{table}

  Extracting \( k \)th coefficient of \( (\Convert{2}{1}\, \widehat{\D})(z, w; t) \) in \( t \),
  we obtain asymptotics of digraphs with \( k \) strongly connected components
  whose dominant term is described by \cref{corollary:dominant:m-fixed}.
  For \( k = 1 \), these digraphs are strongly connected, and their asymptotics was discussed in \cref{Appendix: semi-strong digraphs}, see \cref{tab:scc}.
  For the cases \( k = 2 \) and \( k = 3 \), we present the asymptotic coefficients in \cref{tab:2-scc_in_d} and \cref{tab:3-scc_in_d}, respectively.

\begin{table}[ht!]
  \caption{\label{tab:2-scc_in_d} Values of the coefficients \( [t^2]\hat{\dd}^\circ_{m,\ell}(t) \) for \( m,\ell \leq 7 \).}
 \(
  \begin{array}{c|cccccccc}
   \ell & 0 & 1 & 2 & 3 & 4 & 5 & 6 & 7 \\
   \hline
   \verb"["t^2\verb"]"\hat{\dd}^\circ_{0,\ell}(t) & 0 & 0 & 0 & 0 & 0 & 0 & 0 & 0 \\
   \verb"["t^2\verb"]"\hat{\dd}^\circ_{1,\ell}(t) & 0 & 4 & 0 & 0 & 0 & 0 & 0 & 0 \\
   \verb"["t^2\verb"]"\hat{\dd}^\circ_{2,\ell}(t) & 0 & -4 & -28 & 0 & 0 & 0 & 0 & 0 \\
   \verb"["t^2\verb"]"\hat{\dd}^\circ_{3,\ell}(t) & 0 & 0 & 96 & 112 & 0 & 0 & 0 & 0 \\
   \verb"["t^2\verb"]"\hat{\dd}^\circ_{4,\ell}(t) & 0 & 0 & -96 & -896 & -248 & 0 & 0 & 0 \\
   \verb"["t^2\verb"]"\hat{\dd}^\circ_{5,\ell}(t) & 0 & 0 & 0 & 5\,632 & \frac{40\,960}{3} & \frac{271\,808}{3} & 0 & 0 \\
   \verb"["t^2\verb"]"\hat{\dd}^\circ_{6,\ell}(t) & 0 & 0 & 0 & -18\,432 & -77\,824 & 1\,449\,984 & \frac{2\,895\,728\,576}{45} & 0 \\
   \verb"["t^2\verb"]"\hat{\dd}^\circ_{7,\ell}(t) & 0 & 0 & 0 & 0 & \frac{13\,303\,808}{3} & 10\,747\,904 & \frac{14\,709\,161\,984}{15} & \frac{5\,311\,318\,221\,568}{45} \\
  \end{array}
 \)
\end{table}

\begin{table}[ht!]
  \caption{\label{tab:3-scc_in_d} Values of the coefficients \( [t^3]\hat{\dd}^\circ_{m,\ell}(t) \) for \( m,\ell \leq 7 \).}
 \(
  \begin{array}{c|cccccccc}
   \ell & 0 & 1 & 2 & 3 & 4 & 5 & 6 & 7 \\
   \hline
   \verb"["t^3\verb"]"\hat{\dd}^\circ_{0,\ell}(t) & 0 & 0 & 0 & 0 & 0 & 0 & 0 & 0 \\
   \verb"["t^3\verb"]"\hat{\dd}^\circ_{1,\ell}(t) & 0 & 0 & 0 & 0 & 0 & 0 & 0 & 0 \\
   \verb"["t^3\verb"]"\hat{\dd}^\circ_{2,\ell}(t) & 0 & 0 & 20 & 0 & 0 & 0 & 0 & 0 \\
   \verb"["t^3\verb"]"\hat{\dd}^\circ_{3,\ell}(t) & 0 & 0 & -64 & -232 & 0 & 0 & 0 & 0 \\
   \verb"["t^3\verb"]"\hat{\dd}^\circ_{4,\ell}(t) & 0 & 0 & 32 & 2\,048 & 2\,140 & 0 & 0 & 0 \\
   \verb"["t^3\verb"]"\hat{\dd}^\circ_{5,\ell}(t) & 0 & 0 & 0 & -6\,656 & -30\,720 & 56\,720 & 0 & 0 \\
   \verb"["t^3\verb"]"\hat{\dd}^\circ_{6,\ell}(t) & 0 & 0 & 0 & 4\,096 & 430\,080 & \frac{913\,408}{3} & \frac{87\,938\,960}{3} & 0 \\
   \verb"["t^3\verb"]"\hat{\dd}^\circ_{7,\ell}(t) & 0 & 0 & 0 & 0 & -4\,849\,664 & -\frac{41\,156\,608}{3} & -347\,275\,264 & \frac{1\,476\,072\,351\,296}{45} \\
  \end{array}
 \)
\end{table}

\subsection{Digraphs, counting strongly connected components, part 2}
\label{Appendix: digraphs with marking variables t and s}
  According to \cref{theorem:cgf:trivariate:digraphs}, the Coefficient GF of type \( (2,1) \) of digraphs with the marking variables~\( s \) and \( t \) for the number of source-like and all strongly connected components, respectively, satisfies
  \begin{multline*}
    (\Convert{2}{1}\, \widehat{\D})(z, w; s, t)
     = \\
    \widehat{\D}(2zw; t)
    \left[
      \Change{2}{2}{1}
       \Big(
         (\Convert{2}{2}\, \E) \big( z,w; (s-1)t \big)
       \Big)
       +
      \widehat{\D}(2zw; s, t)
       \cdot
      \Change{2}{2}{1}
       \Big( (\Convert{2}{2}\, \E)(z,w; -t) \Big)
    \right]
    \, .
  \end{multline*}
  
  The corresponding coefficients \( \hat{\dd}^\circ_{m,\ell}(s,t) \) are polynomials in \( s \) and \( t \).
  For \( m,\ell \leq 3 \), the non-zero values of the coefficients \( \hat{\dd}^\circ_{m,\ell}(s,t) \) are the following:
  \begin{align*}
    \hat{\dd}^\circ_{0,0}(s,t) &= st, \\
    \hat{\dd}^\circ_{1,1}(s,t) &= 4st^2 - 4st, \\
    \hat{\dd}^\circ_{2,1}(s,t) &= 4(s^2-2s)t^2+4st, \\
    \hat{\dd}^\circ_{2,2}(s,t) &= 2(s^2+9s)t^3-28st^2+8st, \\
    \hat{\dd}^\circ_{3,2}(s,t) &= 32(s^2-3s)t^3 - 32(s^2-4s)t^2 - 32st, \\
    \hat{\dd}^\circ_{3,3}(s,t) &= \frac{4}{3}(s^3+21s^2+100s)t^4 - 4(7s^2+51s)t^3 + 112st^2 - \frac{128}{3}st \, . 
  \end{align*}

  Clearly, putting \( s = 1 \), on can get the asymptotics discussed in \cref{Appendix: digraphs with marking variable t}.
  On the other hand, putting \( t = 1 \), we obtain asymptotics of digraphs with respect to source-like components.
  We provide the values of \( \hat{\dd}^\circ_{m,\ell}(s,1) \) in \cref{tab:hatd(s)}.
  
\begin{table}[ht!]
  \caption{\label{tab:hatd(s)} Values of the coefficients \( \hat{\dd}^\circ_{m,\ell}(s,1) \) for \( m,\ell \leq 4 \).}
 \(
  \begin{array}{c|ccccc}
   \ell & 0 & 1 & 2 & 3 & 4 \\
   \hline
   \hat{\dd}^\circ_{0,\ell}(s,1) & s & 0 & 0 & 0 & 0 \\
   \hat{\dd}^\circ_{1,\ell}(s,1) & 0 & 0 & 0 & 0 & 0 \\
   \hat{\dd}^\circ_{2,\ell}(s,1) & 0 & 4(s^2-s) & 2(s^2-s) & 0 & 0 \\
   \hat{\dd}^\circ_{3,\ell}(s,1) & 0 & 0 & 0 & \frac{4}{3}(s^3-s) & 0 \\
   \hat{\dd}^\circ_{4,\ell}(s,1) & 0 & 0 & 32(s^3-s) & 128(s^2-s) & \frac{2}{3}s^4 + \frac{4}{3}s^3 + 42s^2 - 44s \\
  \end{array}
 \)
\end{table}

  Finally, extracting coefficients leads to asymptotics of digraphs with prescribed number of strongly connected components or source-like strongly connected components.
  Thus, if we extract \( k \)th coefficient of \( (\Convert{2}{1}\, \widehat{\D})(z, w; s, t) \) in \( t \),
  we obtain asymptotics of digraphs with \( k \) strongly connected components with respect to source-like strongly connected components.
  For \( k = 1 \), these digraphs are strongly connected, and the result is given by \cref{tab:scc} whose entries are multiplied by \( s \).
  For the cases \( k = 2 \) and \( k = 3 \), the asymptotic coefficients are presented in \cref{tab:2-scc_in_d(s)} and \cref{tab:3-scc_in_d(s)}, respectively.
  On the other hand, if we extract \( k \)th coefficient of \( (\Convert{2}{1}\, \widehat{\D})(z, w; s, t) \) in \( s \),
  we obtain asymptotics of digraphs with \( k \) source-like strongly connected components with respect to all strongly connected components.
  The corresponding asymptotics for the cases \( k = 1, 2, 3 \) are summarized in \cref{tab:1-scc_in_d(t)}, \cref{tab:2-scc_in_d(t)} and \cref{tab:3-scc_in_d(t)}. 

\begin{table}[ht!]
  \caption{\label{tab:2-scc_in_d(s)} Values of the coefficients \( [t^2]\hat{\dd}^\circ_{m,\ell}(s,t) \) for \( m,\ell \leq 5 \).}
 \(
  \begin{array}{c|cccccc}
   \ell & 0 & 1 & 2 & 3 & 4 & 5 \\
   \hline
   \verb"["t^2\verb"]"\hat{\dd}^\circ_{0,\ell}(s,t) & 0 & 0 & 0 & 0 & 0 & 0 \\
   \verb"["t^2\verb"]"\hat{\dd}^\circ_{1,\ell}(s,t) & 0 & 4s & 0 & 0 & 0 & 0 \\
   \verb"["t^2\verb"]"\hat{\dd}^\circ_{2,\ell}(s,t) & 0 & 4(s^2-2s) & -28s & 0 & 0 & 0 \\
   \verb"["t^2\verb"]"\hat{\dd}^\circ_{3,\ell}(s,t) & 0 & 0 & -32(s^2-4s) & 112s & 0 & 0 \\
   \verb"["t^2\verb"]"\hat{\dd}^\circ_{4,\ell}(s,t) & 0 & 0 & 96(s^2-2s) & 128(s^2-8s) & -248s & 0 \\
   \verb"["t^2\verb"]"\hat{\dd}^\circ_{5,\ell}(s,t) & 0 & 0 & 0 & -512(3s^2-14s) & -\frac{4\,096}{3}(s^2-11s) & \frac{271\,808}{3}s \\
  \end{array}
 \)
\end{table}

\begin{table}[ht!]
  \caption{\label{tab:3-scc_in_d(s)} Values of the coefficients \( [t^3]\hat{\dd}^\circ_{m,\ell}(s,t) \) for \( m,\ell \leq 5 \).}
 \small
 \(
  \begin{array}{c|cccccc}
   \ell & 0 & 1 & 2 & 3 & 4 & 5 \\
   \hline
   \verb"["t^3\verb"]"\hat{\dd}^\circ_{0,\ell}(s,t) & 0 & 0 & 0 & 0 & 0 & 0 \\
   \verb"["t^3\verb"]"\hat{\dd}^\circ_{1,\ell}(s,t) & 0 & 0 & 0 & 0 & 0 & 0 \\
   \verb"["t^3\verb"]"\hat{\dd}^\circ_{2,\ell}(s,t) & 0 & 0 & 2(s^2+9s) & 0 & 0 & 0 \\
   \verb"["t^3\verb"]"\hat{\dd}^\circ_{3,\ell}(s,t) & 0 & 0 & 32(s^2-3s) & -4(7s^2+51s) & 0 & 0 \\
   \verb"["t^3\verb"]"\hat{\dd}^\circ_{4,\ell}(s,t) & 0 & 0 & 32(s^3-3s^2+3s) & -256(s^2-9s) & 2(89s^2+981s) & 0 \\
   \verb"["t^3\verb"]"\hat{\dd}^\circ_{5,\ell}(s,t) & 0 & 0 & 0 & -512(s^3-9s^2+21s) & 3\,072(s^2-11s) & -8(553s^2-7\,643s) \\
  \end{array}
 \)
\end{table}

\begin{table}[ht!]
  \caption{\label{tab:1-scc_in_d(t)} Values of the coefficients \( [s]\hat{\dd}^\circ_{m,\ell}(s,t) \) for \( m,\ell \leq 3 \).}
 \(
  \begin{array}{c|cccc}
   \ell & 0 & 1 & 2 & 3 \\
   \hline
   \verb"["s\verb"]"\hat{\dd}^\circ_{0,\ell}(s,t) & t & 0 & 0 & 0 \\
   \verb"["s\verb"]"\hat{\dd}^\circ_{1,\ell}(s,t) & 0 & 4(t^2-t) & 0 & 0 \\
   \verb"["s\verb"]"\hat{\dd}^\circ_{2,\ell}(s,t) & 0 & -4(2t^2-t) & 2(9t^3-14t^2+4t) & 0 \\
   \verb"["s\verb"]"\hat{\dd}^\circ_{3,\ell}(s,t) & 0 & 0 & -32(3t^3-4t^2+t) & 4\big(\frac{100}{3}t^4-51t^3+28t^2-\frac{32}{3}t\big) \\
  \end{array}
 \)
\end{table}

\begin{table}[ht!]
  \caption{\label{tab:2-scc_in_d(t)} Values of the coefficients \( [s^2]\hat{\dd}^\circ_{m,\ell}(s,t) \) for \( m,\ell \leq 4 \).}
 \(
  \begin{array}{c|ccccc}
   \ell & 0 & 1 & 2 & 3 & 4 \\
   \hline
   \verb"["s^2\verb"]"\hat{\dd}^\circ_{0,\ell}(s,t) & 0 & 0 & 0 & 0 & 0 \\
   \verb"["s^2\verb"]"\hat{\dd}^\circ_{1,\ell}(s,t) & 0 & 0 & 0 & 0 & 0 \\
   \verb"["s^2\verb"]"\hat{\dd}^\circ_{2,\ell}(s,t) & 0 & 4t^2 & 2t^3 & 0 & 0 \\
   \verb"["s^2\verb"]"\hat{\dd}^\circ_{3,\ell}(s,t) & 0 & 0 & 32(t^3-t^2) & 28(t^4-t^3) & 0 \\
   \verb"["s^2\verb"]"\hat{\dd}^\circ_{4,\ell}(s,t) & 0 & 0 & -96(t^3-t^2) & 128(2t^4-2t^3+t^2) & 2(258t^5-326t^4+89t^3) \\
  \end{array}
 \)
\end{table}

\begin{table}[ht!]
  \caption{\label{tab:3-scc_in_d(t)} Values of the coefficients \( [s^3]\hat{\dd}^\circ_{m,\ell}(s,t) \) for \( m,\ell \leq 5 \).}
 \(
  \begin{array}{c|cccccc}
   \ell & 0 & 1 & 2 & 3 & 4 & 5 \\
   \hline
   \verb"["s^3\verb"]"\hat{\dd}^\circ_{0,\ell}(s,t) & 0 & 0 & 0 & 0 & 0 & 0 \\
   \verb"["s^3\verb"]"\hat{\dd}^\circ_{1,\ell}(s,t) & 0 & 0 & 0 & 0 & 0 & 0 \\
   \verb"["s^3\verb"]"\hat{\dd}^\circ_{2,\ell}(s,t) & 0 & 0 & 0 & 0 & 0 & 0 \\
   \verb"["s^3\verb"]"\hat{\dd}^\circ_{3,\ell}(s,t) & 0 & 0 & 0 & \frac{4}{3}t^4 & 0 & 0 \\
   \verb"["s^3\verb"]"\hat{\dd}^\circ_{4,\ell}(s,t) & 0 & 0 & 32t^3 & 0 & 4(10t^5-\frac{29}{3}t^4) & 0 \\
   \verb"["s^3\verb"]"\hat{\dd}^\circ_{5,\ell}(s,t) & 0 & 0 & 0 & 512(2t^4-t^3) & -\frac{1024}{3}(t^5-t^4) & \frac{4}{3}(1\,154t^6-1\,480t^5+359t^4) \\
  \end{array}
 \)
\end{table}

\subsection{Digraphs, counting strongly connected components, part 3}
\label{Appendix: digraphs with many marking variables}
  According to \cref{theorem:cgf:multivariate:digraphs}, the Coefficient GF of type \( (2,2) \) of digraphs
  with the marking variables  \( u, v, y \) and \( t \) for the numbers of purely source-like, purely sink-like, isolated and all strongly connected components, respectively,
 is given by
  \[
    (\Convert{2}{2}\, \D)(z, w; u, v, y, t) =
    \D^\circ_1
     +
    \D^\circ_{20} \cdot
    \Change{2}{1}{2}
    \Big(
      \D^\circ_{21} + \D^\circ_{22} + \D^\circ_{23}
    \Big)
    \, ,
  \]
  where
  \begin{align*}
    \D_1^\circ(z,w; u,v,y,t) &=
      (y-u-v+1)t
       \cdot
      \D(2^{3/2} z^2w; u,v,y,t)
       \cdot
      (\Convert{2}{2}\, \SCC)(z,w)
    \, ; \\
    \D_{20}^\circ(z,w; u,v,y,t) &=
      \E\big( 2^{3/2}z^2w; (y-u-v+1)t \big)
    \, ;  \\
    \D_{21}^\circ(z,w; u,v,y,t) &=
      \widehat{\D}(2zw; u,t)
       \cdot
      \Change{2}{2}{1}
       \Big(
         (\Convert{2}{2}\, \E) \big( z,w; (v-1)t \big) 
       \Big)
    \, ; \\
    \D_{22}^\circ(z,w; u,v,y,t) &=
      \widehat{\D}(2zw; v,t)
       \cdot
      \Change{2}{2}{1}
       \Big(
         (\Convert{2}{2}\, \E) \big( z,w; (u-1)t \big) 
       \Big)
    \, ; \\
    \D_{23}^\circ(z,w; u,v,y,t) &=
      \widehat{\D}(2zw; u,t)
       \cdot
      \widehat{\D}(2zw; v,t)
       \cdot
      \Change{2}{2}{1}
       \Big( (\Convert{2}{2}\, \E)(z,w; -t) \Big)
    \, .
  \end{align*}

  The first non-zero values of the corresponding coefficients \( \dd^\circ_{m,\ell}(u,v,y,t) \), which are polynomials in \( u,v,y \) and \( t \), are listed below.
  \begin{align*}
    \dd^\circ_{0,0}(u,v,y,t) &= yt, \\
    \dd^\circ_{1,1}(u,v,y,t) &= 4uvt^2 - 4yt, \\
    \dd^\circ_{2,1}(u,v,y,t) &= 4(y^2-2uv)t^2+4yt, \\
    \dd^\circ_{2,2}(u,v,y,t) &= 2uv(u+v+8)t^3-28uvt^2+8yt, \\
    \dd^\circ_{3,2}(u,v,y,t) &= 32uv(y-3)t^3 + 32(4uv-y^2)t^2 - 32yt, \\
    \dd^\circ_{3,3}(u,v,y,t) &= \frac{4}{3}uv\big(u^2+v^2+21(u+v)+78\big)t^4 - 4uv\big(7(u+v)+44\big)t^3 + 112uvt^2 - \frac{128}{3}yt \, .
  \end{align*}
  Note that, after the substitution \( u = s \), \( v = 1 \), \( y = s \),
  we get the coefficients \( \hat{\dd}^\circ_{m,\ell}(s,t) \) seen in \cref{Appendix: digraphs with marking variables t and s}.
  In other words,
  \[
    \dd^\circ_{m,\ell}(s,1,s,t)  = \hat{\dd}^\circ_{m,\ell}(s,1)
    \, .
  \]
  Many other possible substitutions can be considered.
  For instance, to obtain the asymptotics of sink-like strongly connected components, one may put \( u = 1 \), \( v = s \), \( y = s \), which leads to exactly the same result for symmetry reasons.
  Another example is the asymptotics of strongly connected components that are neither source-like, nor sink-like.
  This asymptotics can be reached by the substitution \( u = \bar{s}^{-1} \), \( v = \bar{s}^{-1} \), \( y = \bar{s}^{-1} \), \( t = \bar{s}\bar{t} \),
  where \( \bar{s} \) is the marking variable for the target components
  (see the list of first non-zero values below).
  \begin{align*}
    \dd^\circ_{0,0}(\bar{s}^{-1},\bar{s}^{-1},\bar{s}^{-1},\bar{s}\bar{t}) &= \bar{t}, \\
    \dd^\circ_{1,1}(\bar{s}^{-1},\bar{s}^{-1},\bar{s}^{-1},\bar{s}\bar{t}) &= 4\bar{t}^2 - 4\bar{t}, \\
    \dd^\circ_{2,1}(\bar{s}^{-1},\bar{s}^{-1},\bar{s}^{-1},\bar{s}\bar{t}) &= -4\bar{t}^2+4\bar{t}, \\
    \dd^\circ_{2,2}(\bar{s}^{-1},\bar{s}^{-1},\bar{s}^{-1},\bar{s}\bar{t}) &= 4\big(1+4\bar{s}\big)\bar{t}^3-28\bar{t}^2+8\bar{t}, \\
    \dd^\circ_{3,2}(\bar{s}^{-1},\bar{s}^{-1},\bar{s}^{-1},\bar{s}\bar{t}) &= 32\big(1-3\bar{s}\big)\bar{t}^3 - 96\bar{t}^2 - 32\bar{t}, \\
    \dd^\circ_{3,3}(\bar{s}^{-1},\bar{s}^{-1},\bar{s}^{-1},\bar{s}\bar{t}) &= \frac{8}{3}\big(1+21\bar{s}+39\bar{s}^2\big)\bar{t}^4 - 8\big(7+22\bar{s}\big)\bar{t}^3 + 112\bar{t}^2 - \frac{128}{3}\bar{t} \, .
  \end{align*}
  
  As usually, extracting coefficients leads to asymptotics of digraphs with prescribed number of strongly connected components.
  We will not give an exhaustive list of all possibilities and will limit ourselves to a few remarks.
  First of all, the expression
  \[
    [u^pv^qy^rt^k](\Convert{2}{2}\, \D)(z, w; u, v, y, t)
  \]
  represents the asymptotics of digraphs with \( k \) strongly connected components such that
  \( p \) of them are strongly source-like,
  \( q \) of them are strongly sink-like, and
  \( r \) of them are isolated.
  As an example, we provide asymptotic coefficients for the digraphs with one purely source-like component, one purely sink-like component, one isolated component and one component of the general type (neither source-like, nor sink-like).
  This corresponds to the case where \( u = v = y = 1 \) and \( t = 4 \), see \cref{tab:1-1-1-4-scc_in_d(all)}.

\begin{table}[ht!]
  \caption{\label{tab:1-1-1-4-scc_in_d(all)} Values of the coefficients \( \alpha_{m,\ell}=[uvyt^4]\dd^\circ_{m,\ell}(u,v,y,t) \) for \( m,\ell \leq 8 \).}
 \(
  \begin{array}{c|ccccccccc}
   \ell & 0 & 1 & 2 & 3 & 4 & 5 & 6 & 7 & 8 \\
   \hline
   \alpha_{0,\ell} & 0 & 0 & 0 & 0 & 0 & 0 & 0 & 0 & 0 \\
   \alpha_{1,\ell} & 0 & 0 & 0 & 0 & 0 & 0 & 0 & 0 & 0 \\
   \alpha_{2,\ell} & 0 & 0 & 0 & 0 & 0 & 0 & 0 & 0 & 0 \\
   \alpha_{3,\ell} & 0 & 0 & 0 & 0 & 0 & 0 & 0 & 0 & 0 \\
   \alpha_{4,\ell} & 0 & 0 & 0 & 256 & 0 & 0 & 0 & 0 & 0 \\
   \alpha_{5,\ell} & 0 & 0 & 0 & -3\,072 & -5\,632 & 0 & 0 & 0 & 0 \\
   \alpha_{6,\ell} & 0 & 0 & 0 & 16\,384 & 143\,360 & 114\,176 & 0 & 0 & 0 \\
   \alpha_{7,\ell} & 0 & 0 & 0 & 0 & -1\,966\,080 & -4\,620\,288 & 8\,392\,704 & 0 & 0 \\
   \alpha_{8,\ell} & 0 & 0 & 0 & 0 & 31\,457\,280 & 139\,460\,608 & -\frac{859\,635\,712}{3} & -\frac{23\,526\,842\,368}{3} & 0 \\
  \end{array}
 \)
\end{table}

  If we extract \( k \)th coefficient of \( (\Convert{2}{2}\, \D)(z, w; u, v, y, t) \) in \( t \),
  we obtain the asymptotics of digraphs with \( k \) strongly connected components with respect to strongly connected components of different types.
  In particular, for \( k = 1 \), we revisit the asymptotics of strongly connected digraphs, and the result is given by \cref{tab:scc} whose entries are multiplied by \( y \).
  For the cases \( k = 2 \) and \( k = 3 \), the asymptotic coefficients are presented in \cref{tab:2-scc_in_d(all)} and \cref{tab:3-scc_in_d(all)}, respectively.

\begin{table}[ht!]
  \caption{\label{tab:2-scc_in_d(all)} Values of the coefficients \( \beta_{m,\ell}=[t^2]\dd^\circ_{m,\ell}(u,v,y,t) \) for \( m,\ell \leq 5 \).}
 \(
  \begin{array}{c|cccccc}
   \ell & 0 & 1 & 2 & 3 & 4 & 5 \\
   \hline
   \beta_{0,\ell} & 0 & 0 & 0 & 0 & 0 & 0 \\
   \beta_{1,\ell} & 0 & 4uv & 0 & 0 & 0 & 0 \\
   \beta_{2,\ell} & 0 & 4(y^2-2uv) & -28uv & 0 & 0 & 0 \\
   \beta_{3,\ell} & 0 & 0 & -32(y^2-4uv) & 112uv & 0 & 0 \\
   \beta_{4,\ell} & 0 & 0 & 96(y^2-2uv) & 128(y^2-8uv) & -248uv & 0 \\
   \beta_{5,\ell} & 0 & 0 & 0 & -512(3y^2-14uv) & -\frac{4\,096}{3}(y^2-11uv) & \frac{271\,808}{3}uv \\
  \end{array}
 \)
\end{table}

\begin{table}[ht!]
  \caption{\label{tab:3-scc_in_d(all)} Values of the coefficients \( \gamma_{m,\ell}=[t^3]\dd^\circ_{m,\ell}(u,v,y,t) \) for \( m,\ell \leq 4 \).}
 \(
  \begin{array}{c|ccccc}
   \ell & 0 & 1 & 2 & 3 & 4 \\
   \hline
   \gamma_{0,\ell} & 0 & 0 & 0 & 0 & 0 \\
   \gamma_{1,\ell} & 0 & 0 & 0 & 0 & 0 \\
   \gamma_{2,\ell} & 0 & 0 & 2uv(u+v+8) & 0 & 0 \\
   \gamma_{3,\ell} & 0 & 0 & 32uv(y-3) & -4uv\big(7(u+v)+44\big) & 0 \\
   \gamma_{4,\ell} & 0 & 0 & 32\big(y^3-uv(2y+u+v-4)\big) & -64uv\big(7y-3(u+v+11)\big) & 2uv\big(89(u+v)+892\big) \\
  \end{array}
 \)
\end{table}

\begin{table}[ht!]
  \caption{\label{tab:1-u-scc_in_d(all)} Values of the coefficients \( \eta_{m,\ell}=[u]\dd^\circ_{m,\ell}(u,1,1,t) \) for \( m,\ell \leq 4 \).}
 \(
  \begin{array}{c|ccccc}
   \ell & 0 & 1 & 2 & 3 & 4 \\
   \hline
   \eta_{0,\ell} & 0 & 0 & 0 & 0 & 0 \\
   \eta_{1,\ell} & 0 & 4t^2 & 0 & 0 & 0 \\
   \eta_{2,\ell} & 0 & -8t^2 & 2(9t^3-14t^2) & 0 & 0 \\
   \eta_{3,\ell} & 0 & 0 & -64(t^3-2t^2) & 4\big(\frac{100}{3}t^4-51t^3+28t^2\big) & 0 \\
   \eta_{4,\ell} & 0 & 0 & 32(t^3-6t^2) & -32(39t^4-58t^3+32t^2) & 2\big(905t^5-\frac{3\,304}{3}t^4+981t^3-124t^2\big) \\
  \end{array}
 \)
\end{table}

\begin{table}[ht!]
  \caption{\label{tab:1-y-scc_in_d(all)} Values of the coefficients \( \theta_{m,\ell}=[y]\dd^\circ_{m,\ell}(1,1,y,t) \) for \( m,\ell \leq 5 \).}
 \(
  \begin{array}{c|cccccc}
   \ell & 0 & 1 & 2 & 3 & 4 & 5 \\
   \hline
   \theta_{0,\ell} & t & 0 & 0 & 0 & 0 & 0 \\
   \theta_{1,\ell} & 0 & -4t & 0 & 0 & 0 & 0 \\
   \theta_{2,\ell} & 0 & 4t & 8t & 0 & 0 & 0 \\
   \theta_{3,\ell} & 0 & 0 & 32(t^3-t) & \frac{128}{3}t & 0 & 0 \\
   \theta_{4,\ell} & 0 & 0 & -64(t^3-t) & 64(5t^4-7t^3+2t) & -\frac{4\,096}{3}t & 0 \\
   \theta_{5,\ell} & 0 & 0 & 0 & -512(6t^4-7t^3+2t) & 256\big(\frac{61}{3}t^5-29t^4+14t^3-\frac{16}{3}t\big) & -\frac{3\,473\,408}{15}t \\
  \end{array}
 \)
\end{table}

  Similarly, extracting \( p \)th coefficient in \( u \),
  we obtain the asymptotics of digraphs with \( p \) purely source-like strongly connected components.
  Thus, \cref{tab:1-u-scc_in_d(all)} shows this asymptotics for the case \( p = 1 \) with respect to the number of all strongly connected components.
  Here, to avoid keeping references to other types of strongly connected components, we also make the substitution \( v = 1 \) and \( y = 1 \).
  In the same way, to get the asymptotics of digraphs with \( r = 1 \) isolated components that keeps track of the total number of connected components,
  we extract \( r \)th coefficient in \( y \) and substitute \( u = 1 \) and \( v = 1 \), see \cref{tab:1-y-scc_in_d(all)}.
  In particular, we can see from these tables that purely source-like and isolated components behave differently.

\subsection{Satisfiable 2-CNFs}
\label{Appendix: satisfiable 2-CNFs}

  According to \cref{theorem:cgf:sat}, the Coefficient GF of type \( (2,1) \) of satisfiable 2-CNF formulae satisfies
  \[
    (\Convert{2}{1}\, \SAT)(z, w)
     = 
    \SAT(2zw) \big(1 - \I(2zw) \big)
    \, .
  \]
  As a consequence, the corresponding asymptotic coefficients \( \dsat^\circ_{m,\ell} \) are of form
  \[
    \dsat^\circ_{m,\ell}
     =
    \mathbf 1_{m = \ell = 0}
     -
    \mathbf 1_{m = \ell > 0}
     \cdot 
	\dfrac{\bar{s}_m}{2^{\binom{m-1}{2}} \cdot m!}
	\, ,
  \]
  where (compare to \cref{corollary:asymptotics:sat_n})
  \[
    \bar{s}_m
     = 
    \left(
     \sum_{k=0}^{m-1}
      \binom{m}{k} \cdot 2^{m^2-k^2} \cdot
       \sat_k \cdot \ir_{m-k}
    \right)
     -
    \sat_m
    \, .
  \]
  Here, \( (\sat_m)_{m=0}^{\infty} \) is the counting sequence of satisfiable 2-CNF formulae,
  \[
    (\sat_m)
     = 
    1,\,
    1,\,
    15,\,
    2\,397,\,
    3\,049\,713,\,
    28\,694\,311\,447,\,
    2\,034\,602\,766\,692\,687,\,
    \ldots
  \]
  and \( (\ir_m)_{m=1}^{\infty} \) is the counting sequence of irreducible tournaments described in \cref{Appendix: connected graphs}.
  Thus, the sequence \( (\bar{s}_m)_{m=1}^{\infty} \) starts with
  \[
    (\bar{s}_m)
     = 
    1,\,
    1,\,
    67,\,
    12\,559,\,
    8\,976\,361,\,
    23\,458\,307\,761,\,
    225\,313\,054\,216\,027,\,
    \ldots
  \]
  and
  \[
    \big(\dsat^\circ_{m,m}\big)
     =
    1,\,
    -1,\,
    -\dfrac{1}{4},\,
    -\dfrac{67}{48},\,
    -\dfrac{12\,559}{1\,536},
    -\dfrac{8\,976\,361}{122\,880},\,
    -\dfrac{23\,458\,307\,761}{23\,592\,960},\,
    -\dfrac{225\,313\,054\,216\,027}{10\,569\,646\,080},\,
    \ldots
  \]

\subsection{Contradictory strongly connected implication digraphs}
\label{Appendix: CSCC}

  Due to \cref{theorem:asymptotics:cscc}, the  Coefficient GF of type \( (2,4) \) of contradictory strongly connected implication digraphs is given by
  \[
    (\Convert{2}{4}\, \CSCC)(z, w)
     =
    \exp \left(
      \dfrac{1}{2} \SCC(2^{7/2} z^4 w)
       -
      \CSCC(2^{5/2} z^4 w)
    \right)
     \cdot
    \Change{2}{2}{4}
     \Big( 1 - \I(2^{5/2}z^2w) \Big)
    \, .
  \]
  From this relation, it is cleat that the corresponding asymptotic coefficients \( \cscc^\circ_{m,\ell} \) are zeroes for all the odd values of \( m \).
  Another necessary condition for these coefficients to be non-zero is \( 2\ell \leq m \leq 4\ell \).
  We provide the numerical values of \( \cscc^\circ_{2m,\ell} \) for \( m,\ell \leq 6 \) in \cref{tab:cscc}.
  
\begin{table}[ht!]
  \caption{\label{tab:cscc} Values of the coefficients \( \cscc^\circ_{2m,\ell} \) for \( m,\ell \leq 6 \).}
 \(
  \begin{array}{c|ccccccc}
   \ell & 0 & 1 & 2 & 3 & 4 & 5 & 6 \\
   \hline
   \cscc^\circ_{0,\ell} & 1 & 0 & 0 & 0 & 0 & 0 & 0 \\
   \cscc^\circ_{2,\ell} & 0 & -8 & 0 & 0 & 0 & 0 & 0 \\
   \cscc^\circ_{4,\ell} & 0 & 16 & 0 & 0 & 0 & 0 & 0 \\
   \cscc^\circ_{6,\ell} & 0 & 0 & -512 & -\frac{32\,768}{3} & 0 & 0 & 0 \\
   \cscc^\circ_{8,\ell} & 0 & 0 & 4\,096 & 0 & -16\,777\,216 & 0 & 0 \\
   \cscc^\circ_{10,\ell} & 0 & 0 & 0 & -524\,288 & -\frac{33\,554\,432}{3} & -\frac{2\,336\,462\,209\,024}{15} & 0 \\
   \cscc^\circ_{12,\ell} & 0 & 0 & 0 & -4\,278\,190\,080 & 0 & -68\,719\,476\,736 & -8\,725\,724\,278\,030\,336 \\
  \end{array}
 \)
\end{table}

  As seen from \cref{tab:cscc}, the counting sequence \( (\cscc_n)_{n=0}^{\infty} \) of contradictory strongly connected implication digraphs,
  \[
    (\cscc_n)
     =
    0,\,
    0,\,
    1,\,
    1\,606,\,
    12\,864\,042,\,
    1\,035\,697\,286\,504,\,
    1\,137\,724\,245\,192\,445\,576,\,
    \ldots
  \]
  behaves as \( 2^{4\binom{n}{2}} \), as \( n \to \infty \).
  Similarly to the case of strongly connected digraphs, we can establish its asymptotic behavior more precisely, namely,
  \[
    \cscc_n \approx 2^{4\binom{n}{2}}
      \sum_{m\geq0} \dfrac{w_m(n)}{4^{nm}}
    \, ,
	\qquad\mbox{where}\quad
    w_m(n) = \sum_{\ell=\lceil m/2 \rceil}^{m}
      n^{\underline{\ell}}\,
      \cscc^\circ_{2m,\ell}
    \, .
  \] 
  For \( m\leq6 \), the polynomials \( w_m(n) \) are the following:
  \begin{align*}
    w_0(n) &= 1, \\
    w_1(n) &= -8n, \\
    w_2(n) &= 16n, \\
    w_3(n) &= -\dfrac{512}{3} n(n-1)(64n-125), \\
    w_4(n) &= -4\,096 n(n-1)(4\,096n^2 - 20\,480n + 24\,575), \\
    w_5(n) &= -\dfrac{524\,288}{15} n(n-1)(n-2) (4\,456\,448 n^2 - 31\,194\,816n + 53\,476\,431) \, , \\
    w_6(n) &= -16\,777\,216 n(n-1)(n-2) (520\,097\,792n^2 + 6\,241\,153\,024n - 14\,042\,579\,199) \, .
  \end{align*}

\subsection{2-CNFs, counting strongly connected components}
\label{Appendix: general case of 2-CNFs}

  According to \cref{theorem:cgf:cnf:types}, the Coefficient GF of type \( (2,2) \) of 2-CNF formulae is given by
  \begin{multline*}
    (\Convert{2}{2}\, \CNF)(z, w; s, t)
     =
    s \cdot \widehat{\D}(2^{3/2} z^2 w; t)
     \cdot \\
    \Change{2}{4}{2}
    \left[
      z \cdot
      \exp \left(
        \big(s-1\big) \cdot \CSCC(2^{3/2} z^4 w)
         +
        \dfrac{(1-t)}{2} \cdot \SCC(2^{5/2} z^4 w)
      \right)
       \cdot
      \Change{2}{2}{4}
       \Big( 1 - \I(4z^2w) \Big)
    \right]
    \, .
  \end{multline*}
  Here, the variables \( s \) and \( t \) mark, respectively,
  the numbers of contradictory strongly connected components and
  pairs of ordinary strongly connected components in the corresponding implication digraph.
  The asymptotic coefficients \( \cnf_{m,\ell}(s,t) \) are polynomials in \( s \) and \( t \).
  It is clear from the above relation that these polynomials are zeroes for even values of \( m \).
  For small odd values of \( m \), the polynomials \( \cnf_{m,\ell}(s,t) \) are shown in \cref{tab:2-cnf(all)}.
  
\begin{table}[ht!]
  \caption{\label{tab:2-cnf(all)} Values of the coefficients \( \cnf_{2m+1,\ell}(s,t) \) for \( m,\ell \leq 3 \).}
 \(
  \begin{array}{c|ccccc}
   \ell & 0 & 1 & 2 & 3 \\
   \hline
   \cnf_{1,\ell}(s,t) & s & 0 & 0 & 0 \\
   \cnf_{3,\ell}(s,t) & 0 & 8s(t-1) & 0 & 0 \\
   \cnf_{5,\ell}(s,t) & 0 & -16s(t-1) & 192s(t^2-t) & 0 \\
   \cnf_{7,\ell}(s,t) & 0 & 0 & -512s(4t^2-5t+1) & 2\,048s\big(\frac{25}{3}t^3-5t^2+2t-\frac{16}{3}\big) \\
  \end{array}
 \)
\end{table}

  Substituting \( t = 0 \), we obtain the asymptotic coefficients of implication digraphs that do not contain ordinary strongly connected components, see \cref{tab:2-cnf(s)}.
  In other words, all strongly connected components of such graphs are contradictory.
  
\begin{table}[ht!]
  \caption{\label{tab:2-cnf(s)} Values of the coefficients \( \cnf_{2m+1,\ell}(s,0) \) for \( m,\ell \leq 5 \).}
 \(
  \begin{array}{c|cccccc}
   \ell & 0 & 1 & 2 & 3 & 4 & 5 \\
   \hline
   \cnf_{1,\ell}(s,0) & s & 0 & 0 & 0 & 0 & 0 \\
   \cnf_{3,\ell}(s,0) & 0 & -8s & 0 & 0 & 0 & 0 \\
   \cnf_{5,\ell}(s,0) & 0 & 16s & 0 & 0 & 0 & 0 \\
   \cnf_{7,\ell}(s,0) & 0 & 0 & -512s & -\frac{32\,768}{3}s & 0 & 0 \\
   \cnf_{9,\ell}(s,0) & 0 & 0 & 2\,048s(s+2) & 0 & -16\,777\,216s & 0 \\
   \cnf_{11,\ell}(s,0) & 0 & 0 & 0 & -262\,144s(s+2) & -\frac{33\,554\,432}{3}s & -\frac{2\,336\,462\,209\,024}{15}s \\
  \end{array}
 \)
\end{table}
  
  Extracting \( k \)th coefficient in \( s \) from \( \cnf_{m,\ell}(s,0) \), we get the asymptotics of implication digraphs with exactly \( k \) contradictory strongly connected components.
  In particular, taking \( [s]\cnf_{2m+1,\ell}(s,0) \) leads us to the asymptotics of contradictory strongly connected implication digraphs whose coefficients are described by \cref{tab:cscc},
  while \( [s^2]\cnf_{2m+1,\ell}(s,0) \) corresponds to implication digraphs with two contradictory strongly connected components \cref{tab:2-cnf(ss)}
  
\begin{table}[ht!]
  \caption{\label{tab:2-cnf(ss)} Values of the coefficients \( [s^2]\cnf_{2m+1,\ell}(s,0) \) for \( m,\ell \leq 7 \).}
 \(
  \begin{array}{c|cccccccc}
   \ell & 0 & 1 & 2 & 3 & 4 & 5 & 6 & 7 \\
   \hline
   \verb"["s^2\verb"]"\cnf_{1,\ell}(s,0) & 0 & 0 & 0 & 0 & 0 & 0 & 0 & 0 \\
   \verb"["s^2\verb"]"\cnf_{3,\ell}(s,0) & 0 & 0 & 0 & 0 & 0 & 0 & 0 & 0 \\
   \verb"["s^2\verb"]"\cnf_{5,\ell}(s,0) & 0 & 0 & 0 & 0 & 0 & 0 & 0 & 0 \\
   \verb"["s^2\verb"]"\cnf_{7,\ell}(s,0) & 0 & 0 & 0 & 0 & 0 & 0 & 0 & 0 \\
   \verb"["s^2\verb"]"\cnf_{9,\ell}(s,0) & 0 & 0 & 2\,048 & 0 & 0 & 0 & 0 & 0 \\
   \verb"["s^2\verb"]"\cnf_{11,\ell}(s,0) & 0 & 0 & 0 & -262\,144 & 0 & 0 & 0 & 0 \\
   \verb"["s^2\verb"]"\cnf_{13,\ell}(s,0) & 0 & 0 & 0 & \frac{13\,497\,270\,272}{3} & 0 & 0 & 0 & 0 \\
   \verb"["s^2\verb"]"\cnf_{15,\ell}(s,0) & 0 & 0 & 0 & 0 & -\frac{6\,910\,602\,379\,264}{3} & -\frac{274\,877\,906\,944}{3} & 0 & 0 \\
  \end{array}
 \)
\end{table}
  
  Similarly, it is possible to consider implication digraphs with a given number \( k \) of pairs of ordinary strongly connected components.
  To obtain the corresponding asymptotics, it is sufficient to extract \( k \)th coefficient in \( t \) from \( \cnf_{m,\ell}(s,t) \).
  For \( k = 1,2,3 \), the reader can find the result of calculations in \cref{tab:2-cnf(t)}, \cref{tab:2-cnf(tt)} and \cref{tab:2-cnf(ttt)}.
  
\begin{table}[ht!]
  \caption{\label{tab:2-cnf(t)} Values of the coefficients \( [t]\cnf_{2m+1,\ell}(s,t) \) for \( m,\ell \leq 5 \).}
 \(
  \begin{array}{c|cccccc}
   \ell & 0 & 1 & 2 & 3 & 4 & 5 \\
   \hline
   \verb"["t\verb"]"\cnf_{1,\ell}(s,t) & 0 & 0 & 0 & 0 & 0 & 0 \\
   \verb"["t\verb"]"\cnf_{3,\ell}(s,t) & 0 & 8s & 0 & 0 & 0 & 0 \\
   \verb"["t\verb"]"\cnf_{5,\ell}(s,t) & 0 & -16s & -192s & 0 & 0 & 0 \\
   \verb"["t\verb"]"\cnf_{7,\ell}(s,t) & 0 & 0 & 2\,560s & 4\,096s & 0 & 0 \\
   \verb"["t\verb"]"\cnf_{9,\ell}(s,t) & 0 & 0 & -8\,192s & 0 & \frac{16\,973\,824}{3}s & 0 \\
   \verb"["t\verb"]"\cnf_{11,\ell}(s,t) & 0 & 0 & 0 & 1\,048\,576s(4s+9) & \frac{2\,046\,820\,352}{3}s & 84\,859\,158\,528s \\
  \end{array}
 \)
\end{table}
  
\begin{table}[ht!]
  \caption{\label{tab:2-cnf(tt)} Values of the coefficients \( [t^2]\cnf_{2m+1,\ell}(s,t) \) for \( m,\ell \leq 5 \).}
 \(
  \begin{array}{c|cccccc}
   \ell & 0 & 1 & 2 & 3 & 4 & 5 \\
   \hline
   \verb"["t^2\verb"]"\cnf_{1,\ell}(s,t) & 0 & 0 & 0 & 0 & 0 & 0 \\
   \verb"["t^2\verb"]"\cnf_{3,\ell}(s,t) & 0 & 0 & 0 & 0 & 0 & 0 \\
   \verb"["t^2\verb"]"\cnf_{5,\ell}(s,t) & 0 & 0 & 192s & 0 & 0 & 0 \\
   \verb"["t^2\verb"]"\cnf_{7,\ell}(s,t) & 0 & 0 & -2\,048s & -10\,240s & 0 & 0 \\
   \verb"["t^2\verb"]"\cnf_{9,\ell}(s,t) & 0 & 0 & 2\,048s & 786\,432s & 5\,472\,256s & 0 \\
   \verb"["t^2\verb"]"\cnf_{11,\ell}(s,t) & 0 & 0 & 0 & -17\,039\,360s & -134\,217\,728s & \frac{123\,505\,475\,584}{3}s \\
  \end{array}
 \)
\end{table}
  
\begin{table}[ht!]
  \caption{\label{tab:2-cnf(ttt)} Values of the coefficients \( [t^3]\cnf_{2m+1,\ell}(s,t) \) for \( m,\ell \leq 6 \).}
  \small
 \(
  \begin{array}{c|ccccccc}
   \ell & 0 & 1 & 2 & 3 & 4 & 5 & 6 \\
   \hline
   \verb"["t^3\verb"]"\cnf_{1,\ell}(s,t) & 0 & 0 & 0 & 0 & 0 & 0 & 0 \\
   \verb"["t^3\verb"]"\cnf_{3,\ell}(s,t) & 0 & 0 & 0 & 0 & 0 & 0 & 0 \\
   \verb"["t^3\verb"]"\cnf_{5,\ell}(s,t) & 0 & 0 & 0 & 0 & 0 & 0 & 0 \\
   \verb"["t^3\verb"]"\cnf_{7,\ell}(s,t) & 0 & 0 & 0 & \frac{51\,200}{3}s & 0 & 0 & 0 \\
   \verb"["t^3\verb"]"\cnf_{9,\ell}(s,t) & 0 & 0 & 0 & -786\,432s & -\frac{851\,968}{3}s & 0 & 0 \\
   \verb"["t^3\verb"]"\cnf_{11,\ell}(s,t) & 0 & 0 & 0 & 4\,194\,304s & \frac{1\,744\,830\,464}{3}s & 16\,965\,959\,680s & 0 \\
   \verb"["t^3\verb"]"\cnf_{13,\ell}(s,t) & 0 & 0 & 0 & -\frac{8\,388\,608}{3} & -96\,636\,764\,160s & -\frac{14\,809\,047\,236\,608}{3}s & \frac{4\,602\,751\,631\,753\,216}{9}s \\
  \end{array}
 \)
\end{table}

\printbibliography

\end{document}